\documentclass[11pt,a4paper, twoside]{article}
\usepackage[utf8]{inputenc}

\usepackage[all]{xy}
\usepackage{latexsym}
\usepackage[margin=1in]{geometry}
\usepackage[dvipdfmx]{graphicx}
\usepackage{overpic}
\usepackage{hyperref}
\usepackage{breakurl}
\usepackage{mymacro}
\usepackage{tikz-cd}
\usepackage{mathtools}
\usepackage{comment}

\usepackage{pinlabel}

\usepackage[export]{adjustbox}

\usepackage{tabularx,booktabs}

\usepackage[
backend=biber,
style=ext-alphabetic,
maxnames = 5,
maxalphanames = 5,
maxcitenames  = 5,
minalphanames = 5,
minnames      = 5,
sorting=nyt,
giveninits,
articlein=false,
url=false,  
doi=false,
eprint=true,
isbn=false
]{biblatex}
\usepackage[whole]{bxcjkjatype}

\usepackage{fancyhdr}

\newcommand\shorttitle{\footnotesize{Non-contractible loops of Legendrian tori from families of knots}}
\newcommand\authors{\footnotesize{Yukihiro Okamoto and Mari\'{a}n Poppr}}

\title{Non-contractible loops of Legendrian tori from families of knots}
\author{Yukihiro Okamoto \and Mari\'{a}n Poppr}
\date{}

\begin{document}

\maketitle

\begin{abstract}
\noindent
In the unit cotangent bundle of $\mathbb{R}^3$, we consider loops of Legendrian tori arising as families of the unit conormal bundles of smooth knots in $\mathbb{R}^3$.
In this paper, using the cord algebra of knots, we give a topological method to compute the monodromy on the Legendrian contact homology in degree $0$ induced by those loops.
As an application, we obtain an infinite family of non-contractible loops of Legendrian tori which are contractible in the space of smoothly embedded tori.
\end{abstract}


\section{Introduction}

\noindent
\textbf{Background.}
Consider the $n$-dimensional Euclidean space $\R^n$ with a standard metric.
Its unit cotangent bundle $U^*\R^n$ has a canonical contact structure.
Given a compact $C^{\infty}$ submanifold $K$ of $\R^n$, its unit conormal bundle is defined by
\[  \Lambda_K \coloneqq \{ (q,p) \in U^*\R^n \mid q\in K,\ p(v)=0 \text{ for all }v\in T_qK \},  \]
and it is a Legendrian submanifold of $U^*\R^n$.
Hence, if we consider the spaces
\[ \begin{array}{rl} \mathcal{K}_{n,k}&\coloneqq \{ k\text{-dimensional  compact }C^{\infty}\text{ submanifolds of }\R^n \}, \\
\mathcal{L}_n&\coloneqq \{\text{compact  Legendrian submanifolds of }U^*\R^n\},
\end{array}\]
for $1\leq k \leq n-1$, we have a map $ \mathcal{K}_{n,k}\to \mathcal{L}_n\colon K\mapsto \Lambda_K$.

It is immediate that for any $K_0,K_1\in \mathcal{K}_{n,k}$,
\[ K_0\text{ is }C^{\infty}\text{ isotopic to }K_1 \text{ in }\R^n 
\Rightarrow \Lambda_{K_0} \text{ is Legendrian isotopic to }\Lambda_{K_1} \text{ in }U^*\R^n.
\]
As a problem of distinguishing Legendrian submanifolds, the following has been studied:
Does the opposite implication ($\Leftarrow$) hold in general? 
This is equivalent to asking whether the induced map $\pi_0(\mathcal{K}_{n,k})\to \pi_0(\mathcal{L}_n)$ on the $0$-th homotopy group is injective.
When $n=3$ and $k=1$, this problem is largely solved.
Shende \cite{Shende} and Ekholm-Ng-Shende \cite{ENS} showed that for any knots $K_0,K_1\in \mathcal{K}_{3,1}$, 
\[ \Lambda_{K_0} \text{ is Legendrian isotopic to }\Lambda_{K_1} \text{ in }U^*\R^n \Rightarrow K_0 \text{ is isotopic to }K_1 \text{ or the mirror of }K_1. \]
There are also studies \cite{Asp, O24} on distinguishing Legendrian non-isotopic unit conormal bundles when $n\geq 4$.
Then, it would be natural to proceed to studying the induced map
\[\pi_m(\mathcal{K}_{n,k},K)\to \pi_m(\mathcal{L}_n,\Lambda_K)\]
on the homotopy groups for $m\geq 1$.

In particular for $m=1$, given a loop $(K_t)_{t\in S^1}$ in $\mathcal{K}_{n,k}$, we ask whether $(\Lambda_{K_t})_{t\in S^1}$ is contractible in $\mathcal{L}_{n,k}$.
This is motivated by preceding researches on loops of Legendrian knots in $\R^3$ with respect to a standard contact structure.
K\'{a}lm\'{a}n \cite{Kalman} found non-contractible loops of Legendrian torus knots.
Casals-Gao \cite{CG} and
Casals-Ng \cite{Casals} gave loops of Legendrian knots which has infinite order under multiplications of the loop.
They computed the monodromy on certain algebraic objects induced by Legendrian loops: \cite{Kalman, Casals} considered the monodromy on the Chekanov-Eliashberg DGA of Legendrian knots, and
\cite{CG} considered the monodromy on the moduli of framed constructible sheaves associated with Legendrian torus links.
Similar to the present paper, Ng \cite{Ng-plane} studied the conormal of oriented immersed curves in $\R^2$ and their loops.
In higher dimension, Sabloff-Sullivan \cite{SS} gave loops of Legendrian spheres in $\R^{2n-1}$ and proved that they are non-contractible by using generating function invariants.
In addition, Golovko \cite{Golovko} considered loops of spherical spuns, based on the construction of \cite{Casals}.
For studies on loops of Legendrian knots from a point of view of h-principle, see \cite{FMP20}.

\

\noindent
\textbf{Main results.}
First, let us fix some notations.
\begin{definition}\label{def-Pi-Leg}
Let $\Lambda_0$ and $\Lambda_1$ be compact Legendrian submanifolds of the $5$-dimensional contact manifold $U^*\R^3$.
We define $\Pi_{\Leg}(\Lambda_0,\Lambda_1)$ as the set of smooth families $(\Lambda_t)_{t\in [0,1]}$ of Legendrian submanifolds of $U^*\R^3$ connecting $\Lambda_0$ and $\Lambda_1$, modulo smooth homotopy equivalence $\sim$ such that
\[ \begin{array}{rl} & (\Lambda^0_t)_{t\in [0,1]}\sim (\Lambda^1_t)_{t\in [0,1]} \vspace{5pt} \\
\Leftrightarrow & \text{there is a smooth family }(\hat{\Lambda}^s_t)_{(s,t)\in [0,1]^2} 
\text{ of Legendrian submanifolds of }U^*\R^3 \text{ satisfying} \\
& \hat{\Lambda}^s_0=\Lambda_0 \text{ and } \hat{\Lambda}^s_1=\Lambda_1 \text{ for any }s\in [0,1], 
\text{ and }\hat{\Lambda}^0_t=\Lambda^0_t \text{ and }\hat{\Lambda}^1_t=\Lambda^1_t \text{ for any }t\in[0,1].
\end{array}\]
%
In addition, forgetting the contact structure and replacing $(\Lambda_t)_{t\in[0,1]}$ and $(\hat{\Lambda}^s_t)_{(s,t)\in [0,1]^2}$ in the above definition with smooth families of embedded surfaces,
we define a set $\Pi_{\sm}(\Lambda_0,\Lambda_1)$ together with a natural map 
$\iota \colon \Pi_{\Leg}(\Lambda_0,\Lambda_1) \to \Pi_{\sm}(\Lambda_0,\Lambda_1)$.
\end{definition}
By concatenating $1$-parameter families, we can naturally define a map
$ \Pi_{\Leg}(\Lambda_1,\Lambda_2) \times \Pi_{\Leg}(\Lambda_0,\Lambda_1) \to \Pi_{\Leg}(\Lambda_0,\Lambda_2)$.
This gives a groupoid $\Pi_{\Leg}$ such that the objects are compact Legendrian submanifolds of $U^*\R^3$ and the sets of morphisms are given by Definition \ref{def-Pi-Leg}.
When $\Lambda_0=\Lambda_1=\Lambda$, we denote the group
$ \Pi_{\Leg}(\Lambda,\Lambda)$ by $\pi_{\Leg} (\Lambda)$.
Similarly, $\pi_{\sm}(\Lambda) \coloneqq \Pi_{\sm}(\Lambda,\Lambda)$ has a group structure.

Analogous to $\Pi_{\sm}(\Lambda_0,\Lambda_1)$, for any smooth knots $K_0$ and $K_1$ in $\R^3$, we define the set $\Pi_{\sm}(K_0,K_1)$ consisting of homotopy classes of smooth families $(K_t)_{t\in [0,1]}$ of smooth knots in $\R^3$ connecting $K_0$ and $K_1$.
Then, we can define a map
\[ \bLambda \colon \Pi_{\sm}(K_0,K_1) \to \Pi_{\Leg}(\Lambda_{K_0},\Lambda_{K_1}) \colon [ (K_t)_{t\in [0,1]} ] \mapsto [(\Lambda_{K_t})_{t\in [0,1]}]. \]


The goal of this paper is to give a sufficient condition, which is described in a topological way, to be $\bLambda(x)\neq \bLambda(y)$ for given $x,y\in \Pi_{\sm}(K_0,K_1)$.
We will also give examples of applying this condition.

To state the main result, we use the \textit{cord algebra} of knots.
This was introduced by Ng \cite[Definition 2.1]{Ng} as a $\Z[H_1(\Lambda_K)]$-algebra for an oriented knot $K$ with a framing.
In this paper, we mainly deal with its reduction to $\Z_2$-coefficient and denote it by $\Cord(K)$.
Taking a parallel copy $K'$ of $K$ with respect to a framing, one can define $\Cord(K)$ as a $\Z_2$-algebra generated by the set of homotopy classes of paths in $\R^3\setminus K$ with endpoints in $K'$,
modulo certain `skein relations' illustrated in Figure \ref{fig-cord-Z2}.
Up to natural isomorphisms, the algebra does not depend on the choice of framing.

Given any $x\in \Pi_{\sm}(K_0,K_1)$ represented by $(K_t)_{t\in [0,1]}$, we take an ambient isotopy $(F_t)_{t\in [0,1]}$ on $\R^3$ such that $F_0=\id_{\R^3}$ and $F_{1-t}(K_t) = K_1$. In particular, $ F_1(K_0) = K_1$.
Then, $F_1$ induces an isomorphism from $\Cord(K_0)$ to $ \Cord(K_1)$, and we denote it by $\rho(x)$ (see Section \ref{subsubsec-isom-on-cord}).
In this way, we construct a map
\[ \rho \colon \Pi_{\sm}(K_0,K_1) \to \Hom (\Cord(K_0),\Cord (K_1)) \colon x\mapsto \rho(x) ,\]
and, in fact, it is well-defined.
Similarly to $\Pi_{\Leg}$, we can define a groupoid $\Pi_{\sm}$ from $\Pi_{\sm}(K_0,K_1)$ for all knots $K_0$ and $K_1$. Then, $\rho$ is regarded as a functor from $\Pi_{\sm}$ to the category of $\Z_2$-algebras.

\begin{theorem}\label{thm-1}
The map $\rho$ factors through $\boldsymbol{\Lambda}$, that is, there is a map
\[ \Theta \colon \Pi_{\Leg}(\Lambda_{K_0},\Lambda_{K_1}) \to \Hom (\Cord(K_0),\Cord (K_1)) \]
such that the following diagram commutes:
\[\xymatrix{
& \Pi_{\Leg}(\Lambda_{K_0},\Lambda_{K_1}) \ar[d]^-{\Theta} \\
\Pi_{\sm}(K_0,K_1)\ar[ur]^-{\bLambda} \ar[r]_-{\rho} & \Hom(\Cord(K_0), \Cord (K_1)) .
}\]
\end{theorem}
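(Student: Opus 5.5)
We construct $\Theta$ from the Legendrian contact homology of the unit conormals, using the isomorphism between its degree-$0$ part and the cord algebra. By Ekholm--Etnyre--Ng--Sullivan, the Chekanov--Eliashberg DGA $\mathcal{A}(\Lambda)$ of a Legendrian in $U^*\R^3 \cong J^1(S^2)$ is defined (here with $\Z_2$-coefficients, after reducing the group ring $\Z_2[H_1(\Lambda)]$ by sending homology classes to $1$). Its homology $LCH_*(\Lambda)$ is invariant under Legendrian isotopy. Moreover, a Legendrian isotopy $(\Lambda_t)_{t\in[0,1]}$ induces a DGA map $\Phi_{(\Lambda_t)}$, well defined up to chain homotopy, by composing the stable tame isomorphisms attached to handle-slide and birth/death moments. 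For the unit conormal $\Lambda_K$, Ekholm--Etnyre--Ng--Sullivan (knot contact homology) and Ng give an isomorphism $\Psi_K\colon LCH_0(\Lambda_K)\to \Cord(K)$. A more intrinsic route is the Cieliebak--Ekholm--Latschev--Ng isomorphism, given by counting holomorphic disks with boundary on $\R^3\cup L_K$, where $L_K$ is the conormal Lagrangian filling. We define
\[ \Theta([(\Lambda_t)]) \coloneqq \Psi_{K_1}\circ (\Phi_{(\Lambda_t)})_*\circ \Psi_{K_0}^{-1} \]
on $LCH_0$.

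\textbf{Step 1 (well-definedness on $\Pi_{\Leg}$).} If two paths are homotopic rel endpoints through Legendrians, the induced maps on $LCH_0$ agree. This is the standard fact that a $2$-parameter family gives a chain homotopy between the continuation maps. Concretely, one can use the cobordism picture: the Lagrangian trace cobordisms in the symplectization are Hamiltonian isotopic through exact cobordisms with fixed ends, and exact Lagrangian cobordism maps are invariant up to chain homotopy under compactly supported deformations. One should also check that $\Theta$ respects concatenation, which follows from functoriality of cobordism maps under gluing.

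\textbf{Step 2 (commutativity $\Theta\circ\bLambda=\rho$).} This is the main obstacle. Given smooth knots $K_t$ with an ambient isotopy $F_t$, the Legendrian isotopy $\Lambda_{K_t}$ is induced by the contactomorphisms of $U^*\R^3$ obtained by lifting $F_t$, since cotangent lifts of diffeomorphisms preserve the canonical $1$-form, after rescaling to unit covectors. So it suffices to show that for a contactomorphism $\tilde F$ lifted from $F\in \mathrm{Diff}(\R^3)$, connected to the identity by the lifted isotopy, the continuation map coincides with the map induced by transporting all holomorphic data by $\tilde F$. It also suffices to show that this transported map is intertwined by $\Psi$ with $F_*$ on cords. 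The plan here is to use the naturality of the CELN isomorphism. The isotopy $F_t$ lifts to symplectomorphisms of $T^*\R^3$ carrying $L_{K_0}$ to $L_{K_t}$ and $\R^3$ to itself. The continuation map on $LCH_0$ equals the map obtained from the cobordism $\R\times\Lambda_{K_t}$, and gluing it to the filling $L_{K_0}$ gives a filling isotopic to $L_{K_1}$ through exact Lagrangians. The augmentation-type map to chains of paths in $\R^3\setminus K$ (the cord side) then transforms by $F_1$, because moduli spaces for $(J,L_{K_0})$ and $(\tilde F_*J, L_{K_1})$ are identified by the lift. Independence of the almost complex structure up to homotopy then closes the argument.

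If the CELN naturality is not available in the needed generality, the fallback is a combinatorial argument via Ng's braid description. Present $K_t$ as a path of braid closures, decompose the path into Markov moves and braid isotopies, and check directly on the generators $a_{ij}$ of the combinatorial DGA that the stable tame isomorphisms match the cord maps. Finally, since the loops factor through homotopy classes and $\rho$ depends only on $F_1$ up to isotopy fixing the knot, commutativity holds on classes in $\Pi_{\sm}(K_0,K_1)$.
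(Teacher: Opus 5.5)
\textbf{There is a genuine gap in Step 2.} Your definition of $\Theta$ and your Step 1 match the paper: it uses exact Lagrangian cobordism maps in the sense of Ekholm--Honda--K\'alm\'an and Dimitroglou Rizell, invariant under compactly supported Hamiltonian deformation. The paper also conjugates by fixed paths to real-analytic knots satisfying condition (\$), because $\LCH_0$ and the CELN map are only defined for such knots.

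The cotangent lift $\tilde F$ of $F_1$, rescaled to $U^*\R^3$, preserves $\xi$ but not $\alpha$: it carries $U^*\R^3$ to the unit cosphere bundle of $(F_1)_*g$. So ``transporting all holomorphic data'' identifies the moduli spaces for $(\Lambda_{K_0},\alpha,J)$ with those for $(\Lambda_{K_1},\alpha',\tilde F_*J)$. The Reeb chords of $\alpha'$ project to the $F_1$-images of the binormal chords of $K_0$, not to binormal chords of $K_1$. Everything nontrivial is therefore pushed into your last sentence, ``independence of the almost complex structure up to homotopy.'' That sentence means: the CELN disk-counting map intertwines the continuation map for a change of data at infinity (contact form and $J$) with the identity on string homology. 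This cannot be quoted from \cite{CELN}, which builds the chain map and proves it is an isomorphism for one fixed choice of data. It is precisely the content of the theorem.

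What is missing is the mechanism that produces such a chain homotopy. In a $1$-parameter family of index-$0$ disks with switching boundary on $L\cup\R^3$, the number of switches is not constant. An arc of the boundary mapped to $\R^3$ (or to $L$) can pass through the knot, and the limit is a disk with a boundary node at a point of $K$, i.e.\ a constant-disk bubble. The family is therefore not a cobordism between two counts. It is a $1$-chain in the string space whose boundary contains the splitting terms $\delta$ of $D_K=\partial+\delta$. To make this work you need:
\begin{itemize}
\item a gluing argument at these nodes;
\item jet transversality (immersed boundary arcs, transverse and non-simultaneous hits of $K$), so that the chains satisfy (1a)--(1c);
\item neck-stretching to identify the far end with $\Psi_{K_1}\circ\Phi$.
\end{itemize}

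The paper does this with the Lagrangian varying rather than $J$. It builds fillings $L_b$ of $\Lambda_{K_0}$ that coincide with $L_{K_{t(b)}}$ near the zero section, so that they meet $\R^3$ cleanly along a knot. Your ``gluing gives a filling isotopic to $L_{K_1}$ through exact Lagrangians'' does not guarantee this for an arbitrary exact isotopy. The paper then pushes the $\R^3$-boundary arcs forward by $F_{1-t(b)}$ to get a $1$-chain $H(a)$ in $\Sigma_{K_1}$, and uses an extra perturbation (L4) to achieve transversality near trivial strips.

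Your decomposition (naturality under the lift, plus invariance under change of data) could be made to work. Its second half, however, needs exactly this machinery, and the proposal does not supply it.

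The braid fallback does not close the gap either. Markov's theorem controls only $\pi_0$, so it says nothing about homotopy classes of paths such as the blue-box loop. You would also still have to identify Ng's combinatorial stable tame isomorphisms with the geometric continuation maps along those paths, which is the same problem in another guise.
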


Let us explain the idea of the proof.
First, the map $\Theta$ is constructed in the following way:
For a compact Legendrian submanifold $\Lambda$ of $U^*\R^3$ whose Maslov class vanishes, its \textit{Legendrian contact homology} $\LCH_*(\Lambda)$ is defined with coefficients in $\Z_2$ by using pseudo-holomorphic curves in the symplectization $\R\times U^*\R^3$.
Moreover, given a Legendrian isotopy $(\Lambda_t)_{t\in [0,1]}$ from $\Lambda_{K_0}$ to $\Lambda_{K_1}$, we can construct an isomorphism from $\LCH_*(\Lambda_{K_0})$ to $\LCH_*(\Lambda_{K_1})$ and this depends only on the equivalence class of $(\Lambda_t)_{t\in [0,1]}$ in $\Pi_{\Leg}(\Lambda_{K_0},\Lambda_{K_1})$ \cite{EHK, DR}.
We should note that when $\Lambda_{K_0}=\Lambda_{K_1}$, the automorphism on $\LCH_*(\Lambda_{K_0})$ is considered as the monodromy for the loop $(\Lambda_t)_{t\in[0,1]}$ with $\Lambda_0=\Lambda_1=\Lambda_{K_0}$.

The key fact is that there is an isomorphism
\[ \LCH_0(\Lambda_K) \cong  \Cord(K)\]
for any knot $K$.
For the construction of this isomorphism, we refer to \cite{CELN}.
%
%
Then, for $x\in \Pi_{\Leg}(\Lambda_{K_0},\Lambda_{K_1})$ represented by $(\Lambda_t)_{t\in [0,1]}$, $\Theta(x)$ is defined by the composition of isomorphisms
\[ \Cord(K_0) \cong \LCH_0(\Lambda_{K_0})\to \LCH_0(\Lambda_{K_1}) \cong \Cord(K_1). \]

Next, in order to show the equality $\rho =\Theta\circ \bLambda$, we need to compute $\Theta(x)$ when $x$ is represented by a family $(\Lambda_{K_t})_{t\in [0,1]}$ for $[(K_t)_{t\in [0,1]}] \in \Pi_{\sm}(K_0,K_1)$.
The isomorphism $\LCH_0(\Lambda_K) \to  \Cord(K)$ from \cite{CELN} is described by using moduli spaces of pseudo-holomorphic curves in $T^*\R^3$ with boundary in $L_K\cup \R^3$ and punctures asymptotic to Reeb chords of $\Lambda_K$. 
Here, $L_K$ is the conormal bundle of $K$ and $\R^3$ is identified with the zero section of $T^*\R^3$.
More precisely, in \cite[Theorem 1.2]{CELN}, the moduli space is used to construct an isomorphism from $\LCH_0(\Lambda_K)$ to the $0$-th degree part of the \textit{string homology} of $K$ (see Section \ref{subsec-string}). 
It is generated by $0$-chains in a space of tuples of paths in $\R^3$ with endpoints in $K$, and naturally isomorphic to $\Cord(K)$.

Take a generator $\xi\in \Cord(K_0)\cong \LCH_0(\Lambda_{K_0})$ corresponding to a Reeb chord of $\Lambda_{K_0}$.
We want to show that the images of $\xi$ under the two maps $\Theta(x), \rho\left( [(K_t)_{t\in [0,1]}]\right) \colon \Cord(K_0)\to \Cord(K_1)$ coincide.
They are represented by $0$-chains of the string homology of $K_1$, and we need to show that these $0$-chains are homologous via a $1$-chain.

In Section \ref{subsubsec-proof-main},
we construct the required $1$-chain by using parametrized moduli spaces and their compactification.
In more detail, we construct in $T^*\R^3$ a certain family of exact Lagrangian fillings $(L_b)_{b\in [0,\infty)}$ of $\Lambda_{K_0}$ which intersect $\R^3$ cleanly along knots: $K_0$ if $b=0$ and $K_1$ if $b\gg 1$ (see Section \ref{subsubsec-family-of-Lag}). Then, we consider moduli spaces of pseudo-holomorphic curves with boundary in $L_b \cup \R^3$ for some $b\in [0,\infty)$ and punctures asymptotic to Reeb chords of $\Lambda_{K_0}$.
The $1$-dimensional moduli spaces are compactified by using the ideas in \cite{EHK, CELN}.
(In practice, we deal with moduli spaces parametrized over $[0,b_*]$ for $b_* \gg 0$.)


When $K_0=K_1=K$, the above argument amounts to computing the monodromy on $\LCH_0(\Lambda_K) \cong \Cord(K)$ for a loop $(\Lambda_{K_t})_{t\in [0,1]}$.
See Theorem \ref{thm-monodromy}.

\begin{remark}
For any knot $K$,
the Legendrian contact homology of $\Lambda_K$ can be defined with coefficients in $\Z_2[\pi_1(\Lambda_K)]\cong \Z_2[\lambda^{\pm},\mu^{\pm}]$ (and in $\Z[\pi_1(\Lambda_K)]$ if a spin structure of $\Lambda_K$ is fixed.)
The isomorphism $\LCH_0(\Lambda_K) \cong \Cord(K)$ over $\Z_2$ is a reduction of the isomorphism in \cite[Theorem 1.2]{CELN} over $\Z_2[\pi_1(\Lambda_K)]$ (see also Appendix \ref{app-isom}).
Thus, we expect that is is possible to generalize Theorem \ref{thm-1} by using the cord algebras over $\Z_2[\pi_1(\Lambda_K)]$, but it would not be so straightforward for some reasons in Remark \ref{rem-simplify} below.
\end{remark}

Furthermore, Theorem \ref{thm-1} can be used to give an infinite family of non-contractible loops of Legendrian tori in $U^*\R^3$.
Consider the $(2,3)$-torus knot $T_{2,3}$ and its mirror $T_{2,-3}$.
\begin{theorem}\label{thm-2}
Let $K$ be one of the following knots:
\begin{itemize}
    \item[(i)] the square knot $T_{2, 3}\#T_{2, -3}$,
    \item[(ii)] $T_{2, 3}\#(T_{2, -3}\# L)$ for an arbitrary knot $L$,
    \item[(iii)] $(2k+1, 1)$-cable of the square knot $T_{2, 3}\#T_{2, -3}$, where $k\in \Z_{\geq 0}$. 
\end{itemize}
Then, there exists a non-trivial element $x\in \pi_{\Leg}(\Lambda_K)$ such that $\iota(x)$ is the unit of $\pi_{\sm}(\Lambda_K)$.
\end{theorem}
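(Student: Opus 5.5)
We will produce, for each knot $K$ in the list, a loop $(K_t)_{t\in[0,1]}$ of smooth knots based at $K$, and take $x = \bLambda([(K_t)])\in\pi_{\Leg}(\Lambda_K)$. Nontriviality of $x$ will follow from Theorem \ref{thm-1}: since $\Theta$ is a homomorphism of groupoids and sends the constant loop to the identity, it suffices to show that $\rho([(K_t)])\neq \id_{\Cord(K)}$. Triviality of $\iota(x)$ will be shown by a separate, purely smooth argument.

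\textbf{Choice of loop.} For $K=T_{2,3}\#T_{2,-3}$ we use the classical loop that slides one summand around the other. Concretely, shrink $T_{2,-3}$ to a small knotted arc, push it once along the rest of the knot (through the $T_{2,3}$ part), and return to $K$. The same construction with $T_{2,-3}\#L$ as the travelling summand handles (ii). For (iii), apply the $(2k+1,1)$-satellite operation to the loop of (i), using the fact that a loop of companions induces a loop of satellites. The ambient isotopy $F_1$ at the end of such a loop is, on the complement, the mapping class that swaps the two summands by pushing one through the other. Its action on $\pi_1(\R^3\setminus K)$ is conjugation of one free factor's generators by a meridian-like element. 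The amalgamated product structure $\pi_1(T_{2,3})*_{\Z}\pi_1(T_{2,-3})$ makes this action explicit.

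\textbf{Nontriviality of $\rho$.} The cord algebra over $\Z_2$ can be presented via the knot group, with cords corresponding to elements of $\pi_1$ modulo the skein relations. We compute the images of specific cords, for instance a cord $\gamma$ that runs into the $T_{2,-3}$ region and back. We then need an invariant that detects $\rho(\gamma)\neq\gamma$. The most concrete option is an augmentation, or a representation-type invariant: a map $\epsilon\colon\Cord(K)\to\mathbb{F}$ into a finite field or matrix algebra, built from a representation of the knot group (the cord algebra is known to be computable from $\pi_1$ with peripheral data). We then exhibit $\epsilon$ with $\epsilon(\rho(\gamma))\neq\epsilon(\gamma)$, or show that $\epsilon\circ\rho\neq\epsilon$ as augmentations. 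For the trefoil summands, $2\times2$ representations over $\Z_2$-extensions of the factor groups glued along the peripheral $\Z$ should suffice. For (ii) we extend the representation by choosing one on $L$ that is trivial, or abelian, on $L$'s side, so the computation reduces to (i). For (iii) we use that the cord algebra of a cable contains information from the companion complement via the JSJ torus, so a representation of the companion composed with the cabling map on $\pi_1$ transports the computation. This step, producing a concrete invariant on $\Cord$ that distinguishes $\rho$ from the identity, is the main obstacle. I would first try explicit augmentations to $\Z_2[\lambda^{\pm}]$-type fields from $SL_2$ representations.

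\textbf{Smooth triviality.} The loop of unit conormal tori is a loop of embedded tori $\Lambda_{K_t}$ in $U^*\R^3\cong\R^3\times S^2$. The ambient dimension is $5$, so tori are highly flexible. We will argue that the loop $(\Lambda_{K_t})$ is homotopic, through families of smooth embeddings, to a constant loop. One route is to note that knots become unknotted in $\R^5$, so $\Lambda_{K_t}$ can be isotoped, in a family, to a standard torus near a small circle. This uses that the space of embeddings $T^2\hookrightarrow M^5$ in the relevant range is controlled by immersion data (Haefliger–Whitney range). The remaining obstruction is the class of the loop in $\pi_1$ of the space of framed immersions, which is determined by homotopy data of the swapping loop. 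For a connected-sum swap this data is trivial, since the loop is homotopic to constant in the space of immersed or singular knots. Hence $\iota(x)$ is the unit.
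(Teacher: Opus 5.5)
Your outline has the same architecture as the paper: carry the small $T_{2,-3}$ summand (resp.\ $T_{2,-3}\#L$) once around $T_{2,3}$, pass to satellites for (iii), deduce that $\bLambda([(K_t)])$ is nontrivial from $\rho=\Theta\circ\bLambda$, and prove smooth triviality separately. However, neither substantive step is carried out, and the sketches you give would not work as stated.

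\emph{Nontriviality of $\rho$.} This is the real content of the theorem, and you leave it open.

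The automorphism you would feed into the computation is also misidentified. For the slide loop, with base point on the boundary of the box, $(F_1)_*$ acts on $\pi_1(\R^3\setminus T_{2,3})*_{\langle m\rangle}\pi_1(\R^3\setminus T_{2,-3})$ by $a\mapsto\ell_1^{-1}a\ell_1$ on the first factor and trivially on the second. So it is a partial conjugation by the \emph{longitude} $\ell_1$ of $T_{2,3}$, as in (\ref{eqn_autom_gener}). A partial conjugation by a meridian is the action of a different loop, namely twisting the small summand about its own axis.

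What is needed is an explicit detector, and the paper supplies one:
\begin{itemize}
\item[(i)] From Ng's diagrammatic presentation of $\Cord^{\mathrm{ab}}(K)$ it derives $a_{36}=0$, $a_{51}=a_{42}$ and $a_{52}=a_{41}$. It then defines $\varphi\colon\Cord^{\mathrm{ab}}(K)\to\Z_2[z]$ and expands $(F_1)_*[a_{24}]$ by skein moves, obtaining $\varphi((F_1)_*[a_{24}])=z+1$ against $\varphi([a_{24}])=z$.
\item[(ii)] $\varphi$ is extended by treating any cord endpoint on $L$ as lying on strand $4$.
\item[(iii)] Here ``composing with the cabling map on $\pi_1$'' is not enough by itself. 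The natural homomorphism $\pi_1(\R^3\setminus K)\to\pi_1(\R^3\setminus T_{2,3}\#T_{2,-3})$ is only defined after killing $m_2^{2k}$, where $m_2$ is the companion meridian. It induces a surjection $\Cord(K)\to\Cord(T_{2,3}\#T_{2,-3})$ intertwining the two actions only because over $\Z_2$ one has $[\gamma_a m_2^{2k}\gamma_b]=[\gamma_a\gamma_b]$: the $2k$ correction terms $[\gamma_a][\gamma_b]$ cancel. This is exactly where the odd cabling parameter and the $\Z_2$ coefficients are used.
\end{itemize}

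\emph{Smooth triviality.} The reduction to immersion data is not valid in this dimension. Generic $1$-parameter families of maps $T^2\to U^*\R^3$ already have isolated double points, and $2$-parameter families have a $1$-dimensional double-point locus. So there is no general principle reducing $\pi_1$ of the embedding space to immersions; Haefliger's range compares embeddings with equivariant maps of deleted products, not with immersions.

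Saying that $\Lambda_{K_t}$ can be isotoped ``in a family'' to a standard torus is precisely the statement to be proved. Also, a null-homotopy through immersed knots does not give embedded tori, since at a double point of the curve the two conormal great circles meet.

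The paper's argument is direct. Because the box is transported rigidly, it suffices to deform $\Lambda_{L_2}\cap U^*B[L_2]$, fixed near the boundary, to the conormal of the trivial tangle. This is done one crossing at a time: the fibre circle of one strand is tilted in $S^2$ (the embeddings $c_t$), so that the two conormal cylinders stay disjoint while the other strand is pushed through it in $\R^3$. Doing this uniformly in $t$ deforms the loop to a constant one.
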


This result follows from Theorem \ref{thm-1} by finding loops of knots $(K_t)_{t\in [0,1]}$ with $K_0=K_1=K$ such that $\rho ([(K_t)_{t\in [0,1]}])\in \Aut(\Cord(K))$ is not the identity map, and the loop $(\Lambda_{K_t})_{t\in [0,1]}$ is contractible as a loop of smooth submanifolds of $U^*\R^3$.
The idea of such loops in this paper is similar to `purple-box Legendrian loops' introduced in \cite[Section 2.4]{Casals}.
See Section \ref{sec:mapping_spaces}.

\begin{remark}\label{rem-simplify}
There are three points simplified by reducing the coefficient ring from $\Z_2[\pi_1(\Lambda_K)]$ to $\Z_2$:
(1) The isomorphism $\LCH_0(\Lambda_K)\to \Cord(K)$ over $\Z_2$ depends only on $K$,
while the isomorphism of \cite[Theorem 1.2]{CELN} over $\Z_2[\pi_1(\Lambda_K)]$, including the definition of the Legendrian contact homology, depends on the choice a base point of $\Lambda_K$. See Section \ref{app-isom}.
(2) In the proof of Theorem \ref{thm-1}, we can avoid the discussion about `spikes' as in \cite[Section 2.1, 4.3]{CELN}, which requires some local analysis on pseudo-holomorphic curves.
This makes the argument in Section \ref{subsubsec-proof-main} less complicated.
(3) A combinatorial description of the cord algebra over $\Z_2$ is significantly simpler (but still non-trivial) than the case of $\Z_2[\pi_1(\Lambda_K)]$. See Example \ref{exmpl_nontriv1}.

\end{remark}

\noindent
\textbf{Possible approach using microlocal sheaf theory.}
Fix a field $\bold{k}$.
For a topological space $X$, let $\mathrm{loc}(X)$ denote the category of locally constant sheaves on $X$ of $\bold{k}$-vector spaces.
Note that there is a functor $\mathrm{loc}(X) \to \mathrm{loc}(\{x\})$ taking the stalks at $x\in X$.
Let $K_0$ and $K_1$ be knots in $\R^3$ and consider a contact Hamiltonian isotopy $\Phi=(\Phi_t)_{t\in [0,1]}$ on $U^*\R^3$ with compact support.
Fix $x_0\in \R^3$ such that its fiber $U_{x_0}^*\R^3$ is sufficiently far from the support of $\Phi$.
In \cite[Theorem 7]{Shende}, Shende proved that if $\Phi_1(\Lambda_{K_0})=\Lambda_{K_1}$, the Guillermou-Kashiwara-Schapira kernel of $\Phi$ can be used to give an equivalence of categories $\mathrm{loc}(\R^3\setminus K_0) \to \mathrm{loc}(\R^3\setminus K_1)$ which commutes with the functors taking the stalks at $x_0$.
Furthermore, from this equivalence and the left-orderablity of the group $\pi_1(\R^3\setminus K_0,x_0)$, an isomorphism
$ \pi_1(\R^3\setminus K_1,x_0) \to  \pi_1(\R^3 \setminus K_0,x_0)$
is deduced \cite[Theorem 8]{Shende}.
Let us denote it by $f_{\Phi}$.
We expect to construct a well-defined map
\[ \Pi_{\Leg}(\Lambda_K)\to \Hom ( \pi_1(\R^3\setminus K_1,x_0),  \pi_1(\R^3\setminus K_0,x_0) ) \colon [(\Lambda_{t})_{t\in [0,1]}] \mapsto f_{\Phi} , \]
where $\Phi$ is an extension of a Legendrian isotopy $(\Lambda_{t})_{t\in [0,1]}$.
Using this map when $K_0=K_1$, one might be able to detect non-trivial loops of Legendrian tori.



\

\noindent
\textbf{Organization of paper.}
In Section \ref{sec-LCH}, the Legendrian contact homology for Legendrian submanifolds in $U^*\R^3$ are defined with coefficients in $\Z_2$.
We also study in detail the isomorphisms induced by  Legendrian isotopies.
In Section \ref{sec-moduli}, referring to \cite{CEL,CELN}, we introduce moduli spaces of pseudo-holomorphic curves in $T^*\R^3$  such that the boundary condition is parametrized by a manifold.
In Section \ref{sec-string}, referring to \cite[Section 2]{CELN}, we introduce the string homology and cord algebra for knots in $\R^3$ with coefficients in $\Z_2$.
In Section \ref{subsubsec-isom-LCH-str}, for a knot $K$, we describe how to construct an isomorphism from the Legendrian contact homology of $\Lambda_K$ to the string homology of $K$.
In Section \ref{sec-coincidence}, we prove Theorem \ref{thm-1} by using the moduli spaces in Section \ref{sec-moduli} and the isomorphism in Section \ref{subsubsec-isom-LCH-str}.
In Section \ref{sec-examples-of-detection}, we apply Theorem \ref{thm-1} to concrete examples and prove Theorem \ref{thm-2}.
Referring to the proof of \cite[Theorem 1.2]{CELN}, Appendix \ref{subsec-proof-isom} shows that the map constructed in Section \ref{subsubsec-isom-LCH-str} is an isomorphism of $\Z_2$-algebra.

\noindent
\textbf{Notations.} In this paper, for any finite set $S$, let $\#_{\bZ_2}S\in \Z_2$ denote the cardinality of $S$ modulo $2$.
In addition, for any paths $\gamma_i\colon [0,T_i]\to \R^3$ ($i=0,1$) such that $\gamma_1(T_1)=\gamma_2(0)$, let $\gamma_1\bullet \gamma_2\colon [0,T_1+T_2]\to \R^3$ denote the concatenation of them.

\

\noindent
\textbf{Acknowledgements.}
This research started while the first author was staying at Universit\"{a}t Augsburg.
We thank Kai Cieliebak, the former supervisor of the second author, for making time for discussion and giving valuable comments.
We also thank Tomohiro Asano, Kei Irie, Tamas K\'{a}lm\'{a}n, Javier Mart\'{i}nez-Aguinaga and  Lenhard Ng for stimulating conversations.
Y.O. is supported by JSPS KAKENHI Grant Number JP25KJ0270.

\section{Legendrian contact homology and Legendrian isotopies}\label{sec-LCH}

\subsection{Preliminaries}
Consider the $3$-dimensional Euclidean space $\R^3$ with the coordinates $(q_1,q_2,q_3)$. 
On the cotangent bundle $T^\ast\mathbb{R}^3$, we have canonical coordinates $(q,p)=((q_1,q_2,q_3), (p_1,p_2,p_3))$, where $p=(p_1, p_2, p_3)$ are the coordinates of fibers.
The canonical Liouville $1$-form on $T^*\R^3$ is defined by $\lambda_{\bR^3}\coloneqq \sum_{i=1}^3p_idq_i$.

Fix a standard metric $\la \cdot,\cdot \ra = \sum_{i=1}^3 dq_i\otimes dq_i$ on $\R^3$.
The unit cotangent bundle $U^\ast\mathbb{R}^3$ is a $5$-dimensional contact manifold with a contact form $\alpha\coloneqq \rest{\lambda_{\R^3}}{U^*\R^3}$.
Let $R_{\alpha}$ denote the Reeb vector field associated with $\alpha$.
This is written as $R_{\alpha}=\sum_{i=1}^3 p_i \partial_{q_i}$.
\begin{remark}\label{rem-1-jet}
Let $S^2$ be the unit sphere in $\R^3$.
It is often useful to identify $U^*\R^3$ with the $1$-jet bundle $J^1S^2$ by a diffeomorphism
\[ U^*\R^3 \to J^1S^2 = \R\times T^*S^2 \colon (q,p) \mapsto (\la q,p \ra, (p, q- \la q,p \ra p)), \]
by which $\alpha$ corresponds to $dz -\lambda_{S^2}$.
Here, $z$ is the coordinate of $\R$ and $\lambda_{S^2}$ is the canonical Liouville form on $T^*S^2$.
Let $\pi_{T^*S^2}\colon U^*\R^3 \cong J^1S^2 \to T^*S^2 $ denote 
the projection.
\end{remark}

Let $\Lambda$ be a compact connected Legendrian submanifold of $U^*\bR^3$ whose Maslov class vanishes. 
The set of its Reeb chords is defined by
\[ \mathcal{R}(\Lambda) \coloneqq \{ a\colon [0,T]\to U^*\bR^3 \mid T>0, \dot{a}(t)= (R_{\alpha})_{a(t)} \text{ for every }t\in [0,T], \text{ and }a(0),a(T) \in \Lambda \}.\]
For any $(a\colon [0,T]\to U^*\bR^3)\in \mathcal{R}(\Lambda)$, let us denote $T_a \coloneqq T = \int a^*\alpha$.

Let $(\varphi_{\alpha}^t)_{t\in \bR}$ be the flow of $R_{\alpha}$.
Denote the contact distribution by $\xi = \ker\alpha$. 
Then, a Reeb chord $a\in \mathcal{R}(\Lambda)$ is called \textit{non-degenerate} if $(d\varphi_{\alpha}^{T_a})_{a(0)}(T_{a(0)}\Lambda)$ is transverse to $T_{a(T_a)}\Lambda$ in $\xi_{a(T_a)}$.
For every $a\in \mathcal{R}(\Lambda)$ which is non-degenerate, we choose a capping path $\gamma_a\colon [0,1]\to \Lambda$ such that $\gamma_a(0)=a(T_a)$ and $\gamma_a(1)= a(0)$.
Using the family of Lagrangian planes $(T_{\gamma_a(t)}\Lambda)_{t\in [0,1]}$ in $\gamma_a^*\xi$, the Maslov index $\mu(a)\in \bZ$ is defined. 
For details, see \cite[Section 2.2]{EES-R}.
From the assumption that the Maslov class of $\Lambda$ vanishes, $\mu(a)$ is independent of the choice of $\gamma_a$.
We set
\[|a| \coloneqq \mu(a) -1 \in\bZ.\]

Let $D\coloneqq\{z\in \bC \mid |z|\leq 1\}$.
For any $m\in\bZ_{\geq0}$, we fix boundary points $p_0,p_1,\dots ,p_m\in \partial D$ ordered counterclockwise.
Then we define $D_{m+1}\coloneqq D\setminus \{p_0,\dots,p_m\}$.
Let $\mathcal{C}_{m+1}$ be the space of conformal structures on $D_{m+1}$ which can be smoothly extended to $D$.
For each $\kappa\in \mathcal{C}_{m+1}$, we take $j_{\kappa}$ as a complex structure on $D_{m+1}$ which associates $\kappa$.
On $\bR\times [0,1]$ with the coordinates $(s,t)$, a complex structure $j_0$ is determined by $j_0(\partial_s) = \partial_t$.
Near each boundary puncture of the Riemann surface $(D_{m+1},j_{\kappa})$, we fix strip coordinates by biholomorphic maps
\begin{align*}
\psi_0 &\colon [0,\infty)\times [0,1] \to D_{m+1}, \\
\psi_k & \colon (-\infty ,0] \times [0,1] \to D_{m+1} \text{ for }k=1,\dots ,m,
\end{align*}
with respect to $j_0$ and $j_{\kappa}$ such that $\lim_{s\to \infty}\psi_0(s,t) = p_0$ and $\lim_{s\to -\infty} \psi_k(s,t) = p_k$ uniformly on $t\in [0,1]$.

\subsection{$\Z_2$-coefficient Legendrian contact homology}

\subsubsection{Moduli space of pseudo-holomorphic curves in $\bR\times U^*\bR^3$}\label{subsubsec-symplectization-moduli}

Hereafter, we identify $\R^3$ with the zero section of $T^*\R^3$. In addition, we will frequently use a diffeomorphism
\begin{align}\label{diffeo-symp-cot}
E\colon \bR\times U^*\bR^3 \to  T^*\bR^3 \setminus \bR^3 \colon (r, (q,p)) \mapsto (q,e^r \cdot p), 
\end{align}
for which $E^* (\lambda_{\bR^3}) = e^r \alpha$ holds.

An almost complex structure $J_0$ on $\bR\times U^*\bR^3$ is defined via $E$ such that
\[ (E_*J_0)_{(q,p)} \colon T_{(q,p)}(T^*\bR^3) \to T_{(q,p)}(T^*\bR^3) \colon \partial_{q_i} \mapsto -|p|\partial_{p_i},\ \partial_{p_i}\mapsto |p|^{-1} \partial_{q_i}\ (i=1,2,3) \]
for every $(q,p)\in T^*\bR^3\setminus \bR^3$.
For every $s\in \bR$, we set a map
\[ \tau_s \colon \bR\times U^*\bR^3 \to \bR\times U^*\bR^3 \colon (r, x) \mapsto (r+s,x).\]
Then, one can check that $\tau_s^*J_0=J_0$.

Let $\Lambda_+$ and $\Lambda_-$ be compact connected Legendrian submanifolds of $U^*\bR^3$.
Let $L$ be an exact Lagrangian cobordism in $\bR\times U^*\bR^3$ whose positive end is $\Lambda_+$ and negative end is $\Lambda_-$.
In the present case where $\Lambda_-$ is connected,
this means that $\rest{(e^r \alpha)}{L}$ is an exact $1$-form on $L$ and there exist $r_-<r_+$ such that
\[\begin{array}{cc} L\cap \left( [r_+,\infty)\times U^*\bR^3\right) = [r_+,\infty)\times \Lambda_+ , & L\cap \left( (-\infty,r_-]\times U^*\bR^3\right) = (-\infty,r_-]\times \Lambda_-, \end{array}\]
and $L\cap ([r_-,r_+]\times U^*\bR^3)$ is compact.
For such an exact Lagrangian cobordism $L$,
we define a set $\mathcal{J}_L$ consisting of almost complex structures $J$ on $\bR\times U^*\bR^3$ satisfying:
\begin{itemize}
\item $d(e^r \alpha) (\cdot, J\cdot)$ is a Riemannian metric on $\bR\times U^*\bR^3$.
\item $\tau_s^*J=J$ on $[r_+,\infty)\times U^*\bR^3$ for every $s\geq 0$, and $\tau_s^*J=J$ on $(-\infty,r_-]\times U^*\bR^3$ for every $s\leq 0$.
\item Outside $[r_-,r_+]\times U^*\bR^3$, $J(\xi) \subset \xi$ and $J(\partial_r)=R_{\alpha}$.
\item There exists $d>0$ such that for $U^*B_d\coloneqq \{(q,p)\in U^*\R^3 \mid |q|<d\}$, $L$ is contained in $ \R\times U^*B_d$ and $J$ coincides with $J_0$ outside $\R\times  U^*B_d$.
\end{itemize}
$\mathcal{J}_L$ is equipped with the $C^{\infty}$ topology.

Let $a\in \mathcal{R}(\Lambda_+)$ and $a_1,\dots ,a_m\in \mathcal{R}(\Lambda_-)$.
For $J\in\mathcal{J}_L$, we define $\calM_{L,J}(a;a_1,\dots ,a_m)$ as the moduli space (modulo conformal automorphisms on $D_{m+1}$) consisting of pairs $(u,\kappa)$
of $\kappa\in \mathcal{C}_{m+1}$ and a $C^{\infty}$ map $u \colon D_{m+1}\to \bR\times U^*\bR^3$
such that:
\begin{itemize}
\item $du + J\circ du \circ j_{\kappa}=0$.
\item $u(\partial D_{m+1})\subset L$.
\item There exist $s_0,s_1,\dots ,s_m \in \bR$ such that
\begin{align*}
&\lim_{s\to \infty} \tau_{-T_a s} \circ u \circ \psi_0 (s ,t) = (s_0,a(T_a t)), \\
&\lim_{s\to -\infty} \tau_{-T_{a_k}s}\circ u \circ \psi_k (s,t) = (s_k,a_k(T_{a_k}t)) \text{ for }k=1,\dots ,m,
\end{align*}
$C^{\infty}$ uniformly on $t\in[0,1]$.
\end{itemize}
Note that the boundary of $\R\times U^*B_d$ is $J_0$-convex, that is,
the image of $u$ is contained in the closure of $\R\times U^*B_d$ for any $(u,\kappa)\in \calM_{L,J}(a;a_1,\dots ,a_m)$.
When $m=0$, we denote this moduli space by $\calM_{L,J}(a;\emptyset)$.

If $\Lambda_+=\Lambda_-=\Lambda$ and $L=\bR\times \Lambda$, we consider a subset $\mathcal{J}_{\bR\times \Lambda}^{\mathrm{cyl}} \subset \mathcal{J}_{\bR\times \Lambda}$ consisting of $J\in \mathcal{J}_{\bR\times \Lambda}$ such that $\tau_s^* J =J$ for every $s\in \bR$.
When $J\in \mathcal{J}_{\bR\times \Lambda}^{\mathrm{cyl}}$,
the group $\bR$ acts on the moduli space by $(u,\kappa)\cdot s = (\tau_s\circ u,\kappa)$ for every $s\in \bR$, so
we define the quotient space
\[ \bar{\calM}_{\Lambda,J}(a;a_1,\dots ,a_m) \coloneqq \calM_{\bR\times \Lambda,J}(a;a_1,\dots ,a_m) /\bR. \]

\subsubsection{Definition of Chekanov-Eliashberg DGA and its homology}\label{subsubsec-CE-DGA}

Let $\Lambda$ be a compact connected Legendrian submanifold of $U^*\bR^3$ such that its Maslov class vanishes and every Reeb chord of $\Lambda$ is non-degenerate.
For generic $J\in \mathcal{J}_{\bR\times \Lambda}^{\mathrm{cyl}}$, we can associate a DGA over $\Z_2$ as follows.
For the construction, we refer to \cite{DR, DR-lift}. See also \cite{EES-R,EES}.

By \cite[Proposition 3.13]{DR}, for generic $J\in \mathcal{J}_{\bR\times \Lambda}^{\mathrm{cyl}}$,
the moduli space $\bar{\calM}_{\Lambda,J}(a;a_1,\dots ,a_m)$ is cut out transversely for every $a,a_1,\dots ,a_m\in \mathcal{R}(\Lambda)$ such that $(a_1,\dots ,a_m)\neq (a)$.
It is a manifold of dimension $|a|-1-\sum_{k=1}^m|a_k|$ (see \cite[Section 4.2.4]{DR-lift} and \cite[Proposition 2.3]{EES}).
Moreover, by Gromov-Floer compactness \cite[Section 3.3.2]{DR}, it admits a compactification consisting of $J$-holomorphic buildings with boundary in $\bR\times \Lambda$. In particular, when $|a|-1-\sum_{k=1}^m|a_k|=0$, $\bar{\calM}_{\Lambda,J}(a;a_1,\dots ,a_m)$ is a compact $0$-dimensional manifold.

Let $\calA_*(\Lambda)$ be the unital non-commutative $\bZ_2$-algebra freely generated by $\mathcal{R}(\Lambda)$ such that $a\in \calA_{|a|}(\Lambda)$ for every $a\in \mathcal{R}(\Lambda)$.
As a vector space, it is generated by words of Reeb chords of $\Lambda$, including the empty word $1$.
We define a derivation $d_J \colon \calA_*(\Lambda) \to \calA_{*-1}(\Lambda)$ of degree $-1$ such that for every $a\in \mathcal{R}(\Lambda)$
\[ d_J (a) \coloneqq \sum_{|a_1|+\dots +|a_m|=|a|-1} \left( \#_{\bZ_2} \bar{\calM}_{\Lambda,J}(a;a_1,\dots ,a_m) \right) a_1\cdots a_m \]
and extend it by the Leibniz rule.
Then, $d_J\circ d_J=0$ holds. For the proof, see \cite[Section 3.3.2]{DR}.
The differential graded algebra $(\mathcal{A}_*(\Lambda),d_J)$ is called the \textit{Chekanov-Eliashberg DGA} of $\Lambda$. 

We define the $\bZ_2$-coefficient \textit{Legendrian contact homology} of $\Lambda$ by
\[ \LCH_*(\Lambda) \coloneqq \Ker d_J / \Image d_J .  \]
Its isomorphism class as a unital graded $\bZ_2$-algebra is independent of $J$ and invariant under Legendrian isotopies of $\Lambda$.
For the proof, we refer to \cite[Theorem 3.5]{DR} using exact Lagrangian cobordisms constructed from Legendrian isotopies.
See also Section \ref{subsec-isom-cobordism} below.


\subsubsection{DGA map associated with exact Lagrangian cobordism}\label{subsubsec-DGA-map}

Let $L$ be an exact Lagrangian cobordism in $\bR\times U^*\bR^3$ whose positive end is $\Lambda_+$ and negative end is $\Lambda_-$. We assume that the Maslov class of $L$ vanishes, $\Lambda_{\pm}$ is connected and every Reeb chord of $\Lambda_{\pm}$ is non-degenerate.
We take almost complex structures $J_{+}\in \mathcal{J}^{\mathrm{cyl}}_{\bR\times \Lambda_+}$ and $J_- \in \mathcal{J}^{\mathrm{cyl}}_{\bR\times \Lambda_-}$ as in Section \ref{subsubsec-CE-DGA} so that the Chekanov-Eliashberg DGAs
$
(\mathcal{A}_*(\Lambda_+),d_{J_+})$ and $ (\mathcal{A}_*(\Lambda_-),d_{J_-})$
are defined.

Let us take an almost complex structure $J\in \mathcal{J}_{L}$ such that $J=J_+$ on $[r_+,\infty)\times U^*\bR^3$ and $J=J_-$ on $(-\infty,r_-]\times U^*\bR^3$, where $r_-<r_+$.
In the way of \cite[Proposition 3.13]{DR},
by a generic perturbation of $J_+$, $\calM_{L,J}(a;a_1,\dots ,a_m)$ is cut out transversely for every $a\in \mathcal{R}(\Lambda_+)$ and $a_1,\dots ,a_m\in\mathcal{R}(\Lambda_-)$.
Moreover,
it is a manifold of dimension $|a|-\sum_{k=1}^m |a_k|$.
Again, it follows from Gromov-Floer compactness that $\calM_{L,J}(a;a_1,\dots ,a_m)$ is a compact $0$-dimensional manifold when $|a|-\sum_{k=1}^m |a_k|=0$.

We define a graded $\bZ_2$-algebra map 
$\Phi'_L\colon \mathcal{A}_*(\Lambda_+)\to \mathcal{A}_*(\Lambda_-)$ such that
\[ \Phi'_L(a)=\sum_{|a_1|+\dots +|a_m|=|a|} \left( \#_{\bZ_2} \calM_{L,J}(a;a_1,\dots ,a_m) \right) a_1\cdots a_m \]
for every $a\in \mathcal{R}(\Lambda_+)$.
In particular, note that $\#_{\Z_2}\calM_{L,J}(a,\emptyset)$ is the coefficient of $1\in \calA_0(\Lambda_-)$.
The next theorem \cite[Theorem 3.3]{DR} follows from \cite[Lemma 3.13 and 3.14]{EHK}.
\begin{theorem}
$\Phi'_L$ is a DGA map, that is, $d_{J_-}\circ \Phi'_L = \Phi'_L\circ d_{J_+}$.
Moreover, its homotopy class is invariant under compactly supported Hamiltonian isotopy of $L$.
\end{theorem}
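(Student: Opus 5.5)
The plan is to follow the standard Ekholm–Honda–Kálmán argument, adapted to the $\Z_2$ setting of the moduli spaces $\calM_{L,J}(a;a_1,\dots,a_m)$ introduced above. It has two parts: the chain map identity, and homotopy invariance under compactly supported Hamiltonian isotopy.

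\textbf{Chain map identity.} I would check $d_{J_-}\circ \Phi'_L = \Phi'_L\circ d_{J_+}$ on generators $a\in\mathcal{R}(\Lambda_+)$. Both sides are algebra maps and $d$ is a derivation, so this suffices. Fix words $a_1\cdots a_m$ with $|a|-\sum|a_k|=1$. For generic $J$ the space $\calM_{L,J}(a;a_1,\dots,a_m)$ is a $1$-dimensional manifold. By SFT/Gromov–Floer compactness (as cited from \cite{DR}), it compactifies to a compact $1$-manifold.

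The boundary consists of two-level buildings. Either the top level lies in the cylindrical end over $\Lambda_+$, giving an element of $\bar{\calM}_{\Lambda_+,J_+}$ of dimension $0$, with cobordism curves of index $0$ below it. Or the top level is a rigid curve in $L$, and exactly one $\bar{\calM}_{\Lambda_-,J_-}$ disk of index $1$ sits below one of its negative punctures, with trivial strips at the other punctures. Index additivity and transversality rule out other configurations, for instance levels containing only trivial strips or more than one nontrivial level. Exactness of $L$ and of the cylinders gives an energy bound and excludes closed components and boundary bubbling of disks without positive puncture.

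A standard gluing theorem shows that each such building is the end of exactly one component of the compactified moduli space. Counting the boundary points of a compact $1$-manifold mod $2$ then gives exactly the coefficient of $a_1\cdots a_m$ in $\Phi'_L(d_{J_+}a)+d_{J_-}(\Phi'_L(a))$, which therefore vanishes. The one subtle point is the contribution of words containing the unit (empty outputs). These are handled by the same count, since curves in $\calM_{L,J}(a;\emptyset)$ are allowed.

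\textbf{Homotopy invariance.} Let $L_s$, $s\in[0,1]$, be a compactly supported Hamiltonian isotopy, with a generic path $J_s$ agreeing with fixed $J_\pm$ on the ends. I would consider the parametrized moduli spaces $\bigcup_s\calM_{L_s,J_s}(a;\dots)$. Generically, index $-1$ curves appear only at finitely many isolated parameters $s_i$.

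Define $K(a)$ as the mod $2$ count of these index $-1$ rigid curves. Then extend $K$ as a $(\Phi_0,\Phi_1)$-derivation, namely
\[
K(xy)=K(x)\Phi_1(y)+\Phi_0(x)K(y),
\]
to obtain a degree $+1$ map.

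Next analyze the boundary of the $1$-dimensional parametrized moduli spaces of index $0$. Its ends are the fibers at $s=0$ and $s=1$, together with broken buildings formed by an index $-1$ curve plus a $J_\pm$-cylindrical disk of index $1$, at either end. Counting these boundary points gives
\[
\Phi'_{L_0}-\Phi'_{L_1}=d_{J_-}K+Kd_{J_+},
\]
which is the required DGA homotopy.

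\textbf{Main obstacle.} The step I expect to be hardest is transversality at the index $-1$ moments, that is, achieving a generic $1$-parameter family in which each such curve is isolated and can be glued. Combined with this is getting the bookkeeping right for the derivation homotopy when several punctures meet the broken disk. Here I would rely directly on the analysis in \cite[Lemma 3.13 and 3.14]{EHK}, restricting to $\Z_2$ so that orientation issues disappear.
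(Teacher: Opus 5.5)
Your route is the one the paper relies on. The paper gives no argument of its own; it cites \cite[Theorem 3.3]{DR}, which rests on \cite[Lemma 3.13 and 3.14]{EHK}. Your chain-map half is the standard two-level boundary count and is fine. Since $U^*\mathbb{R}^3$ is noncompact, it should also invoke the $J_0$-convexity built into $\mathcal{J}_L$.

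The homotopy half does not work as written. You collect the $(-1)$-curves from all bifurcation times $s_1<\dots<s_N$ into one map $K$ and extend it as a single $(\Phi'_{L_0},\Phi'_{L_1})$-derivation. Now look at an end at $s=s_i$ of the $1$-dimensional index-$0$ parametrized moduli space in which the $(-1)$-curve $w$ sits below a $J_+$-disk. It consists of:
\begin{itemize}
\item a rigid disk $v\in\bar{\mathcal{M}}_{\Lambda_+,J_+}(a;c_1,\dots,c_r)$;
\item the curve $w$ attached at one puncture $c_j$;
\item rigid curves of $(L_{s_i},J_{s_i})$ at the remaining punctures.
\end{itemize}
The last factors are counted by the cobordism map at time $s_i$, which jumps at every bifurcation time. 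For $N\geq 2$ this map is in general neither $\Phi'_{L_0}$ nor $\Phi'_{L_1}$. So the boundary count does not give $\Phi'_{L_0}-\Phi'_{L_1}=d_{J_-}K+Kd_{J_+}$ with your $K$.

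Even for $N=1$, the two-level ends alone do not produce the asymmetric derivation rule needed to pass from generators to words. One must also account for buildings in which the same $w$ is attached at several negative punctures of $v$. An example is a second copy of $w$ at another puncture $c_k$ equal to its positive chord, with a rigid $J_-$-disk below it. Such buildings cannot be removed by genericity, because all copies of $w$ live at the same parameter, and they do occur as ends of the family.

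The fix is the localization that \cite[Lemma 3.14]{EHK} carries out.
\begin{itemize}
\item Between consecutive bifurcation times, $\Phi'_{L_s}$ is constant, because the index-$0$ parametrized moduli spaces there are cobordisms between fibers.
\item Across a single $s_i$ one proves $\Phi'_{s_i^+}-\Phi'_{s_i^-}=d_{J_-}K_i+K_i\,d_{J_+}$. Here $\Phi'_{s_i^{\mp}}$ are the maps just before and after $s_i$, and $K_i$ counts the $(-1)$-curves at $s_i$ on generators.
\item $K_i$ is extended to words by
\[
K_i(c_1\cdots c_r)=\sum_j \Phi'_{s_i^-}(c_1\cdots c_{j-1})\,K_i(c_j)\,\Phi'_{s_i^+}(c_{j+1}\cdots c_r),
\]
or with the two maps interchanged, depending on conventions.
\item Finally, concatenate these homotopies using that DGA homotopy is an equivalence relation for these semi-free DGAs.
\end{itemize}
For what the paper actually uses, the induced map on homology, it already suffices to add the linear chain homotopies $K_i$, each extended by its own rule.
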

It follows that $\Phi'_L$ induces a graded $\Z_2$-algerba map
\[  \Phi_L \colon \LCH_*(\Lambda_+)\to \LCH_*(\Lambda_-), \]
which is invariant under compactly supported Hamiltonian isotopy of $L$.

When $\Lambda=\Lambda_+=\Lambda_-$,  $L=\bR\times \Lambda$ and $J\in \mathcal{J}^{\mathrm{cyl}}_{\bR\times \Lambda}$, the moduli space $\calM_{\bR\times \Lambda,J}(a,a_1, \dots, a_m)$ is the empty set if $|a|=\sum_{k=1}^m|a_k|$ and $(a_1,\dots ,a_m)\neq (a)$, since $\dim \bar{\calM}_{ \Lambda,J}(a,a_1, \dots, a_m)<0$.
For any $a\in\mathcal{R}(\Lambda)$, the moduli space $\calM_{\bR\times \Lambda,J}(a,a)$ consists of a single element given by the trivial strip whose image is $\bR\times a([0,T_a])$.
Therefore, $\Phi_{\bR\times \Lambda}$ is the identity map on $\LCH_*(\Lambda)$.

\subsection{Isomorphism induced by an isotopy of Legendrian submanifolds}\label{subsec-isom-cobordism}

Let us discuss the map $\Phi_L$ when $L$ arises from an isotopy of Legendrian submanifolds of $U^*\R^3$.
For geometric arguments, we will refer to \cite[Section 2]{Cha}. (Although \cite{Cha} focuses on the case of $3$-dimensional contact manifolds, the arguments in \cite[Section 2]{Cha} can be generalized to arbitrary odd dimensions.)

For any time-dependent $C^{\infty}$ Hamiltonian on $\R\times U^*\R^3$
\[H\colon (\bR\times U^*\R^3) \times [0,1] \to \bR \colon ((r,x),t)\mapsto H_t(r,x),\]
a time-dependent vector field $(X^t_H)_{t\in [0,1]}$ is defined by $d(e^r\alpha)(\cdot , X^t_H) = dH_t$.
If it is an integrable vector field, we let $(\varphi^t_H)_{t\in [0,1]}$ denote the flow on $\bR\times U^*\R^3$ generated by $(X^t_H)_{t\in [0,1]}$.

Given exact Lagrangian cobordisms $L$ and $L'$ such that the positive end of $L$ (resp. $L'$) is $\Lambda_1$ (resp. $\Lambda_2$) and the negative end of $L$ (resp. $L'$) is $\Lambda_2$ (resp. $\Lambda_3$), we denote by $ L' \circ_{\Lambda_2} L$ an exact Lagrangian cobordism obtained by concatenating $L$ and $L'$ along $\Lambda_2$.
It has $\Lambda_1$ as the positive end and $\Lambda_3$ as the negative end.

\begin{lemma}\label{lem-isotopy-cobordism}
Let $(\Lambda_t)_{t\in [0,1]}$ be an isotopy of compact Legendrian submanifolds of $U^*\R^3$.
Then, there exist
smooth families of exact Lagrangian submanifolds $(L_t)_{t\in [0,1]}$ and $(L'_t)_{t\in [0,1]}$ of $\bR\times U^*\R^3$ satisfying:
\begin{itemize}
\item[(\#)] $L_0=L'_0= \bR\times \Lambda_0$ and there exist $r_-<r_+$ such that
\[\begin{array}{cc}
L_t\cap \left( [r_+,\infty)\times U^*\R^3\right) = [r_+,\infty) \times \Lambda_0, & L_t\cap \left( (-\infty,r_-]\times U^*\R^3\right) = (-\infty,r_-] \times \Lambda_t , \\
L'_t\cap \left( [r_+,\infty)\times U^*\R^3\right) = [r_+,\infty) \times \Lambda_t, & L'_t\cap \left( (-\infty,r_-]\times U^*\R^3\right) = (-\infty,r_-] \times \Lambda_0 .
\end{array}\]
\end{itemize}
Moreover, for any pair of $(L_t)_{t\in [0,1]}$ and $(L'_t)_{t\in [0,1]}$ satisfying (\#), the concatenation $L'_1\circ_{\Lambda_1}L_1$ (resp. $L_1\circ_{\Lambda_0}L'_1$) is isotopic to $\bR\times \Lambda_0$ (resp. $\bR\times \Lambda_1$) by a compactly supported Hamiltonian isotopy on $\bR\times U^*\R^3$.
\end{lemma}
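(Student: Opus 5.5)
The plan is to build the cobordisms explicitly from a contact Hamiltonian that generates the Legendrian isotopy, and then to obtain the Hamiltonian isotopy of the concatenations from a one-parameter family of such constructions.

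\emph{Step 1: the contact Hamiltonian.} By the Legendrian isotopy extension theorem, there is a compactly supported time-dependent contact Hamiltonian $h_t\colon U^*\R^3\to\R$ whose contact flow $\phi_t$ satisfies $\phi_t(\Lambda_0)=\Lambda_t$. We may choose $h_t$ depending smoothly on $t$ and vanishing for $t$ near $0$ and $1$.

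\emph{Step 2: the families $L_t$ and $L'_t$.} Following \cite[Section 2]{Cha}, fix a cutoff $\chi\colon\R\to[0,1]$ that equals $1$ on $(-\infty,r_-]$ and $0$ on $[r_+,\infty)$. Consider the trace of the isotopy: for $t\in[0,1]$, set
\[
L_t \coloneqq \{ (r,\phi_{t\chi(r)}(x)) \mid r\in\R,\ x\in\Lambda_0 \},
\]
corrected in the $r$-direction so that it becomes Lagrangian. Concretely, use the Hamiltonian $H(r,x)=e^r\chi(r)h(x)$ on the symplectization. Its flow preserves $\{r\}\times U^*\R^3$ only up to a conformal factor, so we stretch the interval $[r_-,r_+]$ by choosing $r_+-r_-$ large relative to $\sup|dh|$; then the graph-type construction gives an embedded exact Lagrangian. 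Exactness follows because the primitive $e^r\alpha$ restricts to $d(\text{function})$ along the flow lines. Cylindricity outside $[r_-,r_+]$ holds because $\chi'=0$ there. Doing the same with $\chi$ replaced by $1-\chi$ yields $L'_t$. Both families are smooth in $t$, and $L_0=L'_0=\R\times\Lambda_0$ because $\phi_0=\id$. For a single $[r_-,r_+]$ to work for all $t$, we use compactness of $[0,1]$.

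\emph{Step 3: the concatenation is trivial.} Now take arbitrary families satisfying (\#). The concatenation $L'_s\circ_{\Lambda_s}L_s$, $s\in[0,1]$, is a smooth family of exact Lagrangian cobordisms. Each has both ends equal to $\Lambda_0$, the ends are fixed cylinders outside a fixed compact region (after translating the pieces), and at $s=0$ the concatenation equals $\R\times\Lambda_0$. A smooth family of exact Lagrangians that is fixed outside a compact set and has exact flux (the Liouville primitives can be chosen to vary smoothly and to be constant near the ends) is generated by a compactly supported Hamiltonian isotopy. The standard argument is as follows. Via a Weinstein neighborhood, the variation $\partial_s$ of the family gives a closed $1$-form on $L$. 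It is exact since each member is exact with primitive normalized to be $0$ at the negative end; this normalization is consistent because the primitive is constant on the cylindrical pieces and the ends are connected. It also has compact support. Extending the corresponding function gives the Hamiltonian. Hence $L'_1\circ_{\Lambda_1}L_1$ is Hamiltonian isotopic to $\R\times\Lambda_0$, and the same argument applies to the other order using $L_s\circ_{\Lambda_0}L'_s$ at $s=0$.

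\emph{Main obstacle.} The delicate point is the exactness of the flux in Step 3. The difference of the primitives' constant values at the two ends could a priori change with $s$, which would obstruct a compactly supported Hamiltonian. I would handle this by observing that the primitive on each cylindrical end is locally constant, hence constant because $\Lambda_t$ is connected. Then I would normalize it by adding a constant at the positive end and compute that the value at the negative end is forced by the action identity $\int_{\text{path}} e^r\alpha$ along the cylinder. That value is $0$ on a Legendrian, so the normalization is consistent for all $s$. A second technical point is making the explicit construction in Step 2 embedded, which the choice of large $r_+-r_-$ handles.
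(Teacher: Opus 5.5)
Your Steps 1--2 are essentially the paper's construction. The paper takes from Chantraine a Hamiltonian $H_t$ with $\varphi^t_H(\R\times\Lambda_0)=\R\times\Lambda_t$ (in effect the lift $e^rh_t$ of a contact Hamiltonian, as in your Step 2). It then sets $L_t=\varphi^t_{\mu H}(\R\times\Lambda_0)$ and $L'_t=\varphi^t_{(1-\mu)H}(\R\times\Lambda_0)$ for a cutoff $\mu(r)$. No ``correction'' or stretching is needed for embeddedness: the image of $\R\times\Lambda_0$ under a Hamiltonian diffeomorphism is automatically an embedded exact Lagrangian.

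\textbf{First gap: the other order in Step 3.} The family $L_s\circ_{\Lambda_0}L'_s$ has both ends equal to $\Lambda_s$, which move with $s$. At $s=0$ it is $\R\times\Lambda_0$, not $\R\times\Lambda_1$. So it is neither fixed outside a compact set nor does it start at the cylinder you need, and ``the same argument'' gives nothing. The missing idea, which the paper uses, is to transport by the lifted contact isotopy. The map $\psi^s\coloneqq\varphi^1_H\circ(\varphi^s_H)^{-1}$ preserves $e^r\alpha$ and maps $\R\times\Lambda_s$ to $\R\times\Lambda_1$. Hence $S'_s\coloneqq\psi^s(L_s\circ_{\Lambda_0}L'_s)$ is a family of exact cobordisms with both ends fixed at $\Lambda_1$, with $S'_0=\R\times\Lambda_1$ and $S'_1=L_1\circ_{\Lambda_0}L'_1$.

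\textbf{Second gap: your resolution of the ``main obstacle'' is wrong.} A path in the cobordism from the negative end to the positive end must cross the non-cylindrical region, where $e^r\alpha|_L$ need not vanish. So the action difference $c_+-c_-$ of the primitive is not forced to be $0$. Equivalently, the flux form of a fixed-end exact isotopy is exact and compactly supported, but its class in $H^1_c(\R\times\Lambda_0)$, namely $\frac{d}{ds}(c_+-c_-)$, need not vanish. Only when it vanishes is there a compactly supported Hamiltonian.

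Here is a concrete counterexample. Work in a standard neighbourhood $\R\times J^1\Lambda_0$ with $\lambda=e^r(dz-p\,dq)$, take $\beta$ a smooth step from $0$ to $1$, and let $\epsilon>0$ be small. The sets $\{p=0,\ z=-s\epsilon\beta'(r)e^{-r}\}$ are exact cobordisms from $\Lambda_0$ to $\Lambda_0$ starting at $\R\times\Lambda_0$. Their primitive is $s\epsilon(\beta-\beta')$, so $c_+-c_-=s\epsilon$. Now take this family as $L_t$ for the constant isotopy $\Lambda_t\equiv\Lambda_0$, and $L'_t=\R\times\Lambda_0$. This pair satisfies (\#). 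Since $c_+-c_-$ is invariant under compactly supported Hamiltonian isotopies, its concatenation is \emph{not} compactly supported Hamiltonian isotopic to $\R\times\Lambda_0$.

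So Step 3 cannot be proved for an arbitrary pair; it needs the action differences of $L_t$ and $L'_t$ to vanish. The paper's proof also just asserts the existence of compactly supported $K,K'$ without addressing this. The assertion is correct for the pair built from the cut-off lifted Hamiltonian. With the paper's convention, $e^r\alpha(X_H)=H$ for $H=e^rh_t$, and for $G=\mu H$ one gets $e^r\alpha(X_G)-G=\mu' H$. The primitive evolves by the time integral of this quantity, which vanishes along the end trajectories, since they stay where $\mu'=0$. Hence $c_+-c_-$ stays $0$ for all $t$. You should state and prove Step 3 under this normalization.
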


\begin{proof}
By \cite[Proposition 2.2]{Cha} and its proof, there exists a time-dependent Hamiltonian $H$ on $\bR\times U^*\R^3$ such that  $\varphi^t_H(\bR\times \Lambda_0) = \bR\times \Lambda_t$ for every $t\in [0,1]$.
Take a $C^{\infty}$ function $\mu \colon \bR\to [0,1]$ and $r'_-<r'_+$ such that $\mu(r)=0$ if $r\geq r'_+$ and $\mu(r) =1$ if $r\leq r'_-$.
We define
\begin{align*}
G\colon & (\bR\times U^*\R^3)\times [0,1]\to \bR\colon ((r,x),t) \mapsto \mu(r)\cdot H_t(r,x) , \\
G'\colon & (\bR\times U^*\R^3)\times [0,1]\to \bR\colon ((r,x),t) \mapsto (1-\mu(r))\cdot H_t(r,x).
\end{align*}
Then, there exist $r_-<r'_-$ and $r_+>r'_+$ such that
\[\begin{cases}
\varphi^t_{G}(r,x)= (r,x) \text{ and }\varphi^t_{G'}(r,x)= \varphi^t_H(r,x) & \text{ if }r\geq r+ , \\
\varphi^t_G(r,x)=\varphi^t_H(r,x) \text{ and }\varphi^t_{G'}(r,x)=(r,x) & \text{ if } r\leq r_-.
\end{cases}\]
Therefore, if we define $L_t\coloneqq \varphi^t_G(\bR\times \Lambda_0)$ and $L'_t\coloneqq \varphi^t_{G'}(\bR\times \Lambda_0)$ for every $t\in [0,1]$,
 $(L_t)_{t\in [0,1]}$ and $(L'_t)_{t\in [0,1]}$ satisfy the condition (\#).

Next, suppose that we have a pair of $(L_t)_{t\in [0,1]}$ and $(L'_t)_{t\in [0,1]}$ satisfying (\#).
Consider for every $t\in [0,1]$
\[ \begin{array}{cc} S_t\coloneqq  L'_t \circ_{\Lambda_t} L_t , &  S'_t\coloneqq  \psi^t( L_t \circ_{\Lambda_0} L'_t ), \end{array}\]
where $\psi^t\coloneqq  \varphi^1_H \circ (\varphi^t_H)^{-1} $.
Then,
\[\begin{array}{cccc} S_0=\bR\times \Lambda_0, &  S_1 =  L'_1 \circ_{\Lambda_1} L_1, &  S'_0 = \bR\times \Lambda_1 , & S'_1=  L_1 \circ_{\Lambda_0} L'_1.
\end{array}\]
Moreover,
$(S_t)_{t\in [0,1]}$ (resp. $(S'_t)_{t\in [0,1]}$) is a smooth family of exact Lagrangian cobordisms whose positive and negative ends are fixed at $\Lambda_0$ (resp. $\Lambda_1$).
For such smooth families of exact Lagrangian submanifolds, there exist Hamiltonians $K,K'$ on $\bR\times U^*\R^3$ with compact support such that $\varphi^t_K(\bR\times \Lambda_0) = S_t$ and $\varphi^t_{K'}(\bR\times \Lambda_1) = S'_t$.
In particular, $\varphi^1_K(\R\times \Lambda_0)=L'_1\circ_{\Lambda_1}L_1$ and $\varphi^1_{K'}(\R\times \Lambda_1)=L_1\circ_{\Lambda_0}L'_1$.
\end{proof}

Moreover, using $1$-parameter families of Hamiltonians on $\bR\times U^*\R^3$, we can show the following lemma in a similar way as Lemma \ref{lem-isotopy-cobordism}.
\begin{lemma}\label{lem-isotopy-of-isotopy}
Let $(\Lambda^s_t)_{s,t\in [0,1]}$ be a family of Legendrian submanifolds of $U^*\R^3$ which varies smoothly on $(s,t)\in [0,1]^2$ such that $\Lambda^s_{0}=\Lambda_0$ and $\Lambda^s_{1}=\Lambda_1$ for every $s\in [0,1]$.
Then, there exist smooth families $(L^s_t)_{s,t\in [0,1]}$ and $(L'^s_t)_{s,t\in [0,1]}$ of exact Lagrangian submanifolds of $\bR\times U^*\R^3$ such that for any fixed $s\in [0,1]$, $(L^s_t)_{t\in [0,1]}$ and $(L'^s_t)_{t\in [0,1]}$ satisfy (\#) in Lemma \ref{lem-isotopy-cobordism} for $(\Lambda^s_t)_{t\in[0,1]}$.
\end{lemma}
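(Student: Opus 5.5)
We will mimic the first half of the proof of Lemma \ref{lem-isotopy-cobordism}, now keeping track of the extra parameter $s$. First we need a two-parameter family of Hamiltonians $H^s_t$ on $\bR\times U^*\R^3$, depending smoothly on $(s,t)\in[0,1]^2$, such that $\varphi^t_{H^s}(\bR\times\Lambda_0)=\bR\times\Lambda^s_t$ for all $(s,t)$. To get it, we first extend the Legendrian family $(\Lambda^s_t)$ to a smooth two-parameter family of contact Hamiltonians $h^s_t$ on $U^*\R^3$ with compact support. Its contact flow then satisfies $\phi^t_{h^s}(\Lambda_0)=\Lambda^s_t$.

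This extension is the parametric version of the Legendrian isotopy extension theorem. For fixed $s$, the velocity $\partial_t\Lambda^s_t$ is determined by the function $\alpha(\partial_t\Lambda^s_t)$ on $\Lambda^s_t$. This function depends smoothly on $(s,t)$ once we choose a smooth family of parametrizations $\Lambda_0\to\Lambda^s_t$, which exists because $[0,1]^2$ is contractible. We extend these functions to $U^*\R^3$ by a standard Weinstein-neighborhood construction performed smoothly in $(s,t)$, and cut off to get compact support. This is the step that needs the most care, but it is only a parametric repetition of the argument in \cite[Proposition 2.2]{Cha}.

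Next we lift to the symplectization. Set $H^s_t(r,x)\coloneqq e^r h^s_t(x)$. This is the standard homogeneous lift, whose Hamiltonian flow is $\varphi^t_{H^s}(r,x)=(r-g^s_t(x),\phi^t_{h^s}(x))$ for a conformal factor $g^s_t$. Hence $\varphi^t_{H^s}(\bR\times\Lambda_0)=\bR\times\Lambda^s_t$, and $H^s_t$ is smooth in $(s,t)$ with integrable vector fields.

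Now take the same cutoff $\mu$ as in the proof of Lemma \ref{lem-isotopy-cobordism}, with the same $r'_\pm$ for all $s$, and define $G^s=\mu H^s$, $G'^s=(1-\mu)H^s$. Put $L^s_t\coloneqq\varphi^t_{G^s}(\bR\times\Lambda_0)$ and $L'^s_t\coloneqq\varphi^t_{G'^s}(\bR\times\Lambda_0)$. These are exact Lagrangian, being Hamiltonian images of an exact cylinder, and they depend smoothly on $(s,t)$ by smooth dependence of flows on parameters.

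It remains to find uniform $r_\pm$ that give (\#) for every $s$. Since $[0,1]^2$ is compact, the conformal factors $g^s_t$ are uniformly bounded, so the $r$-displacement of the flows is bounded by a constant $C$ independent of $(s,t)$. We choose $r_+>r'_++C$ and $r_-<r'_--C$. Then trajectories starting in $r\ge r_+$ never enter the region where $\mu\neq0$, and trajectories starting in $r\le r_-$ stay in the region where $\mu=1$. Exactly as before, $\varphi^t_{G^s}$ is the identity for $r\ge r_+$ and agrees with $\varphi^t_{H^s}$ for $r\le r_-$, and symmetrically for $G'^s$. This gives (\#) for each $s$, with $L^s_0=L'^s_0=\bR\times\Lambda_0$.
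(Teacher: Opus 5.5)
Your proposal is correct and follows the same route as the paper. The paper's entire proof is the remark that Lemma \ref{lem-isotopy-cobordism} goes through with a one-parameter family of Hamiltonians $H^s$, cut off by $\mu$ into $G^s$ and $G'^s$. You supply the details it leaves implicit: the parametric version of \cite[Proposition 2.2]{Cha} via the homogeneous lift $e^r h^s_t$, and the choice of $r_\pm$ uniform in $s$ from compactness of $[0,1]^2$.
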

From this lemma, we have a family $(L^s_1)_{s\in [0,1]}$ whose positive end is fixed at $\Lambda_0$ and  negative end is fixed at $\Lambda_1$.
Therefore, $L^0_1$ is isotopic to $L^1_1$ by a compactly supported Hamiltonian isotopy on $\bR\times U^*\R^3$. 

Now, we apply Lemma \ref{lem-isotopy-cobordism} and \ref{lem-isotopy-of-isotopy} to deduce the following theorem.
Let $\Lambda_0, \Lambda_1$ and $\Lambda_2$ be Legendrian submanifolds of $U^*\R^3$ as in Section \ref{subsubsec-CE-DGA}.
Recall the set $ \Pi_{\Leg}(\Lambda_i,\Lambda_j)$ of Definition \ref{def-Pi-Leg} for $i,j\in\{0,1,2\}$ and the concatenation map $\Pi_{\Leg}(\Lambda_1,\Lambda_2)\times \Pi_{\Leg}(\Lambda_0,\Lambda_1)\to \Pi_{\Leg}(\Lambda_0,\Lambda_2)\colon (x', x) \mapsto x'\cdot x$.

\begin{theorem}\label{thm-DGA-isom}
For any $y \in \Pi_{\Leg}(\Lambda_0,\Lambda_1)$ 
represented by a Legendrian isotopy $(\Lambda_t)_{t\in [0,1]}$ in $U^*\bR^3$, we take families of exact Lagrangian submanifolds $(L_t)_{t\in [0,1]}$ and $(L'_t)_{t\in [0,1]}$ as in Lemma \ref{lem-isotopy-cobordism}.
Then, $\Phi_{L_1} \colon \LCH_*(\Lambda_0) \to \LCH_\ast(\Lambda_1)$ is an isomorphism whose inverse map is given by $\Phi_{L'_1}$.
If we set
\[\Phi(y) \coloneqq \Phi_{L_1},\]
this gives a well-defined map
\[ \Phi \colon \Pi_{\Leg}(\Lambda_0,\Lambda_1) \to \Hom ( \LCH_*(\Lambda_0), \LCH_*(\Lambda_1)) . \]
Moreover, for any $y\in \Pi_{\Leg}(\Lambda_0,\Lambda_1)$ and $y'\in \Pi_{\Leg}(\Lambda_1,\Lambda_2)$, the equality $\Phi(y'\cdot y) = \Phi(y')\circ \Phi(y)$ holds in $\Hom(\LCH_*(\Lambda_0),\LCH_*(\Lambda_2))$.
\end{theorem}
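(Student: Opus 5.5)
The plan is to reduce everything to two standard properties of the cobordism maps from Section \ref{subsubsec-DGA-map}. \textbf{Invariance:} $\Phi_L$ depends only on the compactly supported Hamiltonian isotopy class of $L$. \textbf{Functoriality:} for composable exact Lagrangian cobordisms $L$ (from $\Lambda_a$ to $\Lambda_b$) and $L'$ (from $\Lambda_b$ to $\Lambda_c$), we have $\Phi_{L'\circ_{\Lambda_b}L}=\Phi_{L'}\circ\Phi_{L}$ on homology. The first is the theorem quoted from \cite{DR,EHK}. The second I would take from \cite[Theorem 3.5]{DR} or \cite{EHK}. Its proof stretches the neck along $\Lambda_b$: rigid curves on the concatenation converge to two-level buildings whose levels are rigid curves on $L$ and on $L'$, and a gluing argument gives a chain homotopy. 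We also need $\Phi_{\R\times\Lambda}=\id$, which was already checked above. Maslov class vanishing for $L_1$ and $L'_1$ follows because they are Hamiltonian images of cylinders $\R\times\Lambda_0$ with vanishing Maslov class.

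\textbf{Isomorphism.} Let $(L_t)$ and $(L'_t)$ be as in Lemma \ref{lem-isotopy-cobordism}. I would first reconcile the conventions: Lemma \ref{lem-isotopy-cobordism} puts $\Lambda_0$ at the positive end of $L_1$, so $\Phi_{L_1}\colon\LCH_*(\Lambda_0)\to\LCH_*(\Lambda_1)$, consistent with the statement. That lemma says $L'_1\circ_{\Lambda_1}L_1$ is compactly supported Hamiltonian isotopic to $\R\times\Lambda_0$, and $L_1\circ_{\Lambda_0}L'_1$ to $\R\times\Lambda_1$. Then functoriality and invariance give
\[\Phi_{L'_1}\circ\Phi_{L_1}=\Phi_{L'_1\circ_{\Lambda_1}L_1}=\Phi_{\R\times\Lambda_0}=\id,\]
and symmetrically $\Phi_{L_1}\circ\Phi_{L'_1}=\id$.

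\textbf{Well-definedness.} There are two sources of ambiguity.

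First, the choice of $(L_t)$ for a fixed isotopy. Suppose $(\tilde L_t)$, $(\tilde L'_t)$ is another pair satisfying (\#). Since the "moreover" part of Lemma \ref{lem-isotopy-cobordism} applies to \emph{any} pair satisfying (\#), it applies to the mixed pair $(\tilde L_t),(L'_t)$. Hence $\Phi_{L'_1}\circ\Phi_{\tilde L_1}=\id$, so $\Phi_{\tilde L_1}=\Phi_{L'_1}^{-1}=\Phi_{L_1}$.

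Second, the choice of representative of $y$. Given a homotopy $(\Lambda^s_t)$ between two representatives, Lemma \ref{lem-isotopy-of-isotopy} produces $(L^s_1)_{s\in[0,1]}$ with both ends fixed. As remarked after that lemma, $L^0_1$ and $L^1_1$ are then compactly supported Hamiltonian isotopic, so invariance gives $\Phi_{L^0_1}=\Phi_{L^1_1}$. Combined with the first point, $\Phi(y)$ depends only on $y$.

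\textbf{Composition.} Let $y$ and $y'$ be represented by $(\Lambda_t)$ and $(\Lambda'_t)$, with cobordism families $(L_t)$ and $(M_t)$ respectively. I want to show that $M_1\circ_{\Lambda_1}L_1$ is, up to compactly supported Hamiltonian isotopy, a valid "$L_1$" for the concatenated isotopy $y'\cdot y$. Then well-definedness and functoriality give $\Phi(y'\cdot y)=\Phi(y')\circ\Phi(y)$.

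To do this, reparametrize the concatenation over $[0,2]$. Define $N_t=L_t$ for $t\le1$, and $N_t=M_{t-1}\circ_{\Lambda_1}L_1$ for $t\ge1$, with the negative end pushed to lower $r$ so that the collars are uniform. Smoothing at $t=1$ by a reparametrization that is constant near $t=1$ yields a family satisfying (\#) for $y'\cdot y$, with $N_2=M_1\circ_{\Lambda_1}L_1$. The uniform choice of $r_\pm$ only requires translating in the $r$-direction, which changes $\Phi$ only by a Hamiltonian isotopy; this uses cylindricity of $J$ at the ends.

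\textbf{Main obstacle.} I expect the main obstacle to be the functoriality $\Phi_{L'\circ L}=\Phi_{L'}\circ\Phi_L$ in this $U^*\R^3$ setting. It requires neck-stretching compactness and gluing for the concatenated cobordism, including confinement of curves to $\R\times U^*B_d$ by $J_0$-convexity. My first move would be to cite \cite[Theorem 3.5]{DR} and \cite[Lemma 3.13, 3.14]{EHK}, and to check that their hypotheses (vanishing Maslov class, connected ends, generic $J$ in $\mathcal{J}_L$) hold here. Everything else is formal from Lemmas \ref{lem-isotopy-cobordism} and \ref{lem-isotopy-of-isotopy}.
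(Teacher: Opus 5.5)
Your proposal is correct and follows essentially the same route as the paper. Invertibility comes from the composition and Hamiltonian-invariance properties of cobordism maps \cite[Lemmas 3.13, 3.14]{EHK} combined with Lemma \ref{lem-isotopy-cobordism}; independence of the chosen family comes from the mixed pair $(\hat L_t),(L'_t)$; homotopy invariance comes from Lemma \ref{lem-isotopy-of-isotopy}. The one difference is in the composition law: the paper simply cites \cite[Lemma 3.13]{EHK}, whereas you explicitly check that $M_1\circ_{\Lambda_1}L_1$ is the endpoint of a family satisfying (\#) for the concatenated isotopy, which is exactly what that citation needs in order to apply.
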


\begin{proof}
Consider a Legendrian isotopy $(\Lambda_t)_{t\in [0,1]}$ in $U^*\bR^3$.
For any pair of $(L_t)_{t\in [0,1]}$ and $(L'_t)_{t\in [0,1]}$ satisfying (\#) as in Lemma \ref{lem-isotopy-cobordism},
we have homomorphisms of graded $\Z_2$-algebras
\[ \begin{array}{cc}
\Phi_{L_1}\colon \LCH_*(\Lambda_0) \to \LCH_*(\Lambda_1) , & \Phi_{L'_1} \colon \LCH_*(\Lambda_1) \to \LCH_*(\Lambda_0) . \end{array}\]
By \cite[Lemma 3.13]{EHK} about the composition of cobordisms and \cite[Lemma 3.14]{EHK} about the invariance under compactly supported Hamiltonian isotopy,
\begin{align*}
\Phi_{L'_1}\circ \Phi_{L_1} = \Phi_{L'_1\circ_{\Lambda_1}L_1} = \Phi_{\bR\times \Lambda_0} = \id_{\LCH_*(\Lambda_0)} , \\
\Phi_{L_1}\circ \Phi_{L'_1} = \Phi_{L_1\circ_{\Lambda_0}L'_1} = \Phi_{\bR\times \Lambda_1} = \id_{\LCH_*(\Lambda_1)}.
\end{align*}
Therefore, $\Phi_{L_1}$ is an isomorphism whose inverse map is $\Phi_{L'_1}$.
Moreover, this map is independent of the choices of $(L_t)_{t\in [0,1]}$ and $(L'_t)_{t\in [0,1]}$.
Indeed, if we choose another pair of $(\hat{L}_t)_{t\in [0,1]}$ and $(\hat{L}'_t)_{t\in [0,1]}$ satisfying (\#), then
\[ \Phi_{L'_1}\circ \Phi_{\hat{L}_1} = \Phi_{L'_1\circ_{\Lambda_1}\hat{L}_1} = \Phi_{\bR\times \Lambda_0} = \id_{\LCH_*(\Lambda_0)} \]
since $L'_1\circ_{\Lambda_1}\hat{L}_1$ is isotopic to $\bR\times \Lambda_0$ by a compactly supported Hamiltonian isotopy.
Therefore, $\Phi_{L_1}=(\Phi_{L'_1})^{-1} = \Phi_{\hat{L}_1}$.

Let us see that $\Phi_{L_1}$ depends only on the homotopy class of $(\Lambda_t)_{t\in [0,1]}$.
For any smooth family of Legendrian submanifolds $(\Lambda^s_t)_{s,t\in [0,1]}$ such that $\Lambda^s_{0}= \Lambda_0$ and $\Lambda^s_{1}=\Lambda_1$ for every $s\in [0,1]$, we take $(L^s_t)_{s,t\in[0,1]}$ and $(L'^s_t)_{s,t\in [0,1]}$ as in Lemma \ref{lem-isotopy-of-isotopy}.
Using $(L^s_1)_{s\in[0,1]}$, it follows from \cite[Lemma 3.14]{EHK} that $ \Phi_{L^0_1} = \Phi_{L^1_1} $.
Therefore, $\Phi(y)$ for $y\in \Pi_{\Leg}(\Lambda_0,\Lambda_1)$ is well-defined.

Lastly, for any $y\in \Pi_{\Leg}(\Lambda_0,\Lambda_1)$ and $y'\in \Pi_{\Leg}(\Lambda_1,\Lambda_2)$, the equation
$\Phi(y')\circ \Phi(y) = \Phi(y'\cdot y)$
follows from \cite[Lemma 3.13]{EHK}.
\end{proof}

\subsection{Legendrian contact homology of the unit conormal bundle of knots}\label{subsec-LCH-of-Lambda-K}

Let $K$ be a smoothly embedded knot in $\bR^3$.
Its conormal bundle 
\[L_K \coloneqq \{ (q, p)\in T^\ast \R^3 \mid q\in K,\  p(v)=0 \text{ for all }v\in T_qK \} \]
is an exact Lagrangian submanifold of $T^\ast \mathbb{R}^3$ whose Maslov class vanishes.
Then, the unit conormal bundle $\Lambda_K=L_K\cap U^\ast\mathbb{R}^3$ is a Legendrian submanifold of $U^\ast\mathbb{R}^3$. Note that $\Lambda_K$ is diffeomorphic to the $2$-torus $S^1\times S^1$.

Let us recollect several facts about Reeb chords of $\Lambda_K$.
Let $\mathcal{C}(K)$ be the set of binormal chords of $K$ with unit speed, that is,
\[ \mathcal{C}(K) \coloneqq \left\{ c\colon [0,T]\to \R^3 \ \middle| \begin{array}{l}  |\dot{c}(t)|=1 \text{ for any }t\in [0,T] \text{ and } \\
c(t)\in K,\ \dot{c}(t)\in (T_{c(t)}K)^{\perp} \text{ for }t\in \{0,T\} \end{array}\right\} .\]
Then we have a bijection
\[ \mathcal{R}(\Lambda_K)\to \mathcal{C}(K) \colon a \mapsto \pi_{\R^3}\circ a, \]
where $\pi_{\R^3}\colon U^*\R^3\to \R^3$ is the bundle projection.
In addition, if we set a $C^{\infty}$ function
\[ f_K\colon (K\times K)\setminus \Delta_K \to \R \colon (q,q')\mapsto |q-q'|^2, \]
where $\Delta_K$ is the diagonal subset, then we also have a bijection
\[ \mathcal{C}(K) \to \{ \text{critical points of }f_K \}\colon (c \colon [0,T]\to \R^3) \mapsto (c(0),c(T)). \]
Therefore, we have a bijection
\[ \mathcal{R}(\Lambda_K) \to \{ \text{critical points of }f_K \} \]
which maps any $a\in \mathcal{R}(\Lambda_K)$ with $a(0)=(q_a,p_a)\in \Lambda_K$ to $x_a\coloneqq (q_a, q_a+T_a\cdot p_a) \in (K\times K)\setminus \Delta_K$.
Moreover, the following relation holds.
See \cite[Proposition 2.7]{O24}, which relies on \cite[Corollary 4.2]{APS}.
\begin{proposition}
A Reeb chord $a\in \mathcal{R}(\Lambda_K)$ is non-degenerate if and only if $x_a$ is a non-degenerate critical point of $f_K$. 
Moreover,
\[\mu(a)= \mathrm{ind} (x_a)+1,\]
where $\mathrm{ind} (x_a)$ is the Morse index of $f_K$ at $x_a$, hence $|a|= \mathrm{ind}(x_a)$.
In particular, $|a| \in \{0,1,2\}$.
\end{proposition}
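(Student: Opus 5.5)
The plan is to reduce both claims to a finite-dimensional linearization near the chord, using the Legendrian generating-function picture of $\Lambda_K$ given by the bijection $a\mapsto x_a$. Near a Reeb chord $a$ with $a(0)=(q_a,p_a)$ and $a(T_a)=(q'_a,p_a)$, the Reeb flow is the geodesic flow: $(q,p)\mapsto (q+tp,p)$. The linearized flow $d\varphi^{T}_\alpha$ acts on $\xi$ by a shear. I would parametrize a neighborhood of the start point in $\Lambda_K$ by $(s,\theta)$, with $s$ the arclength along $K$ and $\theta$ the angle of the unit conormal. Similarly I would parametrize a neighborhood of the endpoint by $(s',\theta')$. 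The key computation is this: pairs in $\Lambda_K\times\Lambda_K$ joined by a Reeb trajectory (a straight segment with common direction $p$) correspond to pairs $(q,q')\in K\times K$ with $q'-q$ normal to $K$ at both ends. These are exactly the critical points of $f_K$. Since $\partial_s f_K=-2\langle q'-q,\dot q\rangle$ and $\partial_{s'}f_K=2\langle q'-q,\dot q'\rangle$, the normality conditions are the vanishing of $df_K$.

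For non-degeneracy, I would show that the transversality of $d\varphi^{T_a}(T_{a(0)}\Lambda_K)$ and $T_{a(T_a)}\Lambda_K$ in $\xi$ is equivalent to the nondegeneracy of the Hessian of $f_K$ at $x_a$. I would argue this through the graph description. Consider the $1$-parameter family of Legendrians $\varphi^{t}_\alpha(\Lambda_K)$ intersecting $\Lambda_K$. Via Remark \ref{rem-1-jet}, view $\Lambda_K\subset J^1S^2$ as locally a multi-graph of $1$-jets of local functions $z_i$ on $S^2$, where the $z$-coordinate is $\langle q,p\rangle$. A Reeb chord then corresponds to a critical point of a difference of sheets, namely the function $h(p)=\langle q'(p)-q(p),p\rangle$ on $S^2$. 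Its nondegeneracy is the standard criterion from \cite{EES-R}. Where this local graph description degenerates (conormal directions tangent to the fold of $\Lambda_K\to S^2$), I would instead use a Morse–Bott reduction. The function $F(q,q',p)=\langle q'-q,p\rangle$ on $\{(q,q',p):p\perp T_qK,\ p\perp T_{q'}K\}$ has critical points equal to the chords. Eliminating $p$ by setting $p=(q'-q)/|q'-q|$ on the critical locus gives $F=f_K^{1/2}$, and the nondegeneracy of the Hessian is preserved under this elimination because the $p$-direction is a nondegenerate maximum.

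For the index formula, I would invoke \cite[Corollary 4.2]{APS}, which identifies the Maslov index of a Reeb chord of a conormal with a Morse index. I expect this to be the main obstacle. To check $\mu(a)=\mathrm{ind}(x_a)+1$ directly, I would compute the Maslov index of the capping path as a Conley–Zehnder/Robbin–Salamon index of the geodesic shear. The extra $+1$ comes from the focal (conjugate) contribution of the $\theta$-directions: $\Lambda_K$ contains the full circle of conormals, so the family of Lagrangian planes picks up one extra crossing, namely the rotation of the conormal circle at the caustic. The remaining crossings come from negative eigenvalues of $\mathrm{Hess}f_K$, which is the second variation of length of chords with endpoints on $K$. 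The vanishing Maslov class makes the choice of capping path irrelevant. Given $\mu(a)=\mathrm{ind}(x_a)+1$ and $|a|=\mu(a)-1$, it follows that $|a|=\mathrm{ind}(x_a)$. Finally, since $\dim(K\times K)=2$, we get $|a|\in\{0,1,2\}$.
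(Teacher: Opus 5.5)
Your index formula rests on the same external input as the paper, which gives no argument of its own here. The paper cites \cite[Proposition 2.7]{O24}, built on \cite[Corollary 4.2]{APS}, and that citation covers the non-degeneracy equivalence as well. Your ``extra crossing from the conormal circle'' is not an argument and can be dropped.

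\textbf{The gap.} The genuine gap is the finite-dimensional non-degeneracy argument you propose instead, specifically the fallback you use at folds. Take $W=\{(q,q',p): p\perp T_qK,\ p\perp T_{q'}K\}$ near a chord with $\dot\gamma(s)\not\parallel\dot\gamma(s')$. There the covector is forced: $p=\pm n$ with $n=\dot\gamma(s)\times\dot\gamma(s')/|\dot\gamma(s)\times\dot\gamma(s')|$, and we take $n=p_a$ at the chord. So there is no free $p$-direction in which $F$ could have a non-degenerate maximum. Also $F|_W=\langle v,n\rangle$, with $v=\gamma(s')-\gamma(s)$, is not $f_K^{1/2}$ off the chord.

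Worse, suppose the initial endpoint lies on the fold, $\langle p_a,\ddot\gamma(s_a)\rangle=0$, which is exactly the case you send to this fallback. Then $\ddot\gamma(s_a)$ and $\dot\gamma(s'_a)$ both lie in $p_a^\perp$, so $\partial_s(\dot\gamma(s)\times\dot\gamma(s'))\parallel n$ and $\partial_s n=0$ at the chord. On $W$ we have $\partial_sF=\langle v,\partial_sn\rangle$. At the chord this gives $\partial_s^2F=\langle\ddot\gamma(s_a),p_a\rangle-T_a|\partial_sn|^2=0$ and $\partial_s\partial_{s'}F=-T_a\langle\partial_sn,\partial_{s'}n\rangle=0$. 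So $\mathrm{Hess}(F|_W)$ is degenerate. Meanwhile $\frac{1}{2}\mathrm{Hess}f_K=\begin{pmatrix}1&-c\\-c&1+T_a\langle p_a,\ddot\gamma(s'_a)\rangle\end{pmatrix}$, with $c=\langle\dot\gamma(s_a),\dot\gamma(s'_a)\rangle$, is generically invertible.

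The reduction therefore fails precisely where you need it. Near folds, $F|_W$ also acquires critical points that are not chords. Your graph-region route is incomplete as well: you never relate $\mathrm{Hess}\,h$ on $S^2$ to $\mathrm{Hess}\,f_K$ on $K\times K$.

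\textbf{A repair.} Drop the normality constraints. Via Remark~\ref{rem-1-jet}, $g(p,q)=\langle q,p\rangle$ on $S^2\times K$ is a generating family for $\Lambda_K\subset J^1S^2$. Its fiber-critical set is $\{p\perp T_qK\}$, and $g$, $d_pg$ recover the coordinates $\langle q,p\rangle$ and $q-\langle q,p\rangle p$. Its difference function is $w(p,q,q')=\langle q'-q,p\rangle$ on all of $S^2\times K\times K$.

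The positive critical points of $w$ are exactly the Reeb chords. For fixed $(q,q')$, $w$ now genuinely has a non-degenerate maximum in $p$ at $p=(q'-q)/|q'-q|$, with value $f_K^{1/2}$. The parametric Morse lemma then shows that $w$ is non-degenerate at the chord iff $f_K$ is at $x_a$, and that $\mathrm{ind}_w=\mathrm{ind}(x_a)+2$. The standard generating-family dictionary says a chord is non-degenerate iff the corresponding critical point of $w$ is, and that $\mathrm{ind}_w=|a|+\dim K+1$. Together these give the whole proposition, including $|a|=\mathrm{ind}(x_a)$.

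This is a finite-dimensional alternative to the path-space/Legendre-transform input of \cite{APS}, at the cost of importing the generating-family index formula. It also settles your graph-region case, since there $h$ is obtained from $w$ by a further fiberwise splitting in $(q,q')$.
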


For generic $K$, $f_K$ is a Morse function, and this means that every Reeb chord of $\Lambda_K$ is non-degenerate.
Then, we can define the DGA $(\mathcal{A}_*(\Lambda_K),d_J)$
and its homology $\LCH_*(\Lambda_K)$ as in Section \ref{subsubsec-CE-DGA}.
We note that since $\mathcal{A}_p(\Lambda_K)=0$ for $p<0$,
\[  \LCH_0(\Lambda_K) = \mathcal{A}_0(\Lambda_K)/ d_J(\mathcal{A}_1(\Lambda_K)) . \]

\section{Parametrized moduli of pseudo-holomorphic curves in $T^*\R^3$}\label{sec-moduli}

In this section, referring to \cite{CEL, CELN}, we introduce moduli spaces of pseudo-holomorphic curves in $T^*\bR^3$ with switching Lagrangian boundary conditions.
More generally, we will consider cases where the boundary condition is parametrized by a manifold.

\subsection{Definition of the moduli space $\calM_{\mathscr{L},l}(a)$}\label{subsec-def-moduli}

As a notation, for every $l\in \bZ_{\geq 0}$ and $j\in \{1,\dots ,2l+1\}$, let $\partial_j D_{2l+1}$ denote the component of $\partial D_{2l+1}$ whose closure has $p_{j-1}, p_{j}\in \partial D$ as boundary points (when $j=2l+1$, $p_{2l+1}\coloneqq p_0$). We denote the closure by
$ \overline{\partial_j D_{2l+1}}$, which is homeomorphic to a closed interval.
In addition. let us denote $D^*_{\epsilon}\R^3\coloneqq \{(q,p)\in T^*\R^3 \mid |p|\leq \epsilon \}$ for any $\epsilon>0$.

Recall from Section \ref{subsubsec-symplectization-moduli} the symplectomorphism $E$ defined by (\ref{diffeo-symp-cot}) and the almost complex structure $J_0$ on $\bR\times U^*\bR^3$.
Choose a $C^{\infty}$ function $\rho \colon [0,\infty)\to \bR_{>0}$ such that $\rho(s)=1$ if $s\leq \frac{1}{2}$ and $\rho(s)=s$ if $s\geq 1$.
Then, we define an almost complex structure $J_{\rho}$ on $T^*\bR^3$ such that for every $(q,p)\in T^*\bR^3$,
\[ (J_{\rho})_{(q,p)} \colon T_{(q,p)} (T^*\bR^3) \to T_{(q,p)}(T^*\bR^3) \colon \partial_{q_i} \mapsto - \rho(|p|) \partial_{p_i},\  \partial_{p_i} \mapsto \rho(|p|)^{-1} \partial_{q_i} , \]
for $i=1,2,3$.
Note that $E^*J_{\rho}$ agrees with $J_0$ on $\bR_{\geq 0}\times U^*\bR^3$.

Fix a compact connected Legendrian submanifold $\Lambda$ of $U^*\R^3$.
Let $B$ be a compact $C^{\infty}$ manifold possibly with boundary.
Let $\mathscr{L}=(L_b)_{b\in B}$ be a smooth family of exact Lagrangian submanifolds of $T^*\bR^3$ parametrized by $B$ such that:
\begin{itemize}
\item[(L1)] The Maslov class of $L_b$ vanishes.
\item[(L2)] There exists a real number $r_0>0$ such that
\[  E^{-1}(L_b) \cap \left( [r_0,\infty) \times U^*\bR^3 \right) = \Lambda\times [r_0,\infty) \]
for every $b\in B$ 
and $\left( \bigcup_{b\in B} L_b\right) \cap D^*_{e^{r_0}}\R^3$ is a compact subset of $T^*\R^3$.
\item[(L3)] There exist $\epsilon\in (0,\frac{1}{2}]$ and a smooth family of embeddings $(\gamma_b\colon S^1\to \R^3)_{b\in B}$ such that for each $b\in B$, $\gamma_b$ is a real analytic map and
\[ L_b \cap D^*_{\epsilon}\bR^3 = L_{K_b} \cap D^*_{\epsilon}\bR^3 \]
for the knot $K_b\coloneqq \gamma_b(S^1)\subset \R^3$.
\end{itemize}

For such a family $\mathscr{L}$,
referring to \cite[Section 8.1]{CELN},
let us take an almost complex structure $J'_{\rho}$ on $T^*\bR^3$ satisfying:
\begin{itemize}
\item $d\lambda_{\R^3}(\cdot , J'_{\rho}\cdot )$ is a Riemannian metric on $T^*\bR^3$.
\item There exists $d>0$ such that $\bigcup_{b\in B} L_b \subset \{(q,p) \in T^*\bR^3 \mid |q| < d \}$ and  $J'_{\rho}$ coincides with $J_{\rho}$ on
$  \{(q,p) \in U^*\bR^3 \mid |q| > d \text{ or }|p| \leq \frac{1}{2} \} $.
\item 
$E^*J'_{\rho}$ preserves $\xi$ and $(E^*J'_{\rho})(\partial_r) = \frac{e^r}{\rho(e^r)}R_{\alpha}$.
\item There exists $J\in \mathcal{J}^{\mathrm{cyl}}_{\bR\times \Lambda}$ such that $E^*J'_{\rho}= J$ on $\bR_{\geq 0}\times U^*\bR^3$,
\end{itemize}
Note that $J_{\rho}$ satisfies these conditions with $J=J_0$.

For $a\in \mathcal{R}(\Lambda)$ and $l\in \Z_{\geq 0}$, we define $\calM_{\mathscr{L},l}(a)$ as the moduli space (modulo conformal automorphisms) consisting of triples $(b,u,\kappa)$ of $b\in B$, $\kappa \in \mathcal{C}_{2l+1}$
and a $C^{\infty}$ map $u\colon D_{2l+1} \to T^*\bR^3$ such that:
\begin{itemize}
\item $du+ J'_{\rho} \circ du \circ j_{\kappa}=0$. 
\item $u(\partial_{2i}D_{2l+1})\subset \bR^3$ for $i\in \{1,\dots ,l\}$ and  $u(\partial_{2i+1} D_{2l+1}) \subset L_b$ for $i\in \{0,\dots ,l\}$.
\item  $(u\circ \psi_0)(s,t) $ is contained in $ T^*\bR^3 \setminus \bR^3$ when $s\gg 0 $ and there exists $s_0\in\bR$ such that
\[ \lim_{s\to \infty} (\tau_{-T_a s} \circ  E^{-1}\circ u\circ \psi_0)(s,t) = (s_0,a(T_a t)) \]
in $\R\times U^*\R^3$ $C^{\infty}$ uniformly on $t\in [0,1]$
\item There exist $x_1,\dots ,x_{2l}\in K_b = L_b\cap \R^3$ such that
\[\lim_{s\to -\infty} (u\circ \psi_j)(s,t)= x_j\]
in $T^*\R^3$ $C^{\infty}$ uniformly on $[0,1]$ for every $j\in \{1,\dots ,2l\}$.
\end{itemize}
By setting  $u(p_i)=x_i$ for $j\in \{1,\dots ,2l\}$, $u$ is extended continuously on $D_1$.
See Figure \ref{fig-switch}.
\begin{remark}
When $u$ is extended as a map from $(D_1,\partial D_1) $ to $ (T^*\R^3, L_b\cup \R^3)$, the points $p_1,\dots ,p_{2l+1}\in \partial D_1$ are regarded as points where the boundary condition switches (i.e. jumps from $L_b$ to $\R^3$ or vice versa) rather than as punctures. See also \cite[Section 6.3]{CELN}.
\end{remark}

\begin{figure} \centering
 \begin{overpic}[height=4.5cm, width=8.5cm]{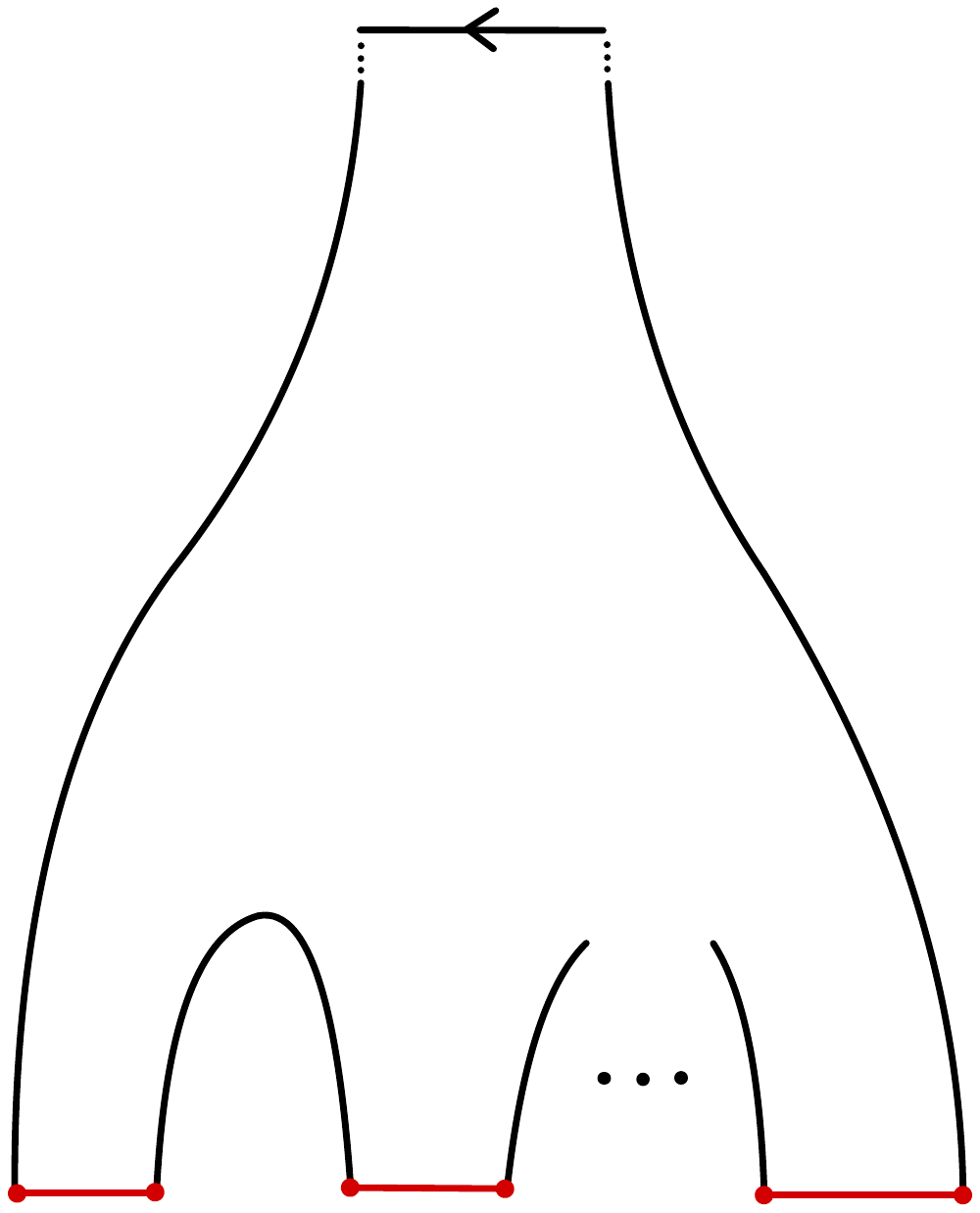}
 \put(45,54){$a$}
 \put(12,-3){$u(p_1)$}
 \put(26,-3){$u(p_2)$}
 \put(54,-3){$u(p_{2l-1})$} \put(75,-3){$u(p_{2l})$}
 \put(42,-3){\color{red}$\bR^3$}
 \put(30,36){$L_b$}
\end{overpic}
\caption{The image of $u$ in $T^*\R^3$. $a$ is a Reeb chord of $\Lambda$. The black boundaries are contained $L_b$ and the red boundaries are contained in $\bR^3$. The switching points $p_1,\dots ,p_{2l}$ are mapped to $ L_b\cap \bR^3 =K_b$.}\label{fig-switch}
\end{figure}

In addition, for $j\in\{1,\dots ,2l+1\}$, we define $\widetilde{\calM}^j_{\mathscr{L},l}(a)$ as the moduli space (modulo conformal automorphisms on $D_{2l+1}$) consisting of $((b,u,\kappa), z)$ such that $(b,u,\kappa)$ satisfies the same conditions as above and $z$ lies in $\partial_j D_{2l+1}$.
It is a moduli space of $J'_{\rho}$-holomorphic curves with a marked point on the $j$-th boundary component.

We have two submanifolds of $B\times T^*\bR^3$: $B\times \bR^3$ and
\[  L_{\mathscr{L}} \coloneqq \{ (b,x)\in B\times T^*\bR^3 \mid x\in L_b \}. \]
Then, we can define evaluation maps
\begin{align}\label{ev-j-map}
\ev_{j} \colon \widetilde{\calM}^j_{\mathscr{L},l} (a) \to \begin{cases} B\times \bR^3  & \text{ if }j\text{ is even,} \\
L_{\mathscr{L}}  & \text{ if }j \text{ is odd,} \end{cases}
\end{align}
which maps $((b,u,\kappa),z)$ to $(b,u(z)) $.
We also have a submanifold of $B\times \bR^3$
\[ K_{\mathscr{L}} \coloneqq \{ (b,q)\in B\times \bR^3 \mid q\in K_b \}. \]
Note that $B\times \bR^3$ and $L_{\mathscr{L}}$ intersect cleanly along $K_{\mathscr{L}}$.

\subsection{Winding numbers at switching points}\label{subsec-winding}

For $\delta>0$, we set
\[D_{\delta} \coloneqq \{ (z_1,z_2)\in \bC\times \bC \mid |z_1|^2+|z_2|^2<\delta \}.\]
For $S^1 = \{z\in \bC \mid |z|=1\}$,
we fix a complex structure on $S^1\times (-\delta,\delta)$ such that the map $\{z\in \bC \mid -\delta < \Image z  <\delta\} \to S^1\times (-\delta,\delta) \colon z\mapsto (e^{2\pi \sqrt{-1} \mathrm{Re}z}, \Image z)$ is holomorphic.

We continues to consider a family $\mathscr{L}$ of exact Lagrangian submanifolds of $T^*\bR^3$ in Section \ref{subsec-def-moduli}.
From the condition (L3),
there is a family of real analytic embeddings $(\gamma_b\colon S^1\to \R^3)_{b\in B}$ such that $K_b=\gamma_b(S^1)$.
Take real analytic vector fields along $K_b$
\[v_{1,b},v_{2,b}\colon K_b\to \bR^3\]
which vary smoothly on $b\in B$ such that $\{ \dot{\gamma}_b(t), (v_{1,b})_{\gamma_b(t)},(v_{2,b})_{\gamma_b(t)} \}$ is an orthonormal basis of $\bR^3$ for every $t\in S^1$.
We extend the proof of \cite[Lemma 8.6]{CELN} about a single real analytic embedding $\gamma\colon S^1\to \bR^3$.
Then, for sufficiently small $\delta>0$, we obtain a family of holomorphic embedding
\[ \textstyle{ \iota_b\colon S^1 \times (-\delta,\delta) \times D_{\delta} \to D^*_{\frac{1}{2}}}\R^3\]
which varies smoothly on $b\in B$ such that:
\begin{itemize}
\item $\iota_b(S^1\times \{0\}\times \{0\})=K_b$.
\item $\iota_b(S^1\times \{0\}\times (\bR^2\cap D_{\delta})) \subset \bR^3$.
\item $\iota_b(S^1\times \{0\}\times (\sqrt{-1} \bR^2\cap D_{\delta})) \subset L_{K_b}$.
\end{itemize}

Let $\pi_{D_{\delta}} \colon S^1\times (-\delta,\delta)\times D_{\delta} \to D_{\delta}$ denote the projection.
For any $(b,u,\kappa)\in \calM_{\mathscr{L},l}(a)$ and $j\in \{1,\dots ,2l\}$, we set \[u^{\mathrm{loc}}_j\coloneqq \pi_{D_{\delta}}\circ \iota_b^{-1}\circ u \circ \psi_j.\]
This is a holomorphic function from $(-\infty,s'_j]\times [0,1]$ to $D_{\delta}\subset \bC\times \bC$, where $s'_j\ll 0$.
Take a biholomorphic map
\[ \mu\colon (-\infty,0]\times [0,1]\to \{z\in \bC\setminus \{0\} \mid |z|\leq 1,\ \mathrm{Re}(z) \geq 0,\  \mathrm{Im} (z) \geq 0 \} \colon (s,t) \mapsto e^{\frac{\pi}{2}(s+\sqrt{-1}t)} , \]
and consider a power series expansion of $u^{\mathrm{loc}}_j\circ  \mu^{-1}$ centered at $0$.
From the arguments in \cite[Section 4.1, Case 1 and 2]{CELN}, it has the form
\[u^{\mathrm{loc}}_j\circ  \mu^{-1}(z) = \sum_{k = 0}^{\infty} a_k z^{2k+1}, \text{ where }a_k\in \begin{cases} \bR^2 & \text{ if }j \text{ is odd,} \\ \sqrt{-1}\bR^2 & \text{ if }j \text{ is even}. \end{cases} \]
The \textit{winding number} of $u$ at $p_j$ is defined by
\[ w_{b,j}(u,\kappa)\coloneqq \min \{ \textstyle{\frac{2k+1}{2}} \mid k\in \Z_{\geq 0},\  a_k\neq 0 \}. \]

Then, 
for any tuple $\bold{n}=(n_1,\dots ,n_{2l})$ such that $n_j\in \{\frac{2k+1}{2} \mid k\in \bZ_{\geq 0}\}$, a subspace of $\calM_{\mathscr{L},l}(a) $ is defined by
\[\calM_{\mathscr{L},l}(a;\bold{n})\coloneqq \{ (b,u,\kappa) \in \calM_{\mathscr{L},l}(a) \mid w_{b,j}(u,\kappa) = n_j \text{ for each }j=1,\dots ,2l \}.\]
Note that $\calM_{\mathscr{L},l}(a;\bold{n})$ for $\bold{n}=(\frac{1}{2},\dots ,\frac{1}{2})$ is an open subset of $\calM_{\mathscr{L},l}(a)$.

\subsection{Perturbation of Lagrangian submanifolds}\label{subsec-perturb-Lag}

In this section, we discuss a perturbation of $(L_b)_{b\in B}$ which is used to achieve an additional condition (L4) in Section \ref{subsec-property-of-moduli}. See also Remark \ref{rem-perturb-L}.

For any $a\in\mathcal{R}(\Lambda)$ with $a(0) = (p_a,q_a) \in \Lambda$, let us consider a plane in $T^*\R^3$
\[ H_a\coloneqq \{ (q_a+ t \cdot p_a, r\cdot p_a ) \in T^*\R^3 \mid t\in \R,\ r\in \R \}. \]
Note that
$H_a\cap U^*\R^3 =  \{ (q_a+ t \cdot p_a,  p_a ) \in U^*\R^3 \mid t\in \R \} $ 
and it is the image of the Reeb orbit containing $a$.

Assume that every Reeb chord $a\in \mathcal{R}(\Lambda)$ is non-degenerate and moreover,
\begin{align}\label{intersect-with-orbit} 
\Lambda\cap (H_a\cap U^*\R^3) = \{a(0),a(T_a)\}.
\end{align}
Using the notation in Remark \ref{rem-1-jet}, this is equivalent to that the self-intersection of the Lagrangian immersion $\rest{\pi_{T^*S^2}}{\Lambda} \colon \Lambda \to T^*S^2$ consists only of transverse double points.

We choose a small isotopy of Legendrian submanifolds $(\Lambda_s)_{s\in [0,1]}$ such that $\Lambda_0=\Lambda$ and
\begin{align}\label{non-intersect-with-orbit} 
s\neq 0\Rightarrow \Lambda_s\cap (H_a\cap U^*\R^3) = \emptyset \text{ for any }a\in\mathcal{R}(\Lambda).
\end{align}
This is equivalent to that $\pi_{T^*S^2}(a(0))\notin \pi_{T^*S^2}(\Lambda_s)$ for any $a\in \mathcal{R}(\Lambda)$ if $s\neq 0$. 
Then, $L'_s\circ_{\Lambda_s} L_s\subset \R\times U^*\R^3$ as in Lemma \ref{lem-isotopy-cobordism} and its shift in the $\R$-direction give a family of exact Lagrangian cobordisms
$(M_s)_{s\in [0,1]}$ such that:
\begin{itemize}
\item $M_0 = \R\times \Lambda$ and there exists $r_1>0$ such that
\[ M_s \cap   \left( (\R\setminus [0,2r_1]) \times U^*\R^3\right) = (\R\setminus [0,2r_1]) \times  \Lambda \]
for any $s\in [0,1]$.
\item $M_s \cap \left( \{r_1\} \times U^*\R^3\right) = \{r_1\}\times \Lambda_s$.
\end{itemize}

Take a family of exact Lagrangian submanifolds $\mathscr{L}=(L_b)_{b\in B}$ satisfying (L1), (L2) and (L3). Let $r_0> 0$ be the real number in (L2).
\begin{figure} \centering
\begin{overpic}[height=5cm]{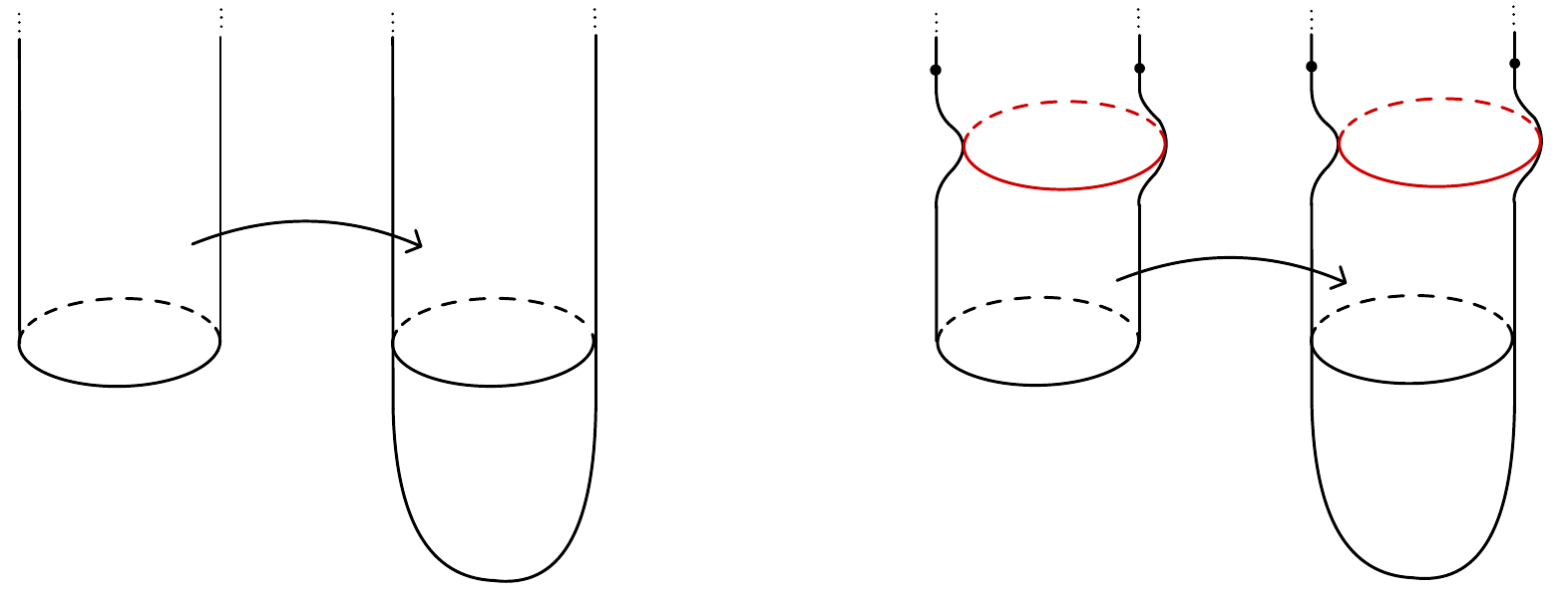}
\put(15.5,25){$E\circ \tau_{r'}$}
\put(0,11){$[0,\infty)\times \Lambda$}
\put(30,-2){$L_b$}
\put(74,22.5){$E\circ \tau_{r'}$}
\put(52,11){$M_s\cap ( [0,\infty)\times U^*\R^3)$}
\put(88.5,-2){$L^s_b$}
\put(48.5,28){\color{red}$\{r_1\}\times \Lambda_s$}
\put(55.5,33.5){$P_a^0$}
\put(73,33.5){$P_a^1$}
\end{overpic}
\caption{$L_b$ contains a cylindrical region identified with $[0,\infty)\times \Lambda$ via $E\circ \tau_{r'}$. $L^s_b$ is an exact Lagrangian filling of $\Lambda$ obtained by replacing the cylindrical region  with $M_s\cap ([0,\infty)\times U^*\R^3)$. In the right picture, $P^0_a\coloneqq (2r_1,a(0))$ and $P^1_a\coloneqq (2r_1,a(T_a))$.}\label{fig-perturb}
\end{figure}
\begin{proposition}\label{prop-perturb-Lag}
Fix $b\in B$ and $r'>r_0$ arbitrarily.
Define an exact Lagrangian submanifold $L^s_b$ of $T^*\R^3$ by
\[ L^s_b \coloneqq  \left( L_b  \setminus E ( [r',\infty )\times \Lambda ) \right) \cup (E \circ \tau_{r'}) \left( M_s\cap (  [0,\infty) \times U^*\R^3 \ ) \right) . \]
See Figure \ref{fig-perturb}.
Then, there exists $s_0>0$ independent of $b$ and $r'$ such that for any $0<s\leq s_0$, the two points
\[ E(2r_1+r',a(0)), E(2r_1+r',a(T_a)) \in L^s_b\cap H_a\]
cannot be connected by a path in $(\R^3 \cup L^s_b)\cap H_a$. (In Figure \ref{fig-perturb}, these two points are $E\circ \tau_{r'}(P_a^0)$ and $E\circ \tau_{r'}(P_a^1)$.)
\end{proposition}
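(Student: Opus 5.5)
The plan is to analyse $(\R^3\cup L^s_b)\cap H_a$ piece by piece along the radial coordinate $|p|$, using that $H_a$ is a $2$-plane spanned by the line through $q_a$ in direction $p_a$ and the fibre direction $\R p_a$. Under $E^{-1}$, the part of $H_a$ with $p\neq 0$ consists of the two half-planes $r\cdot p_a$ with $r>0$ and $r<0$. The first is identified with $\R\times(H_a\cap U^*\R^3)$, i.e. $\R\times$(Reeb orbit through $a$); the second is identified with $\R\times$(the orbit through $(q_a,-p_a)$). The zero section meets $H_a$ in the line $\{(q_a+tp_a,0)\}$. A path in $(\R^3\cup L^s_b)\cap H_a$ joining the two given points must therefore either stay in the region $E(\{r>r'\}\times\cdots)$, or pass through $E(\{r=r'+r_1\}\times U^*\R^3)$ and enter the compact part.

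\textbf{Step 1.} I would show that the slice at level $r'+r_1$ is a barrier. By construction $L^s_b\cap E(\{r'+r_1\}\times U^*\R^3)=E(\{r'+r_1\}\times\Lambda_s)$, and by (\ref{non-intersect-with-orbit}) $\Lambda_s$ misses $H_a\cap U^*\R^3$ for $s\neq0$. For the opposite half-plane, I would choose the isotopy $\Lambda_s$, or shrink $s_0$, so that $\Lambda_s$ also avoids the orbit through $(q_a,-p_a)$ whenever that orbit meets $\Lambda$; this needs a genericity remark. The zero section lies at $|p|=0$, far below this level. Hence $(\R^3\cup L^s_b)\cap H_a$ does not meet the hypersurface $\{|p|=e^{r'+r_1}\}$. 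Consequently, any connecting path stays in $E((r'+r_1,\infty)\times U^*\R^3)$, since both endpoints lie at level $2r_1+r'>r_1+r'$.

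\textbf{Step 2.} I would then work in the upper region $(r_1,\infty)\times U^*\R^3$ of $\tau_{r'}^{-1}E^{-1}$, where the Lagrangian is $M_s\cap\{r>r_1\}$. For $r\geq 2r_1$ this region equals $\R\times\Lambda$. Its intersection with $\R\times(H_a\cap U^*\R^3)$ is $[2r_1,\infty)\times\{a(0),a(T_a)\}$ by (\ref{intersect-with-orbit}), which gives two disjoint rays. The remaining piece is $M_s\cap((r_1,2r_1)\times U^*\R^3)$ intersected with $\R\times(\text{orbit})$.

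I would show that, for $s$ small, this intersection consists of two arcs emanating from $P^0_a$ and $P^1_a$ that stay in disjoint small neighbourhoods of $(r_1,2r_1)\times\{a(0)\}$ and $(r_1,2r_1)\times\{a(T_a)\}$. The reason is that $M_s$ is $C^1$-close to $\R\times\Lambda$ uniformly as $s\to0$, since $M_0=\R\times\Lambda$ and the family is smooth. Away from neighbourhoods of $a(0)$ and $a(T_a)$, $\Lambda$ stays a positive distance from the orbit, by (\ref{intersect-with-orbit}) and compactness, so $M_s$ does too. Near $a(0)$ and $a(T_a)$, the relevant component of $\Lambda$ meets the orbit transversally, in the sense of the double point being transverse in $T^*S^2$. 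So the intersection there stays near $\R\times\{a(0)\}$ or $\R\times\{a(T_a)\}$, respectively. Since the two neighbourhoods are disjoint, no path can link $P^0_a$ to $P^1_a$.

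All constants here depend only on $\Lambda$, $(\Lambda_s)$ and $(M_s)$, not on $b$ or $r'$: the piece $L_b$ is only used below level $r'$, and $\tau_{r'}$ is a translation. This gives $s_0$ uniform in $b$ and $r'$.

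\textbf{Main obstacle.} The delicate point is Step 1 for the antipodal half-plane, together with the uniform separation in Step 2. For the latter I would argue by contradiction: suppose there were $s_n\to0$ and connected sets joining the two neighbourhoods. A limit point would then give a point of $(\R\times\Lambda)\cap(\R\times\text{orbit})$ outside $\{a(0),a(T_a)\}$, contradicting (\ref{intersect-with-orbit}).
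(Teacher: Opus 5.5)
Your argument is essentially the paper's proof. The slice $E(\{r_1+r'\}\times\Lambda_s)$ is a barrier by (\ref{non-intersect-with-orbit}). Above that level, the intersection with $H_a$ is confined to $[r_1+r',\infty)\times(U_0\sqcup U_1)$ for disjoint neighbourhoods $U_0\ni a(0)$, $U_1\ni a(T_a)$. The constant $s_0$ depends only on $(M_s)$, through exactly the compactness/limit argument in your last paragraph. Two points need fixing, neither fatal.

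\textbf{The antipodal half-plane.} This is not a real obstacle, and you should not alter the isotopy $(\Lambda_s)$, which is given in the statement. Shrinking $s_0$ could not make $\Lambda_s$ avoid an orbit that $\Lambda$ meets anyway. Write points of $H_a$ as $(q_a+tp_a,\sigma p_a)$. The coefficient $\sigma$ is continuous along a path and starts at $e^{2r_1+r'}$. So a path cannot reach $\sigma\le 0$ without first hitting $\sigma=e^{r_1+r'}$, which is level $r_1+r'$ on the positive side. There (\ref{non-intersect-with-orbit}) for the orbit through $a$ already blocks it. Hence only the orbit through $a$ ever matters. The paper uses this tacitly, and for $\Lambda=\Lambda_K$ the antipodal orbit is in any case the orbit of the reversed chord, so it is covered by (\ref{non-intersect-with-orbit}) too.

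\textbf{The description in Step 2.} The ``two arcs emanating from $P^0_a,P^1_a$'' and its transversality justification are not right as stated. A surface and a curve in the $5$-manifold $U^*\R^3$ cannot meet transversally at isolated points. The transverse double point of $\pi_{T^*S^2}(\Lambda)$ does not produce such arcs either; generically the intersection simply ends at $P^0_a$ and $P^1_a$. However, nothing beyond the containment of $M_s\cap(\R\times(H_a\cap U^*\R^3))$ in $\R\times(U_0\sqcup U_1)$ is needed, and your contradiction argument gives exactly that.
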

\begin{proof}
From the properties of $M_s$, there exists $s_0>0$ such that for any $s\in [0,s_0]$ and $r\in \R$, $M_s$ and $\{r\}\times U^*\R^3$ intersect transversely and $M^r_s \coloneqq M_s\cap (\{r\}\times U^*\R^3)$ is a $2$-dimensional submanifold of $U^*\R^3\cong \{r\}\times U^*\R^3$.
Then, $M^r_0=\Lambda$ for any $r\in\R$, $M^r_s = \Lambda$ if $r\notin [0,2r_1]$ and $M^{r_1}_s = \Lambda_s$.
By taking $s_0$ smaller if necessary, we may assume that $M^r_s$ is contained in a tubular neighborhood of $\Lambda$.
In particular, by (\ref{intersect-with-orbit}),
there are small neighborhoods $U_0$ of $a(0)$ and $U_1$ of $a(T_a)$ in $U^*\R^3$ such that $U_0\cap U_1=\emptyset$ and
\[ M^r_s \cap (H_a\cap U^*\R^3) \subset U_0 \sqcup U_1 \]
for any $s\in [0, s_0]$ and $r\in \R$.

We argue by contradiction.
For $s\in (0,s_0]$, suppose that there exists a path $\gamma \colon [0,1]\to (\R^3\cup L^s_b)\cap H_a$ such that $\gamma(0) =E(2r_1+r', a(0))$ and $\gamma(1) =E(2r_1+r', a(T_a))$.
Then, we must have
\[ \gamma([0,1]) \not\subset E ( [r_1+r',\infty)\times U^*\R^3 ). \]
This is because $\gamma([0,1]) \cap E ( [r_1+r',\infty)\times U^*\R^3 ) \subset (L^s_b\cap H_a) \cap E ( [r_1+r',\infty)\times U^*\R^3 )$, and via the diffeomorphism $E$,
\[  E^{-1}(L^s_b \cap H_a) \cap \left( [r_1+r',\infty)\times U^*\R^3 \right)   \subset [r_1+r',\infty) \times (U_0 \sqcup U_1). \]
Here, the right-hand side is disconnected,
and $E^{-1} (\gamma(i)) $ lies in $[r_1+ r',\infty)\times U_i$ for $i\in \{0,1\}$.
Therefore, the path $\gamma$ intersects the boundary of $E ( [r_1+r',\infty)\times U^*\R^3 ) $, and thus there exists $0<u<1$ such
\[ \gamma(u) \in E ( \{ r_1+r'\} \times U^*\R^3 ) \cap \left( (\R^3\cup L^s_b)\cap H_a \right) . \]
However, $E ( \{ r_1+r'\} \times U^*\R^3 ) \cap (\R^3\cup L^s_b) = E( \{r_1+ r'\}\times \Lambda_s)$ and it is disjoint from $H_a$ from the condition (\ref{non-intersect-with-orbit}) on $(\Lambda_s)_{s\in (0,1]}$, so we get a contradiction. 
\end{proof}

\subsection{Properties of low dimensional moduli spaces}\label{subsec-property-of-moduli}

Let $K$ be a knot in $\bR^3$ such that:
\begin{itemize} 
\item[(\$)] Every Reeb chord of $\Lambda_K$ is non-degenerate as in Section \ref{subsec-LCH-of-Lambda-K}.
Moreover,
for any $a\in \mathcal{R}(\Lambda_K)$ such that $a(0)=(q_a,p_a)\in \Lambda_K$,
\[\{ q_a + t\cdot p_a \in \R^3 \mid t\in \R \}\cap K = \{\pi_{\R^3}(a(0)), \pi_{\R^3}(a(T_a))\},\]
that is, the straight line in $\R^3$ containing the binormal cord $\pi_{\R^3}\circ a\colon [0,T_a]\to \R^3$ of $K$ can intersect $K$ only at the endpoints of $\pi_{\R^3}\circ a$.
In particular, (\ref{intersect-with-orbit}) holds for $\Lambda=\Lambda_K$.
\end{itemize}
Note that in the space $\mathrm{Emb}(S^1,\R^3)= \{f\colon S^1\to \R^3\mid \text{embedding}\}$ with $C^{\infty}$ topology, the subset $\{f\in \mathrm{Emb}(S^1,\R^3) \mid f(S^1)\text{ is a knot satisfying (\$)}\}$ is open and dense. For the proof of the density, see \cite[Lemma 3.6]{O24}.
Using the Fourier expansion of $f\in \mathrm{Emb}(S^1,\R^3)$ on $S^1=\{z\in \bC\mid |z|=1\}$, we can show that the subset
\[\{f \in \mathrm{Emb}(S^1,\R^3)\mid f(S^1) \text{ is a real analytic knot satisfying (\$)}\}\]
is dense in $\mathrm{Emb}(S^1,\R^3)$ with respect to the $C^{\infty}$ topology.

Let us fix $\Lambda=\Lambda_K$ for a real analytic knot $K$ satisfying (\$).
Let us also take $B=\{0\}$ or $B=[0,b_*]$ for some $b_*>0$.
Consider a family $\mathscr{L}=(L_b)_{b\in B}$ satisfying (L1), (L2), (L3) and 
\begin{itemize}
\item[(L4)]
$L_0 = L_K$.
When $B=[0,b_*]$ and $b>0$, for any $a\in\mathcal{R}(\Lambda_K)$, the two points
\[ E (r_0,a(0)) , E(r_0,a(T_a)) \in L_b\cap H_a\]
cannot be connected by a path in $(\R^3 \cup L_b)\cap H_a$. Here, $r_0>0$ is the real number of (L2).
\end{itemize}
\begin{remark}\label{rem-perturb-L}
Suppose we have $\mathscr{L} = (L_b)_{b\in [0,b_*]}$ satisfying (L1),(L2), (L3) and $L_0=L_K$.
Since (\ref{intersect-with-orbit}) holds for every $a\in \mathcal{R}(\Lambda_K)$, we can take $(L^s_b)_{b\in B,\ s\in [0,s_0]}$ as in Proposition \ref{prop-perturb-Lag}.
If we choose a function $[0,b_*]\to [0,s_0]\colon b\mapsto s(b)$ such that $s^{-1}(0)=\{0\}$, then the new family $\left( L^{s(b)}_b\right)_{b\in [0,b_*]}$ satisfies (L4).
\end{remark}

For any tuple $\bold{n}=(n_1,\dots ,n_{2l})$ such that $n_j\in \{\frac{2k+1}{2} \mid k\in \bZ_{\geq 0}\}$, we define
\[ |\bold{n}| \coloneqq \textstyle{ \sum_{j=1}^{2l} (2n_j-1). }  \]
Then, the following holds for the moduli space $\calM_{\mathscr{L},l}(a;\bold{n})$.
When $B=\{0\}$, the statement agrees with \cite[Theorem 6.7]{CELN}.

\begin{lemma}\label{lem-transv}
For generic $J'_{\rho}$, $\calM_{\mathscr{L},l}(a;\bold{n})$ is transversely cut out for every $a\in \mathcal{R}(\Lambda_K)$ and $\bold{n} \in \{ \frac{2k+1}{2} \mid k\in \bZ_{\geq 0}\}^{\times l}$, and it is a $C^{\infty}$ manifold of dimension $|a|-|\bold{n}| +\dim B$.
\end{lemma}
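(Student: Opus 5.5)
The plan is to reduce the statement to the single-Lagrangian case $B=\{0\}$, namely \cite[Theorem 6.7]{CELN}, by running the usual parametrized transversality argument. The key point is that the boundary condition varies with $b\in B$, so the parameter space adds $\dim B$ to the index.

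\emph{Functional-analytic setup.} Fix $a$, $l$ and $\bold{n}$. Following \cite[Section 6]{CELN}, I would set up a Banach manifold $\mathcal{B}$ of triples $(b,u,\kappa)$ with $b\in B$, $\kappa\in\mathcal{C}_{2l+1}$, and $u$ a map in a weighted Sobolev space. The map $u$ satisfies the switching boundary conditions $u(\partial_{2i+1}D_{2l+1})\subset L_b$ and $u(\partial_{2i}D_{2l+1})\subset\R^3$, is asymptotic to $a$ at $p_0$, and has prescribed vanishing order $n_j$ at each switching point $p_j$.

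The winding condition at $p_j$ is encoded as in \cite[Section 6]{CELN}. One uses the local holomorphic charts $\iota_b$ of Section \ref{subsec-winding}, which vary smoothly in $b$, together with weights at the corners that force the leading coefficients $a_0,\dots,a_{k-1}$ of $u^{\mathrm{loc}}_j\circ\mu^{-1}$ to vanish. Because $\iota_b$ depends smoothly on $b$, these charts trivialize the family. This lets $\mathcal{B}$ be written locally as $B\times\mathcal{B}_0$, with $\mathcal{B}_0$ the fixed-$b$ space for, say, $L_{b_0}$; one transports $L_b$ to $L_{b_0}$ by a $b$-dependent diffeomorphism of $T^*\R^3$ that preserves $\R^3$ and is the identity near infinity.

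\emph{Index.} The section $\bar\partial_{J'_\rho}$ of the bundle $\mathcal{E}\to\mathcal{B}$ is Fredholm. Restricted to each slice $\{b\}\times\mathcal{B}_0$, its linearization is exactly the operator of \cite[Theorem 6.7]{CELN}, which has index $|a|-|\bold{n}|$. Adding the finite-dimensional $TB$-directions raises the index by $\dim B$, giving the claimed dimension $|a|-|\bold{n}|+\dim B$. If $B=[0,b_*]$ has boundary, the moduli space is a manifold with boundary lying over $\partial B$. Transversality should be required for the whole family and also for the restrictions over $b=0$ and $b=b_*$.

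\emph{Transversality.} This is the main obstacle, and I would handle it with the universal moduli space. Consider the universal section over $\mathcal{B}\times\mathcal{J}$, where $\mathcal{J}$ is the Banach manifold of admissible $J'_\rho$ (the perturbations allowed in Section \ref{subsec-def-moduli}, supported in a compact region away from the fixed ends). Its surjectivity already holds along each fixed-$b$ slice, by the argument of \cite[Theorem 6.7]{CELN}. That argument shows every nonconstant solution has an injective point in the interior of the domain that avoids the region where $J'_\rho$ is fixed, and an annihilating element of the cokernel is then killed by a variation of $J$ near that point.

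The delicate issues are two. First, near the switching points the curve lies in the region where $J'_\rho$ is fixed and integrable; but the injective point can be chosen away from the corners, as in \cite{CELN}. Second, curves near infinity have image in the cylindrical region, where $J'_\rho=J$ is $\R$-invariant; there one instead uses genericity of $J\in\mathcal{J}^{\mathrm{cyl}}_{\R\times\Lambda}$ as in \cite[Proposition 3.13]{DR}. Both facts are independent of $b$, so surjectivity of the universal operator holds at every $(b,u,\kappa,J)$.

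Sard--Smale then gives a comeager set of $J'_\rho$ for which the parametrized section is transverse. The conditions for all $a$ (finitely many) and all $\bold{n}$ (countably many) form a countable intersection of comeager sets, which is still comeager, so a single generic $J'_\rho$ works for all of them simultaneously. Hence $\calM_{\mathscr{L},l}(a;\bold{n})$ is a $C^\infty$ manifold of dimension $|a|-|\bold{n}|+\dim B$.
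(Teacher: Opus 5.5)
There is a genuine gap in your transversality step. The admissible almost complex structures of Section~\ref{subsec-def-moduli} are rigid. $E^*J'_{\rho}$ must preserve $\xi$ and send $\partial_r$ to $\frac{e^r}{\rho(e^r)}R_{\alpha}$, and $J'_{\rho}=J_{\rho}$ on $\{|p|\leq \frac12\}$. Consequently every admissible variation $Y=\frac{d}{d\epsilon}J'_{\rho,\epsilon}$ vanishes on $\mathrm{Span}(\partial_r,R_{\alpha})$: one can only perturb $J|_{\xi}$, which is what \cite[Lemma 9.5]{CELN} does near the chord $a$.

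Now take a solution $(b,u,\kappa)$ for which $E^{-1}\circ u$ is everywhere tangent to $\mathrm{Span}(\partial_r,R_{\alpha})$ off $\R^3$, i.e.\ $\Image u\subset H_a$. This plane is holomorphic for \emph{every} admissible $J'_{\rho}$. At such a curve the term $Y\circ du\circ j_{\kappa}$ vanishes identically. The universal linearized operator then has the same image as the operator for fixed $J'_{\rho}$, and no choice of injective point helps. So your claim that surjectivity of the universal operator holds at every $(b,u,\kappa,J)$, ``independently of $b$'', fails exactly at these planar curves. Citing \cite[Proposition 3.13]{DR} does not cover them either, since that result excludes trivial strips, which are their symplectization analogues. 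In a $1$-parameter family of fillings, non-regular curves of this kind can genuinely occur at isolated values of $b$, and no change of $J'_{\rho}$ removes them.

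The paper closes this gap with an input your proposal never uses.
\begin{itemize}
\item For $b>0$ it invokes condition (L4), which was arranged beforehand by perturbing the Lagrangians themselves (Proposition~\ref{prop-perturb-Lag} and Remark~\ref{rem-perturb-L}). The boundary of a curve in $H_a$ would give a path in $(\R^3\cup L_b)\cap H_a$ through both $E(r_0,a(0))$ and $E(r_0,a(T_a))$, which (L4) forbids.
\item For $b=0$ one has $L_0=L_K$, and the only such curve is the trivial strip $u_a$ of Remark~\ref{rem-trivial-strip}. Here one needs the separate fact that $u_a$ is transversely cut out without any perturbation (\cite[Appendix A.3]{O24}).
\end{itemize}

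There are also two smaller omissions. For $l=0$ the domain $D_1$ is unstable and must be stabilized by adding marked points as in \cite[Section 9.4]{CELN}. And the exponential weight at the positive puncture must be taken small (so that $n_0=0$ in \cite[Lemma 9.2]{CELN}) for the Fredholm index to be $|a|-|\bold{n}|+\dim B$, as given by \cite[Theorem A.1]{CEL}.
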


\begin{proof}
Let us sketch how to modify the proof of \cite[Theorem 6.7]{CELN}.
Referring to \cite[Section 9.2]{CELN}, we construct a Banach manifold $\mathscr{W}=\mathscr{W}_{\mathscr{L},l}(a;\bold{n})$ which is fibered over $B$ such that each fiber $\mathscr{W}_b$ of $b\in B$ is a Banach manifold as in \cite[Lemma 9.1]{CELN} with the conormal bundle $L_{K}$ replaced by the Lagrangian submanifold $L_b$.
(More precisely, to fix a weighted Sobolev norm, let us choose the exponential weight $\delta_0>0$ at the positive puncture to be small so that the parameter $n_0\in \Z_{\geq 0}$ in \cite[Lemma 9.2]{CELN} is equal to $0$.)
In addition, the Cauchy-Riemann operator with respect to $J'_{\rho}$ defines a section $\overline{\partial}_{J'_{\rho}}\colon \mathscr{W}\to \mathscr{E}$ of a Banach bundle $\mathscr{E}\to \mathscr{W}$ such that its zero locus is $\calM_{\mathscr{L},l}(a;\bold{n})$.
When $l=0$, we stabilize $J'_{\rho}$-holomorphic curves as in \cite[Section 9.4]{CELN} by adding marked points to the domain $D_1$.
The Fredholm index of the linearlization of $\overline{\partial}_{J'_{\rho}}$ is $|a|-|\bold{n}| + \dim B$ by \cite[Theorem A.1]{CEL}.

The transversality of the section $\overline{\partial}_{J'_{\rho}}$ with respect to the zero section of $\mathscr{E}$ is achieved by a generic perturbation of $\rest{J}{\xi}$ near the Reeb chord $a$ \cite[Lemma 9.5]{CELN}, except at $(b,u,\kappa)\in \calM_{\mathscr{L},l}(a;\bold{n})$ such that the map 
\[ E^{-1}\circ u\colon  \{z\in D_{2l+1} \mid  u(z)\notin \R^3\} \to \R\times U^*\R^3 \]
is everywhere tangent to the plane $\mathrm{Span}(\partial_r, R_{\alpha})$.
In such case, $\Image u $ is contained in the plane $H_a$, and the map $\rest{u}{\partial D_{2l+1}} \colon \partial D_{2l+1}\to (\R^3\cup L_b)\cap H_a$ is continuously extended to a curve $\partial D_1\to (\R^3\cup L_b)\cap H_a$ which passes through both $E(r_0,a(0))$ and $E(r_0,a(T_a))$, where $r_0>0$ is the real number in (L2).
From the condition (L4), this cannot happen when $b>0$.
When $b=0$, we have $L_b=L_K$ and $(u,\kappa)$ is a trivial strip over $a$ (see Remark \ref{rem-trivial-strip} below). The transversality of $\overline{\partial}_{J'_{\rho}}$ at the trivial strip $(0,u,\kappa)\in \mathscr{W}$ holds without perturbation.
For a proof, see \cite[Appendix A.3]{O24}.
\end{proof}

\begin{remark}\label{rem-trivial-strip}
The trivial strip over $a\in \mathcal{R}(\Lambda_K)$  belongs to $\calM_{L_K,1}(a, (\frac{1}{2},\frac{1}{2}))$. When $a(0)=(q_a,p_a)\in \Lambda_K$, it can be described explicitly by
\[ u_a \colon [0,\infty)\times [0,1] \to T^*\R^3 \colon (s,t) \mapsto (q_a + T_at\cdot p_a, h(s)\cdot p_a), \]
where $h\colon[0,\infty)\to [0,\infty)$ is a $C^{\infty}$ function characterized by $h(0)=0$ and $\frac{dh}{ds} (s) = T_a\cdot \rho (h(s))$.
Here, we use an identification via a biholomorphic map $D_3\to ([0,\infty)\times [0,1])\setminus \{(0,0),(0,1)\}$ which is continuously extended to $D_1 \to [0,\infty)\times [0,1]\colon p_1\mapsto (0,1),p_2\mapsto (0,0)$.
In particular, the path $\rest{u_a}{\partial_j D_3}$ satisfies:
\begin{itemize}
\item $\rest{u_a}{\partial_j D_3} $ for $j=1,3$ is an immersion into $L_K$ and $\rest{u_a}{\partial_2D_3}$ is an immersion into $\R^3$. 
\item $(\rest{u_a}{\partial_jD_3})^{-1}(K) =\emptyset$ for $K = L_K \cap \R^3$. To see this when $j=2$, note the condition (\$) on $K$.
\end{itemize}
\end{remark}

When the dimension of the moduli space is small, $\calM_{\mathscr{L},l}(a;\bold{n})$ has the following properties.

\begin{lemma}\label{lem-jet-transv}
For generic $J'_{\rho}$, the following hold when $|a|+\dim B\leq 1$:
\begin{itemize}
\item[(i)]  $\calM_{\mathscr{L},l}(a) = \calM_{\mathscr{L},l}(a;\bold{n})$ for $\bold{n}=(\frac{1}{2},\dots ,\frac{1}{2})$.
\item[(ii)] For every $(b,u,\kappa) \in \calM_{\mathscr{L},l}(a)$, the curve
\[\rest{u}{\partial_j D_{2l+1}} \colon \partial_j D_{2l+1} \to  \begin{cases} L_b & \text{ if }j\text{ is odd,} \\ \bR^3 & \text{ if }j \text{ is even,} \end{cases} \]
is an immersion.
\item[(iii)] The map $\ev_{j}$ in (\ref{ev-j-map}) is transverse to $K_{\mathscr{L}}$.
Moreover, when $B=[0,b_*]$,
\[  \left\{ ((b,u,\kappa),z)\in \ev_j^{-1}(K_{\mathscr{L}}) \mid b\in \{0,b_*\} \right\} = \emptyset. \]
\item[(iv)] When $B=[0,b_*]$,
\[\left\{ ((b,u,\kappa),z)\in \ev_j^{-1}(K_{\mathscr{L}}) \mid \text{ there exists }z'\in \partial D_{2l+1}\setminus \{z\} \text{ such that } (b,u(z'))\in K_{\mathscr{L}} \right\}=\emptyset.\]
\end{itemize}
\end{lemma}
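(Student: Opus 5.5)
The plan is to treat all four items as consequences of the transversality statement in Lemma \ref{lem-transv}, applied not only to $\calM_{\mathscr{L},l}(a;\bold{n})$ but to auxiliary moduli spaces that carry one extra constraint each. Every such constraint will be shown to cut the dimension below zero, or to impose a codimension that $\dim B$ cannot make up, once $|a|+\dim B\leq 1$. The order is the one in \cite[Theorem 6.7 and Section 9]{CELN}, which treats $B=\{0\}$; the new ingredient is the parameter $b$, which adds $\dim B$ to every dimension count.

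\textbf{Item (i).} Suppose some $n_j\geq \frac{3}{2}$. Then $|\bold{n}|\geq 2$, so by Lemma \ref{lem-transv}
\[
\dim \calM_{\mathscr{L},l}(a;\bold{n}) = |a|-|\bold{n}|+\dim B\leq -1 .
\]
Hence for generic $J'_{\rho}$ this space is empty. Since $\calM_{\mathscr{L},l}(a)$ is the disjoint union of the spaces $\calM_{\mathscr{L},l}(a;\bold{n})$ over all $\bold{n}$, only $\bold{n}=(\frac{1}{2},\dots,\frac{1}{2})$ survives.

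\textbf{Item (ii).} A non-immersed boundary point $z\in\partial_j D_{2l+1}$ is a boundary branch point, where $du(z)=0$. I would add a boundary marked point and require $du$ to vanish there. The marked point adds one dimension, and the condition $du(z)=0$ at a boundary point has codimension $2$ (the local expansion becomes $z^2$ or higher, an extra order of vanishing). The resulting Fredholm index is therefore $|a|+\dim B-1\leq 0$, before quotienting by the remaining reparametrization freedom of the marked point. Using the same weighted-Sobolev setup with a perturbation of $J$ near the Reeb chord, exactly as in Lemma \ref{lem-transv} and \cite[Lemma 9.5]{CELN}, this space is empty for generic $J'_{\rho}$. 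The exceptional curves lying in $H_a$ are handled as in Lemma \ref{lem-transv}: for $b>0$ condition (L4) excludes them, and for $b=0$ they are trivial strips, which are immersed by Remark \ref{rem-trivial-strip}.

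\textbf{Item (iii).} I would make $\ev_j$ transverse to the codimension-$2$ submanifold $K_{\mathscr{L}}$ (codimension $2$ inside $B\times\R^3$ or inside $L_{\mathscr{L}}$) by the standard parametric transversality argument, varying $J'_{\rho}$ generically. Then $\ev_j^{-1}(K_{\mathscr{L}})$ has dimension $|a|+\dim B+1-2\leq 0$.
\begin{itemize}
\item When $B=[0,b_*]$, the restriction to $b\in\{0,b_*\}$ lowers the dimension by one more. Since $|a|+\dim B\leq 1$ forces $|a|=0$, the count there is $0+0+1-2<0$, so those fibers are empty.
\item At $b=0$ the fiber moduli space may contain trivial strips, which are not reached by perturbations. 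These are covered directly by Remark \ref{rem-trivial-strip}, which says $(\rest{u_a}{\partial_j D_3})^{-1}(K)=\emptyset$.
\end{itemize}

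\textbf{Item (iv).} Here I would consider two distinct boundary marked points, both evaluating into $K_{\mathscr{L}}$. The count is $|a|+1+2-4\leq -1$, so this space is empty generically. The main obstacle is justifying transversality of these multi-point evaluation maps near the corners where the two marked points collide with each other or with switching points. Before attempting a direct transversality argument there, I would try to exclude such configurations using (i) and (ii): a switching point at winding number exactly $\frac{1}{2}$ with immersed boundary arcs cannot have a nearby point mapping to $K_b$, by the local expansion $a_0z$ in the holomorphic chart $\iota_b$.
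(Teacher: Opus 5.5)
Your overall strategy matches the paper's. Item (i) comes from the dimension formula of Lemma~\ref{lem-transv}. Items (ii)--(iv) come from auxiliary moduli spaces with boundary marked points and an extra constraint, made transverse by perturbing $J$ near the Reeb chord. The trivial strips at $b=0$ are handled by Remark~\ref{rem-trivial-strip}, and curves in $H_a$ for $b>0$ are excluded by (L4). Your counts in (i), (iii) and (iv) agree with the paper's. The step that fails as written is the dimension count in (ii).

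\textbf{The count in (ii).}
\begin{itemize}
\item At $z\in\partial_j D_{2l+1}$, $du(z)$ is complex linear. So $du(z)=0$ is equivalent to $(du)_z(V_z)=0$ for a nonzero tangent vector $V_z$ to the boundary arc.
\item That vector lies in the tangent space of the $3$-dimensional Lagrangian ($\R^3$ or $L_b$). The constraint is therefore $3$ real conditions, not $2$. In your local expansion, the linear coefficient that must vanish is a vector in $\R^3$.
\item With your codimension $2$ you get $|a|+\dim B-1$. In the borderline case $|a|+\dim B=1$ this is $0$, and that is exactly the case $B=[0,b_*]$, $|a|=0$ used in Section~\ref{sec-coincidence}. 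A transversely cut out $0$-dimensional space need not be empty.
\item The ``remaining reparametrization freedom of the marked point'' you invoke to lower the count does not exist. The $+1$ you added is already the marked point's full freedom. The conformal automorphisms were already divided out, or, for $l=0$, removed by stabilization.
\item The correct count, as in the paper, is $\dim\widetilde{\calM}^j_{\mathscr{L},l}(a)-3=|a|+\dim B-2<0$.
\end{itemize}

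There is also a technical point. For the constraint $((b,w,\kappa),z)\mapsto (dw)_z(V_z)$ to be a $C^1$ map transverse to $0$ on the configuration space, $w$ must be $C^1$. This is why the paper replaces the $L^2$-based weighted spaces of \cite{CELN} by $L^p$-based ones with $p>2$, so that $w\in W^{2,p}_{\mathrm{loc}}\subset C^1$. ``The same weighted-Sobolev setup'' does not provide this.

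\textbf{The obstacle you name in (iv) is not one.} Emptiness follows from transversality at each point of the open set $\{z\neq z'\}$, together with the negative expected dimension $|a|+\dim B-2$. No control is needed as $z'$ approaches $z$ or a switching point, so no compactness at those corners is required. The paper simply runs the two-point analogue of the argument for (iii). Your local-expansion observation near switching points is correct but unnecessary.
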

\begin{proof}
(i) If $|a|+\dim B\leq 1$,
Lemma \ref{lem-transv} shows that $\dim \calM_{\mathscr{L},l}(a,\bold{n})\leq 1 -|\bold{n}|$.
Since $|\bold{n}|=\sum_{j=1}^{2l}(2n_j-1) \leq 1$ if and only if $n_1=\dots =n_{2l}=\frac{1}{2}$, this proves the first assertion.

(ii)
The proof is similar to that of \cite[Lemma A.4]{ES}.
Suppose that $j$ is an even number.
Consider the Banach bundle $\mathscr{E}\to \mathscr{W}$ in the proof of Lemma \ref{lem-transv}, but here let us replace $L^2$-norm in \cite[Section 9.2]{CELN} with $L^p$-norm for $p>2$ to define weighted Sobolev spaces.
Then, the map $w\colon D_{2l+1}\to T^*\R^3$ for any $(w,\kappa)\in \mathscr{W}_b$ belongs to the Sobolev space $W^{2,p}$ on any compact set, and thus $w$ is of class $C^1$.
We put
\begin{align}\label{section-sigma} 
\sigma_j \colon \mathscr{W}\times \partial_j D_{2l+1}\to \mathscr{E}\colon (\bold{w},z)\mapsto \overline{\partial}_{J'_{\rho}}(\bold{w}),
\end{align}
whose zero locus is $\widetilde{\calM}_{\mathscr{L},l}^j(a)$.
Using a non-vanishing vector field $V$ on $\partial_j D_{2l+1}$, we define a map
\[ q_j \colon \mathscr{W}\times \partial_j D_{2l+1} \to \bR^3 \colon ((b,w,\kappa),z) \mapsto (dw)_z(V_z), \]
which is transverse to $0\in \bR^3$.
Then, $(q_j)^{-1}(0)$ is a Banach manifold.
Let $\sigma'_j  \colon (q_j)^{-1}(0) \to \mathscr{E}$ be 
the restriction of $\sigma_j$.
The preimage of the zero section of $\mathscr{E}$ agrees with
\[ (\sigma'_j)^{-1}(0) = \left\{ ((b,u,\kappa),z)\in \widetilde{\calM}^j_{\mathscr{L},l}(a)\ \middle| \rest{du}{T_z (\partial_j D_{2l+1})} \colon T_z (\partial_j D_{2l+1}) \to \bR^3 \text{ is the zero map} \right\}.\]
Note that $u$ cannot be the trivial strip $u_a$ when $((b,u,\kappa),z)\in (\sigma'_j)^{-1}(0)$ since $\rest{u_a}{\partial_2 D_3}$ is an immersion by Remark \ref{rem-trivial-strip}.
Then, in the way as in \cite[Lemma 9.5]{CELN}, we can show that $\sigma'_j$ is transverse to the zero section after a generic perturbation of $\rest{J}{\xi}$.

When $(\sigma'_j)^{-1}(0)$ is transversely cut out, it is a manifold whose dimension is
\[ \dim (\sigma'_j)^{-1}(0) = \dim \widetilde{\calM}_{\mathscr{L},l}^j(a) - 3 = |a| + \dim B -2  . \]
If $|a|+\dim B\leq 1$, then $\dim (\sigma'_j)^{-1}(0) <0$ and 
$(\sigma'_j)^{-1}(0)$ must be the empty set.
This means that for any $(b,u,\kappa)\in \calM_{\mathscr{L},l}(a)$ and $z\in \partial_jD_{2l+1}$, $\rest{du}{T_z(\partial_j D_{2l+1})}$ does not vanish.
This proves the second assertion when $j$ is even. The proof when $j$ is odd is similar.

(iii)
Suppose that $j$ is an even number.
The map $\ev_{j}$ on $\widetilde{\calM}_{\mathscr{L},l}^j(a)$ is extended to
\[ \widetilde{\ev}_{j} \colon \mathscr{W} \times \partial_j D_{2l+1} \to B\times \bR^3 \colon ((b,w,\kappa),z) \mapsto (b,w(z)). \]
Using the notations in Section \ref{subsec-winding}, let us define an open subset of $\mathscr{W} \times \partial_j D_{2l+1}$
\[ \mathscr{U} \coloneqq \widetilde{\ev}_{j}^{-1} ( \{ (b,q)\in B\times \R^3 \mid q\in  \iota_b( S^1\times \{0\}\times (\bR^2\cap D_{\delta}) \} ) \]
and a $C^{\infty}$ map
\[ \tilde{p} \colon \mathscr{U} \to \R^2\cap D_{\delta}\colon ((b,w,\kappa),z) \mapsto   (\pi_{D_{\delta}}\circ \iota_b^{-1}\circ w)(z). \]
Let $p\colon \mathscr{U} \cap \widetilde{\calM}_{\mathscr{L},l}^j(a)\to \R^2 \cap D_{\delta}$ be the restriction of $\tilde{p}$.
The assertion of (iii) is equivalent to that $p$ is transverse to $0\in \R^2 \cap D_{\delta}$. 

The map $\tilde{p}$ is transverse to $0$, and $\tilde{p}^{-1}(0)$ is a Banach manifold whose tangent space at $\bold{u}\in \tilde{p}^{-1}(0)$ is $T_{\bold{u}} (\tilde{p}^{-1}(0)) = \Ker (d\tilde{p})_{\bold{u}}$.
Note that for any $((b,u,\kappa),z)\in p^{-1}(0) = \tilde{p}^{-1}(0) \cap \widetilde{\calM}_{\mathscr{L},l}^j(a)$, $u$ cannot be the trivial strip $u_a$ since $u_a(z)\notin K $ for any $z\in \partial_2 D_3$ by Remark \ref{rem-trivial-strip}.
Then, the restriction of $\sigma_j$ in (\ref{section-sigma})
\[\rest{\sigma_j}{ \tilde{p}^{-1}(0)} \colon \tilde{p}^{-1}(0) \to \mathscr{E}\]
is transverse to the zero section of $\mathscr{E}$ after a generic perturbation of $\rest{J}{\xi}$.
In such case, for any $\bold{u}\in (\rest{\sigma_j}{ \tilde{p}^{-1}(0)} )^{-1}(0)$, the linearlization $(d \sigma_j)_{\bold{u}}$ is surjective when restricted on the linear subspace $\Ker (d\tilde{p})_{\bold{u}}\subset T_{\bold{u}}(\mathscr{W}\times \partial_j D_{2l+1})$.
We claim that the differential
\[ (d p)_{\bold{u}} = \rest{(d\tilde{p})_{\bold{u}}}{\ker (d\sigma_j)_{\bold{u}}} \colon \ker (d\sigma_j)_{\bold{u}} = T_{\bold{u}} \widetilde{\calM}_{\mathscr{L},l}^j(a) \to \R^2 = T_0 (\R^2\cap D_{\delta})  \]
is surjective for every $\bold{u}\in p^{-1}(0)$, that is, $p$ is transverse to $0\in \R^2\times D_{\delta}$.
Indeed, for any $v\in \R^2$, there exists $\zeta \in T_{\bold{u}}(\mathscr{W}\times \partial_jD_{2l+1})$ such that $(d\tilde{p})_{\bold{u}}(\zeta)=v$. We can take $\eta\in \ker (d\tilde{p})_{\bold{u}}$ such that $(d\sigma_j)_{\bold{u}} (\eta) = (d\sigma_j)_{\bold{u}}(\zeta)$. Then, we have $\zeta -\eta \in \ker (d\sigma_j)_{\bold{u}}$ and $(d\tilde{p})_{\bold{u}}(\xi-\eta)= (d\tilde{p})_{\bold{u}}(\xi) =v$.
This proves the claim.
The proof when $j$ is an odd number is similar.

When $B=[0,b_*]$, we may also assume that $\ev_j$ is transverse to $K_{\mathscr{L}}$ when restricted on the boundary of $\calM_{\mathscr{L},l}(a)$.
In such case, $\{ ((b,u,\kappa),z)\in \ev_j^{-1}(K_{\mathscr{L}}) \mid b\in \{0,b_*\} \}$ is cut out transversely and has a negative dimension $|a| + \dim B -2 = -1$, so it must be the empty set.

(iv) The proof of (iv) is similar to (iii), so we omit the details. We prove that
\[ \{ ((b,u,\kappa),(z,z'))\in  \calM_{\mathscr{L},l}(a) \times (\partial D_{2l+1})^2\mid z\neq z'\text{ and } (b,u(z)),(b,u(z')) \in K_{\mathscr{L}}\} \]
is cut out transversely for generic $J'_{\rho}$, and its dimension $|a|+\dim B -2$ is negative when $|a|+\dim B\leq 1$.
\end{proof}

\subsection{Compactification of the moduli spaces}

We continue to consider $\mathscr{L}=(L_b)_{b\in B}$ satisfying (L1), $\dots$ ,(L4) for $B=\{0\}$ or $[0,b_*]$.

Let $\kappa_0$ denote the unique element consisting of 
$\mathcal{C}_3$.
We define a moduli space $\mathcal{N}_{\bR^3}$ (resp. $\mathcal{N}_{\mathscr{L}}$) consisting of pairs $(b,v)$ of $b\in B$ and a $C^{\infty}$ map $v\colon D_3\to T^*\bR^3$ such that:
\begin{itemize} 
\item $dv + J'_{\rho}\circ dv\circ j_{\kappa_0} =0$.
\item $v(\partial_1 D_3 \sqcup \partial_3 D_3) \subset \bR^3$ (resp. $L_{b}$) and $v(\partial_2 D_3) \subset L_{b}$ (resp. $\bR^3$).
\item There exists $x_0\in \bR^3$ (resp. $L_{b}$) and $x_1,x_2 \in\bR^3 \cap  L_{b} = K_b$ such that
\[\begin{array}{cc}
\lim_{s\to \infty} (v\circ \psi_0)(s,t)=x_0, & \lim_{s\to -\infty} (v\circ \psi_i)(s,t) =x_i \text{ for } i=1,2
\end{array}\]
$C^{\infty}$ uniformly on $t\in [0,1]$.
\end{itemize}
Let us denote $x_i = v(p_i)$ for $i\in \{0,1,2\}$ and define the evaluation map at $p_0$
\begin{align}\label{ev-const}
\ev_0\colon \begin{cases} \mathcal{N}_{\bR^3} \to B\times \bR^3 , \\ 
 \mathcal{N}_{\mathscr{L}} \to L_{\mathscr{L}},  \end{cases} (b,v) \mapsto (b,v(p_0)) .
 \end{align}

Since $L_b$ is an exact Lagrangian submanifold of $T^*\bR^3$, there exists a $C^{\infty}$ function $f\colon L_b\to \R$ such that $df = \rest{\lambda_{\R^3}}{L_b}$. Note that $(\lambda_{\R^3})_x=0$ for any $x\in \R^3$ and thus $f$ is constant on $K_b=L_b\cap \bR^3$.
Therefore, for any element $(b,v)\in \mathcal{N}_{\bR^3}$,
\[ \int_{D_3} v^*(d\lambda_{\R^3}) = \int_{\partial D_3} v^*\lambda_{\R^3} = \int_{\partial_2 D_3} v^*(df)  = f(v(p_2))-f(v(p_1)) =0, \]
which implies that $v$ is a constant disk at $v(p_0)$.
In the same way, for any elements $(b,v)\in \mathcal{N}_{\mathscr{L}}$, $v$ is proved to be a constant disk at $v(p_0)$.
These moduli spaces of constant disks are cut out transversely, and
the evaluation maps (\ref{ev-const}) are diffeomorphisms onto $K_{\mathscr{L}}= (B\times \bR^3) \cap L_{\mathscr{L}}$.

Using the evaluation map $\ev_{j}$ in (\ref{ev-j-map}) for $j=1,\dots ,2l$, we define
\[\delta \widetilde{\calM}_{\mathscr{L},l}^j(a) \coloneqq (\ev_{j})^{-1} ( K_{\mathscr{L}} ). \]
If we take $J'_{\rho}$ as in Lemma \ref{lem-jet-transv}, it is a $C^{\infty}$ manifold of dimension $|a|+ \dim B - 1$.
We can also consider the fiber product with respect to $\ev_{j}$ and $\ev_0$
\begin{align}\label{fiber-prod}\begin{cases}  \widetilde{\calM}^j_{\mathscr{L},l} (a) \ftimes{\ev_{j}}{\ev_0} \mathcal{N}_{\bR^3} &\text{ if }j \text{ is even,} \\
\widetilde{\calM}^j_{\mathscr{L},l} (a) \ftimes{\ev_{j}}{\ev_0} \mathcal{N}_{\mathscr{L}} & \text{ if }j \text{ is odd.} \end{cases}
\end{align}
Since $\mathcal{N}_{\bR^3}$ and $\mathcal{N}_{\mathscr{L}}$ consist of constant disks in $K_{\mathscr{L}}$, we have a natural bijection
\begin{align}\label{delta-fiber-prod} 
\delta \widetilde{\calM}_{\mathscr{L},l}^j(a) \to 
\begin{cases} 
\widetilde{\calM}^j_{\mathscr{L},l} (a) \ftimes{\ev_{j}}{\ev_0} \mathcal{N}_{\bR^3} &\text{ if }j \text{ is even,} \\
\widetilde{\calM}^j_{\mathscr{L},l} (a) \ftimes{\ev_{j}}{\ev_0} \mathcal{N}_{\mathscr{L}} & \text{ if }j \text{ is odd,}
\end{cases} \end{align}
which maps $((b,u,\kappa),z)$ to $\left( ((b,u,\kappa),z), (b,v_{u(z)})\right) $,
where $v_{u(z)}$ is a constant disk at $u(z) \in K_b$.

\begin{remark}\label{rem-winding}
For $((b,u,\kappa),z)\in \delta\widetilde{\calM}^j_{\mathscr{L},l}(a)$, $u(z)$ lies in $K_b$ and the winding number of $u$ at $z$ is defined in a similar way as in Section \ref{subsec-winding}. In this case, it is given by a positive integer. See \cite[Section 6.3]{CELN}.
When $|a|+\dim B \leq 1$, for generic $J'_{\rho}$ this integer must be equal to $1$ (i.e. the smallest one) since the differential of $u$ at $z$ does not vanish by Lemma \ref{lem-jet-transv} (ii).
\end{remark}

Let us discuss a compactification of the moduli space $\calM_{\mathscr{L},l}(a)$.

\begin{proposition}\label{prop-compactify}
Let $a\in \mathcal{R}(\Lambda_{K_0})$ with $|a|=0$.
\begin{itemize}
\item If $B=\{0\}$, $\calM_{L_K,l}(a)$ is a compact $0$-dimensional manifold.
\item If $B=[0,b_*]$, $\calM_{\mathscr{L},l}(a)$ is compactified to a compact $1$-dimensional manifold whose boundary is
\[
 \calM_{L_{K_0},l }(a) \sqcup \calM_{L_{b_*},l}(a) \sqcup \coprod_{j=1}^{2l-1} \delta \widetilde{\calM}_{\mathscr{L},l-1}^j(a).
\]
\end{itemize}
\end{proposition}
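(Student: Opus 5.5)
The plan is a Gromov--Floer compactness argument for the curves with switching boundary conditions, following \cite[Sections 4 and 6]{CELN}. We then use the dimension counts of Lemma \ref{lem-transv} and Lemma \ref{lem-jet-transv} to discard every breaking except the ones in the stated boundary.

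\textbf{Energy and limits.} First, a sequence in $\calM_{\mathscr{L},l}(a)$ has energy bounded by $T_a$. This uses exactness of $L_b$, the fact that the primitive is constant on $K_b$, and $\lambda_{\R^3}|_{\R^3}=0$. Its image stays in a compact region because of the $J_0$-convexity of $\R\times U^*B_d$ and condition (L2). Hence, after passing to a subsequence, we get a limit building. Its possible components are:
\begin{enumerate}
\item a top level in the symplectization $\R\times U^*\R^3$, with positive puncture at $a$ and negative punctures at chords $a_1,\dots,a_m$;
\item disks in $T^*\R^3$ with switching boundary on $L_b\cup\R^3$, with positive punctures at the $a_k$;
\item disk or sphere bubbles in the interior or on the boundary;
\item degenerations at switching points, where two adjacent switching points collide or a boundary arc shrinks.
\end{enumerate}

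\textbf{Excluding most breakings.} Exactness of $L_b$ and $\R^3$ rules out closed bubbles and disks without positive puncture, except for the constant disks in $\mathcal{N}_{\R^3}$ and $\mathcal{N}_{\mathscr{L}}$ computed above. Since $\mathcal{A}_{<0}=0$, every chord has $|a_k|\geq 0$. A genuine symplectization level is counted modulo $\R$, so for $|a|=0$ it has dimension $|a|-1-\sum_k|a_k|<0$ and cannot occur for generic $J$; a trivial-strip level carries no information. The lower pieces then have total dimension $\leq |a|+\dim B$. By Lemma \ref{lem-jet-transv}(i), all winding numbers are $\frac{1}{2}$, so no higher-winding strata appear.

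\textbf{Case $B=\{0\}$.} Here the expected dimension is $0$, so every degeneration above has negative expected dimension. This gives compactness, and it is the statement of \cite{CELN}.

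\textbf{Case $B=[0,b_*]$.} The moduli space is $1$-dimensional. Its ends are of three kinds:
\begin{enumerate}
\item $b\to 0$ or $b\to b_*$, which gives the fibers $\calM_{L_{K_0},l}(a)$ and $\calM_{L_{b_*},l}(a)$;
\item an interior $b$ where a boundary arc $\partial_j D_{2l+1}$ touches $K_b$ at an interior point;
\item the collapse of the arc between two adjacent switching points.
\end{enumerate}
Lemma \ref{lem-jet-transv}(iii)--(iv) shows that only one such touching happens at a time, it is transverse, and it never happens at $b\in\{0,b_*\}$. Such a limit is a disk in $\calM_{\mathscr{L},l-1}(a)$ together with a constant disk attached at a point of $\delta\widetilde{\calM}^j_{\mathscr{L},l-1}(a)$, via the bijection (\ref{delta-fiber-prod}). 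Conversely, collapsing an arc of a $(2l+1)$-gon gives exactly this kind of limit, so both descriptions produce the same boundary stratum. By Remark \ref{rem-winding} the winding number at the touching point is $1$, which corresponds to two new switching points of winding $\frac{1}{2}$.

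\textbf{Main obstacle: gluing.} The hard step is to show that each such configuration is a genuine boundary point with a collar. We must glue a constant disk at a transverse point of $\ev_j^{-1}(K_{\mathscr{L}})$, which increases $l-1$ to $l$, and also check the boundary at $b=0,b_*$. For this I would adapt the local model of \cite[Section 6.3]{CELN}, using the holomorphic coordinates $\iota_b$ of Section \ref{subsec-winding}: near the touching point, $u$ is written as $z\mapsto z$ plus higher order terms, and the nearby curves are obtained by perturbing to $\sqrt{z^2-\epsilon}$-type solutions. Smooth dependence on $b$ then gives a one-sided family for each such point, hence a boundary point. The index bookkeeping is consistent with this: $\delta\widetilde{\calM}$ has dimension $|a|+\dim B-1=0$.
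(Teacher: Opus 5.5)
Your proposal is correct and follows essentially the same route as the paper: compactness as in \cite[Propositions 6.8 and 6.9]{CELN}, exclusion of buildings with a symplectization level because $|a|-1-\sum_k|a_k|<0$, identification of the remaining ends (two adjacent switching points colliding, with winding number $1$ at the resulting point of $K_b$) with $\delta\widetilde{\calM}^j_{\mathscr{L},l-1}(a)$ via constant disks, and a gluing step adapted from \cite{CELN}; the only difference is that the paper does the gluing with a pre-gluing to the constant disk plus Floer's Picard lemma as in \cite[Section 10.3]{CELN}, rather than with an explicit $\sqrt{z^2-\epsilon}$ local model. There is one small slip in your item (2): the touching arc has to be an arc of $D_{2l-1}$, i.e.\ of a curve in $\calM_{\mathscr{L},l-1}(a)$, because a curve in $\calM_{\mathscr{L},l}(a)$ touching $K_b$ inside an arc is an interior point of $\calM_{\mathscr{L},l}(a)$ (it is a boundary point of the compactified $\calM_{\mathscr{L},l+1}(a)$); with that correction, items (2) and (3) are a single stratum, as you yourself observe.
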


\begin{proof}
In the case of $B=\{0\}$ and $\mathscr{L}=L_K$, the compactness of $\calM_{L_K,l}(a)$ is shown in \cite[Section 6.4]{CELN}.
In the case of $B=[0,b_*]$, we argue as in \cite[Proposition 6.8 and 6.9]{CELN} by observing all possible limits of sequences in the $1$-dimensional moduli space $\calM_{\mathscr{L},l}(a)$ (see also Figure \ref{fig-building}). 
Then, a compactification of $\calM_{\mathscr{L},l}(a)$ is given by
\begin{align}\label{compactify-MLn}
 \calM_{\mathscr{L},l}(a) \sqcup \coprod_{j=1}^{2l-1} \delta \widetilde{\calM}_{\mathscr{L},l-1}^j(a).
\end{align}
This is similar to \cite[Proposition 6.8 and 6.9]{CELN}, except that any $J'_{\rho}$-holomorphic building in
\[ \bar{\calM}_{J,\Lambda_{K_0}}(a;a_1,\dots,a_m) \times \prod_{k=1}^m\calM_{\mathscr{L},l_k}(a_k) , \]
illustrated by the rightmost picture in  Figure \ref{fig-building}, does not contribute to the compactification.
\begin{figure}
\centering
\begin{overpic}[height=4.45cm]{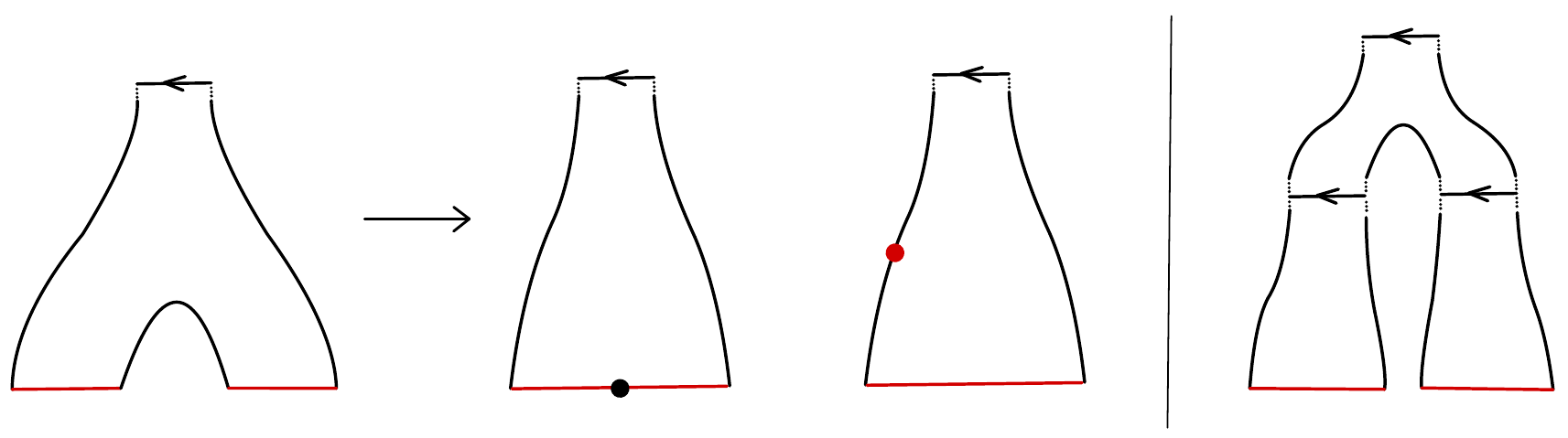}
\put(2,15){$L_{b_n}$}
\put(3,0.5){\color{red}$\R^3$}
\put(38,0){$u(z)$}
\put(52,12){$u(z)$}
\put(10.5,23.5){$a$}
\put(38.5,24){$a$}
\put(61.5,24){$a$}
\put(88.5,26.5){$a$}
\put(83.5,17){$a_1$}
\put(93,17){$a_2$}
\put(76.5,22){$\R\times \Lambda_{K_0}$}
\put(34,-3.5){($j$: even)}
\put(58,-3.5){($j$: odd)}
\put(10,-3){$u_n$}
\put(23,15){$n\to \infty$}
\put(33,16){$L_b$}
\put(55.5,16){$L_b$}
\put(34,0.5){\color{red}$\R^3$}
\put(57,0.5){\color{red}$\R^3$}
\end{overpic}
\caption{The leftmost picture represents a sequence $(b_n,u_n,\kappa_n)_{n=1,2,\dots}$ in $\calM_{\mathscr{L},l}(a)$ when $l=2$. Taking a subsequence if necessary, such a sequence can converge to a $J'_{\rho}$-holomorphic curve $u$ with a marked point $z$ on the boundary such that $u(z)\in K_b$ for $b=\lim_{n\to \infty}b_n$. (We may think of it as $J'_{\rho}$-holomorphic curve with a boundary node at $z$ such that a constant disk at $u(z)$ is attached to $u$ at the node.)
A priori, the sequence may converge to a $J'_{\rho}$-holomorphic building as in the rightmost picture, but this cannot happen since $\bar{\calM}_{\Lambda_{K_0},J}(a;a_1,\dots,a_m)=\emptyset$ when $|a|=0$.}\label{fig-building}
\end{figure}
Indeed, $|a'|\geq 0$ for any $a'\in \mathcal{R}(\Lambda_K)$ and the moduli space $\bar{\calM}_{J,\Lambda_{K_0}}(a;a_1,\dots,a_m)$ has dimension $|a|-\sum_{k=1}^m|a_k|-1$, which is negative  when $|a|=0$.
(When comparing with \cite[Proposition 6.8]{CELN}, note that for any $( (b,u,\kappa),z)\in \delta \widetilde{\calM}_{\mathscr{L},l-1}^j(a)$, the winding number of $u$ at $z$ is $1$ by Remark \ref{rem-winding}.)

We can show
that (\ref{compactify-MLn}) is a compact $1$-dimensional manifold  by modifying the proof of \cite[Theorem 10.3]{CELN}.
For each element of $\delta \widetilde{\calM}_{\mathscr{L},l-1}^j(a)$,
an element $(((b,u,\kappa),z), (b, v))$ of the set (\ref{fiber-prod}) is determined by the bijection (\ref{delta-fiber-prod}).
Here, $v\colon D_3\to T^*\R^3$ is a constant disk at $u(z)\in L_b\cap \R^3 = K_b$.
As in \cite[Section 10.3]{CELN}, we construct a pre-gluing $w_{\rho}\colon D_{2l+1}\to T^*\R^3$ from $u$ and $v$, where $\rho\gg0$ is a gluing parameter, such that $w_{\rho}(\partial_{2i-1} D_{2l+1})\subset L_b$ and $w_{\rho}(\partial_{2i} D_{2l+1})\subset \R^3$.
We have the same estimate on the linearlization of $\overline{\partial}_{J'_{\rho}}$ at $(b,w_{\rho})$ as in \cite[Lemma 10.12]{CELN} and apply the Floer's Picard lemma \cite[Lemma 10.10]{CELN}.
The rest of argument is parallel to that in \cite[page 772]{CELN}.
\end{proof}

We also have the following result, which is a slight modification of \cite[Theorem 6.5]{CELN}.

\begin{proposition}\label{lem-finiteness}
For each $a\in \mathcal{R}(\Lambda_K)$, there are only finitely many $l\in \Z_{\geq 0}$ such that $\calM_{\mathscr{L},l}(a)\neq \emptyset$.
\end{proposition}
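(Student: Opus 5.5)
The plan is to follow the energy/area argument behind \cite[Theorem 6.5]{CELN}, keeping every constant uniform in $b\in B$. The idea is to show that each switch costs a definite amount of area, while the total area of a curve in $\calM_{\mathscr{L},l}(a)$ is bounded by the action $T_a$.

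\textbf{Step 1: uniform area bound.} Let $(b,u,\kappa)\in\calM_{\mathscr{L},l}(a)$. Each $L_b$ is exact, so choose a primitive $f_b$ with $df_b=\rest{\lambda_{\R^3}}{L_b}$. Normalize it so that $f_b\equiv 0$ on $K_b$, which is possible because $f_b$ is constant there, as noted before (\ref{ev-const}). By (L2), on the cylindrical end $E([r_0,\infty)\times\Lambda)$ the form $\lambda_{\R^3}$ restricts to $e^r\alpha|_{\Lambda}=0$. Hence $f_b$ is constant there as well, and this constant is bounded uniformly in $b$ because $B$ is compact and the family is smooth. Apply Stokes on the truncated domain. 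The pieces of $\partial D_{2l+1}$ mapped to $\R^3$ contribute nothing, since $\lambda_{\R^3}$ vanishes on the zero section. The pieces mapped to $L_b$ contribute differences of $f_b$ at switching points, which are $0$, plus the values at the positive end. Together these give
\[ \int_{D_{2l+1}} u^*d\lambda_{\R^3} \le C\,T_a + C', \]
with $C,C'$ independent of $l$ and $b$. This is a $d\lambda$-area; since $J'_{\rho}$ is compatible with $d\lambda_{\R^3}$, it controls the metric area of $u$ in the compact region $\{|p|\le e^{r_0}\}$, where all the switching behaviour takes place.

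\textbf{Step 2: each switch costs area.} Around each $K_b$ use the holomorphic charts $\iota_b$ of Section \ref{subsec-winding}. These have uniform size $\delta$ because they vary smoothly over the compact $B$. For every switching point $p_j$, the curve $u$ passes from $K_b$ to a point of $\R^3\cup L_b$ whose distance from $K_b$ is bounded below, or else it stays in the tubular model. The main obstacle lies here. In the fully local model, the pair $(\R^3,L_{K_b})$ is a pair of Lagrangian subspaces in $\C^2$, and a switching corner there is a half-integer-winding holomorphic map. I would show by a monotonicity argument that a neighbourhood of each switching point contributes area at least $c\,\delta^2$. The model to use is the reflection/doubling trick of \cite[Section 6.4]{CELN}: in the local coordinates, the map $z\mapsto (u^{\mathrm{loc}}_j\circ\mu^{-1})(z^2)$ extends by Schwarz reflection to a holomorphic map on a full disk, to which the standard monotonicity lemma applies. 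The delicate case is when many switching points cluster inside a single chart. There I would use the CELN argument that consecutive switches force the boundary arcs on $\R^3$ and on $L_b$ to make a definite angular sweep, each costing area at least $c\,\delta^2$. Alternatively, the disjoint half-disk neighbourhoods can be recovered by a winding count: the total winding $\sum n_j\ge l$ enters the local energy estimate linearly.

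\textbf{Step 3: conclusion.} Combining the two steps gives $l\cdot c\,\delta^2\le C\,T_a+C'$, so $l$ is bounded independently of $b$. Therefore $\calM_{\mathscr{L},l}(a)=\emptyset$ for all large $l$. I expect Step 2, the uniform lower bound per switch even when switches cluster, to be the hardest part. My first attempt would be to cite the corresponding estimate in the proof of \cite[Theorem 6.5]{CELN} verbatim, and then check that its constants depend only on the $C^2$-geometry of $L_b$ near $K_b$ and on the charts $\iota_b$; both are uniformly controlled over the compact parameter space $B$.
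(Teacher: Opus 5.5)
Your Step 2 does not hold, and the failure is not merely technical. Inside the charts $\iota_b$ there is no local lower bound on area per switch, nor per unit of winding. Write points of the chart as $(w,z)$, with $w$ lifted to the strip $\{|\mathrm{Im}\,w|<\delta\}$ and $z\in D_\delta$. On the half-rectangle $\{0<\mathrm{Re}\,\zeta<\frac12,\ 0\le\mathrm{Im}\,\zeta<\delta\}$ take
\[ w(\zeta)=\zeta,\qquad z(\zeta)=\epsilon\,\Bigl(\sqrt{\textstyle\prod_{j=1}^{N}(\zeta-x_j)},\,0\Bigr),\qquad x_1<\dots<x_N \text{ in } (0.1,0.4). \]
This map has the following properties:
\begin{itemize}
\item It is holomorphic for the integrable structure in the chart.
\item The bottom edge goes alternately into $S^1\times\{0\}\times\R^2$ and $S^1\times\{0\}\times\sqrt{-1}\R^2$, with $N$ switches of winding $\frac12$.
\item Its area is $\delta/2+O_N(\epsilon^2)$, which is bounded independently of $N$ once $\epsilon$ is chosen small depending on $N$.
\end{itemize}
The piece runs along the complexification $\iota_b(S^1\times(-\delta,\delta)\times\{0\})$ of $K_b$. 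It leaves the chart through $|\mathrm{Im}\,w|=\delta$, not through the normal directions. Consequently, monotonicity at each switch only produces disjoint balls of radius about $1/N$, and the ``angular sweep'' heuristic needs a normal exit that does not occur. The assertion that total winding enters the local energy linearly is false. So the inequality $l\cdot c\,\delta^2\le C\,T_a+C'$ in Step 3 is unsupported. Citing the proof of \cite[Theorem 6.5]{CELN} does not repair this, because it contains no area-per-switch estimate.

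What rules out such accumulation is global information about the curve, and the paper's proof uses it by contradiction.
\begin{itemize}
\item Assume $l_n\to\infty$ and $\bold{u}_n\in\calM_{\mathscr{L},l_n}(a)$. The energy bound is automatic by \cite[Remark 6.6]{CELN}.
\item Away from the switching points, ordinary compactness applies. Near $K_{b_n}$ one applies \cite[Theorem 4.1]{CEL} to $\iota_{b_n}^{-1}\circ u_n$. Its boundary condition $S^1\times\{0\}\times(\R^2\cup\sqrt{-1}\R^2)$ no longer depends on $n$; this is the only place where uniformity in $b$ is needed.
\item A subsequence converges to a curve with boundary on $L_b\cup\R^3$, where $b=\lim b_n$.
\item The winding-number argument of \cite[Theorem 5.1]{CEL}, applied to $\pi_{D_\delta}\circ\iota_{b_n}^{-1}\circ u_n$, gives the contradiction.
\end{itemize}
Your Step 1 is fine after one correction. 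The $d\lambda_{\R^3}$-area of the whole curve is infinite because of the positive end. You must truncate at a level $r\ge r_0$ and use $\int u^*\alpha\le T_a$ on that level curve. The $f_b$ boundary terms then cancel exactly, so you do not even need a uniform bound on $f_b$.
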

\begin{proof}
Assume that there exists a sequence $(l_n)_{n=1,2,\dots}$ in $\Z_{\geq 0}$ and $\bold{u}_n=(b_n,u_n,\kappa_n)\in \calM_{\mathscr{L},l_n}(a)$ for $n=1,2,\dots$ such that $\lim_{n\to \infty}l_n=\infty$.
Here, the domain of each $u_n$ is extended to $D_1$.

Taking a subsequence if necessary, $u_n$ converges  in the sense of \cite[Theorem 4.1]{CEL} to a $J'_{\rho}$-holomorphic curve whose boundary condition is given by $L_b\cup \R^3$ for $b=\lim_{n\to \infty}b_n$.
For the proof, the difference from \cite[Theorem 4.1]{CEL} is that the boundary condition of $u_n$ varies in $n$.
Nevertheless, after taking a subsequence, on a domain in $D_1$ disjoint from $(\rest{u_n}{\partial D_1})^{-1}(K_{b_n})$, we can apply usual compactness argument, and on a domain $S\subset D_1$ such that $u_n(S)\subset \iota_{b_n}(S^1\times (-\delta,\delta) \times D_{\delta})$, we can apply the same argument as in \cite[Theorem 4.1]{CEL} to the holomorphic map $\iota_{b_n}^{-1}\circ u_n$, whose boundary condition $S^1\times \{0\} \times (\R^2\cup \sqrt{-1}\R^2)$ is independent of $n$.
Note that a necessary bound on the energy appearing in \cite[Theorem 4.1]{CEL} is automatically satisfied in the present case, as mentioned in \cite[Remark 6.6]{CELN}.

The rest of argument to deduce a contradiction, using the winding numbers of $u_n$ at the points in $(\rest{u_n}{\partial D_1})^{-1}(K_{b_n})$, is parallel to \cite[Theorem 5.1]{CEL} since it is reduced to studying holomorphic maps $\pi_{D_{\delta}}\circ \iota_{b_n}\circ u_n$, whose boundary condition $\R^2\cup \sqrt{-1}\R^2$ is independent of $n$.
\end{proof}

%

\section{String homology, cord algebra and $\LCH_0(\Lambda_K)$}\label{sec-string}
\label{sec:string}

Let $K$ be a knot in $\bR^3$.
We will introduce the the string homology in degree $0$ and the cord algebra of $K$.
They are $\Z_2$-algebras defined in topological ways and isomorphic to $\LCH_0(\Lambda_K)$.

\subsection{String homology in degree $0$ without $N$-strings}\label{subsec-string}

Fix a constant $c>1$.
We choose $\epsilon>0$ such that
\begin{align}\label{tub-nbd}
N= \{ q+v \in \R^3 \mid q\in K,\ v\in (T_qK)^{\perp},\ |v|\leq 2\epsilon \}
\end{align}
is a tubular neighborhood of $K$ in $\R^3$.
For each $l\in \bZ_{\geq 1}$, we define a space of $l$ tuples of paths
\begin{align}\label{Sigma-l}
\Sigma^l_K \coloneqq \left\{ (s_1,\dots ,s_l)\ \middle| 
\begin{array}{l}
\text{for each }i =1,\dots ,l,\  s_i\colon [0,T_i]\to \R^3 \text{ is a }C^1\text{ path such that } \\
T_i>0,\ s_i(\{0,T_i\})\subset K \text{ and } |\dot{s}_i(t)| < c \text{ for every }t\in[0,T_i] 
\end{array}
\right\} . \end{align}
\begin{remark}\label{rem-Banach}
For the path $s_i\colon [0,T_i]\to \bR^3$ as above, we can associate a real number $T_i\in \R_{>0}$ and a $C^1$ path $[0,1]\to \R^3 \colon t\mapsto s_i(T_it)$. In this way, $\Sigma^l_K$ is embedded onto an open subset of a Banach manifold
\[ \left( \R_{>0} \times \{s\colon [0,1]\to \R^3 \mid C^1\text{ path such that }s(\{0,1\})\subset K\} \right)^{\times l}.\]
Here, the space of $C^1$ paths is endowed with the $C^1$ topology.
Through this embedding, $\Sigma^l_K$ is equipped with a structure of Banach manifold.
\end{remark}

We define an open subset of $\Sigma^l_K$ by
\begin{align}\label{Sigma-l-epsilon} \Sigma^l_{K,\epsilon}\coloneqq \left\{ (s_1,\dots ,s_l) \in \Sigma^l_K  \middle| \  \min_{1\leq i\leq l} \len (s_i) <\epsilon \right\}. \end{align}
For $l=0$, we exceptionally define $\Sigma^0_K$ to be a one point set $\{ \mathrm{pt} \}$ and $\Sigma^0_{K,\epsilon}\coloneqq \emptyset$.
We will later mod out $0$-chains in $\Sigma^l_{K,\epsilon}$. That is, we ignore $0$-chains $(s_1,\dots ,s_l)$ if at least one of $s_1,\dots ,s_l$ has length less than $\epsilon$.

Using $\Sigma_K\coloneqq \coprod_{l=0}^{\infty} \Sigma^l_K$, we shall define a $\Z_2$-algebra $\bar{H}^{\str}_0(K)$, which is a reduction of the string homology $H^{\str}_0(K)$ of $K$ in degree $0$ defined by
\cite[Section 2.1]{CELN} (see also Section \ref{subsec-full-string}).
The reduction is done by omitting `N-strings' which are paths in the tubular neighborhood $N$ of $K$.
The following construction of $\bar{H}^{\str}_0(K)$ is very close to the $\R$-algebra in \cite[Section 6.1]{O22}.
\begin{definition}\label{def-gen-chain}
We define $\bZ_2$-vector spaces $C_0(\Sigma^l_K)$, $C_1(\Sigma^l_K)$ and $\bar{C}_0(\Sigma^l_K)$ for $l\in \bZ_{\geq 0}$ as follows:
For $l=0$, we set $C_0(\Sigma^0_K)= \bar{C}_0(\Sigma^0_K)= \Z_2$ and $C_1(\Sigma^0_K)=0$.
For $l\geq 1$:
\begin{itemize} 
\item We define $C_0(\Sigma^l_K)$ as the $\bZ_2$-vector space spanned by $\bold{s}=(s_1,\dots ,s_l)\in \Sigma^l_K$ such that for every $i\in \{1,\dots ,l\}$:
\begin{itemize}
\item[(0a)] $s_i^{-1}(K)=\{0,T_i\}$. Here, $[0,T_i]$ is the domain of $s_i$.
\item[(0b)]  $s_i$ is not tangent to $K$ at the endpoints $\{0,T_i\}$.
\end{itemize}
\item  We define $C_1(\Sigma^l_K)$ as the $\bZ_2$-vector space spanned by continuous $1$-parameter families
\[(\bold{s}^{\lambda})_{\lambda \in [0,1]} = (s^{\lambda}_1,\dots ,s^{\lambda}_{l})_{\lambda \in [0,1]}\]
in $\Sigma^l_K$ such that $\bold{s}^0$ and $\bold{s}^1$ satisfy the conditions (0a) and (0b), and for every $i\in \{1,\dots ,l\}$:
\begin{itemize}
\item[(1a)]
Let us set $U_i\coloneqq \{(\lambda,t) \mid 0 <  \lambda < 1,\ 0 < t < T^{\lambda}_{i}\}$, where $[0,T^{\lambda}_i]$ is the domain of $s^{\lambda}_i$, and consider a map
\[ 
\Gamma_{i} \colon U_{i} \to \R^3 \colon (\lambda,t) \mapsto s^{\lambda}_{i}(t).
\]
Then, $\Gamma_i$ is a $C^1$ map and transverse to $K$.
\item[(1b)] For every  $\lambda\in [0,1]$, $s^{\lambda}_i$ is not tangent to $K$ at the endpoints $\{0, T^{\lambda}_i\}$.
\item[(1c)] For any two distinct points $(\lambda,t), (\lambda',t') \in \coprod_{i=1,\dots ,l} \Gamma_i^{-1}(K)$, we have $\lambda\neq \lambda'$.
\end{itemize}
\item 
Let $C_0(\Sigma^l_{K,\epsilon})$ be the $\Z_2$-linear subspace of $C_0(\Sigma^l_K)$ generated by $\bold{s}\in \Sigma^l_{K,\epsilon}$ satisfying (0a) and (0b).
We then define the quotient
\[ \bar{C}_0(\Sigma^l_K) \coloneqq C_0(\Sigma^l_K)/C_0(\Sigma^l_{K,\epsilon}). \]
\end{itemize}
\end{definition}

Next, we define $\bZ_2$-linear maps
\begin{align*} 
\partial & \colon C_1(\Sigma^l_K) \to  \bar{C}_0(\Sigma^l_K) , \\
\delta \coloneqq \sum_{i=1}^l\delta_i & \colon C_1(\Sigma^l_K) \to  \bar{C}_0(\Sigma^{l+1}_K) .
\end{align*}
as follows:
On $C_1(\Sigma^0_K)=0$, we set $\partial \coloneqq 0 $ and $\delta \coloneqq 0$.
Let $l\in \Z_{\geq 1}$. For every $x=(\bold{s}^{\lambda})_{\lambda\in[0,1]}\in C_1(\Sigma^l_K)$,
we define
\[ \partial (x) \coloneqq \bold{s}^1 - \bold{s}^0 \in \bar{C}_0(\Sigma^l_K) .\]
Moreover, for each $i\in \{ 1,\dots ,l\}$ and $(\lambda,t)\in \Gamma_i^{-1}(K)$, where $\Gamma_i$ is the map in (1a), we set
\begin{align}\label{split-path}
\bold{s}^{(\lambda,t),i} \coloneqq \left( s^{\lambda}_1,\dots  , s^{\lambda}_{i-1} , \rest{s^{\lambda}_{i}}{[0,t]} ,  \rest{s^{\lambda}_{i}}{[t,T^{\lambda}_{i}]}(\cdot -t) , s^{\lambda}_{i+1} , \dots , s^{\lambda}_{l} \right) \in \Sigma^{l+1}_K 
\end{align}
and define
\[ \delta_{i} (x) = \sum_{(\lambda,t) \in \Gamma_i^{-1}(K)} \bold{s}^{(\lambda,t),i}\in \bar{C}_0(\Sigma^{l+1}_K) .\]
We can check that $\bold{s}^{(\lambda,t),i}$ satisfies (0a) from (1c), and (0b) from (1a) and (1b).

$\Gamma_i^{-1}(K)$ is a $0$-dimensional manifold by (1a).
The definition of $\delta_i(x)$ makes sense for the following reason.

\begin{lemma}
There are only finitely many $(\lambda,t)\in \Gamma_i^{-1}(K)$ such that $\bold{s}^{(\lambda,t),i}\neq 0$ in $\bar{C}_0(\Sigma^{l+1}_K)$.
\end{lemma}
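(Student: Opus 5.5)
The plan is to show that any point $(\lambda,t)\in\Gamma_i^{-1}(K)$ whose split tuple $\bold{s}^{(\lambda,t),i}$ is nonzero in $\bar{C}_0(\Sigma^{l+1}_K)$ is confined to a compact region in which transversality leaves only finitely many such points. The key observation is that $\bold{s}^{(\lambda,t),i}$ vanishes in the quotient as soon as one of its components has length less than $\epsilon$. So a nonzero term requires both $\len(\rest{s^{\lambda}_i}{[0,t]})\geq\epsilon$ and $\len(\rest{s^{\lambda}_i}{[t,T^\lambda_i]})\geq\epsilon$. Since $|\dot s^\lambda_i|<c$, this forces $t\geq \epsilon/c$ and $T^\lambda_i - t\geq \epsilon/c$. (If some other $s^\lambda_j$ is already short, the term vanishes anyway, so it can be ignored.)

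The first step is to define the closed subset
\[ A\coloneqq\{(\lambda,t)\mid \lambda\in[0,1],\ \epsilon/c\leq t\leq T^\lambda_i-\epsilon/c\} \]
of $[0,1]\times\R$. The family $(\bold{s}^\lambda)_{\lambda \in [0,1]}$ is continuous in $\Sigma^l_K$, so $\lambda\mapsto T^\lambda_i$ is continuous and hence bounded. This makes $A$ compact, and the extended map $(\lambda,t)\mapsto s^\lambda_i(t)$ is continuous on $A$, using the Banach manifold structure of Remark \ref{rem-Banach}. Consequently $\Gamma_i^{-1}(K)\cap A$, extended to include $\lambda\in\{0,1\}$, is a closed subset of the compact set $A$, and is therefore compact.

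Next I rule out accumulation. At $\lambda=0$ or $\lambda=1$, condition (0a) gives $(s^\lambda_i)^{-1}(K)=\{0,T^\lambda_i\}$, and $A$ excludes these endpoints. So $\Gamma_i^{-1}(K)\cap A$ meets $\lambda\in\{0,1\}$ nowhere and lies inside $U_i$. There, by (1a), $\Gamma_i$ is $C^1$ and transverse to the codimension-$2$ submanifold $K$ of $\R^3$, while $U_i$ is $2$-dimensional. Hence $\Gamma_i^{-1}(K)$ is a discrete $0$-dimensional submanifold of $U_i$. A compact discrete set is finite, which gives the lemma.

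I expect the main obstacle to be a limit point of $\Gamma_i^{-1}(K)\cap A$ on the boundary $\lambda\in\{0,1\}$. Such a point would lie outside $U_i$, where transversality is not assumed. This is handled by closedness together with (0a) for $\bold{s}^0$ and $\bold{s}^1$: a limit point would be an interior intersection of $s^0_i$ or $s^1_i$ with $K$, which (0a) forbids. The restriction to $A$ is what keeps such limits from escaping toward $t=0$ or $t=T^\lambda_i$, where intersections with $K$ do occur. Near those ends, (1b) together with the length cutoff is what prevents infinitely many nonzero terms.
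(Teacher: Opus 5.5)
Your proposal is correct and follows essentially the same route as the paper. You restrict to the middle range of $t$, where a nonzero term forces both split pieces to have length at least $\epsilon$, and use (0a) at $\lambda\in\{0,1\}$ to keep $\Gamma_i^{-1}(K)$ away from the $\lambda$-boundary (the paper does this with an explicit $\lambda'>0$, you via closedness in the compact set $A$), concluding by compactness plus the discreteness given by (1a); the only quibble is that (1b) plays no role here, since near $t=0$ and $t=T^\lambda_i$ the length cutoff alone kills every term.
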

\begin{proof}
Take any $(\lambda,t)\in \Gamma_i^{-1}(K)$.
If $0\leq t\leq \frac{\epsilon}{2c}$ (resp. $T^{\lambda}_i- \frac{\epsilon}{2c} \leq t \leq T^{\lambda}_i$), then the path $\rest{s^{\lambda}_{i}}{[0,t]}$ (resp. $\rest{s^{\lambda}_{i}}{[t,T^{\lambda}_i]}$) has a length less than $\epsilon$ since $|\dot{s}^{\lambda}_i(t)|<c$ for every $t$.
Let us put
\[
I_i\coloneqq \Gamma_i^{-1}(K) \cap \textstyle{ \{(\lambda,t) \mid \frac{\epsilon}{2c} \leq t\leq T^{\lambda}_i-\frac{\epsilon}{2c}\} } ,\]
which is a discrete subset of $U_i$.
Since $s^{0}_i([\frac{\epsilon}{2c},T^0_i-\frac{\epsilon}{2c}])$ and $s^{1}_i([\frac{\epsilon}{2c},T^1_i-\frac{\epsilon}{2c}])$ are disjoint from $K$ by the condition (0a) on $\bold{s}^0$ and $\bold{s}^1$, there exists $\lambda'>0$ such that $s^{\lambda}_i([\frac{\epsilon}{2c},T^{\lambda}_i-\frac{\epsilon}{2c}])$ is disjoint from $K$ for any $\lambda \in [0,\lambda') \cup (1-\lambda',1]$.
This means that $I_i$ is contained in a compact set
\[ \textstyle{ \{(\lambda,t) \mid \lambda' \leq \lambda \leq 1-\lambda', \  \frac{\epsilon}{2c} \leq t\leq T^{\lambda}_i-\frac{\epsilon}{2c}\}, }\]
so $I_i$ is a finite set.

For any $(\lambda,t)\in \Gamma_i^{-1}(K) \setminus I_i$, 
$\bold{s}^{(\lambda,t),i}$ belongs to $\Sigma^{l+1}_{K,\epsilon}$ since either $\rest{s^{\lambda}_{i}}{[0,t]} $ or $\rest{s^{\lambda}_{i}}{[t,T^{\lambda}_{i}]}$ has a length less than $\epsilon$.
Therefore, $\bold{s}^{(\lambda,t),i} =0$ in $\bar{C}_0(\Sigma^{l+1}_K)$ if $(\lambda,t)\in  \Gamma_i^{-1}(K) \setminus I_i$.
\end{proof}

We set $\bar{C}_0(\Sigma_K)\coloneqq \bigoplus_{l=0}^{\infty} \bar{C}_0(\Sigma^l_K)$ and
$C_1(\Sigma_K) \coloneqq \bigoplus_{l=0}^{\infty} C_1(\Sigma^l_K)$, and define
\[D_{K} \coloneqq \partial + \delta \colon C_1(\Sigma_K)\to \bar{C}_0(\Sigma_K).\]
Then, the \textit{string homology of $K$ in degree $0$} with coefficients in $\bZ_2$ is defined by the quotient
\[ \bar{H}^{\str}_0(K) \coloneqq \bar{C}_0(\Sigma_K)/  \Image D_{K}. \]

In addition, $\bar{C}_0(\Sigma_K)$ has a product structure
\[
\bar{C}_0(\Sigma^l_K)\otimes \bar{C}_0(\Sigma^{l'}_K) \to \bar{C}_0(\Sigma^{l+l'}_K) \colon x\otimes x' \mapsto x*x'
\]
induced by a natural map
\begin{align}\label{product-map} \Sigma^l_K \times \Sigma^{l'}_K \to \Sigma^{l+l'}_K \colon \left( (s_1,\dots ,s_{l}),(s'_1,\dots ,s'_{l'})\right) \mapsto ( s_1,\dots ,s_{l} , s'_1,\dots ,s'_{l'} ). 
\end{align}
When $l=0$ or $l'=0$, we regard it as the identity map since $\Sigma^0_K=\{\pt\}$.
Likewise, we define
\[  C_0(\Sigma^l_K)\otimes C_1(\Sigma^{l'}_K)\otimes C_0(\Sigma^{l''}_K)\to C_1(\Sigma^{l+l'+l''}_K) \colon x_1\otimes y \otimes x_2 \mapsto x *_1 y *_1 x' , \]
for which $D_K( x *_1 y *_1 x' ) = x* (D_K y)* x'$ holds in $\bar{C}_0(\Sigma_K)$.
Therefore, $\Image D_K$ is a two-sided ideal of $\bar{C}_0(\Sigma_K)$.
Hence, the product $*$ on $\bar{C}_0(\Sigma_K)$ descends to a product on
$\bar{H}^{\str}_0(K)$, and this determines a structure of $\bZ_2$-algebra.
It has a unit which comes from $1\in \bar{C}_0(\Sigma^0_K)=\bZ_2$.

The next lemma will be used in Section \ref{subsubsec-proof-main}.

\begin{lemma}\label{lem-homologous}
Suppose that $\bold{s}=(s_1,\dots ,s_l)\in \Sigma^l_K$ satisfies (0a) and (0b).
Then, there exists a neighborhood $U_{\bold{s}}$ of $\bold{s}$ in $\Sigma^l_K$ such that the following hold  for any $\bold{s}'\in U_{\bold{s}}$:
\begin{itemize} 
\item $\bold{s}'$ satisfies (0a) and (0b).
\item There exists $h\in C_1(\Sigma^l_K)$ such that $ \partial (h) = \bold{s}'-\bold{s}$ in $\bar{C}_0(\Sigma^l_K)$ and $\delta (h) =0 $ in $\bar{C}_0(\Sigma^{l+1}_K)$.
\end{itemize}
\end{lemma}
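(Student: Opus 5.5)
We use the Banach manifold structure of $\Sigma^l_K$ from Remark \ref{rem-Banach}: a neighborhood $U_{\bold{s}}$ that is path-connected in a linear chart, and the straight-line homotopy $\bold{s}^{\lambda}$ from $\bold{s}$ to $\bold{s}'$ inside it. The homotopy $(\bold{s}^\lambda)_{\lambda\in[0,1]}$ will have no interior intersections with $K$ at all, so that $\Gamma_i^{-1}(K)=\emptyset$ and hence $\delta(h)=0$, with $\partial(h)=\bold{s}'-\bold{s}$.

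\textbf{Step 1: endpoint charts.} Write $s_i\colon[0,T_i]\to\R^3$ and $s_i(0)=x_i$, $s_i(T_i)=y_i\in K$. Choose local parametrizations $\kappa$ of $K$ near these endpoints, and rescale to $[0,1]$. A chart of $\Sigma^l_K$ near $\bold{s}$ is then given by tuples $(T'_i,\theta_i,\theta'_i,\sigma_i)$, where $\sigma_i$ is a $C^1$ path vanishing at $0$ and $1$, and we define
\[ s'_i(T'_i t)=s_i(T_it)+\bigl(\kappa(\theta_i)-x_i\bigr)\phi_0(t)+\bigl(\kappa(\theta'_i)-y_i\bigr)\phi_1(t)+\sigma_i(t), \]
with cutoffs $\phi_0,\phi_1$ supported near $0$ and $1$. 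A small convex ball $U_{\bold{s}}$ in this chart gives, for $\bold{s}'\in U_{\bold{s}}$, a path $\bold{s}^\lambda$ in $U_{\bold{s}}$ from $\bold{s}$ to $\bold{s}'$ obtained by linear interpolation of the parameters. This path is $C^1$ in $(\lambda,t)$, so (1a) holds as long as the relevant preimage of $K$ is empty.

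\textbf{Step 2: no interior hits of $K$, uniformly.} This is the main point. By (0b), $\dot s_i(0)$ is transverse to $T_{x_i}K$. Hence there is $\eta>0$ such that for every path $C^1$-close to $s_i$ whose starting point lies on $K$ near $x_i$, the path does not meet $K$ on $(0,\eta]$. The justification is that the distance to $K$ grows linearly, since the normal component of the velocity is bounded below; one can make this precise in a tubular coordinate of $K$. The same holds near $T_i$. On the middle interval $[\eta, T_i-\eta]$, the image of $s_i$ has positive distance from $K$ by (0a) and compactness, and $C^0$-closeness preserves this.

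Shrinking $U_{\bold{s}}$ so that all these estimates hold, every $\bold{s}''\in U_{\bold{s}}$ satisfies (0a) and (0b), and (0b) is an open condition in $C^1$. This gives the first bullet. Applied to each $\bold{s}^\lambda$, it gives (1b) and shows that $\Gamma_i^{-1}(K)=\emptyset$ on the open set $U_i$. Consequently (1a) holds vacuously, (1c) holds trivially, and $h\coloneqq(\bold{s}^\lambda)_{\lambda}\in C_1(\Sigma^l_K)$.

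\textbf{Step 3: conclusion.} We have $\delta(h)=\sum_i\delta_i(h)=0$ because every $\Gamma_i^{-1}(K)$ is empty, and $\partial(h)=\bold{s}^1-\bold{s}^0=\bold{s}'-\bold{s}$ in $\bar{C}_0(\Sigma^l_K)$. For $l=0$ the statement is trivial.

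We expect the only delicate step to be the uniform endpoint estimate in Step 2, which controls intersections with $K$ near the moving endpoints. It is handled by writing paths in tubular coordinates $(\theta,v)$ around $K$ and using that $|v(t)|\ge c' t$ for small $t$ whenever $\dot v(0)\neq0$, uniformly over a $C^1$-neighborhood.
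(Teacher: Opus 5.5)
Your proposal is correct and follows the paper's proof. The paper also takes a small ball around $\bold{s}$ in the Banach manifold on which (0a) and (0b) hold, joins $\bold{s}$ to $\bold{s}'$ by a $C^1$ path inside that ball, and concludes from $\Gamma_i^{-1}(K)=\emptyset$ that $\partial(h)=\bold{s}'-\bold{s}$ and $\delta(h)=0$. The only difference is that the paper simply asserts that (0a) and (0b) are open conditions, whereas your Step 2 justifies this, including the point that (0a) is open only together with the endpoint transversality (0b). You also check explicitly that the interpolated family is $C^1$ in $(\lambda,t)$.
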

\begin{proof}
The conditions (0a) and (0b) are open conditions.
Since $\Sigma^l_K$ is a Banach manifold, we can take $U_{\bold{s}}$ to be an open ball containing $\bold{s}$ such that every $\bold{s}' \in U_{\bold{s}}$ satisfies (0a) and (0b).
Furthermore, for any $\bold{s}' \in U_{\bold{s}}$,
we choose a $C^1$ path $[0,1]\to U_{\bold{s}}\colon \lambda \mapsto \bold{s}^{\lambda}$ such that $\bold{s}^0=\bold{s}$ and $\bold{s}^1=\bold{s}'$.
Consider a $1$-chain $h=(\bold{s}^{\lambda})_{\lambda\in [0,1]}$.
$\bold{s}^{\lambda}$ satisfies (0a) for each $\lambda \in [0,1]$.
Therefore, if we consider a map $\Gamma_i$ as in (1a), $\Gamma_i^{-1}(K)$ is the empty set for $i=1,\dots ,l$.
Then, (1a), (1b) and (1c) apparently hold for $h$, and as an element of $C_1(\Sigma^l_K)$, $h$ satisfies
$\partial (h) = \bold{s}'-\bold{s}$ and $\delta (h)=0$.
\end{proof}

\subsection{Cord algebra over $\bZ_2$}\label{subsubsec-cord-Z2}
Fix an orientation and a framing of $K$.
Let $K'\subset N$ be a parallel copy of $K$ with respect to the framing.

\begin{definition}\label{def-cord-Z/2}
Let $\mathcal{P}_{K}$ be the set of homotopy classes of paths $\gamma\colon[0,1] \to \bR^3 \setminus K$ such that $\gamma(0),\gamma(1)\in K'$.
Then, we define $\mathcal{A}$ as the non-commutative $\bZ_2$-algebra freely generated by $\mathcal{P}_{K}$.
The \textit{cord algebra} of $K$ is defined by
\[ \Cord(K)\coloneqq \mathcal{A}/\mathcal{I}, \]
where $\mathcal{I}$ is the two-sided ideal of $\mathcal{A}$
generated by the following relations (see Figure \ref{fig-cord-Z2}):
\begin{itemize}
\item[(i)] If $x\in \mathcal{P}_{K}$ is represented by a constant path in $K'$, then $x=0$.
\item[(ii)] If $x_1,x_2\in \mathcal{P}_{K}$ are represented by paths $\gamma_1,\gamma_2$ such that $\gamma_1(1)=\gamma_2(0)$, then
\[ [\gamma_1\bullet \gamma_2] - [\gamma_1\bullet m \bullet \gamma_2] = x_1 \cdot x_2.\]
Here, $m\colon [0,1]\to \R^3\setminus K$ is a loop based at $\gamma_1(1)$ and represents the meridian of $K$.
\end{itemize}
\end{definition}
\begin{figure}
\centering
\begin{overpic}[height=5cm]{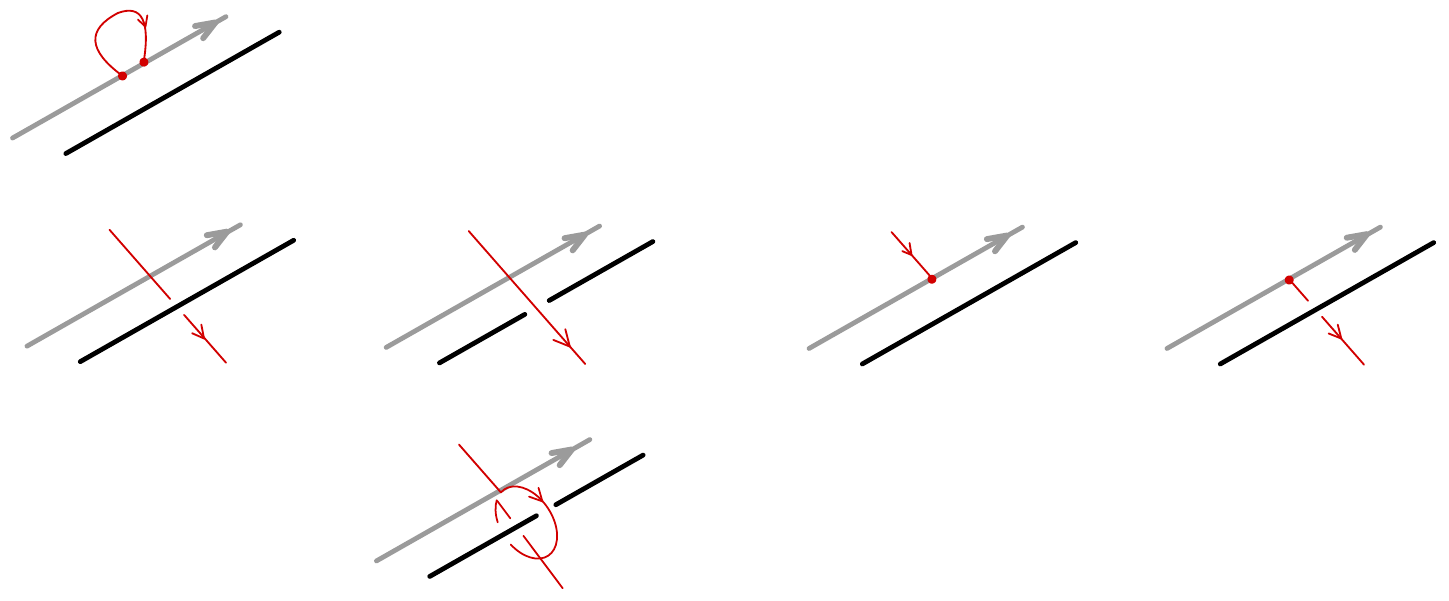}
\put(-6,36){(i)}
\put(23,36){$=0$}
\put(-7,20){(ii)}
\put(23,20){$-$}
\put(48,20){$=$}
\put(76,20){$\cdot$}
\put(35,13){\rotatebox[origin=c]{90}{$\simeq$}}
\put(10,13){\color{red}$\gamma_1\bullet \gamma_2$}
\put(40,5){\color{red}$\gamma_1\bullet m\bullet \gamma_2$}
\put(58,23){\color{red}$\gamma_1$}
\put(93.5,18){\color{red}$\gamma_2$}
\end{overpic}
\caption{The relations generating the ideal $\mathcal{I}$. `$\simeq$' means a homotopy equivalence.}\label{fig-cord-Z2}
\end{figure}


Fix a point $q_1\in K$ and let $q'_1$ be the corresponding point on the copy $K'$.
Referring to \cite[Proposition 2.9]{CELN}, choose paths $\gamma_Q, \tilde{\gamma}_Q\colon [0,1] \to \{q\in N \mid |q-q_1|<\epsilon\}$ such that: 
\begin{itemize}
\item $\gamma_Q(0)=q_1$ and $\gamma_Q(1)=q'_1$. $\tilde{\gamma}_Q(0)=q'_1$ and $\tilde{\gamma}_Q(1)=q_1$.
\item $\gamma_Q^{-1}(K) =\{0\}$ and $\tilde{\gamma}_Q^{-1}(K)=\{1\}$. $\dot{\gamma}_Q(0)\notin T_{q_1}K$ and $\dot{\tilde{\gamma}}_Q(1)\notin T_{q_1}K$.
\end{itemize}

For any $x\in \mathcal{P}_{K}$,
take a representative $\gamma_x\colon [0,1] \to \bR^3 \setminus K$ such that $\gamma_x(0) = \gamma_x(1) = q'_1$.
By concatenation, we obtain $\hat{\gamma}_x\coloneqq \gamma_Q\bullet \gamma_x\bullet \tilde{\gamma}_Q \colon [0,3]\to \bR^3$.
We remark that $\hat{\gamma}_x $ belongs to $ \Sigma^1_K$ and satisfies the conditions (0a) and (0b).
(More precisely, we need to modify $\hat{\gamma}_x$ near the concatenating points so that it is a $C^1$ path.)

\begin{proposition}\label{prop-isom-IK}
There exists an isomorphism of $\Z_2$-algebra 
\[ I_K\colon \Cord(K) \to \bar{H}_0^{\str}(K) \]
which maps $x_1\cdots x_l\in \Cord(K)$ for $x_1,\dots ,x_l\in \mathcal{P}_{K}$ to an element of $\bar{H}^{\str}_0(K)$ represented by
$(\hat{\gamma}_{x_1},\dots ,\hat{\gamma}_{x_l})\in \bar{C}_0(\Sigma^l_K)$.
\end{proposition}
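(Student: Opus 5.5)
We construct $I_K$ on generators, check that it respects the two relations of Definition \ref{def-cord-Z/2}, and then build an inverse map $\bar{H}^{\str}_0(K)\to \Cord(K)$. This mirrors \cite[Proposition 2.9]{CELN}, reduced to $\Z_2$ and with $N$-strings omitted.

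\emph{Well-definedness of the generator map.} For $x\in\mathcal{P}_K$, the class of $\hat{\gamma}_x$ should be independent of the representative $\gamma_x$. Two representatives based at $q'_1$ that are homotopic in $\R^3\setminus K$ can be joined by a homotopy avoiding $K$. The hat construction then gives a $1$-chain $h$ with $\Gamma_1^{-1}(K)=\emptyset$ (after a $C^1$ smoothing near the concatenation points), so $\delta(h)=0$ and $\partial h$ is the difference. Homotopies that move the endpoints along $K'$ are handled by first sliding them back to $q'_1$ along $K'$. We then extend freely and multiplicatively to $\mathcal{A}\to\bar{H}^{\str}_0(K)$, using the product $*$.

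\emph{The relations.} For (i), a constant path at $q'_1$ gives $\hat{\gamma}_x=\gamma_Q\bullet\tilde{\gamma}_Q$, which lies in the $\epsilon$-ball around $q_1$. Choosing $\gamma_Q,\tilde{\gamma}_Q$ short, this path has length less than $\epsilon$, so it is zero in $\bar{C}_0(\Sigma^1_K)$.

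For (ii), we take a $1$-parameter family of paths that deforms $\hat{\gamma}_{\gamma_1\bullet\gamma_2}$ into $\hat{\gamma}_{\gamma_1\bullet m\bullet\gamma_2}$ by pushing a small arc near the concatenation point $\gamma_1(1)\in K'$ across $K$ exactly once, transversely. This family is an element $h\in C_1(\Sigma^1_K)$ with $\partial h$ equal to the difference of the two classes. Its $\delta h$ is the single split string at the crossing point. The two halves of that split string are $\gamma_Q\bullet\gamma_1$ followed by a short arc to $K$, and a short arc from $K$ followed by $\gamma_2\bullet\tilde{\gamma}_Q$. Each half is homotopic, by a family avoiding $K$ in its interior, to $\hat{\gamma}_{x_1}$ and $\hat{\gamma}_{x_2}$ respectively. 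Lemma \ref{lem-homologous} and homotopies with $\delta=0$ then identify $\delta h$ with $\hat{\gamma}_{x_1}*\hat{\gamma}_{x_2}$. Since we work over $\Z_2$, signs play no role, and relation (ii) holds in $\bar{H}^{\str}_0(K)$.

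\emph{Inverse map.} For a generator $\bold{s}=(s_1,\dots,s_l)$ satisfying (0a) and (0b), each $s_i$ has interior in $\R^3\setminus K$ and is transverse to $K$ at its endpoints. We push the endpoints off $K$ into $K'$ along the framing direction, in a canonical way determined by the normal direction of $s_i$ at the endpoint, by a short arc inside $N\setminus K$. This gives a class $J(s_i)\in\mathcal{P}_K$, and we set $J(\bold{s})=J(s_1)\cdots J(s_l)$.

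We must check that $J$ kills $\Image D_K$ and $C_0(\Sigma^l_{K,\epsilon})$. A string of length less than $\epsilon$ lies in a small ball around a point of $K$, so its pushed-off path is homotopic to a constant path in $K'$, and relation (i) applies; the choice of pushing direction contributes meridian factors, which are absorbed by relation (ii). For a $1$-chain, the class $J(\bold{s}^\lambda)$ changes only at the finitely many $\lambda$ where some $s^\lambda_i$ crosses $K$, and these are isolated by (1c). At each such crossing the change is exactly a meridian insertion, so relation (ii) gives $J(\partial h)=J(\delta h)$. Since $J\circ I_K=\id$ on generators, and every $\bold{s}$ is homologous to $I_K(J(\bold{s}))$ by sliding endpoints to $q_1$ with $\delta=0$, $I_K$ is an isomorphism.

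The main obstacle is this last step. One has to check carefully that the endpoint push-off of a string near $K$ is independent of choices up to relation (ii), because the direction at the endpoint can rotate around $K$ and each full rotation introduces a meridian. One also has to verify that crossings of $K$ near the endpoints (at $t<\epsilon/2c$), which the chain complex ignores, are consistent with relation (i) times a meridian term. I would first handle the case where the endpoint direction is fixed, and then treat rotation by an explicit local computation inside $N$.
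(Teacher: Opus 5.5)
Your proposal is correct and follows the same route as the paper. The paper defers to \cite[Proposition 6.3]{O22}, itself a modification of \cite[Proposition 2.9]{CELN}: define $I_K$ on the representatives $\hat{\gamma}_x$, check relation (i) via a short string and relation (ii) via a single transverse crossing of $K$, and invert by pushing string endpoints off $K$ into $K'$, with the endpoint ambiguities absorbed by $[m\bullet\gamma]=[\gamma]$ and by sliding along $K'$. The endpoint-rotation issue you flag is harmless for exactly this reason, and the vanishing of your inverse on strings shorter than $\epsilon$ (a meridian power inside a small ball) is what makes it compatible with the quotient by $C_0(\Sigma^l_{K,\epsilon})$.
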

The proof is completely parallel to \cite[Proposition 6.3]{O22}, where the cord algebra and the string homology are defined over $\R$.
We also remark that the proof of \cite[Proposition 6.3]{O22} is a modification of \cite[Proposition 2.9]{CELN}.

The isomorphism $I_K$ is independent of the choice of $\gamma_Q$ and $\tilde{\gamma}_Q$.
Since $\bar{H}^{\str}_0(K)$ is free from the orientation and the framing of $K$, this proposition implies that up to natural isomorphisms, $\Cord(K)$ is independent of  the orientation and the framing of $K$.

\subsection{Isomorphism from $\LCH_0(\Lambda_K)$ to $\bar{H}^{\str}_0(K)$}\label{subsubsec-isom-LCH-str}

Let $K$ be a real analytic knot in $\R^3$ satisfying the condition (\$) in Section \ref{subsec-property-of-moduli}.

Let $a\in \mathcal{R}(\Lambda_K)$ with $|a|=0$.
Taking $B=\{0\}$ and $\mathscr{L}=L_K$, consider the moduli space $\calM_{\mathscr{L},l}(a)$ for $l\in \Z_{\geq 1}$.
Hereafter, to denote its elements, we often omit writing the conformal structures.
Let us take $J'_{\rho}$ as in Lemma \ref{lem-jet-transv}.
For every $u \in \calM_{L_K,l}(a)$ and $i\in \{1,\dots ,l\}$,
we have a curve
\[\rest{u}{\overline{\partial_{2i} D_{2l+1}}} \colon \overline{\partial_{2i} D_{2l+1}} \to \bR^3\]
which is in $K$ at the endpoints $\{p_{2i-1}, p_{2i}\}$.
Since $\dim \calM_{L_K,l}(a)=|a| = 0$, Lemma \ref{lem-jet-transv} (iii) implies that $u(z)\notin K$ if $z\in \partial_{2i}D_{2l+1}$.
Moreover, this curve is not tangent to $K$ at the endpoints since $u\in \calM_{L_K,l}(a;(\frac{1}{2},\dots ,\frac{1}{2}))$ by Lemma \ref{lem-jet-transv} (i).
It is also an immersion by Lemma \ref{lem-jet-transv} (ii), so there is a unique diffeomorphism $\varphi^u_i\colon [0,T^u_i]\to \overline{\partial_{2i} D_{2l+1}}$ which maps $0$ to $p_{2i}$ such that
\begin{align}\label{path-si}
s^u_i \coloneqq \rest{u}{\overline{\partial_{2i} D_{2l+1}}} \circ \varphi^u_i \colon [0,T^u_i] \to \R^3 
\end{align}
is a $C^{\infty}$ path and the speed $|\dot{s}^u_i(t)|$ is constant to $1$ on $t\in [0,T^u_i]$.
(In particular, $T_i^u$ is the length of the curve $\rest{u}{\overline{\partial_{2i} D_{2l+1}}}$.)
We take
\[\bold{s}^u\coloneqq (s_1^u,\dots ,s_l^u ) \in \Sigma^l_K,\]
then, we can check that $\bold{s}^u$ satisfies (0a) and (0b) from the properties of $\rest{u}{\overline{\partial_{2i} D_{2l+1}}}$.

We define a unital $\Z_2$-algebra map
\begin{align}\label{Psi'K} \Psi'_K \colon \mathcal{A}_0(\Lambda_K) \to \bar{C}_0(\Sigma_K) 
\end{align}
which maps $a\in \mathcal{R}(\Lambda_K)$ with $|a|=0$ to  $(\Psi'_{K,l}(a))_{l=0,1,\dots} \in \bigoplus_{l=0}^{\infty} \bar{C}_0(\Sigma^l_K)$ determined by
\begin{align*} \Psi'_{K,0}(a) & \coloneqq \#_{\Z_2} \calM_{L_K,0}(a) \in \bar{C}_0(\Sigma^0_K) =\bZ_2 ,\\
 \Psi'_{K,l}(a) & \coloneqq \sum_{u \in \calM_{L_K,l}(a)} \bold{s}^u \in \bar{C}_0(\Sigma^l_K) \text{ if }l\geq 1,
\end{align*}
(here, note that $\calM_{L_K,l}(a)$ is a finite set by Lemma \ref{prop-compactify} and that $\calM_{L_K,l}(a) =\emptyset$ when $l\gg 0$ by Proposition \ref{lem-finiteness}).
Note that $\Psi'_K(1)=1\in \bar{C}_0(\Sigma^0_K)=\Z_2$ and
\[\Psi'_K(a_1\cdots a_m) = \Psi'_K(a_1)*\dots * \Psi'_K(a_m)\]
for every $a_1,\dots ,a_m \in \mathcal{R}(\Lambda_K)$ with $|a_1|=\dots =|a_m|=0$.

\begin{theorem}\label{thm-str-cord}
For the subspace $d_J(\mathcal{A}_1(\Lambda_K))$ of $\mathcal{A}_0(\Lambda_K)$, $\Psi'_K\left( d_J(\mathcal{A}_1(\Lambda_K))\right) $ is contained in $ \Image D_K$.
Moreover, the induced $\Z_2$-algebra map
\[ \Psi_K \colon \LCH_0(\Lambda_K) \to \bar{H}^{\str}_0(K) \]
is an isomorphism.
\end{theorem}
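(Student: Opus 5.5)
The first assertion is a chain-level statement, and I would prove it with the $1$-dimensional moduli spaces $\calM_{L_K,l}(b)$ for Reeb chords $b$ with $|b|=1$ and $B=\{0\}$. Since $\Psi'_K$ is an algebra map and $D_K$ has the Leibniz-type property $D_K(x*_1y*_1x')=x*(D_Ky)*x'$, the ideal $\Image D_K$ is two-sided. It therefore suffices to find, for each generator $b$ of degree $1$, a chain $h_b\in C_1(\Sigma_K)$ with
\[ D_K(h_b)=\Psi'_K(d_J b) \quad \text{in } \bar{C}_0(\Sigma_K). \]
By Lemma \ref{lem-transv} and Lemma \ref{lem-jet-transv}, with $|b|+\dim B=1$, the space $\calM_{L_K,l}(b)$ is a $1$-manifold. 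Its elements have winding numbers $\frac12$ and immersed boundary arcs. Each component is an arc or a circle.

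For every arc parametrized by $\lambda\in[0,1]$, the boundary arcs $s^{u_\lambda}_i$ on $\R^3$, reparametrized by arclength as in (\ref{path-si}), form a $1$-parameter family in $\Sigma^l_K$. Lemma \ref{lem-jet-transv} (iii)–(iv) gives the conditions (1a) and (1c): the evaluation maps are transverse to $K$, and at most one switch point hits $K$ at a time. Condition (1b) follows from winding number $\frac12$. A bound $|\dot s|<c$ needs a normalization, for instance a uniform rescaling of the time parameter, which I regard as routine. Let $h_b$ be the sum of these chains.

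Next I would identify the ends of $\calM_{L_K,l}(b)$, following \cite[Proposition 6.8, 6.9]{CELN}, in the same way as Proposition \ref{prop-compactify} but with $|b|=1$. There are three kinds of ends.
\begin{itemize}
\item[(a)] Two-level buildings $\bar{\calM}_{\Lambda_K,J}(b;a_1,\dots,a_m)\times\prod_k\calM_{L_K,l_k}(a_k)$ with $|a_k|=0$. These correspond exactly to $\Psi'_K(d_Jb)$, and they contribute through $\partial(h_b)$.
\item[(b)] Boundary points of $\partial_{2i}D_{2l+1}$ passing through $K$. These appear as interior points of the $\lambda$-family with $\Gamma_i^{-1}(K)\neq\emptyset$. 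They produce the splittings counted by $\delta(h_b)$, which lie in $\Sigma^{l+1}_K$. Under the gluing with constant disks these are not ends but interior points, and they match elements of $\calM_{L_K,l+1}(b)$ obtained by splitting.
\item[(c)] Boundary points of the odd arcs in $L_K$ passing through $K$. These correspond to two switch points merging in the limit, which shortens a string to length $0$. Such a configuration lies in $\Sigma_{K,\epsilon}$ and is therefore zero in $\bar C_0$.
\end{itemize}
Keeping track of which of these are ends and which are interior crossings, and checking that the $\delta$-contributions and the $\partial$-contributions across different $l$ cancel mod $2$, gives $D_K(h_b)=\Psi'_K(d_Jb)$. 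Finiteness in $l$ follows from Proposition \ref{lem-finiteness}.

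For the isomorphism, I would follow \cite[Theorem 1.2]{CELN}, which the paper defers to Appendix \ref{subsec-proof-isom}, using a length filtration. The binormal chords of $K$ correspond to critical points of $f_K$. On the string side, I would filter $\bar{C}_0(\Sigma_K)$ by the total length of the strings, or by the action. On the contact side, I would filter by $T_a$. Both maps respect these filtrations, because holomorphic disks decrease action and the boundary lengths are controlled by energy. I would then show that the associated graded map is an isomorphism. On the string side, a Morse-theoretic shortening argument for the length functional on chords with endpoints on $K$ reduces $\bar{H}_0^{\str}(K)$ to words in index-$0$ critical chords modulo the relations coming from index-$1$ chords. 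For the leading term, the only disk of minimal energy from $a$ is the trivial strip of Remark \ref{rem-trivial-strip}, which sends $a$ to its binormal chord plus lower-order terms. So $\Psi'_K$ is triangular with identity diagonal with respect to these filtrations, and a spectral-sequence or five-lemma argument finishes.

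The main obstacle is the string-side Morse reduction in the isomorphism step, which must be carried out carefully with the $N$-strings omitted and with $\epsilon$-short strings quotiented out. This requires constructing a length-shortening flow compatible with splittings. For the chain-map step, the delicate point is the cancellation bookkeeping for the ends in (b) and (c).
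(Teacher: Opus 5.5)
Your argument for the first assertion is sound in outline, but it is not how the paper proceeds. It is the $B=\{0\}$, $|b|=1$ analogue of the cobordism argument the paper uses for Proposition \ref{prop-chain-homotopy}; the paper instead gets this assertion for free from the non-commutative comparison described below. Two points need fixing.

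\begin{itemize}
\item In (b), a point of $\delta\widetilde{\calM}^{2i}_{L_K,l}(b)$ is an interior crossing of $\calM_{L_K,l}(b)$, contributing to $\delta h_b$. It is simultaneously a genuine end of the compactified $\calM_{L_K,l+1}(b)$, contributing the split tuple to $\partial h_b$. The two contributions cancel mod $2$ as in Lemma \ref{lemma-delta-Hl}, so ``not ends but interior points'' is only half the picture.
\item In (c), the limiting configuration is not a point of $\Sigma^l_K$, since a string has length $0$. You must truncate as the paper does: remove a collar $U_\lambda$ of each such end and stop the chain at $\widehat{\lambda}$. That tuple contains a string shorter than $\epsilon$, so it vanishes in $\bar{C}_0(\Sigma^l_K)$.
\end{itemize}

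The genuine gap is the isomorphism. What you propose is a re-proof of \cite[Theorem 1.2]{CELN} in the $N$-string-free setting. The step that carries all of its content, computing the associated graded of the string complex via a length-shortening deformation, is only announced. This is not routine: $\delta$ preserves total length, so it survives in the associated graded, and the deformation has to control how $1$-chains cross $K$.

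Even granting that reduction, triangularity of $\Psi'_K$ on degree-$0$ generators gives at most surjectivity. Injectivity also needs a filtered comparison in degree $1$, which your proposal does not address:
\begin{itemize}
\item $b\mapsto h_b$ must have leading term the unstable manifold of the index-$1$ binormal chord of $b$, coming from the component of $\calM_{L_K,1}(b)$ through the trivial strip;
\item these unstable manifolds must produce all relations in each graded piece of $\bar{H}^{\str}_0(K)$.
\end{itemize}

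The paper sidesteps all of this. It takes CELN's isomorphism $\tilde{\Psi}_K\colon \LCH^{\nc}_0(\Lambda_K)\to H^{\str}_0(K)$ over $\Z_2[\lambda^{\pm},\mu^{\pm}]$. It then checks on generators that the diagram (\ref{diagram-A-H-C}) commutes: $I_K\circ j_K\circ (I^{\nc}_K)^{-1}$ forgets $N$-strings and sets $\lambda=\mu=1$, while $\tilde{s}^u_{2i}$ agrees with $s^u_i$. Both assertions then follow formally.
\begin{itemize}
\item The map $\mathcal{A}^{\nc}_1(\Lambda_K)\to\mathcal{A}_1(\Lambda_K)$ carries $d^{\nc}_J$-boundaries onto $d_J$-boundaries, which $\tilde{\Psi}'_K$ kills. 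This gives the first assertion.
\item Apply the reduction (\ref{A-reduction}) to the NC-algebra isomorphism $(I^{\nc}_K)^{-1}\circ\tilde{\Psi}_K$, and identify the reductions of $\LCH^{\nc}_0(\Lambda_K)$ and $\Cord^{\nc}(K)$ with $\LCH_0(\Lambda_K)$ and $\Cord(K)$. This shows that $\Psi_K$ is an isomorphism.
\end{itemize}

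Your route, if completed, would give a proof independent of CELN's $N$-string and spike analysis; the remark after the theorem says this should be possible. As written, however, it does not prove the isomorphism.
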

The proof is left to Appendix \ref{subsec-proof-isom}.
\begin{remark}
In Appendix \ref{subsec-proof-isom}, we will give a proof which relies on the result \cite[Theorem 1.2]{CELN} via the fully non-commutative Legendrian contact homology of $\Lambda_K$.
By introducing the string homology over $\Z_2$ in degree $1$ as in \cite[Section 5]{CELN} and using the length filtrations as in \cite[Section 7.6]{CELN},
it would be possible to directly show that $\Psi'_K$ induces an isomorphism in degree $0$.
\end{remark}

\section{Coincidence of isomorphisms induced by family of knots}\label{sec-coincidence}
\label{ch:autom}

\subsection{Isomorphisms on three $\Z_2$-algebras}
\label{sec:isom_cord_alg}


Let $(K_t)_{t\in [0,1]}$ be an isotopy of smooth knots in $\bR^3$.
We then take an ambient isotopy $(F_t\colon \bR^3\to\bR^3)_{t\in [0,1]}$ with compact support such that:
\begin{itemize}
\item $F_0=\id_{\bR^3}$.
\item $F_{1-t}(K_{t})= K_1$ for every $t\in[0,1]$.
\end{itemize}
In particular, $F_1(K_0)=K_1$.
This is obtained as follows: We take a smooth family of embedding $(f_t\colon S^1\to \R^3)_{t\in [0,1]}$ such that $f_t(S^1)=K_{1-t}$.
It is extended to an ambient isotopy $(G_t\colon \R^3\to \R^3)_{t\in [0,1]}$ such that $G_0=\id_{\R^3}$ and $G_t\circ f_0 = f_t$, and then we set $F_t\coloneqq G_t^{-1}$.

Let us define isomorphisms of $\Z_2$-algebras induced by $(K_t)_{t\in [0,1]}$ and $(F_t)_{t\in [0,1]}$.

\subsubsection{On the string homology}\label{subsubsec-isom-F-string}
For $j\in \{0,1\}$, take $c_j>1$ and $\epsilon_j>0$ such that we can define the spaces $\Sigma^l_{K_j}$ from (\ref{Sigma-l}) and $\Sigma^l_{K_j,\epsilon_j}$ from (\ref{Sigma-l-epsilon})
by setting $c=c_j$, $K=K_j$ and $\epsilon=\epsilon_j$.
Then, as in Section \ref{subsec-string}, a linear map
\[D_{K_j} \colon C_1(\Sigma_{K_j}) \to \bar{C}_0(\Sigma_{K_j})\]
and a $\Z_2$-algebra $\bar{H}^{\str}_0(K_j)= \bar{C}_0(\Sigma_{K_j})/\Image D_{K_j}$ are defined.

We take $c_1> 1$ to be large and $\epsilon_0<0$ to be small so that
\[ \textstyle{ |(dF_1)_q(v)|< \min \{ \frac{c_1}{c_0}, \frac{\epsilon_1}{\epsilon_0}\}\cdot |v| }\]
for any $q\in \bR^3$ and $v\in T_q\bR^3$.
Then, we can define a map
\[ \Sigma^l_{K_0}\to \Sigma^l_{K_1} \colon (s_1,\dots ,s_l) \mapsto (F_1\circ s_1,\dots ,F_1\circ s_l)   \]
for $l\in \Z_{\geq 1}$.
The conditions (0a), (0b), (1a), (1b) and (1c) are preserved by this map since $F_1$ is a diffeomorphism which maps $K_0$ onto $K_1$.
Let us define $\Z_2$-linear maps $F_1^{(0)}$ and $F_1^{(1)}$ by
\begin{align}\label{F1-chain} F_1^{(i)} \colon C_i(\Sigma^l_{K_0})\to C_i(\Sigma^l_{K_1}) \colon \begin{cases} (s_1,\dots ,s_l) \mapsto (F_1\circ s_1,\dots ,F_1\circ s_l) &\text{if }i=0  ,\\
(s^{\lambda}_1,\dots ,s^{\lambda}_l)_{\lambda\in[0,1]} \mapsto (F_1\circ s^{\lambda}_1,\dots ,F_1\circ s^{\lambda}_l)_{\lambda\in[0,1]} & \text{if }i=1,
\end{cases}
\end{align}
for $l\in \Z_{\geq 1}$. Let us also set $F_1^{(0)} = \id_{\Z_2} \colon C_0(\Sigma^0_{K_0})\to C_0(\Sigma^0_{K_1})$.
We note that
\[F_1^{(0)}(C_0(\Sigma^l_{K_0,\epsilon_0})) \subset C_0(\Sigma^l_{K_1,\epsilon_1})\]
since for any $C^1$ path $s \colon [0,T]\to \bR^3$ with $\len (\gamma)<\epsilon_0$,
\[\len (F_1\circ s) \leq \sup_{q\in \bR^3, 0\neq v\in T_q\bR^3} \textstyle{ \frac{| dF_1 (v) |}{|v|} } \cdot \len (s) \leq \frac{\epsilon_1}{\epsilon_0} \cdot 
\len (s) <\epsilon_1. \]
Therefore, $F^{(0)}_1$ induces a linear map
$ \bar{F}_1^{(0)} \colon \bar{C}_0(\Sigma_{K_0}) \to \bar{C}_0(\Sigma_{K_1})$.
It is claer from the construction of $D_K=\partial +\delta$ for $K\in \{K_0,K_1\}$ that
\[ D_{K_1} \circ F_1^{(1)} = \bar{F}^{(0)}_1 \circ D_{K_0} \colon C_1(\Sigma_{K_0}) \to \bar{C}_0(\Sigma_{K_1}). \]
Therefore, a $\Z_2$-linear map
\[  F_1^{\str} \colon \bar{H}^{\str}_0(K_0)\to \bar{H}^{\str}_0(K_1) \]
is induced from $\bar{F}^{(0)}_1$.
Moreover, this preserves the product structures and the units.

\subsubsection{On the cord algebra}\label{subsubsec-isom-on-cord}
For $j\in \{0,1\}$, fix an orientation and a framing of $K_j$, and let $K'_j$ be a parallel copy of $K_j$ with respect to the framing.
We may fix them so that $F_1(K'_0)=K'_1$.
Then, $F_1$ induces a bijection
\[ \mathcal{P}_{K_0}  \to \mathcal{P}_{K_1} \colon [\gamma] \mapsto [F_1\circ \gamma]  \]
between the sets of generators of cord algebras.
Moreover, this bijection preserves the relations in Definition \ref{def-cord-Z/2}, so
it induces an isomorphisms
\[ (F_1)_* \colon \Cord(K_0) \to \Cord (K_1) . \]

From the construction of the isomorphism $I_{K_j} \colon \bar{H}^{\str}_0(K_j) \to \Cord (K_j)$ of Proposition \ref{prop-isom-IK} for $j\in \{0,1\}$,
the following diagram commutes:
\begin{align}\label{diagram-F-IK} 
\begin{split}
\xymatrix@C=30pt{
\bar{H}^{\str}_0(K_0) \ar[d]_-{I_{K_0}} \ar[r]^-{F_1^{\str}} & \bar{H}^{\str}_0(K_1) \ar[d]_-{I_{K_1}} \\
\Cord (K_0) \ar[r]^-{(F_1)_*} & \Cord (K_1).
}\end{split}
\end{align}

The map $(F_1)_*$ depends only on the isotopy $(K_t)_{t\in [0,1]}$.
Indeed, if we take another ambient isotopy $(F'_t)_{t\in[0,1]}$ satisfying the same condition as $(F_t)_{t\in[0,1]}$, $F_1$ is homotopic to $F'_1$ through a family $(F'_s\circ F_s^{-1}\circ F_1)_{s\in [0,1]}$ of diffeomorphisms which map $K_0$ to $K_1$.
This means that $[F_1\circ \gamma]=[F'_1\circ \gamma]$ for any $[\gamma]\in \mathcal{P}_{K_0}$.
Moreover, given a smooth family $(K^s_t)_{s,t\in [0,1]}$ of knots such that $K^s_0=K_0$ and $K^s_1=K_1$ for every $s\in [0,1]$, we take an ambient isotopy $(F^s_t)_{t\in [0,1]}$ as above for $(K^s_t)_{t\in [0,1]}$ which varies smoothly on $s$. Then, $[F^0_1\circ \gamma]=[F^1_1\circ \gamma]$ for every $[\gamma]\in \mathcal{P}_{K_0}$.
Therefore, by setting $\rho \left( [(K_t)_{t\in [0,1]}]\right) \coloneqq (F_1)_*$, we obtain a well-defined map
\[ \rho\colon \Pi_{\sm}(K_0,K_1)\to \Hom (\Cord (K_0),\Cord(K_1)) . \] 
In addition, given another knot $K_2$, we can check that $\rho(x')\circ\rho(x) = \rho(x'\cdot x)$ for any $x\in \Pi_{\sm}(K_0,K_1)$ and $x'\in \Pi_{\sm}(K_1,K_2)$.

\subsubsection{On the Legendrian contact homology in degree $0$}\label{subsubsec-isom-on-LCH}

For $j\in \{0,1\}$, assume that every Reeb chord of $\Lambda_{K_j}$ is non-degenerate. 

From an isotopy $(K_t)_{t\in [0,1]}$, we obtain a family of Legendrian tori $(\Lambda_{K_t})_{t\in [0,1]}$ in $U^*\R^3$.
By Lemma \ref{lem-isotopy-cobordism}, we get a smooth family of exact Lagrangian cobordisms $(L_t)_{t\in [0,1]}$ such that
\[\begin{array}{cc} L_t \cap \left( (-\infty,r_-]\times U^*\R^3 \right) = (-\infty,r_-]\times \Lambda_{K_t}, &
L_t \cap \left(  [r_+,\infty) \times U^*\R^3 \right) = [r_+,\infty) \times \Lambda_{K_0},
\end{array}\]
and $L_0 =\R\times \Lambda_K$.
For the element $\bLambda \left( [(K_t)_{t\in [0,1]}]\right) = [(\Lambda_{K_t})_{t\in [0,1]}] \in \Pi_{\Leg}(\Lambda_{K_0},\Lambda_{K_1})$, we have an isomorphism
\[\Phi\left( [(\Lambda_{K_t})_{t\in [0,1]}] \right) = \Phi_{L_1} \colon \LCH_0(\Lambda_{K_0}) \to \LCH_0(\Lambda_{K_1}).\]
by Theorem \ref{thm-DGA-isom}.

As explained in Section \ref{subsec-perturb-Lag}, take an isotopy $(\Lambda_s)_{s\in [0,1]}$ satisfying $\Lambda_0=\Lambda_{K_0}$ and (\ref{non-intersect-with-orbit}), and then construct a smooth family of exact Lagrangian cobordisms $(M_s)_{s\in [0,1]}$.
Recall that $M_0=\R\times \Lambda_{K_0}$ and both the positive and negative ends of $M_s$ are $\Lambda_{K_0}$.
We take $0<s_0\leq 1$ as in Proposition \ref{prop-perturb-Lag} for the family $(L_t)_{t\in [0,1]}$.
Changing the parameter, we may assume that $s_0=1$.
By using $(M_s)_{s\in [0,1]}$, we define a smooth family of exact Lagrangian cobordisms $(\widetilde{L}_t)_{t\in [0,1]}$  to be
\begin{align}\label{tilde-L} \widetilde{L}_t\coloneqq L_t\circ_{\Lambda_{K_0}} M_t.
\end{align}
Note that $\Phi\left( [(\Lambda_{K_t})_{t\in [0,1]}] \right) = \Phi_{L_1} = \Phi_{\widetilde{L}_1}$ since $(L_1\circ_{\Lambda_K} M_s)_{s\in [0,1]}$ is an isotopy from $L_1$ to $\widetilde{L}_1$.
In addition, note that $\widetilde{L}_0 = \R\times \Lambda_{K_0}$.

\subsection{Proof of main results}\label{subsec-proof-main}

\subsubsection{The road to Theorem \ref{thm-1}}

We will prove the following proposition in Section \ref{subsubsec-proof-main}.
Recall the condition (\$) in Section \ref{subsec-property-of-moduli}.
\begin{proposition}\label{prop-chain-homotopy}
Let $(K_t)_{t\in [0,1]}$, and $(\widetilde{L}_t)_{t\in [0,1]}$ be as in Section \ref{subsubsec-isom-on-LCH}.
We additionally assume that $K_0$ and $K_1$ satisfy (\$) and that $(K_t)_{t\in [0,1]}$ is a smooth family of real analytic knots.
Take $(F_t)_{t\in[0,1]}$ and define $F^{\str}_1$ as in Section \ref{subsubsec-isom-F-string}.
Then, the following diagram commutes:
\begin{align}\label{diagram-F-PsiK}
\begin{split}
\xymatrix@C=30pt{
\LCH_0(\Lambda_{K_0}) \ar[d]_-{\Psi_{K_0}} \ar[r]^-{\Phi_{\widetilde{L}_1}} & \LCH_0(\Lambda_{K_1}) \ar[d]_-{\Psi_{K_1}} \\
\bar{H}^{\str}_0(K_0) \ar[r]^-{F_1^{\str}} & \bar{H}^{\str}_0(K_1).
}
\end{split}\end{align}
\end{proposition}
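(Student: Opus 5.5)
Since $\LCH_0(\Lambda_{K_0})=\mathcal{A}_0(\Lambda_{K_0})/d_J(\mathcal{A}_1)$ is generated as an algebra by Reeb chords $a$ with $|a|=0$, and all maps in (\ref{diagram-F-PsiK}) are unital algebra maps, it suffices to show that $\Psi_{K_1}(\Phi_{\widetilde{L}_1}(a))=F^{\str}_1(\Psi_{K_0}(a))$ for each such $a$. To do this we construct a single family of fillings interpolating between the two sides. Concatenate $\widetilde{L}_t$ with the conormal $L_{K_t}$: for $t\in[0,1]$ let $L'_t$ be the filling of $\Lambda_{K_0}$ obtained by gluing $E(\widetilde{L}_t)$ (shifted far in the $r$-direction) on top of $L_{K_t}$. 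Reparametrize $t$ by $b\in[0,b_*]$, where $b\in[0,1]$ moves $t$ and $b\in[1,b_*]$ stretches the neck along $\Lambda_{K_1}$. The family $\mathscr{L}=(L_b)$ then satisfies (L1)--(L3), with knots $K_b$ real analytic, $L_0=L_{K_0}$, and $\widetilde{L}_0=\R\times\Lambda_{K_0}$. Condition (L4) holds by Proposition \ref{prop-perturb-Lag}, because of the $M_t$ factor in (\ref{tilde-L}) (cf. Remark \ref{rem-perturb-L}).

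Next apply Proposition \ref{prop-compactify} to the $1$-dimensional space $\calM_{\mathscr{L},l}(a)$. Its boundary consists of $\calM_{L_{K_0},l}(a)$, of $\calM_{L_{b_*},l}(a)$, and of the spaces $\delta\widetilde{\calM}^j_{\mathscr{L},l-1}(a)$. The rescaled boundary paths on the $\R^3$-arcs assemble into a $1$-chain $h_l\in C_1(\Sigma^l_{K_1})$ after pushing forward by the ambient diffeomorphisms $F$ with $F(K_b)=K_1$, using a family version of $F_1$. Conditions (1a)--(1c) follow from Lemma \ref{lem-jet-transv} (ii)--(iv). The points where a boundary arc hits $K_b$ are exactly the elements of $\delta\widetilde{\calM}$. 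So the $\delta$-part of $D_{K_1}(\sum_l h_l)$ accounts for the $\delta\widetilde{\calM}$ boundary components, while the $\partial$-part equals the difference of the two ends. Near the ends, small mismatches of parametrization are absorbed using Lemma \ref{lem-homologous}. At $b=0$ the end is $F_1^{(0)}(\Psi'_{K_0}(a))$.

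At $b=b_*$ we use SFT neck-stretching together with the gluing and compactness of \cite{EHK, CELN}. For $b_*\gg0$, the curves in $\calM_{L_{b_*},l}(a)$ correspond bijectively to two-level buildings: a curve in $\calM_{\widetilde{L}_1,J}(a;a_1,\dots,a_m)$ of index $0$ on top, and curves in $\calM_{L_{K_1},l_k}(a_k)$ with $|a_k|=0$ below. Hence this end equals $\Psi'_{K_1}(\Phi'_{\widetilde{L}_1}(a))$. Degree reasons exclude other breakings: all chords have degree $\geq0$, and $\widetilde{L}_1$ has no index $-1$ curves. Putting the pieces together, $F_1^{(0)}\Psi'_{K_0}(a)-\Psi'_{K_1}\Phi'_{\widetilde{L}_1}(a)\in\Image D_{K_1}$, which proves the proposition.

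I expect the main obstacle to be the neck-stretching step at $b_*$. It requires matching the stretched moduli spaces with products of the cobordism and filling moduli spaces under a fixed $J'_\rho$, while keeping the transversality of Lemma \ref{lem-jet-transv} uniform in $b$. A secondary technical point is ensuring the boundary paths are uniformly $C^1$-bounded by $c_1$ and that short arcs fall into $\Sigma_{K,\epsilon}$. I would handle both by choosing $c_1$ large and using Proposition \ref{lem-finiteness} together with compactness of the parameter space.
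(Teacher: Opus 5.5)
Your proposal is essentially the paper's proof. It uses the same family $L_b$ (moving $t$ on $[0,1]$, then stretching the neck along $\Lambda_{K_1}$, with (L4) coming from the $M_t$ factor), the same $1$-chain built from $F_{1-t(b)}\circ u|_{\partial_{2i}D_{2l+1}}$ over the compactified parametrized moduli space, and the stretching bijection of \cite{EHK} at the far end, with Lemma \ref{lem-homologous} supplying the correction chain. One point needs stating more carefully: the $\delta$-part of $D_{K_1}$ cancels only the even-$j$ boundary components $\delta\widetilde{\calM}^{2i}_{\mathscr{L},l-1}(a)$, whereas at the odd-$j$ ones an $\R^3$-arc collapses to a point of $K_b$ (so the limiting tuple is not even in $\Sigma^l_{K_1}$); those ends must be cut off by small collars, as the paper does with $C'\calM_{\mathscr{L}_n,l}(a)$, so that the new endpoints contain an arc shorter than $\epsilon_1$ and vanish in $\bar{C}_0(\Sigma^l_{K_1})$.
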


Assuming this proposition, we get the following.

\begin{theorem}\label{thm-monodromy}
Let $K_0$ and $K_1$ be real analytic knots satisfying (\$).
For the map $\rho$ defined in Section \ref{subsubsec-isom-on-cord}, the following diagram commutes:
\[ \xymatrix{
\Pi_{\Leg}(\Lambda_{K_0},\Lambda_{K_1})\ar[r]^-{\Phi} & \Hom (\LCH_0(\Lambda_{K_0}), \LCH_0(\Lambda_{K_1}))  \ar[d]_-{\cong}^-{I_{K_1}^{-1}\circ \Psi_{K_1}\circ(\cdot)\circ \Psi_{K_0}^{-1}\circ I_{K_0}} \\
\Pi_{\sm}(K_0,K_1) \ar[u]^-{\bLambda} \ar[r]^-{\rho} & \Hom (\Cord(K_0),\Cord(K_1)) .
} \]

\end{theorem}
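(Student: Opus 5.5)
We must show, for every class $x=[(K_t)_{t\in[0,1]}]\in\Pi_{\sm}(K_0,K_1)$, that
\[ I_{K_1}^{-1}\circ \Psi_{K_1}\circ \Phi(\bLambda(x))\circ \Psi_{K_0}^{-1}\circ I_{K_0} = \rho(x). \]
Here $\Psi_{K_j}$ and $I_{K_j}$ are isomorphisms by Theorem \ref{thm-str-cord} and Proposition \ref{prop-isom-IK}. Read this way, it should follow by stacking three commutative squares, once we reduce to a path of real analytic knots to which Proposition \ref{prop-chain-homotopy} applies.

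\textbf{Step 1: choose a good representative.} Since $K_0$ and $K_1$ are already real analytic and satisfy (\$), we approximate a representative $(K_t)_{t\in[0,1]}$ of $x$ by a smooth family of real analytic knots with the same endpoints. Because $\mathrm{Emb}(S^1,\R^3)$ is a Fr\'echet manifold, a $C^\infty$-small perturbation of a path of embeddings rel endpoints stays in the same homotopy class. Concretely, we approximate the family $f_t$ by truncated Fourier series in the $S^1$-variable and interpolate near $t=0,1$. Near the endpoints the interpolation stays analytic in $\theta$, because a convex combination of two analytic loops is analytic. Embeddedness is an open condition, so the result is still a path of embeddings. This gives a new representative of $x$, and of $\bLambda(x)$, consisting of real analytic knots. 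By the well-definedness of $\Phi$ (Theorem \ref{thm-DGA-isom}) and of $\rho$ (Section \ref{subsubsec-isom-on-cord}), nothing changes.

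\textbf{Step 2: stack the squares.} For this representative, Section \ref{subsubsec-isom-on-LCH} gives $\Phi(\bLambda(x))=\Phi_{L_1}=\Phi_{\widetilde{L}_1}$. Proposition \ref{prop-chain-homotopy} then gives $\Psi_{K_1}\circ\Phi_{\widetilde L_1}=F_1^{\str}\circ\Psi_{K_0}$. Diagram (\ref{diagram-F-IK}), read with $I_{K_j}$ in the direction $\Cord(K_j)\to\bar H^{\str}_0(K_j)$ as in Proposition \ref{prop-isom-IK}, gives $F_1^{\str}\circ I_{K_0}=I_{K_1}\circ(F_1)_*$. Finally, by definition $(F_1)_*=\rho(x)$. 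Composing these identities and inverting the isomorphisms $\Psi_{K_0}$ and $I_{K_1}$ yields exactly the displayed equation, so the diagram commutes.

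The one point needing care is Step 1: the approximation must be genuinely real analytic at every $t$ while matching $K_0$ and $K_1$ exactly at the endpoints, and it must vary smoothly in $t$ (condition (L3) requires a smooth family of analytic embeddings). The linear-interpolation trick near the ends handles the endpoint matching. I expect no further obstacle, since all the analytic content sits in Proposition \ref{prop-chain-homotopy}, which we may assume.
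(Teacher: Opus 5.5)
Your proposal is correct and follows the paper's proof essentially verbatim. Both arguments first reduce, by a Fourier-series approximation, to a representative that is a smooth family of real analytic knots, and then stack $\Phi(\bLambda(x))=\Phi_{\widetilde{L}_1}$ with the squares (\ref{diagram-F-PsiK}) and (\ref{diagram-F-IK}) and the identity $(F_1)_*=\rho(x)$. Two of your additions are sound and simply make explicit what the paper leaves implicit: reading $I_{K_j}$ as a map $\Cord(K_j)\to\bar{H}^{\str}_0(K_j)$, as in Proposition \ref{prop-isom-IK}, which resolves the direction mismatch in (\ref{diagram-F-IK}), and the interpolation near $t=0,1$ that keeps the endpoints fixed.
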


\begin{proof}[Proof of Theorem \ref{thm-monodromy} assuming Proposition \ref{prop-chain-homotopy}]
Take any $x \in \Pi_{\sm}(K_0,K_1)$ and its representative $(K_t)_{t\in [0,1]}$.
$(K_t)_{t\in[0,1]}$ is given by a smooth family of embeddings $(f_t\colon S^1\to \R^3)_{t\in[0,1]}$, 
and by using the Fourier expansion of $f_t$ on $S^1$, one can show that
$(f_t\colon S^1\to \R^3)_{t\in[0,1]}$ is homotopic to a smooth family of real analytic embeddings.
Therefore, we may make the same assumption on $(K_t)_{t\in[0,1]}$ as in Proposition \ref{prop-chain-homotopy}.

We have seen in Section \ref{subsubsec-isom-on-LCH} that 
$\Phi( \bLambda(x)) = \Phi\left( [(\Lambda_{K_t})_{t\in [0,1]}] \right) = \Phi_{\widetilde{L}_1}$.
From the diagrams (\ref{diagram-F-IK}) and (\ref{diagram-F-PsiK}), we obtain a commutative diagram
\begin{align*}
\xymatrix@C=40pt@R=15pt{
\LCH_0(\Lambda_{K_0}) \ar[d]_-{\Psi_{K_0}} \ar[r]^-{\Phi(\bLambda(x))} & \LCH_0(\Lambda_{K_1}) \ar[d]_-{\Psi_{K_1}} \\
\bar{H}^{\str}_0(K_0) \ar[r]^-{F_1^{\str}} & \bar{H}^{\str}_0(K_1) \\
\Cord(K_0)\ar[u]^-{I_{K_0}} \ar[r]^-{(F_1)_* = \rho (x)} & \Cord(K_1) \ar[u]^-{I_{K_1}} .
}
\end{align*}
This proves that $\rho(x) = I_{K_1}^{-1}\circ \Psi_{K_1}\circ \Phi(\bLambda(x)) \circ \Psi_{K_0}^{-1}\circ I_{K_0}$.
\end{proof}

Theorem \ref{thm-1} is deduced from Theorem \ref{thm-monodromy}.

\begin{proof}[Proof of Theorem \ref{thm-1} using Theorem \ref{thm-monodromy}]
For general smooth knots $K_0,K_1$ in $\R^3$, we take real analytic knots $K'_0$ and $K'_1$ satisfying (\$) such that $K_j$ is isotopic to $K'_j$ for $j\in \{0,1\}$.
Fix elements $x_0\in \Pi_{\sm}(K'_0,K_0)$ and $x_1\in \Pi_{\sm}(K_1,K'_1)$ and define for every $y\in \Pi_{\Leg}(\Lambda_{K_0},\Lambda_{K_1})$
\begin{align*} \Theta (y) &\coloneqq \rho(x_1)^{-1}\circ I_{K'_1}^{-1}\circ \Psi_{K'_1} \circ \Phi( \bLambda(x_1)\cdot y \cdot \bLambda(x_0))\circ \Psi_{K'_0}^{-1}\circ I_{K'_0} \circ \rho(x_0)^{-1} \\
& \in \Hom(\Cord(K_0),\Cord(K_1)).
\end{align*}
Then, for any $x\in \Pi_{\sm}(K_0,K_1)$, $\bLambda(x_1)\cdot \bLambda(x)\cdot \bLambda(x_0)=\bLambda (x_1\cdot x\cdot x_0)$ and Theorem \ref{thm-monodromy} for $K'_0$ and $K'_1$ shows that
\[ \Theta (\bLambda(x)) = \rho(x_1)^{-1}\circ \rho(x_1\cdot x\cdot x_0) \circ \rho(x_0)^{-1} , \]
and this is equal to $\rho(x)$.
Therefore, $\rho = \Theta \circ \bLambda$ holds.
\end{proof}

The rest of this section is devoted to proving Proposition \ref{prop-chain-homotopy}.

\subsubsection{A family of exact Lagrangian fillings of $\Lambda_{K_0}$}\label{subsubsec-family-of-Lag}

We construct a $1$-parameter family $(L_b)_{b\in [0,\infty)}$ of exact Lagrangian fillings of $\Lambda_{K_0}$ in $T^*\R^3$ as follows:
Take non-decreasing $C^{\infty}$ functions $t\colon [0,\infty)\to [0,1]$ and $s\colon [0,\infty)\to \R_{\geq 0}$ such that:
\begin{itemize}
\item $t(b)=b$ near $b=0$ and $t(b)=1$ when $b\geq 1$.
\item $s(b)=0$ when $b\in [0,1]$ and $\lim_{b\to \infty}s(b)=\infty$.
\end{itemize}
Using $(\widetilde{L}_t)_{t\in [0,1]}$ in Section \ref{subsubsec-isom-on-LCH}, we define for every $b\in [0,\infty)$
\[  L_b \coloneqq  \left( L_{K_{t(b)}} \setminus E ( [s(b) +r_-,\infty)\times \Lambda_{K_{t(b)}}) \right) \cup  (E\circ \tau_{s(b)}) \left(\widetilde{L}_{t(b)} \cap ( [r_-,\infty)\times U^*\bR^3) \right) . \]
See Figure \ref{fig-Lagrangian}.
\begin{figure}
\centering
\begin{overpic}[height=6cm]{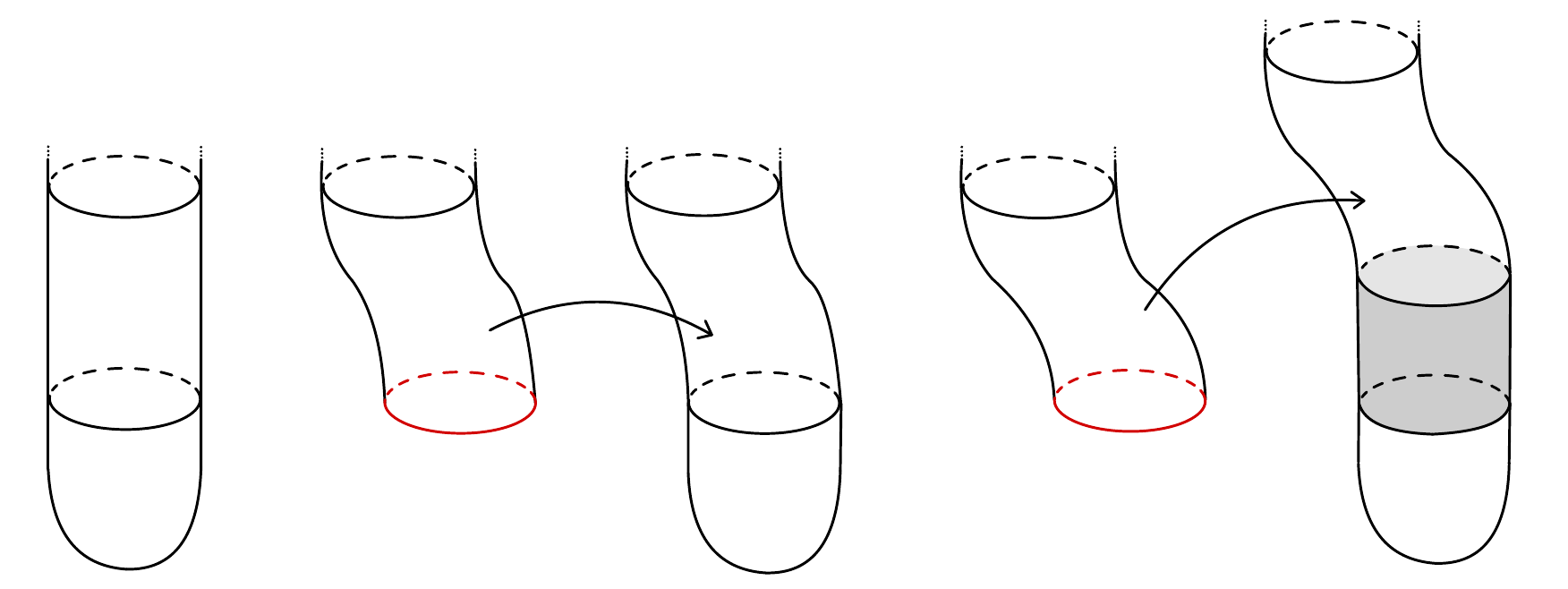}
\put(4,-1){$L_0=L_{K_0}$}
\put(23,8){\color{red}$\{r_-\}\times\Lambda_{K_{t(b)}}$}
\put(16,4){$\widetilde{L}_{t(b)}\cap ([r_-,\infty)\times U^*\R^3)$}
\put(41,-1.5){$L_b$ for $b\in [0,1]$}
\put(36.5,19){$E$}
\put(67,8){\color{red}$\{r_-\}\times \Lambda_{K_1}$}
\put(59,4){$\widetilde{L}_1\cap ([r_-,\infty)\times U^*\R^3)$}
\put(85,-1){$L_b$ for $b\geq 1$}
\put(76.5,20){$E\circ \tau_{s(b)}$}
\end{overpic}
\caption{
When $b=0$, $L_0 = L_{K_0}$ (the left picture).
When $b\in [0,1]$, $L_b$ is an exact Lagrangian filling of $\Lambda_{K_0}$ obtained by concatenating the conormal bundle $L_{K_{t(b)}}$ with $\widetilde{L}_{t(b)}$ via $E$ (the middle picture).
When $b\geq 1$, $L_b$ is given by a concatenation of $L_{K_1}$ and $\widetilde{L}_1$, with the shaded cylindrical region stretched as $b\to \infty$ (the right picture).}\label{fig-Lagrangian}
\end{figure}

Take a sequence $(b_n)_{n=1,2,\dots}$ in $[1,\infty)$ such that $\lim_{n\to \infty} b_n =\infty$.
We set
\[ \mathscr{L}_n\coloneqq (L_b)_{b\in [0,b_n]}. \]
Then, this satisfies the conditions (L1), (L2) and (L3) for the family of knots $(K_{t(b)})_{b\in [0,b_n]}$.
Moreover, from the construction of $(\widetilde{L}_t)_{t\in [0,1]}$ by (\ref{tilde-L}), the condition (L4) is also ensured by Proposition \ref{prop-perturb-Lag}.

For each triple $(a,l,n)$ of  $a\in \mathcal{R}(\Lambda_{K_0})$, $l\in \Z_{\geq 0}$ and $n=1,2,\dots$,
we have a moduli space $\calM_{\mathscr{L}_n,l}(a)$ defined in Section \ref{subsec-def-moduli}.
For generic $J'_{\rho}$,  every moduli space $\calM_{\mathscr{L}_n,l}(a)$ has the properties in Lemma \ref{lem-transv} and \ref{lem-jet-transv}.
Let $C\calM_{\mathscr{L}_n,l}(a)$ denote the compactification of $\calM_{\mathscr{L}_n,l}(a)$ as in Proposition \ref{prop-compactify}.
It is a $1$-dimensional manifold with boundary
\[ \calM_{L_{K_0},l}(a)\sqcup \calM_{L_{b_n},l}(a) \sqcup \coprod_{j=1}^{2l-1}\delta\widetilde{\calM}_{\mathscr{L}_n,l-1}^j(a) . \]

Let us observe the boundary component $\calM_{L_{b_n},l}(a)$ when $n\to \infty$.

\begin{lemma}\label{lem-bijection}
When $n\in\Z_{\geq 1}$ is sufficiently large, there are bijections
\begin{align*} 
\mathcal{G}_{n,0} &\colon \calM_{\widetilde{L}_1,J}(a,\emptyset) \sqcup  \coprod_{ |a_1|=\dots = |a_m|=0 } \left( \calM_{\widetilde{L}_1,J}(a;a_1,\dots,a_m) \times \prod_{k=1}^m\calM_{L_{K_1},0}(a_k) \right) \to \calM_{L_{b_n},0}(a) , \\
\mathcal{G}_{n,l} & \colon \coprod_{ |a_1|=\dots = |a_m|=0, \  l_1+\dots+l_m =l } \left( \calM_{\widetilde{L}_1,J}(a;a_1,\dots,a_m) \times \prod_{k=1}^m\calM_{L_{K_1},l_k}(a_k) \right) \to \calM_{L_{b_n},l}(a),
\end{align*}
for $l\geq 1$
such that for any pseudo-holomorphic building $\bold{u}$ in the domain of $\mathcal{G}_{n,l}$ ($l\in\Z_{\geq 0}$), $\mathcal{G}_{n,l}(\bold{u})$ converges to $\bold{u}$ when $n\to \infty$ in the sense of \cite[Section 8.2]{BEHWZ}. 
Here, $m$, $a_k$ and $l_k$ run over $\bZ_{\geq 1}$, $\mathcal{R}(\Lambda_{K_1})$ and $\bZ_{\geq 0}$ respectively.
\end{lemma}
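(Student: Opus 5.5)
This is a standard neck-stretching statement. The plan is to prove it with SFT compactness as $n\to\infty$, and to get the inverse map from gluing. For $b\geq 1$ the filling $L_b$ is $L_{K_1}$ concatenated with $\widetilde{L}_1$ through a cylindrical region $[r_-,r_-+s(b)]\times\Lambda_{K_1}$ whose length $s(b)\to\infty$. Since $J'_\rho$ is cylindrical there (it agrees with some $J\in\mathcal{J}^{\mathrm{cyl}}_{\R\times\Lambda_{K_1}}$ after the shift), the target degenerates into three levels: the top level $\widetilde{L}_1$ in $\R\times U^*\R^3$, possibly symplectization levels $\R\times\Lambda_{K_1}$, and the bottom level $L_{K_1}\cup\R^3$ in $T^*\R^3$.

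\emph{Step 1 (compactness).} Take $u_n\in\calM_{L_{b_n},l}(a)$. The energy is bounded by $T_a$, and Proposition \ref{lem-finiteness} together with its proof gives uniform control of the switching points. By \cite[Section 8.2]{BEHWZ}, adapted to switching boundary conditions as in the proof of Proposition \ref{prop-compactify}, a subsequence converges to a building. The top level lies in $\calM_{\widetilde{L}_1,J}(a;a_1,\dots,a_m)$. There may be middle levels in $\bar{\calM}_{\Lambda_{K_1},J}$. The bottom level consists of curves in $\calM_{L_{K_1},l_k}(a_k;\bold{n}_k)$, possibly with extra boundary nodes at $K_1$ carrying constant disks. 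Every $l_k$ switching arc of $u_n$ descends to the bottom pieces, so $\sum l_k=l$.

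\emph{Step 2 (index count).} All pieces are transversely cut out for generic data: apply Lemma \ref{lem-transv} with $B=\{0\}$ for $K_1$, and use the transversality of cobordism and cylindrical moduli from Section \ref{sec-LCH}. Also $|a'|\geq0$ for every chord. The top level then has dimension $|a|-\sum|a_k|=-\sum|a_k|\geq0$, so $|a_k|=0$ for all $k$. Each middle level would need dimension $|a'|-\sum|a''|-1\geq0$, which forces a drop in degree, which is impossible; hence there are no middle levels. The bottom pieces have dimension $|a_k|-|\bold{n}_k|\geq0$, so $\bold{n}_k=(\frac12,\dots)$. A boundary node at $K_1$ costs one more dimension (Lemma \ref{lem-jet-transv}(iii)), so no nodes occur. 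The same bookkeeping handles the case $l=0$, including the extra top component $\calM_{\widetilde{L}_1,J}(a,\emptyset)$ where the building has no negative punctures.

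This shows that every limit lies in the domain of $\mathcal{G}_{n,l}$, and that domain is a finite set by Propositions \ref{prop-compactify} and \ref{lem-finiteness}.

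\emph{Step 3 (gluing).} For each building $\bold{u}$ in the domain, all pieces are rigid and transverse. Standard gluing along the Reeb chords $a_k$ produces, for $n\gg0$, a unique nearby curve $\mathcal{G}_{n,l}(\bold{u})\in\calM_{L_{b_n},l}(a)$. This uses the pre-gluing and Newton iteration of \cite[Lemma 3.13]{EHK} and \cite[Section 10]{CELN}; the switching points lie away from the neck, so the analysis at the switches is unchanged. Uniqueness of the glued curve near $\bold{u}$ gives injectivity. Compactness (Steps 1 and 2), together with the finiteness of the limit set, gives surjectivity for large $n$: otherwise one could extract a sequence of unglued curves converging to some $\bold{u}$, which would contradict uniqueness of gluing.

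I expect the main obstacle to be the compactness step under simultaneous stretching and switching boundary conditions. One must rule out switching points escaping into the neck or bubbling off as $N$-type pieces, and must make sure the winding-number analysis of \cite{CEL} still applies uniformly. My first approach would be to use that near $\R^3$ the filling $L_{b_n}$ coincides with $L_{K_1}$ for all $n\geq1$, so the local analysis near $K_1$ is literally $n$-independent.
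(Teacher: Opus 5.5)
Your proposal is correct and takes the same route as the paper. The paper's proof just cites the neck-stretching argument of \cite[Corollary 3.11]{EHK}: compactness, then an index count forcing $|a_k|=0$, all winding numbers $\frac{1}{2}$, and no boundary nodes or symplectization levels, then gluing; like you, it notes that an element of $\calM_{\widetilde{L}_1,J}(a,\emptyset)$ glues to a mere translate escaping to $\{\infty\}\times U^*\R^3$. One small point: your list of possible levels omits symplectization levels over $\R\times\Lambda_{K_0}$ above the cobordism, but your index argument rules these out verbatim since $|a|=0$.
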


\begin{proof}
The proof is the same as \cite[Corollary 3.11]{EHK} using a stretching argument. 
Note that when $n\gg 0$, $\mathcal{G}_{n,0}(\bold{u})$ for $\bold{u}\in  \calM_{\widetilde{L}_1,J}(a,\emptyset)$ is a $J$-holomorphic curve contained in $T^*\R^3\setminus D^*_1\R^3 \cong (0,\infty)\times U^*\R^3$ and escapes to $\{\infty\}\times U^*\R^3$ as $n\to\infty$.
\end{proof}

\subsubsection{Proof of Proposition \ref{prop-chain-homotopy}}\label{subsubsec-proof-main}

Let us observe the compactification $C\calM_{\mathscr{L}_n,l}(a)$.
First, in the case of $l=0$,
\[ \partial \left(  C\calM_{\mathscr{L}_n,0}(a)\right) = \calM_{L_{K_0},0}(a)\sqcup \calM_{L_{b_n},0}(a). \]
We use the bijection $\mathcal{G}_{n,0}$ in Lemma \ref{lem-bijection}.
Then, we have an equation
\begin{align}\label{conut-M0}
\begin{split}
0 =\ & \#_{\Z_2} \calM_{L_{K_0},0}(a) + \#_{\Z_2} \calM_{L_{b_n},0}(a) \\
=\  &  \#_{\Z_2} \calM_{L_{K_0},0}(a) + \#_{\Z_2} \calM_{\widetilde{L}_1,J}(a,\emptyset) \\
&+ \sum_{m\geq 1,\ |a_1|=\dots = |a_m|=0} \left( \#_{\Z_2} \calM_{\widetilde{L}_1,J}(a;a_1,\dots ,a_m) \right) \cdot \left(  \prod_{k=1}^m \#_{Z_2} \calM_{L_{K_1},0}(a_k)\right).
\end{split}
\end{align}

Next, we observe the case $l\geq 1$.
A possible picture of the moduli space is illustrated in Figure \ref{fig-moduli}.
Hereafter, we abbreviate the conformal structures.
First, consider a boundary point
\[ \lambda= ((b,u),z) \in \delta\widetilde{\calM}_{\mathscr{L}_n,l-1}^{2i-1}(a)\]
for $i\in \{1,\dots ,l\}$.
For any sequence $(b_{\nu},u_{\nu})_{\nu=1,2,\dots}$ in $ \calM_{\mathscr{L}_n,l}(a)$ converging to $\lambda$, the path
\[\rest{u_{\nu}}{\partial_{2i}D_{2l+1}}\colon \partial_{2i}D_{2l+1}\to \R^3\]
converges to a constant path at $u(z)\in K$ in $C^{\infty}$ topology when $\nu \to \infty$.
(When $l=2$, see the picture in Figure \ref{fig-building} when $j$ is odd.)
Therefore, there exists an open neighborhood $U'_{\lambda} \cong [0,1)$ of $\lambda$ such that
\begin{align}\label{length-short}
\len \left( F_{1-t(b)}\circ \rest{u}{\overline{\partial_{2i} D_{2l+1} }} \right) <\epsilon_1 
\end{align}
for every $(b,u)\in U'_{\lambda}\setminus \{\lambda\}\subset \calM_{\mathscr{L}_n,l}(a)$.
Take a subset $U_{\lambda}\subset U'_{\lambda}$ such that $U_{\lambda}\cong [0,\frac{1}{2})$ via the homeomorphism $U'_{\lambda}\cong [0,1)$, and let $\wh{\lambda}\in U'_{\lambda}$ be the point corresponding to $\frac{1}{2}\in [0,1)$.
(In Figure \ref{fig-moduli}, they appear as $U_{\lambda_4}$ and $\wh{\lambda}_4$.)
Removing $U_{\lambda}$ from $C\calM_{\mathscr{L}_n,l}(a)$ for all $\lambda \in \coprod_{i=1}^{l} \delta\widetilde{\calM}_{\mathscr{L}_n,l-1}^{2i-1}(a)$, we define $C'\calM_{\mathscr{L}_n,l}(a)$.
It is a compact $1$-dimensional manifold with boundary
\begin{align}\label{boundary-C'M}
 \calM_{L_{K_0},l }(a) \sqcup \calM_{L_{b_n},l}(a)  \sqcup \coprod_{i=1}^{l-1}  \delta\widetilde{\calM}_{\mathscr{L}_n,l-1}^{2i}(a) 
\sqcup 
\coprod_{i=1}^{l} \left\{ \wh{\lambda} \in \calM_{\mathscr{L}_n,l}(a)\  \middle| \lambda \in \delta\widetilde{\calM}_{\mathscr{L}_n,l-1}^{2i-1}(a)   \right\} .
\end{align}

\begin{figure}
\centering
\begin{overpic}[height=7cm]{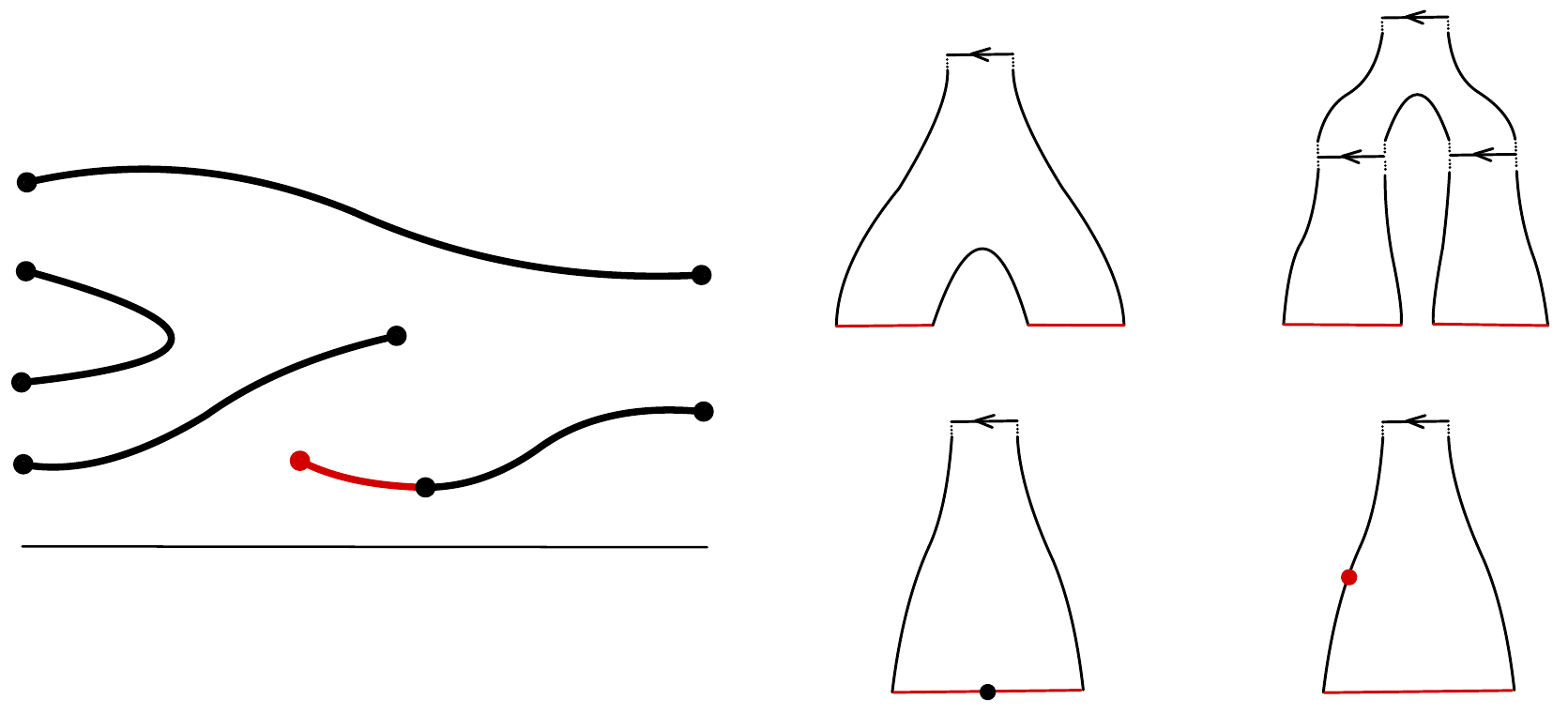}
\put(20,8){$[0,b_n]$}
\put(1,8){$0$}
\put(44,8){$b_n$}
\put(0,23){$\lambda_1$}
\put(24.5,25){$\lambda_3$}
\put(18,17.5){$\lambda_4$}
\put(20,13){\color{red}$U_{\lambda_4}$}
\put(25.5,16){$\wh{\lambda}_4$}
\put(38,30){$\lambda_2=\mathcal{G}_{n,l}(\bold{u})$}
\put(53,35){$L_{K_0}$}
\put(56,25.5){\color{red}$\R^3$}
\put(84,41){$\widetilde{L}_1$}
\put(79,32){$L_{K_1}$}
\put(52,43){$\lambda_1$}
\put(77,43){$\bold{u}$}
\put(52,20){$\lambda_3$}
\put(77,20){$\lambda_4$}
\put(51,23){\tikz \draw[dashed] (0,0)--(7.5,0);}
\put(75,0){\tikz \draw[dashed] (0,-7.1)--(0,0);}
\end{overpic}
\caption{The left-hand side is a (possible) picture of $C\calM_{\mathscr{L}_n,l}(a)$ over $[0,b_n]$. $\lambda_1,\dots ,\lambda_4$ represent the four types of boundary points.
Among the four pictures in the right-hand side, the lower two ($\lambda_3$ and $\lambda_4$) correspond to the middle picture in Figure \ref{fig-building}.
The upper left one illustrates $\lambda_1$ which represents $J'_{\rho}$-holomorphic curves used to define $\Psi_{K_0}$ as in Section \ref{subsubsec-isom-LCH-str}.
When $n$ is sufficiently large, $\lambda_2$ is equal to $\mathcal{G}_{n,l}(\bold{u})$ for a $J'_{\rho}$-holomorphic building $\bold{u}$ as in the upper right picture. It consists of a $J$-holomorphic curve used to define $\Phi_{\widetilde{L}_1}$ as in Section \ref{subsubsec-DGA-map} and $J'_{\rho}$-holomorphic curves used to define $\Psi_{K_1}$.}\label{fig-moduli}
\end{figure}

For every $\lambda\in C'\calM_{\mathscr{L}_n,l}(a)$, we define $\bold{s}^{\lambda}=(s^{\lambda}_1,\dots ,s_l^{\lambda})\in \Sigma^l_{K_1}$ as follows:
\begin{itemize}
\item[(I-$l$)] If $\lambda= (b,u) \in \calM_{\mathscr{L}_n,l}(a)$, $s^{\lambda}_{i'}\colon [0,T^{\lambda}_{i'}]\to \bR^3$ for $i'\in \{1,\dots ,l\}$ is the curve
\[F_{1-t(b)} \circ \rest{u}{\overline{\partial_{2i'}D_{2l+1}}}\]
pre-composed with a parametrization $\varphi^{\lambda}_{i'}\colon [0,T^{\lambda}_{i'}]\to \overline{\partial_{2i'}D_{2l+1}}$ such that $\varphi^{\lambda}_{i'}(0)=p_{2i'-1}$ and $|\dot{s}^{\lambda}_{i'}(t)|=1$ for every $t$.
In particular, $T^{\lambda}_{i'}$ is the length of the curve $\rest{u}{\overline{\partial_{2i'}D_{2l+1}}}$.
Such $\varphi^{\lambda}_{i'}$ uniquely exists since $\rest{u}{\overline{\partial_{2i'}D_{2l+1}}}$ is an immersion by Lemma \ref{lem-jet-transv}.
Note that
\[s^{\lambda}_{i'}(\{0,T^{\lambda}_{i'}\})= F_{1-t(b)}(\{u(p_{2i'-1}),u(p_{2i'})\}) \subset F_{1-t(b)}(K_{t(b)})= K_1.\]
\item[(II-$l$)] If $\lambda =((b,u),z) \in \delta \widetilde{\calM}_{\mathscr{L}_n,l-1}^{2i}(a)$, where $z$ lies in $\partial_{2i}D_{2l+1}$,
$s^{\lambda}_{i'}$ for $i'\in \{1,\dots ,l\}$ is the curve
\[ \begin{cases}
F_{1-t(b)}\circ \rest{u}{\overline{\partial_{2i'}D_{2l-1}}} & \text{ if }i'\leq i-1 , \\
F_{1-t(b)}\circ \rest{u}{\overline{\partial_{2i}D_{2l-1}}^0} & \text{ if }i'=i ,\\
F_{1-t(b)}\circ  \rest{u}{\overline{\partial_{2i}D_{2l-1}}^1} & \text{ if }i'=i+1, \\
F_{1-t(b)}\circ  \rest{u}{\overline{\partial_{2i'-2}D_{2l-1}}} & \text{ if }i'\geq i+2 ,
\end{cases}\]
pre-composed with a parametrization such that $s^{\lambda}_{i'}$ has the unit speed.
Here, $\overline{\partial_{2i}D_{2l-1}}^0 \subset \overline{\partial_{2i}D_{2l-1}}$ is the arc whose endpoints are $\{p_{2i-1}, z\}$, and 
$\overline{\partial_{2i}D_{2l-1}}^1 \subset \overline{\partial_{2i}D_{2l-1}}$ is the arc whose endpoints are $\{z, p_{2i}\}$.
Note that $u(z) \in K_{t(b)}$ and thus $F_{1-t(b)}(u(z))\in K_1$.
If a sequence $(\lambda_{\nu})_{\nu=1,2,\dots}$ in $\calM_{\mathscr{L}_n,l}(a)$ converges to $\lambda$ in $C'\calM_{\mathscr{L}_n,l}(a)$, $\bold{s}^{\lambda_{\nu}}$ defined as in (I-$l$) converges to $\bold{s}^{\lambda}$ in $\Sigma^l_{K_1}$ as $\nu\to \infty$. See Figure \ref{fig-building} in the case where $l=2$ and $i=1$.
\end{itemize}

We take a subdivision of the compact $1$-dimensional manifold $C'\calM_{\mathscr{L}_n,l}(a)$ into intervals. Let $I_{n,l}(a)$ denote the set of the intervals and fix an identification $\Delta \cong [0,1]$ for each $\Delta \in I_{n,l}(a)$.
Define $H(a)=(H_l(a))_{l=0,1,\dots} \in \bigoplus_{l=0}^{\infty} C_1(\Sigma^l_{K_1})$ by
\[H_l(a) = \sum_{\Delta\in I_{n,l}(a)}(\bold{s}^{\lambda})_{\lambda \in \Delta}  . \]
for $l\geq 1$ and $H_0(a)=0$ (recall that $C_1(\Sigma^0_{K_1}) =0$ by definition).
This makes sense since $(\bold{s}^{\lambda})_{\lambda\in \Delta}$ for any $\Delta \in I_{n,l}(a)$ satisfies the conditions (1a), (1b) and (1c) by (iii), (i) and (iv) of Lemma \ref{lem-jet-transv} respectively.
\begin{remark}
More precisely, we need to take a subdivision so that $\bold{s}^{\lambda}$ for $\lambda\in \partial \Delta\cong \{0,1\}$ satisfies (0a) and (0b).
This is possible due to the transversality of the evaluation map by Lemma \ref{lem-jet-transv} (iii). 
\end{remark}

The next lemma is a key to prove Proposition \ref{prop-chain-homotopy}.
\begin{lemma}\label{lemma-delta-Hl}
For each $i\in \{1,\dots ,l\}$
\[ \delta_i H_l(a) = \sum_{\lambda' \in \delta\widetilde{\calM}_{\mathscr{L}_n,l}^{2i}(a)} \bold{s}^{\lambda'}\]
in $\bar{C}_0(\Sigma^{l+1}_{K_1})$, where $\bold{s}^{\lambda'}$ in the right-hand side is defined by (II-$(l+1)$) for $\lambda' \in \delta\widetilde{\calM}_{\mathscr{L}_n,l}^{2i}(a)\subset C' \calM_{\mathscr{L}_n,l+1}(a)$.
\end{lemma}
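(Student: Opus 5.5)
We unwind the definition of $\delta_i$ on each interval piece $(\bold{s}^{\lambda})_{\lambda\in\Delta}$ of $H_l(a)$. By definition,
\[
\delta_i H_l(a)=\sum_{\Delta\in I_{n,l}(a)}\ \sum_{(\lambda,t)\in\Gamma_i^{-1}(K_1)} \bold{s}^{(\lambda,t),i},
\]
where $\Gamma_i(\lambda,t)=s^{\lambda}_i(t)$ is the map of (1a) for the family $(\bold{s}^{\lambda})_{\lambda\in\Delta}$. The plan has three steps.

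First, we identify the interior contributions. A point $(\lambda,t)\in\Gamma_i^{-1}(K_1)$ with $0<t<T^{\lambda}_i$ and $\lambda$ in the interior of $\Delta$ is a curve $(b,u)$ together with a point $z=\varphi^{\lambda}_i(t)$ in the open arc $\partial_{2i}D_{2l+1}$. At this point $F_{1-t(b)}(u(z))\in K_1$, which is equivalent to $u(z)\in K_{t(b)}$, since $F_{1-t(b)}$ maps $K_{t(b)}$ onto $K_1$. So $((b,u),z)$ is exactly an element of $\ev_{2i}^{-1}(K_{\mathscr{L}_n})=\delta\widetilde{\calM}^{2i}_{\mathscr{L}_n,l}(a)$. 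Conversely, every element of $\delta\widetilde{\calM}^{2i}_{\mathscr{L}_n,l}(a)$ arises this way for exactly one $\Delta$, because
\begin{itemize}
\item by Lemma \ref{lem-jet-transv} (iii), such $b$ is never $0$ or $b_n$;
\item its underlying curve lies in $\calM_{\mathscr{L}_n,l}(a)$ and not in one of the removed sets $U_{\lambda}$;
\item after a generic choice of subdivision, it is not a subdivision endpoint, since the endpoints satisfy (0a).
\end{itemize}
Comparing with (II-$(l+1)$), the split tuple $\bold{s}^{(\lambda,t),i}$ is literally $\bold{s}^{\lambda'}$ for $\lambda'=((b,u),z)$. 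Here we use that cutting the arc at $z$ and reparametrizing to unit speed agrees with restricting $s^{\lambda}_i$ to $[0,t]$ and $[t,T^{\lambda}_i]$, since $s^{\lambda}_i$ already has unit speed.

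Second, we dispose of the remaining preimage points. These are points of $\Gamma_i^{-1}(K_1)$ where $t$ is near $0$ or $T^{\lambda}_i$, together with contributions from intervals $\Delta$ near the boundary points $\wh{\lambda}$. By the finiteness lemma in Section \ref{subsec-string}, any $\bold{s}^{(\lambda,t),i}$ in which one piece has length $<\epsilon_1$ vanishes in $\bar{C}_0(\Sigma^{l+1}_{K_1})$. Near the collapsing arcs (the $\lambda\in\delta\widetilde{\calM}^{2i-1}$ ends), we chose $U'_{\lambda}$ via (\ref{length-short}) so that the whole $i$-th string has length $<\epsilon_1$. Since $\wh{\lambda}\in U'_{\lambda}$, those tuples already vanish. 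Points with $u(z)\in K$ at interior switching parameters cannot occur, by Lemma \ref{lem-jet-transv} (iv), which excludes a second intersection with $K$ on the same curve in the one-parameter family.

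Third, we check the mod-2 counts. Both sides count the same transversely cut-out finite set with coefficient one, so no multiplicity issues arise. Transversality of $\Gamma_i$ to $K_1$ at these points is Lemma \ref{lem-jet-transv} (iii), which is the same transversality statement seen through the diffeomorphism $F_{1-t(b)}$.

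The main obstacle is the second step: making sure no spurious intersection points survive in $\bar{C}_0$. This covers points near the short-arc ends, points where a neighbouring arc degenerates, and points at the parameter boundaries $b\in\{0,b_n\}$. The plan is to handle all of these uniformly with the length-$\epsilon_1$ cutoff, using that $F_1$ stretches lengths by less than $\epsilon_1/\epsilon_0$ and that the collapsing arc converges in $C^\infty$ to a constant.
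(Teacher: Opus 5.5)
Your bijection $(\lambda,t)\mapsto(\lambda,\varphi^{\lambda}_i(t))$ from $\Gamma_i^{-1}(K_1)$ to $\delta\widetilde{\calM}^{2i}_{\mathscr{L}_n,l}(a)$ is the paper's argument, and so is your identification of the split tuple $\bold{s}^{(\lambda,t),i}$ with the tuple $\bold{s}^{\lambda'}$ of (II-$(l+1)$). The gap is in your converse direction. There you assert that the underlying curve $(b,u)$ of every $\lambda'=((b,u),z)\in\delta\widetilde{\calM}^{2i}_{\mathscr{L}_n,l}(a)$ avoids the removed neighbourhoods $U_{\lambda_0}$. Nothing in the construction gives this: the $U_{\lambda_0}$ were chosen only subject to (\ref{length-short}), and points of $\delta\widetilde{\calM}^{2i}_{\mathscr{L}_n,l}(a)$ may lie over them. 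Such $\lambda'$ appear on the right-hand side but have no partner in $\delta_i H_l(a)$, which only sees $C'\calM_{\mathscr{L}_n,l}(a)$.

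The missing observation, which is the paper's parenthetical remark, is that these terms vanish in $\bar{C}_0(\Sigma^{l+1}_{K_1})$. Suppose $(b,u)\in U_{\lambda_0}$ with $\lambda_0\in\delta\widetilde{\calM}^{2i'-1}_{\mathscr{L}_n,l-1}(a)$. Then by (\ref{length-short}) the $i'$-th string of $\bold{s}^{(b,u)}$ has length less than $\epsilon_1$. If $i'\neq i$, this string is still an entry of $\bold{s}^{\lambda'}$. If $i'=i$, both halves of the split string are shorter than $\epsilon_1$. In either case $\bold{s}^{\lambda'}\in\Sigma^{l+1}_{K_1,\epsilon_1}$. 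Equivalently, as the paper does, you can run the bijection over all of $\calM_{\mathscr{L}_n,l}(a)$ and note that the added left-hand terms are zero.

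Your second step is aimed at terms that need no treatment. Preimage points with $t$ near $0$ or $T^{\lambda}_i$, or with $\lambda$ near some $\wh{\lambda}$, are already matched by the bijection with right-hand terms carrying the identical tuple. So nothing has to be disposed of there, and killing them on one side would force killing them on the other.

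The second step also treats only collapse ends of type $\delta\widetilde{\calM}^{2i-1}$ and the $i$-th string. Ends of type $\delta\widetilde{\calM}^{2i'-1}$ with $i'\neq i$ must be handled too, as above, where the surviving short string is the $i'$-th one.

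Finally, ``the same finite set'' overstates what is known. $\delta\widetilde{\calM}^{2i}_{\mathscr{L}_n,l}(a)$ is a $0$-manifold, and what is established is only that finitely many of its terms are nonzero in $\bar{C}_0(\Sigma^{l+1}_{K_1})$.
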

\begin{proof}
$\delta_i H_l(a) $ is given by the summand $\bold{s}^{(\lambda,t),i}$ of (\ref{split-path}) for all pairs $(\lambda,t)$ in the set
\[ \Gamma_i^{-1}(K_1) = \{(\lambda,t) \in \calM_{\mathscr{L}_n,l}(a)\times \R \mid 0<t<T^{\lambda}_i,\  s_i^{\lambda}(t) \in K_1 \}. \]
(Here, note that if $\lambda \in \calM_{\mathscr{L}_n,l}(a)\setminus C'  \calM_{\mathscr{L}_n,l}(a)$, we may ignore $\bold{s}^{(\lambda,t),i}$ since it belongs to $\Sigma^{l+1}_{K_1,\epsilon_1}$ by (\ref{length-short}).)
This set is naturally identified with $\delta\widetilde{\calM}_{\mathscr{L}_n,l}^{2i}(a)$ by a bijection
\[ P_{l,i} \colon \Gamma_i^{-1}(K_1) \to  \delta\widetilde{\calM}_{\mathscr{L}_n,l}^{2i}(a) \colon (\lambda,t)  \mapsto (\lambda,\varphi^{\lambda}_i(t)), \]
where $\varphi^{\lambda}_i\colon [0,T^{\lambda}_i]\to \overline{\partial_{2i}D_{2l+1}}$ is a parametrization used in (I-$l$).
By the process of (II-($l+1$)), $\bold{s}^{P_{l,i}(\lambda,t)}\in \Sigma^{l+1}_{K_1}$ is defined for $P_{l,i}(\lambda,t) \in \delta\widetilde{\calM}_{\mathscr{L}_n,l}^{2i}(a)$.
It is obtained from $\bold{s}^{\lambda}$ by splitting the $i$-th path $s^{\lambda}_i$ at the point $t\in [0,T^{\lambda}_i]$ where $s^{\lambda}_i$ intersects $K_1$.
This is exactly the process to construct $\bold{s}^{(\lambda,t),i}$ from $\bold{s}^{\lambda}$ as in (\ref{split-path}).
Therefore,
$ \bold{s}^{(\lambda,t),i} = \bold{s}^{P_{l,i}(\lambda,t)}$. This shows that
\[ \delta_i H_l(a) = \sum_{(\lambda,t)\in \Gamma_i^{-1}(K_1)} \bold{s}^{P_{l,i}(\lambda,t)} =  \sum_{\lambda' \in \delta\widetilde{\calM}_{\mathscr{L}_n,l}^{2i}(a)} \bold{s}^{\lambda'}\]
in $\bar{C}_0(\Sigma^{l+1}_{K_1})$.
\end{proof}

In addition to $H(a)$, let us prepare another $1$-chain.
Consider $\lambda=\mathcal{G}_{n,l}(\bold{u})\in \calM_{L_{b_n},l}(a)$ for any
\[\bold{u} = (u, (u_1,\dots ,u_l)) \in \calM_{\widetilde{L}_1,J}(a;a_1,\dots,a_m) \times \prod_{k=1}^m\calM_{L_{K_1},l_k}(a_k) ,\]
where $a_1,\dots ,a_m\in \mathcal{R}(\Lambda_{K_1})$ and $l_1,\dots ,l_m\in \Z_{\geq 0}$ satisfy $|a_1|=\dots =|a_m|=0$ and $l_1+\dots +l_m=l$.
For $u_k\in\calM_{L_{K_1},l_k}(a_k)$, a $0$-chain $\bold{s}^{u_k}\in \bar{C}_0(\Sigma^{l_k}_{K_1})$ is defined by (\ref{path-si}) when $l_k\geq 1$.
Let us put $\bold{s}^{u_k}\coloneqq 1\in\Z_2 = \bar{C}_0(\Sigma^{l_k}_{K_1})$ when $l_k=0$.
Then, Lemma \ref{lem-bijection} implies that as $n\to \infty$, $\bold{s}^{\lambda}= \bold{s}^{\mathcal{G}_{n,l}(u)}$ converges in $\Sigma^l_{K_1}$ to a tuple of paths representing $\bold{s}^{u_1} * \cdots * \bold{s}^{u_m}\in \bar{C}_0(\Sigma^l_{K_1})$.
Here, $*$ is the product on $\bar{C}_0(\Sigma^l_{K_1})$ induced by (\ref{product-map}).
Therefore, when $n$ is sufficiently large, we can apply Lemma \ref{lem-homologous} to get a $1$-chain
$h_{\lambda}\in C_1(\Sigma_{K_1}^l)$ such that $\delta (h_{\lambda})=0$ and 
\[ D_{K_1}h_{\lambda} = \partial h_{\lambda} = \bold{s}^{\lambda} - \bold{s}^{u_1} * \cdots * \bold{s}^{u_m}. \]
Let us define $h(a)=(h_l(a))_{l=0,1,\dots}\in \bigoplus_{l=0}^{\infty} C_1(\Sigma^l_K)$ by $h_0(a)=0$ and
\[ h_{l}(a) = \sum_{\lambda \in \calM_{L_{b_n},l}(a) } h_{\lambda}  \]
for $l\geq 1$.
We remark that $\delta h(a)=0$ in $\bar{C}_0(\Sigma^{l+1}_{K_1})$.

\begin{proof}[Proof of Proposition \ref{prop-chain-homotopy}]
Consider a diagram
\[ \xymatrix{
\mathcal{A}_0(\Lambda_{K_0}) \ar[d]_-{\Psi'_{K_0}} \ar[r]^-{\Phi'_{\widetilde{L}_1}} & \mathcal{A}_0(\Lambda_{K_1}) \ar[d]_-{\Psi'_{K_1}} \\
\bar{C}_0(\Sigma_{K_0}) \ar[r]^-{\bar{F}_1^{(0)}} & \bar{C}_0(\Sigma_{K_1}). 
}\]
Those maps in the diagram preserve the $\Z_2$-algebra structures, and the subspace $\Image D_{K_1}\subset \bar{C}_0(\Sigma_{K_1})$ is a two-sided ideal with respect to the product $*$.
Therefore, it suffices to show that for every $a\in\mathcal{R}(\Lambda_{K_0})$ with $|a|=0$,
\begin{align}\label{eq-goal}
\Psi'_{K_1}\circ \Phi'_{\widetilde{L}_1}(a) - \bar{F}_1^{(0)} \circ \Psi'_{K_0}(a) = D_{K_1} \left( h(a) + H (a)\right)
\end{align}
in $\Z_2$-coefficient, in particular, the left-hand side is contained in $\Image D_{K_1}$.
For each $l\in \Z_{\geq 0}$, let $\left( \Psi'_{K_1}\circ \Phi'_{\widetilde{L}_1}(a) \right)_l\in \bar{C}_0(\Sigma^l_{K_1})$ denote the $l$-th component of $\Psi'_{K_1}\circ \Phi'_{\widetilde{L}_1}(a) \in \bar{C}_0(\Sigma_{K_1})$.
Then, (\ref{eq-goal}) is rewritten as the equations
\begin{align}\label{final-equation}
\begin{split}
\left( \Psi'_{K_1}\circ \Phi'_{\widetilde{L}_1}(a) \right)_0 - \Psi'_{K_0,0}(a)  & =0  \text{ in } \bar{C}_0(\Sigma^0_{K_1}) = \Z_2 , \\
 \left( \Psi'_{K_1}\circ \Phi'_{\widetilde{L}_1}(a) \right)_l - \bar{F}_1^{(0)} (\Psi'_{K_0,l}(a)) & = \partial h_l(a) + \partial H_l(a) + \delta H_{l-1}(a)  \text{ in } \bar{C}_0(\Sigma^l_{K_1}),
 \end{split}
\end{align}
for $l\in \Z_{\geq 1}$.

The first equation of (\ref{final-equation}) follows from (\ref{conut-M0}) since
\begin{align*} 
& \left( \Psi'_{K_1} \circ \Phi'_{\widetilde{L}_1} (a)\right)_0  \\
=\  & \#_{\Z_2} \calM_{\widetilde{L}_1,0}(a) \cdot \Psi'_{K_1}(1) +  \sum_{m\geq 1,\ |a_1|= \dots =|a_m|=0} \left( \#_{\bZ_2}  \calM_{\widetilde{L}_1,J}(a;a_1,\dots,a_m)\right) \cdot \left(  \prod_{k=1}^m \Psi'_{K_1,0}(a_k) \right) \\
=\  & \#_{\Z_2} \calM_{\widetilde{L}_1,0}(a) +  \sum_{m\geq 1,\ |a_1|= \dots =|a_m|=0} \left( \#_{\bZ_2}  \calM_{\widetilde{L}_1,J}(a;a_1,\dots,a_m)\right) \cdot \left(  \prod_{k=1}^m \#_{\Z_2} \calM_{L_{K_1},0}(a_k) \right) \\
= \ & \#_{\bZ_2} \calM_{L_{K_0},0}(a) 
= \Psi'_{K_0,0}(a)
\end{align*}
in $\bar{C}_0(\Sigma^0_{K_1}) = \bZ_2$.

Let us prove the second equation of (\ref{final-equation}) for $l\geq 1$.
Since $C'\calM_{\mathscr{L}_n,l}(a) = \bigcup_{\Delta \in I_{n,l}(a)} \Delta$ has the boundary (\ref{boundary-C'M}),
\begin{align}\label{boundary-chains} 
\begin{split}
\partial (H_l(a)) & = \sum_{\lambda \in \partial \left( C'\calM_{\mathscr{L}_n,l}(a)\right) } \bold{s}^{\lambda} \\
&= \sum_{\lambda \in \calM_{L_{K_0},l}(a)} 
\bold{s}^{\lambda} + \sum_{\lambda \in \calM_{L_{b_n},l}(a)}\bold{s}^{\lambda} + \sum_{i=1}^{l-1} \sum_{\lambda \in \delta \widetilde{\calM}_{\mathscr{L}_n,l-1}^{2i}(a)} \bold{s}^{\lambda} .
\end{split}
\end{align}
in $\bar{C}_0(\Sigma^l_{K_1})$.
Here, note that $\bold{s}^{\wh{\lambda}}$ vanishes in $\bar{C}_0(\Sigma^l_{K_1})$ for any $\lambda \in \delta \widetilde{\calM}_{\mathscr{L}_n,l-1}^{2i-1}(a)$
since the $i$-th path in $\bold{s}^{\wh{\lambda}}$ has length less than $\epsilon_1$ by (\ref{length-short}), which means that $\bold{s}^{\wh{\lambda}}$ is contained in $\Sigma^l_{K_1,\epsilon_1}$.

By (I-$l$) for $\lambda=(0,u)\in \calM_{L_{K_0},l}(a)$, the path $s^{\lambda}_{i'}$ agrees with $F_1\circ \rest{u}{\overline{\partial_{2i'}D_{2l+1}}}$ up to parametrization.
Therefore,
\[ \sum_{\lambda \in \calM_{L_{K_0},l}(a)} \bold{s}^{\lambda} = \sum_{u \in \calM_{L_{K_0},l}(a)} \bar{F}_1^{(0)} (\bold{s}^u)= \bar{F}_1^{(0)} \circ \Psi'_{K_0,l} (a) , \]
where $\bold{s}^u\in \bar{C}_0(\Sigma^l_{K_0})$ for $u\in \calM_{L_{K_0},l}(a)$ is given by (\ref{path-si}).
In addition, from the construction of $h_l(a)\in C_1(\Sigma^l_K)$ as above,
\begin{align*} 
&\left( \Psi'_{K_1} \circ \Phi'_{\widetilde{L}_1} (a)\right)_l \\
= & \sum_{|a_1|= \dots =|a_m|=0,\ l_1+\dots +l_m=l} \#_{\bZ_2}  \calM_{\widetilde{L}'_1,J}(a;a_1,\dots,a_m) \cdot \Psi'_{K_1,l_1}(a_1)*\dots * \Psi'_{K_1,l_m}(a_m) \\
= & \sum_{|a_1|= \dots =|a_m|=0,\ l_1+\dots +l_m=l} \#_{\bZ_2}  \calM_{\widetilde{L}'_1,J}(a;a_1,\dots,a_m) \cdot \sum_{(u_1,\dots ,u_m)\in \prod_{k=1}^m\calM_{L_{K_1},l_k}(a_k)} ( \bold{s}^{u_1} * \cdots * \bold{s}^{u_m}) \\
= &  \sum_{\lambda \in \calM_{L_{b_n},l}(a) } ( \bold{s}^{\lambda} + \partial h_{\lambda} ) =  \left( \sum_{\lambda \in \calM_{L_{b_n},l}(a) } \bold{s}^{\lambda}  \right) + \partial h_{l}(a).
\end{align*}
Finally, for each $i\in \{1,\dots ,l-1\}$,
\[ \sum_{\lambda \in \delta \widetilde{\calM}_{\mathscr{L},l-1}^{2i}(a)} \bold{s}^{\lambda}  = \delta_{i} (H_{l-1} (a))\]
by Lemma \ref{lemma-delta-Hl}.
Therefore, we obtain from (\ref{boundary-chains})
\[\partial (H_l(a)) = \bar{F}_1^{(0)} \circ \Psi'_{K_0,l} (a) +  \left( \left( \Psi'_{K_1} \circ \Phi'_{\widetilde{L}_1} (a)\right)_l - \partial h_l(a) \right) +  \delta_{i} (H_{l-1} (a)), \]
and this agrees with the second equation of (\ref{final-equation}).
This finishes the proof of (\ref{eq-goal}).
\end{proof}

\section{Examples of detecting non-contractible loops}\label{sec-examples-of-detection}

\subsection{Loops of knots and Legendrian tori}
\label{sec:mapping_spaces}

Motivated by the work of \cite{Kalman, Casals, martinez2024legendrian}, we define the blue-box loops.

\begin{definition}\label{def-blue-box}
Consider a knot $ L_1\#L_2\subset\bR^3$, where the knot $L_1$ is oriented and equipped with fixed framing, and the $L_2$ -tangle is arbitrarily small. Next, let the blue box $B[L_2]$ be a small part of a tubular neighborhood $N_{L_1}$ of $L_1$ that contains a $L_2$-tangle. Then the \textit{blue-box loop} is a smooth family of knots $( K^{\mathrm{box}}_t)_{t\in[0, 1]}$ such that $K^{\mathrm{box}}_0=K^{\mathrm{box}}_1$ and is realized by passing the blue box $B[L_1]$ rigidly inside $N_{L_1}$ once around $L_1$. By a rigid motion we mean that the track of the blue box respects the framing of $L_1$. See Figure \ref{figure_blue_box_loop}.
\end{definition}
\begin{figure}[ht]
\centering
\hspace{-0.5cm}
\includegraphics[scale=0.90]{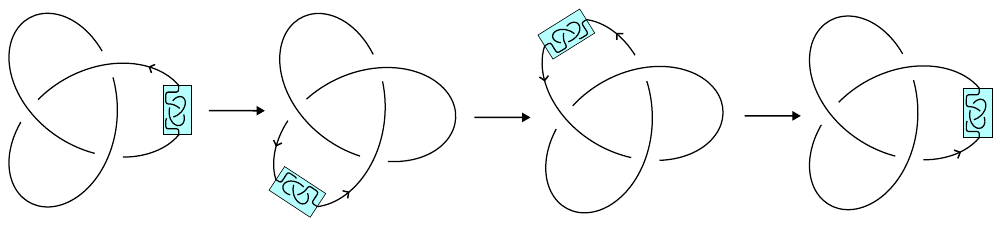}
\vspace{0.1cm}
\caption{A blue-box loop of the square knot $T_{2, 3}\#T_{2, -3}$.}
\label{figure_blue_box_loop}
\end{figure}

The unit conormal lift $K\mapsto \Lambda_K$ induces from the blue-box loop of knots $(K_t^{\mathrm{box}})_{t\in [0,1]}$ a loop of Legendrian tori $(\Lambda_{K_t^{\mathrm{box}}})_{t\in [0,1]}$.
We note that the loop $(\Lambda_{K_t^{\mathrm{box}}})_{t\in [0,1]}$ is realized by the trajectory of the domain $U^* B[L_2]$ inside $U^*\R^3$.

The next proposition shows that
if we forget the contact structure of $U^*\R^3$, $(\Lambda_{K_t^{\mathrm{box}}})_{t\in [0,1]}$ is contractible by a homotopy of loops of $C^{\infty}$ submanifolds.

\begin{proposition}\label{lemma_loop_lift}
$[(\Lambda_{K_t^{\mathrm{box}}})_{t\in [0,1]}]=[e]$ in $ \pi_{\sm}(\Lambda_{K_0})$, where $e$ is a constant loop.
\end{proposition}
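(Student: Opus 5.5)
We will show that the loop of embedded tori $(\Lambda_{K_t^{\mathrm{box}}})_{t\in[0,1]}$ is null-homotopic in the space of smoothly embedded tori of $U^*\R^3$ by writing it as the image of a loop of diffeomorphisms of $U^*\R^3$ that is itself contractible among diffeomorphisms preserving $\Lambda_{K_0}$ setwise up to isotopy. The key observation is that, by construction, the blue box moves rigidly inside the tubular neighborhood $N_{L_1}$ respecting the framing. Hence the loop $\Lambda_{K_t^{\mathrm{box}}}$ is obtained by transporting the piece $\Lambda_{K_0}\cap U^*B[L_2]$ along $U^*N_{L_1}$, while the rest of the torus is the conormal of the part of $L_1$ outside the box, which only gets reparametrized.

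\textbf{Step 1 (reduction to a model in a solid torus).} Identify $N_{L_1}\cong S^1\times D^2$ via the framing, so that $L_1=S^1\times\{0\}$ and the blue box is $I\times D^2$ for a small interval $I\subset S^1$. The knot $K_t^{\mathrm{box}}$ is then $(S^1\setminus(I+t))\times\{0\}$ union the tangle $\tau$ translated by $t$ in the $S^1$ direction. The unit cotangent bundle $U^*N_{L_1}\cong S^1\times D^2\times S^2$ is trivialized by the framing. In this chart $\Lambda_{K_t}$ is the image of $\Lambda_{K_0}$ under the diffeomorphism $R_t\times\id_{D^2\times S^2}$, where $R_t$ is rotation of $S^1$ by angle $2\pi t$. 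Here we use that translation in $S^1\times D^2$ lifts canonically to the trivialized cotangent bundle. Note that this lift is a diffeomorphism, not a contactomorphism, since the metric on $N_{L_1}$ is not flat. Outside $U^*N_{L_1}$ the loop is constant.

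\textbf{Step 2 (contracting the rotation loop smoothly).} Cut off $R_t$ radially to obtain a loop of compactly supported diffeomorphisms $\Phi_t$ of $U^*\R^3$, each supported near $U^*N_{L_1}$, with $\Phi_t(\Lambda_{K_0})=\Lambda_{K_t}$. Recall that elements of $\pi_{\sm}$ are homotopy classes of loops of embedded surfaces, so any loop of embeddings $\Phi_t\circ\iota$ obtained by a homotopy of the representing loop suffices.

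The homotopy we use is the following. The torus $\Lambda_{K_0}$ is, in the chart, the conormal of $L_1$ outside the box union the piece over the box. The loop $\Lambda_{K_t}$ is homotopic to the loop $\Lambda_{K_0}\circ(\text{rotation of the parametrization})$, i.e.\ to the same image set throughout, via a two-parameter family. For $s\in[0,1]$, shrink the tangle $\tau$ inside the box to the trivial arc by an ambient isotopy of embedded tori in $U^*(I\times D^2)$. This is not an isotopy of knots; we only need it at the level of the smooth submanifolds $U^*$-lifts, i.e.\ we isotope the surface $\Lambda_{K_0}\cap U^*B$ rel boundary to $\Lambda_{\text{arc}}\cap U^*B$.

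\textbf{Step 3 (main obstacle).} The main obstacle is justifying Step 2: the piece $\Lambda_{\tau}\cap U^*B$ is an annulus with fixed boundary components, and we need it to be isotopic rel boundary, as an embedded annulus in $U^*B\cong B\times S^2$, to the conormal of the straight arc.

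Our plan is to use the extra $S^2$ dimension. In the $5$-manifold $B\times S^2$ a $2$-dimensional annulus can pass through itself generically, and embedded annuli rel boundary that are homotopic rel boundary are isotopic, since $2\cdot2<5$ puts us in Whitney's range. The homotopy between the two annuli is given by projecting the crossing-changing homotopy of $\tau$ to the trivial tangle. Both annuli are conormals of arcs with the same endpoints, so they are homotopic rel boundary, and this homotopy upgrades to an isotopy.

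Doing this uniformly in $t$ moves the whole loop to the conormal loop of the rotating unknotted arc. That loop is literally constant as a set once we also rotate the parametrization, because $\Lambda_{L_1}$ is invariant under $R_t\times\id$. Concatenating these homotopies, with endpoints fixed at $\Lambda_{K_0}$ by performing the shrinking at $t=0$ and undoing it at $t=1$ through the same path, gives a null-homotopy of the loop in $\pi_{\sm}(\Lambda_{K_0})$.
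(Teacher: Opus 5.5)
Your overall reduction matches the paper's. Because the box moves rigidly, it suffices to isotope $\Lambda_{K_0}\cap U^*B[L_2]$, fixed near the boundary, to the conormal of the trivial arc. You then apply this uniformly in $t$ to reach a loop with constant image, and handle the moving basepoint by conjugation.

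The gap is in Step 3, which is the only substantive step. The count $2\cdot 2<5$ is the condition for a generic \emph{map} of a surface into a $5$-manifold to be an embedding. For a generic \emph{homotopy} $A\times[0,1]\to U^*B$ to be an isotopy, its double-point locus must be empty. That locus has dimension $2\cdot 2+1-5=0$, so emptiness would need target dimension at least $6$. In $U^*B$, generic homotopies therefore have isolated self-intersections, and yours does. When the two strands of a crossing pass through each other, their unit conormal circles are two distinct great circles in the same fibre $S^2$, and these meet in two antipodal points. The same reasoning for curves in $\R^3$ ($2\cdot 1<3$) would prove that homotopic knots are isotopic. So general position cannot be what upgrades your homotopy; in the borderline dimension $m=2n+1$ any such upgrade needs a genuinely different argument.

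The missing idea is to remove these double points explicitly using the fibre direction, which is what the paper does crossing by crossing. Work in local coordinates where the strands are $(\ell_1,0,0)$ and $(0,\ell_2,\varepsilon\phi(\ell_2))$, with conormal circles $(0,\cos\beta_1,\sin\beta_1)$ and $(\cos\beta_2,0,\sin\beta_2)$.
\begin{itemize}
\item Move the second strand to $(0,\ell_2,(1-2t)\varepsilon\phi(\ell_2))$.
\item Simultaneously replace the fibre circle of the first strand by embedded circles $c_t(\ell_1,\cdot)\subset S^2$. These agree with the original circle near $\ell_1=\pm1$ and for $t\in\{0,1\}$.
\item At $\ell_1=0$, $t=\tfrac12$, choose $c_t$ disjoint from $\{(\cos\beta_2,0,\sin\beta_2)\}$, for example $(\tfrac{1}{\sqrt{2}}\cos\beta_1,\tfrac{1}{\sqrt{2}},\tfrac{1}{\sqrt{2}}\sin\beta_1)$.
\end{itemize}
The base points of the two cylinders coincide only at $\ell_1=\ell_2=0$, $t=\tfrac12$, so the cylinders stay disjoint for all $t$. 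This gives an honest (non-Legendrian) isotopy rel boundary. With this in place of your Step 3, the rest of your argument goes through.
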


\begin{proof}
By the construction, for every $t\in[0, 1]$ the blue box has the same form. Therefore, it suffices to show that there is a smooth isotopy on $U^* B[L_2]$ fixed near the boundary which deforms the conormal of the $L_2$-tangle to the conormal of the untangle. To achieve this, we consider a planar diagram of $L_2$ and focus on each crossing.
Focusing on a neighborhood $B$ of one crossing, the problem is reduced to constructing a smooth isotopy from $\Lambda_{L_2}\cap U^*B$ to $\Lambda_{L'_2}\cap U^*B$, where $L'_2$ is obtained by changing the overcrossing in $B$ to the undercrossing. 

%

We may assume that there are local coordinates $(x; y)$ on $U^\ast \bR^3\subset T^\ast\mathbb{R}^3$ such that the crossing has the following form. It consists of strands $c_1(\ell_1)=(\ell_1, 0, 0;\, 0, 0, 0)$ and $c_1(\ell_2)=(0, \ell_2, \varepsilon\phi(\ell_2);\, 0, 0, 0)$, where $\ell_1, \ell_2\in[-1, 1],$ $\varepsilon>0$ is small and $\phi$ is a bump function supported near $0$. Then, after a small isotopy of fibers, the unit conormal bundle of the tangle is the union of two cylinders $\{C_i(\ell_i,\beta_i)\mid \ell_i\in [-1,1],\ \beta_i\in S^1\}$ for $i\in \{1,2\}$, where  
\[\begin{array}{cc} C_1(\ell_1, \beta_1)=(\ell_1, 0, 0;\, 0, \cos{\beta_1}, \sin{\beta_1}) , & C_2(\ell_2, \beta_2)=(0, \ell_2, \varepsilon\phi(\ell_2);\, \cos{\beta_2}, 0, \sin{\beta_2}). \end{array}\]
Consider an isotopy in $t\in [0,1]$
\[\begin{array}{cc} C^t_1(\ell_1,\beta_1) = (\ell_1,0,0; c_t(\ell_1, \beta_1)) , &  C^t_2 (\ell_2,\beta_2) = (0,\ell_2, (1-2t) \cdot \epsilon\phi(\ell_2); \cos \beta_2,0, \sin \beta_2 ), \end{array}\]
where $c_t(\ell_1,\cdot )\colon S^1\to S^2$ is an embedding which coincides with $(0,\cos\beta_1,\sin\beta_1)$ if $\ell_1\in \{1,-1\}$ or $t\in \{0,1\}$, and is disjoint from $\{(\cos\beta_2,0,\sin \beta_2)\mid \beta_2\in S^1\}$ if $\ell_1=0$ and $t=\frac{1}{2}$. (For example, $c_{\frac{1}{2}}(0,\beta) = (\frac{1}{\sqrt{2}}\cos\beta_1, \frac{1}{\sqrt{2}},\frac{1}{\sqrt{2}}\sin\beta_1)$.)
Then, the two cylinders $\{C^t_i(\ell_i,\beta_i)\mid \ell_i\in [-1,1],\ \beta_i\in S^1\}$ for $i=1,2$ are disjoint for every $t\in [0,1]$, and this gives a desired isotopy since $C^1_1(\ell_1,\beta_1)=C_1(\ell_1,\beta_1)$ and $C^1_2(\ell_2,\beta_2) = (0,\ell_2,-\epsilon\phi(\ell_2);\cos\beta_2,0,\sin\beta_2)$.
%
%
\end{proof}

\begin{remark}\label{rem_embedding_spaces}Let us briefly discuss the description of the blue-box loop in the setting of embedding spaces, referring to the work by Hatcher \cite{Hatcher83, Hatcher2002TopologicalMS} and Budney \cite{Budney2006, Budney2010}. See also a survey in Martínez \cite{martinez2024legendrian}.
To simplify the discussion, we consider knots in the unit $3$-sphere $S^3$.



Let $\Emb(S^1, S^3)$ be a space of smooth (parametrized) knots in $S^3$ equipped with $C^\infty$ topology. Similarly, we introduce the space of long knots $\LEmb(\bR, \bR^3)$, i.e. the space of embeddings $f:\bR\rightarrow\bR^3$ such that $f(t)=(t, 0, 0)$ if $\vert t\vert>1$. 
Fix the north pole $p_0 =(0,0,0,1)\in S^3$ and a unit vector $V_0=(1,0,0,0)\in T_{p_0}S^3\subset \R^4$, and fix an identification $S^3\setminus \{p_0\}\cong \R^3$ by  the stereographic projection.
Then, up to homotopy, $\LEmb(\bR, \bR^3)$ is identified with a subspace of $\Emb(S^1, S^3)$ that consists of embeddings $f:\bR/\bZ\rightarrow S^3$ such that $f(0) = p_0$ and $\dot{f}(0)/|\dot{f}(0)| =V_0$.
Consequently, the map $e_0\colon \Emb(S^1,S^3)\to V_2(\R^4) \colon f\mapsto (f(0),\dot{f}(0)/|\dot{f}(0)|)$ induces a fibration sequence
\begin{equation}\label{eqn_fibre_knots}
\xymatrix{
\LEmb(\bR, \bR^3) \ar[r]^-{i_0} & \Emb(S^1, S^3) \ar[r]^-{e_0} & V_2(\bR^4)=\SO(4)/\SO(2).}
\end{equation}
%
We remark that the homotopy type of the space $\Emb(S^1, \bR^3)$ can also be described by a similar fibration involving $\LEmb(\R,\R^3)$. See \cite[Proposition 2.2]{Budney2006}.

The topological type of $\LEmb(\bR, \bR^3)$ has been intensively studied. 
Notably, as a consequence of Smale conjecture, Hatcher \cite{Hatcher2002TopologicalMS} showed that connected components of $\LEmb(\bR, \bR^3)$ are $K(\pi, 1)$.
For any knot $L\subset S^3$, let $\mathcal{K}_L$ denote the component of $\LEmb(\R,\R^3)$ containing $L$-tangles.
As a special case of Budney's formula \cite{Budney2010}, we have the following:
Let $L_1$ and $L_2$ be prime knots which are non-isotopic to each other.
By  \cite[Theorem 11]{Budney2007} and \cite[Theorem 2.4]{Budney2010}, we have a homotopy equivalence
\begin{align}\label{connect-sum-homotopy}
\mathcal{K}_{L_1\#L_2} \simeq  \mathcal{C}_2(\R^2)\times \mathcal{K}_{L_1}\times \mathcal{K}_{L_2},  
\end{align}
where $\mathcal{C}_2(\R^2)$ is the configuration space of distinct two points in $\R^2$.
(A generator of $\pi_1(\mathcal{C}_2(\R^2))$ gives a loop such that a small $L_1$-tangle slides along $L_2$, and then a small $L_2$-tangle slides along $L_1$.)
For any knot $L$, there are two types of loops in $\mathcal{K}_{L}$ described in \cite[page 3]{Hatcher2002TopologicalMS}:
\begin{itemize}
\item One loop is obtained by twisting a long knot around the long axis.
\item The other loop has the form $f(S^1\setminus \{t\})\subset S^3\setminus \{ f(t)\}$, where $f\colon S^1\to S^3$ is an embedding with a framing such that $f(S^1)=L$.
More precisely, a loop $(A_t)_{t\in [0,1]}$ in $\SO(4)$ which maps $(f(t),\dot{f}(t)/|\dot{f}(t)|)$ to $(p_0,V_0)\in T S^3$ is determined by the framing of $f$ at $t$ (see \cite[Section 4.2]{martinez2024legendrian}), and then we obtain a long knot $(A_t\circ f)(S^1)\setminus \{p_0\}\subset S^3\setminus \{p_0\}\cong \R^3$.
\end{itemize}
Following \cite[Section 4]{martinez2024legendrian}, let us call them a \textit{Gramian's} loop and a \textit{Hatcher-Fox} loop respectively.
It is worth noting that
in the case of a torus knot $T_{p,q}$,  $\mathcal{K}_{T_{p,q}}$ has the same homotopy type as $S^1$ \cite[page 3]{Hatcher2002TopologicalMS} and in particular, a Gramain's loop and a Hatcher-Fox loop coincide up to homotopy.
%

Let $f\in \Emb(S^1,S^3)$ be an embedding with a fixed framing such that $f(S^1)=L_1$. This induces a Hatcher-Fox loop $\ell_{\mathrm{HF}}$ for $L=L_1$ as above, together with a loop $(A_t)_{t\in [0,1]}$ in $\SO(4)$.
(The homotopy class $[(A_t)_{t\in [0,1]}]\in \pi_1(\SO(4))$ is invariant under homotopies of framed immersions $S^1\looparrowright S^3$.)
Consider a segment $[-\epsilon,0]\subset \R/\Z \cong S^1 $ for $0<\epsilon \ll 1$.
Our blue-box loop is given by a family $(g_t)_{t\in [0,1]}$ of embeddings $g_t\colon S^1 \to S^3$ such that $g_t(s)= f(t+s)$ if $s\in S^1\setminus [-\epsilon,0]$, and $\rest{g_t}{[ -\epsilon, 0]}$ is an embedding onto the $L_2$-tangle in the blue box.
Note that $g_t(0)=f(t)$ is a point just after the $L_2$-tangle.
Then, $e_0(A_t\circ g_t) =  \left( A_t ( f(t) ), A_t (\dot{f}(t)/|\dot{f}(t)|) \right)   = (p_0,V_0)$, and thus $(A_t\circ g_t)_{t\in [0,1]}$ is identified with a loop in $\LEmb(S^1,S^3) \simeq e_0^{-1}(p_0,V_0)$.
Up to homotopy, this loop has the form $(g'_t)_{t\in [0,1]}$ of
\[g'_t\colon \R\to \R^3 \colon u \mapsto \begin{cases} g^1_t(u) & \text{ if } u\leq 0, \\
g^2(u) & \text{ if }u\geq 0,
\end{cases}\]
where $(g^1_t)_{t\in[0,1]}$ is a loop in $\mathcal{K}_{L_1}$ homotopic to $\ell_{\mathrm{HF}}$, and $g^2\in \mathcal{K}_{L_2}$ does not depend on $t$.
In other words, the homotopy class $[(A_t\circ g_t)_{t\in [0,1]}] \in \pi_1(\Emb(S^1,S^3), A_0\circ g_0)$ is equal to the image of $(1, [\ell_{\mathrm{HF}}], 1)$ under the homomorphism
\[ (i_0)_*\colon \pi_1(\mathcal{C}_2(\R^2))\times \pi_1(\mathcal{K}_{L_1},g^1_0)\times \pi_1(\mathcal{K}_{L_1},g^2) \to \pi_1 ( \Emb(S^1,S^3),A_0\circ g_0). \]
Here, we use the homotopy equivalence (\ref{connect-sum-homotopy}) and the map $i_0$ in (\ref{eqn_fibre_knots}).
\end{remark}

\subsection{Describing the automorphism induced by a blue-box loop}\label{subsec-automorphism}
In order to find a non-contractible loop of Legendrian tori, it is enough by Theorem \ref{thm-1} to find a loop of knots that induces a non-trivial automorphism on the cord algebra.
The natural candidate for us is the blue-box loop from Section \ref{sec:mapping_spaces}.

First, we are going to describe explicitly how the blue-box loop acts on the cord algebra.
Instead of $\Cord(K)$ from Definition \ref{def-cord-Z/2}, it will be more convenient to consider a loop interpretation of the cord algebra $\Cord^{\mathrm{loop}}(K)$ which is given as follows.
It is obtained from \cite[Definition 2.1]{Ng} by reducing the coefficient ring from $\Z[H_1(\partial N)]$ to $\Z_2$.

Recall that $K\subset \R^3$ is an oriented knot with a framing, and $K^\prime\subset N$ is the shift of $K$ with respect to the framing.
Fix a base point $q_0\in K$, ane let $q'_0\in K'$ be the corresponding point.
Let $m$ and $\ell$ be the meridian and the longitude of $K$ respectively. 

\begin{definition}
$\Cord^{\mathrm{loop}}(K)$ is a unital non-commutative $\bZ_2$-algebra generated by the set $\pi_{1}(\mathbb{R}^3\setminus K, q_0^\prime)$ modulo the relations
\begin{itemize}
\item[(i)]$[e]=0$, where $e$ is the trivial loop in $\pi_1(\bR^3\setminus K,q'_0),$
\item[(ii)]$[\ell\bullet\gamma]=[\gamma\bullet \ell]=[\gamma]$ for any $\gamma\in\pi_1(\bR^3\setminus K,q'_0),$
\item[(iii)]$[\gamma_1\bullet\gamma_2]+[\gamma_1\bullet m\bullet\gamma_2]=[\gamma_1] \cdot [\gamma_2]$ for any $\gamma_1, \gamma_2\in\pi_1(\bR^3\setminus K,q'_0).$
\end{itemize}    
\end{definition}
We remark that
\begin{align}\label{meridian-vanish}
[m\bullet \gamma]=[\gamma\bullet m]=[\gamma]\end{align}
holds by (i) and (iii).

\begin{remark}\label{rem-cord-and-loop}
Consider an equivalence relation $\sim$ on $\pi_1(\R^3\setminus K',q'_0)$ generated by $[\ell\bullet \gamma] \sim [\gamma] $ and $ [\gamma\bullet \ell] \sim [\gamma]$.
Then, there is a natural bijection between the quotient set
$\pi_1(\R^3\setminus K,q'_0)/\sim $ and $\mathcal{P}_K$, and
it induces a canonical isomorphism $\Cord^{\mathrm{loop}}(K)\to \Cord(K)$ which maps  generators $[\gamma]\in \pi_1(\R^3\setminus K,q'_0)$ to $[\gamma]\in \mathcal{P}_K$.
See also \cite[Proposition 2.5]{Ng}.
For readers' convenience, let us summarize several versions of cord algebra which appear in Section \ref{sec-examples-of-detection}.
\[\xymatrix@R=10pt@C=45pt{
\Cord^{\mathrm{nc}}(K) \ar[r]^-{\lambda=\mu=1} & \Cord(K) \ar[r]^-{ [\gamma] = [\gamma^{-1}]}_{\text{abelianization}} \ar@{=}[d]_-{\vsim} & \Cord^{\mathrm{ab}}(K) \\ 
& \Cord^{\mathrm{loop}}(K). & 
}\]
$\Cord^{\mathrm{nc}}(K)$ will appear only in the last Remark \ref{rem-potential}.
$\Cord^{\mathrm{ab}}(K)$ is a commutative algebra which is suitable for concrete computations.
\end{remark}

Let $K=L_1\# L_2$ be an oriented framed knot such that $L_2$ is in the blue box $B[L_2]\subset N_{L_1}$.
We fix a base point $q'_0\in K' \cap \partial B[L_2]$.
For $i\in\{0,1\}$, let $m_i, \ell_i\in \pi_1(\R^3\setminus L_i,q'_0)$ denote the meridian and the longitude of $L_i$, respectively.
(Hereafter, we omit writing the base point $q'_0$.)
Note that by Seifert-van Kampen theorem
\begin{equation}\label{eqn_van_Kampen}
\pi_1(\bR^3\setminus K)\cong\frac{\pi_1(\bR^3\setminus L_1)\ast \pi_1(\bR^3\setminus L_2)}{\langle [m_1]=[m_2]\rangle},
\end{equation}
and the meridian and longitude of $K$ are $m=m_1=m_2$ and $\ell = \ell_1\bullet \ell_2 = \ell_2\bullet\ell_1$.

Now, in the spirit of Section \ref{sec:isom_cord_alg}, the blue-box loop determines an ambient isotopy $(F_t)_{t\in[0, 1]}$. Put $F^{\mathrm{box}}\coloneqq F_1$.
Then, $F^{\mathrm{box}}$ induces on $\pi_1(\R^3\setminus K)$ a group automorphism $(F^{\mathrm{box}})_\ast$.
By (\ref{eqn_van_Kampen}), this can be described on generators as
\begin{align}
\begin{split}\label{eqn_autom_gener}
    (F^{\mathrm{box}})_\ast [a]&=[\ell_1^{-1}\bullet a\bullet\ell_1], \hspace{0.4cm}\hbox{ where }[a]\in\pi_1(\bR^3\setminus L_1) ,\\
    (F^{\mathrm{box}})_\ast [b]&=[b], \hspace{1.35cm}\hbox{ where }[b]\in\pi_1(\bR^3\setminus L_2).
\end{split}
\end{align}
The group automorphism $(F^{\mathrm{box}})_*$ on $\pi_1(\R^3\setminus K)$ fixes $m$ and $\ell$,
so an automorphism on $\Cord^{\mathrm{loop}}(K)$ is determined.
Via the canonical isomorphism $\Cord^{\mathrm{loop}}(K)\cong \Cord(K)$,
this agrees with the automorphism $(F^{\mathrm{box}})_*$ defined on $\Cord(K)$ as in Section \ref{subsubsec-isom-on-cord}.



Before continuing with interesting examples, we discuss the computability of $\Cord(K)$.
From now on, we assume that $K$ has the blackboard framing $K'$ induced by a knot diagram.
We utilize a combinatorial way in \cite[Section 4.3]{Ng2003} to compute the cord algebra from a knot diagram.

\begin{remark}
To understand the combinatorial description, it will be more convenient to work with a slightly modified version of the cord algebra as in \cite[Definition 1.2]{Ng2003}.
It is generated by the set of homotopy classes of \textit{cords} of $K$, which are paths $c\colon [0,1]\to \R^3$ such that $c^{-1}(K) = \{0,1\}$. 
A canonical isomorphism from $\Cord(K)$ to this modified version is given by concatenating any path $\gamma\colon ([0,1],\{0,1\})\to (\R^3\setminus K,K')$ with the short segments along the shift from $K$ to $K'$ at the endpoints $\gamma(0)$ and $\gamma(1)$. 
\end{remark}

We reduce $\Cord(K)$ to the \textit{abelian cord algebra} $\Cord^{\mathrm{ab}}(K)$ by imposing additionally that cords are unoriented and commute each together.
More precisely, we mod out by relations
\[\begin{array}{cc} [\gamma]=[\gamma^{-1}], & [\gamma_1]\cdot [\gamma_2] = [\gamma_2]\cdot[\gamma_1], \end{array}\]
for any $[\gamma],[\gamma_1],[\gamma_2]\in \mathcal{P}_K$,
where $\gamma^{-1}$ is the inverse path of $\gamma$.
Note that $F^{\mathrm{box}}$ induces an automorphism $(F^{\mathrm{box}})_\ast$ on $\Cord^{\ab}(K)$, and it is intertwined with the the automorphism on $\Cord(K)$ by the projection $\Cord(K)\rightarrow \Cord^{\mathrm{ab}}(K)$. 

Now, referring to \cite[Section 4.3]{Ng2003}, let us see how to compute $\Cord^{\mathrm{ab}}(K)$ from the knot diagram.
The crossings divide the diagram into $n$ strands.
We label them by $1,\dots, n$.
Then, the cord $a_{ij}$ is taken as an (unoriented) path between the strands $i, j$ such that outside of its endpoints, $a_{ij}$ lies completely above the plane of the knot diagram.
Then,
\[\Cord^{\mathrm{ab}}(K)=\mathcal{A}_n/\mathcal{I}^{\mathrm{diag}},\]
where $\mathcal{A}_n$ is a polynomial ring with $\bZ_2$-coefficients generated by $a_{ij}$ for all $i, j\in \{1,\dots ,n\}$. Also $\mathcal{I}^{\mathrm{diag}}$ is an ideal generated by $a_{ii}$ and $a_{ij}+a_{ji}$ for $i,j\in\{1,\dots ,n\}$, and 
\begin{equation}\label{eqn_cross}
a_{\ell j}+ a_{\ell k}+a_{\ell i}\cdot a_{i j},
\end{equation}
where $\ell\in \{1,\dots, n\}$ and $(i; j, k)$ goes over all $n$ crossings of the knot diagram. In more detail, $i$ represents the overcrossing and $j, k$ are the undercrossings as in Figure \ref{figure_move}. We stress that $a_{ij}=a_{ji}$, that is, the cords are unoriented in $\Cord^{\mathrm{ab}}(K)$.
\begin{figure}
\labellist
\pinlabel $j$ at 75 707
\pinlabel $k$ at 130 707
\pinlabel $i$ at 100 694
\pinlabel $\ell$ at 190 710
\endlabellist
\centering
\includegraphics[scale=0.8]{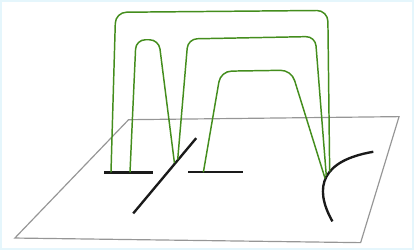}
\vspace{0.3cm}
\caption{The cords entering the relation given by (\ref{eqn_cross}).}
\label{figure_move}
\end{figure}

By definition, $\Cord^{\mathrm{ab}}(K)$ is generated by $n(n-1)/2$ elements in $\{a_{ij}\mid 1\leq i<j\leq n\}$. On the first sight, the number of the relations generating $\mathcal{I}^{\mathrm{diag}}$ might appear large, since (\ref{eqn_cross}) contribute by $2n^2$ relations.
However, this can be reduced.
If we apply relations (\ref{eqn_cross}) to the case $\ell=i$ and the case $\ell=j$, we obtain the  relations 
\begin{equation}\label{eqn_triv_rel}
a_{ik}=a_{ij}\hbox{ and }a_{kj}=a_{ij}^2.
\end{equation}
Furthermore, it follows that for any $\ell$, the relations (\ref{eqn_cross}) with $(i; j, k)$ and $(i; k, j)$ are the same.
Hence, in general, it remains to inspect only $n(n-3)$ relations contributing by (\ref{eqn_cross}): For each $\ell\in \{1,\dots ,n\}$, there are $n-3$ crossings in which the strand $\ell$ does not appear.

\subsection{Examples}\label{subsec-example}

Let us give examples of loops of knots $(K_t)_{t\in [0,1]}$ such that $\rho([(K_t)_{t\in [0,1]}])$ is a non-trivial automorphism on the cord algebra.

\begin{example}\label{exmpl_nontriv1}We consider the square knot $K\coloneqq T_{2, 3}\#T_{2, -3}$, where the $T_{2, -3}$ lies in the blue box. See Figure \ref{figure_square_knot}.
\begin{figure}
\labellist
\pinlabel $1$ at 43 680
\pinlabel $2$ at 43 750
\pinlabel $3$ at 120 680
\pinlabel $6$ at 120 746
\pinlabel $4$ at 250 730
\pinlabel $5$ at 250 700
\pinlabel $3$ at 225 659
\pinlabel $6$ at 225 770
\pinlabel $\ast$ at 197 678
\endlabellist
\centering
\includegraphics[scale=1.1]{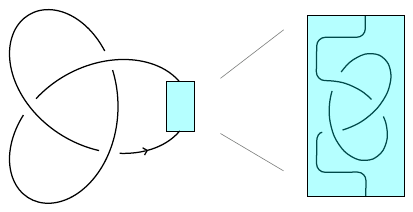}
\vspace{0.3cm}
\caption{The square knot $K$ with strands labeled by $1,\dots, 6$. Note that $K$ is oriented and the base point $\ast$ lies in $\partial B[T_{2, -3}]$ over the strand $3$.}
\label{figure_square_knot}
\end{figure}

The strategy is as follows.
First, we describe the relations in $\Cord^{\mathrm{ab}}(K)$ using the knot diagram description.
We then construct a homomorphism $\varphi:\Cord^{\mathrm{ab}}(K)\rightarrow\mathbb{Z}_2[z]$.
Using (\ref{eqn_autom_gener}), we show that there is an element $x\in \Cord^{\mathrm{ab}}(K)$ such that $\varphi(x)\neq \varphi((F^{\mathrm{box}})_* x)$.
If we take an element $x'\in \Cord(K)$ which is projected to $x$, this satisfies $x' \neq (F^{\mathrm{box}})_*x'$ in $\Cord(K)$.

Now, we examine the relations in  $\Cord^{\mathrm{ab}}(K)$.
By the relations (\ref{eqn_triv_rel}), it is straightforward that, except $a_{36}$, all cords between different strands of $T_{2, 3}$-tangle (i.e. strands $1,2,3,6$) are the same and are idempotent. Let us denote them by $x$. Analogously for $T_{2, -3}$.
Here, we denote the corresponding cords by $y$. We are left with $3\cdot 6=18$ relations given by (\ref{eqn_cross}). From the crossing $(2; 1, 6)$ we obtain a relation that $a_{36}+a_{31}+a_{21}^2=0$. Since $a_{21}$ is idempotent and equal to $a_{31}$, it follows that $a_{36}=0$.
Then, all other $5$ remaining relations involving the cord $a_{36}$ are vacuous.
Hence, we are left with $12$ relations ($2$ for each crossing, where each trefoil has $3$ crossings):
\begin{align}
    y+a_{41}+a_{42}x=0\quad\hbox{ and }\quad y+a_{51}+a_{52}x=0,\label{eqn1}\\
    a_{42}+a_{41}+xy=0\quad\hbox{ and }\quad a_{52}+a_{51}+xy=0,\label{eqn2}\\
    y+a_{42}+a_{41}x=0\quad\hbox{ and }\quad y+a_{52}+a_{51}x=0,\label{eqn3}\\
    \nonumber\\
    x+a_{51}+a_{41}y=0\quad\hbox{ and }\quad x+a_{52}+a_{42}y=0,\label{eqn4}\\
    a_{51}+a_{41}+xy=0 \quad\hbox{ and }\quad a_{52}+a_{42}+xy=0,\label{eqn5}\\
    x+a_{41}+a_{51}y=0\quad\hbox{ and }\quad x+a_{42}+a_{52}y=0.\label{eqn6}
\end{align}
After subtracting (\ref{eqn2}) from (\ref{eqn5}) we obtain that $a_{51}=a_{42}$ and $a_{52}=a_{41}$.

According to the above computation, we can construct a well-defined $\Z_2$-algebra map $\varphi: \Cord^{\mathrm{ab}}(K)\rightarrow \mathbb{Z}_2[z]$ by
\begin{align}
\label{simplif_hom}
\begin{split}
x,y&\mapsto 1,\\
a_{51}\mapsto z+1\quad&\hbox{and}\quad a_{52}\mapsto z.
\end{split}
\end{align}
For convenience, let us describe a table of images of generators of $\mathcal{A}_n$ under $\varphi$.
See Table \ref{table_gen}.

\begin{table}[!h]
\renewcommand{\arraystretch}{1.2}%
\begin{center}
\aboverulesep = 0pt
\belowrulesep = 0pt
\begin{tabular}{ c |c | c | c | c | c | c | } 
 \multicolumn{1}{c}{} & \multicolumn{1}{c}{\textcolor{gray}{1}} & \multicolumn{1}{c}{\textcolor{gray}{2}} & \multicolumn{1}{c}{\textcolor{gray}{\,\,\,3\,\,\,}}  & \multicolumn{1}{c}{\textcolor{gray}{4}} & \multicolumn{1}{c}{\textcolor{gray}{5}} & \multicolumn{1}{c}{\textcolor{gray}{\,\,\,6\,\,\,}}\\
 \cmidrule{2-7}
 \textcolor{gray}{1} & 0 & 1 & \,\,\,1\,\,\,  & z+1 & z & \,\,\,1\,\,\,\\
 \cmidrule{2-7}
 \textcolor{gray}{2} & 1 & 0 & 1 & z & z+1 & 1 \\ 
 \cmidrule{2-7}
 \textcolor{gray}{3} & 1 & 1 & 0 & 1 & 1 & 0\\
 \cmidrule{2-7}
 \textcolor{gray}{4} & z+1 & z & 1 & 0 & 1 & 1\\
 \cmidrule{2-7}
 \textcolor{gray}{5} & z & z+1 & 1 & 1 & 0 & 1\\
 \cmidrule{2-7}
 \textcolor{gray}{6} & 1 & 1 & 0 & 1 & 1 & 0\\
 \cmidrule{2-7}
\end{tabular}
\caption{Table of images of generators of $\mathcal{A}_n$ under $\varphi$, where $\varphi(a_{ij})$ is on $i$-th row and $j$-th column.}
\label{table_gen}
\end{center}
\end{table}

Next, let us study $(F^{\mathrm{box}})_\ast[a_{24}]$.
First, note that $a_{24}$ is the same as the concatenation of $a_{23}$ and $ a_{34}$ at the base point.
Recall that the base point lies over the strand $3$ of $K$.
From our convention, $[a_{23}]$ extends to a loop $ [\hat{a}_{23}] \in \pi_{1}(\R^3\setminus L_1)$ and $[a_{34}]$ extends to a loop $[\hat{a}_{34}] \in \pi_{1}(\R^3\setminus L_2)$.
Hence, $[a_{24}]\in \Cord^{\mathrm{ab}}(K)$ corresponds to $[\hat{a}_{23}\bullet \hat{a}_{34}]\in \Cord^{\mathrm{loop}}(K)$ by the projection $\Cord^{\mathrm{loop}}(K)\cong \Cord(K)\to \Cord^{\mathrm{ab}}(K)$.
By (\ref{eqn_autom_gener}), we obtain
\[(F^{\mathrm{box}})_\ast[\hat{a}_{23}\bullet \hat{a}_{34}]=[\ell_1^{-1}\bullet \hat{a}_{23}\bullet \ell_1\bullet \hat{a}_{34}].
\]

As an element of $\mathcal{P}_K$ (or a cord of $K$), we can draw
$$[\ell_1^{-1}\bullet \hat{a}_{23}\bullet\ell_1\bullet \hat{a}_{34}]=\includegraphics[height=3cm,valign=c,scale=1.6]{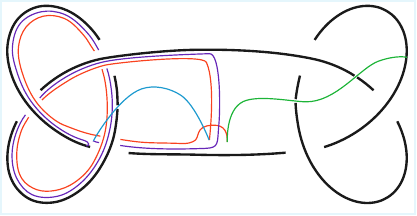},$$
where the purple and orange paths around $T_{2, 3}$ correspond to $\ell^{-1}_1$ and $\ell_1$, respectively. The blue and green paths are the cords $a_{23}$ and $a_{34}$, in this order.
(Actually, in the present case, it is not hard to directly see that the cord $a_{24}$ is mapped by $F^{\mathrm{box}}$ to the cord in the right-hand side.)


Recall the relation (ii) illustrated in Figure \ref{fig-cord-Z2}. (One can alternatively refer to the `skein relation' \cite[(1) in Section 1.1]{Ng2003} for cords.)
Then, we continue the computation in $\Cord(K)$:
\begin{align*}
    [\ell_1^{-1}\bullet \hat{a}_{23}\bullet \ell_1 \bullet \hat{a}_{34}]&\stackrel{(1)}{=}\includegraphics[height=3cm,valign=c]{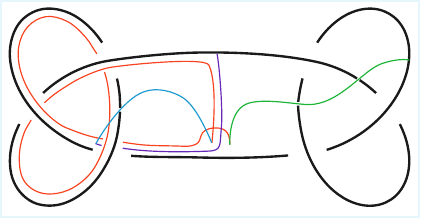}\\
    &\stackrel{(2)}{=}\includegraphics[height=3cm,valign=c]{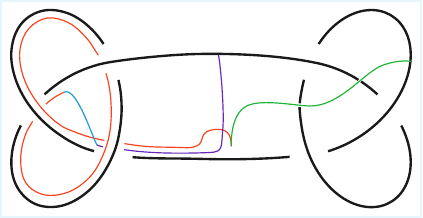}\\
    &\stackrel{(3)}{=} 0 +\includegraphics[height=3cm,valign=c]{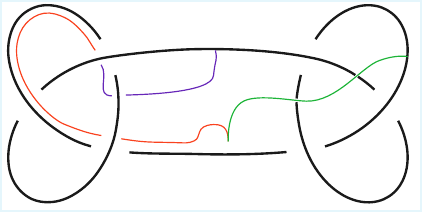}\\
    &\stackrel{(4)}{=}[a_{64}]+\includegraphics[height=2.8cm,valign=c]{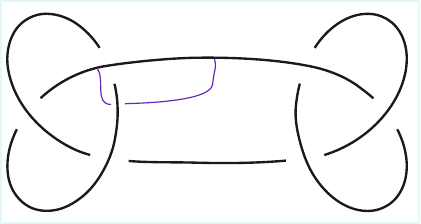}\cdot\includegraphics[height=2.8cm,valign=c]{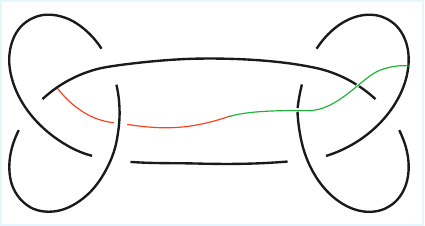}\\
    &\stackrel{(5)}{=}[a_{64}]+( [a_{61}] \cdot [a_{16}] + [a_{66}] )\cdot([a_{64}]+[a_{61}]\cdot[a_{14}]).
\end{align*}
In the equalities $(1)$ and $(2)$, we used the homotopy invariance of cords.
In $(3)$, we pushed the cord through the strand $2$ in a neighborhood of the crossing $(2:1,6)$, and apply the relation $[a_{36}]=0$.
In the case $(4)$, we pushed the cord through the strand $6$ in a neighborhood of $(6;1,2)$, and finally in $(5)$, we push twice the cords through the strand $1$.

Referring to Table \ref{table_gen}, we can compute  that
\[\varphi((F^{\mathrm{box}})_\ast[a_{24}])=1+(1+0)(1+1\cdot (z+1))=1+1+z+1=z+1.\]
On the other hand, $\varphi([a_{24}])=z$. Hence, the action of $(F^{\mathrm{box}})_\ast$ on $\Cord(T_{2, 3}\#T_{2, -3})$ is non-trivial.

To the end, we note that in the spirit of Remark \ref{rem_embedding_spaces}, instead of the action of blue-box loop, we can also study the action of a single Gramain's loop of $B[T_{2, -3}]$ on $\Cord(K)$. In fact, this leads to a slightly shorter computation. However, the authors find that the blue-box loop is more suitable for a further generalization to a loop of cable knots in Example \ref{ex_cable}.
\end{example}

From Example \ref{exmpl_nontriv1} we can derive the following families of infinite number of knots, where $(F^{\mathrm{box}})_\ast$ also has a non-trivial action on cord algebra.

\begin{example}We consider the knot $K\coloneqq T_{2, 3}\#(T_{2, -3}\# L)$, where $L$ is an arbitrary knot that is connected to $T_{2, -3}$.
We consider that $L$ is connected to the strand $4$ in Figure \ref{figure_square_knot}.

Now, let us consider a blue box $B[T_{2, -3}\# L]\subset N_{T_{2, 3}}$. Even though the structure of $\Cord^{\mathrm{ab}}(K)$ can be much more complicated than Example \ref{exmpl_nontriv1}, we want to construct a simplifying  map  $\widetilde{\varphi}:\Cord^{\mathrm{ab}}(K)\rightarrow\mathbb{Z}_2[z]$ similar to $\varphi$ in Example \ref{exmpl_nontriv1}.

We define $\widetilde{\varphi}$ on the generators of $\mathcal{A}_n$ as follows. For the cords between the strands of $T_{2, 3}$ or $T_{2, -3}$, the map $\widetilde{\varphi}$ agrees with the map $\varphi$ from (\ref{simplif_hom}).
Next, every cord with endpoints on $L$ is mapped to $0$. It remain the case of cords $c \in \{a_{i,j} \mid 1\leq i,j \leq n\}$ with one endpoint on $L$ and other endpoint on the complement of $L$ in $K$.
For the strand where the latter endpoint lives, let $r\in \{1,\dots ,6\}$ be the number of this strand in Figure \ref{figure_square_knot}.
Then, we put $\widetilde{\varphi} (c)\coloneqq \varphi (a_{r4})$.
The relations (\ref{eqn_cross}) for all crossings are mapped to $0$, and we obtain a well-defined ring homomorphism $\widetilde{\varphi}\colon \Cord^{\mathrm{ab}}(K)\to \Z_2[z]$.
We take a cord $c$ as above such that $a_{r4}=a_{24}$.
Note that the $L$-part does not affect the computation of $\widetilde{\varphi}((F^{\mathrm{box}})_\ast[c])$ at all.
It follows that $\widetilde{\varphi}((F^{\mathrm{box}})_\ast [c]) \neq \widetilde{\varphi} ([c])$, and thus the action of $(F^{\mathrm{box}})_\ast$ on $\Cord^{\mathrm{ab}}(K)$ is non-trivial.
\end{example}

\begin{example}\label{ex_cable}
For $k\in \mathbb{Z}$. let us consider $(2k+1, 1)$-cable of the square knot $T_{2, 3}\#T_{2, -3}$.
By the cable construction, we mean the following. Let $V_1$ be a standard unknotted solid torus around the torus knot $T_{2k+1, 1}$.
Let $V_2$ be a tubular neighborhood of $T_{2, 3}\#T_{2, -3}$.
We fix a homeomorphism $h: V_1 \rightarrow V_2$ and define $K\coloneqq h(T_{2k+1, 1})$.

Let us rewrite the blue box $B[T_{2,-3}]$ in Example \ref{exmpl_nontriv1} by $B_{2k+1}[T_{2,-3}]$, and consider that all $2k+1$ copies of $T_{2, -3}$ lie in $B_{2k+1}[T_{2, -3}]$.
In addition, for simplicity we assume that $K\setminus B_{2k+1}[T_{2, -3}]$ consists of $2k+1$ parallel strands. In particular, the twisting part of the embedded torus knot $T_{2k+1, 1}$ is completely located inside of the box $B_{2k+1}[T_{2, -3}]$.
Then, a longitudinal turn of the blue box $B_{2k+1}[T_{2, -3}]$ around $V_2$, which is the same as in Example \ref{exmpl_nontriv1}, induces a loop of the cable knot.
We remark that for such a loop, an analogue of Proposition \ref{lemma_loop_lift} also holds.
We take an ambient isotopy $(F_t)_{t\in [0,1]}$ to be the same one as in Example \ref{exmpl_nontriv1}, and set $F^{\mathrm{box}}\coloneqq F_1$.
Our aim is to relate the action of $(F^{\mathrm{box}})_\ast$ on $\Cord^{\mathrm{ab}}(K)$ to the action of $(F^{\mathrm{box}})_\ast$ on $\Cord^{\mathrm{ab}}(T_{2, 3}\#T_{2, -3})$, which we already understand from Example \ref{exmpl_nontriv1}.

By Seifert-van Kampen Theorem, we have an isomorphism
\begin{equation}\label{eqn_kampen_cable}
\pi_1(\mathbb{R}^3\setminus K)\cong\pi_1\big(\mathbb{R}^3\setminus T_{2, 3}\#T_{2, -3}\big)\ast_{\pi_1(\partial V_2)} \pi_1\big(V_2\setminus h(T_{2k+1, 1})\big),
\end{equation}
which comes from the following diagram
\[ \xymatrix{
\pi_1(\R^3\setminus K)  & \pi_1(\R^3\setminus \mathrm{int} V_2) \ar[r]^-{\cong} \ar[l] & \pi_1(\R^3\setminus T_{2,3}\# T_{2,-3}) \\
\pi_1(V_2\setminus h(T_{2k+1,1})) \ar[u]  & \pi_1(\partial V_2)  . \ar[u]^-{i_*} \ar[l]_-{j_*} &
}\]
Here, all homomorphisms are induced by the inclusion maps.

The meridian and the longitude of $T_{2,3}\# T_{2,-3}$ fixed in Example \ref{exmpl_nontriv1} determines elements $m_2,\ell_2\in \pi_1(\partial V_2)$.
Let $m\in \pi_1 (V_2\setminus h(T_{2k+1,1}))$ be the longitude of $h(T_{2k+1})$.
Then, the first homology group $H_1(V_2\setminus h(T_{2k+1,1}))$ is a free abelian group generated by $\ell_2$ and $m$.
Since $\pi_1(\partial V_2)\cong H_1(\partial V_2)$, there is a unique group homomorphism
\[ \psi \colon \pi_1 (V_2\setminus h(T_{2k+1,1})) \to \pi_1(\partial V_2) \colon [\ell_2] \to [\ell_2], \ [m] \mapsto [m_2] , \]
which is reduced to an isomorphism on the first homology groups.


$\psi \circ j_*\colon \pi_1(\partial V_2) \to \pi_1(\partial V_2)$ maps $[\ell_2]$ to $[\ell_2]$, and $[m_2]$ to $[m_2]^{2k+1}$.
Therefore, by (\ref{eqn_kampen_cable}), the map $i_*\circ \psi\colon \pi_1(V_2\setminus h(T_{2k+1,1})) \to \pi_1 (\R^3\setminus T_{2,3}\# T_{2,-3})$ induces a group epimorphism 
\begin{equation}\label{map_group_hom}
\widetilde{\psi}:\pi_1(\bR^3\setminus K)\to \frac{\pi_1(\bR^3\setminus T_{2,3}\# T_{2,-3})}{\langle[e]=[m_2]^{2k}\rangle}.
\end{equation} 
This is the unique map which makes the following diagram commute:
\[ \xymatrix{
 \frac{\pi_1(\bR^3\setminus T_{2,3}\# T_{2,-3})}{\langle[e]=[m_2]^{2k}\rangle} & \pi_1(\R^3\setminus K) \ar[l]^-{\widetilde{\psi}} & \pi_1(\bR^3\setminus T_{2,3}\# T_{2,-3}) \ar[l] \ar@/_15pt/[ll]_-{q} \\
 & \pi_1(V_2,h(T_{2k+1,1})) \ar[u] \ar[ul]^-{q\circ i_*\circ \psi} & \pi_1(\partial V_2) \ar[l]_-{j_*} \ar[u]_-{i_*},
} \]
where $q$ is the quotient map.
Note that modulo $[m_2]^{2k}$, $\widetilde{\psi}$ maps the meridian (resp. the longitude) of $K$ to the meridian (resp.the $(2k+1)$-th power of the longitude) of $T_{2,3}\# T_{2,-3}$.
Consequently, using (\ref{map_group_hom}), we obtain a surjective $\bZ_2$-algebra map
$$\Cord^{\mathrm{loop}}(K)\rightarrow \frac{\Cord^{\mathrm{loop}}(T_{2, 3}\#T_{2, -3})}{\langle [\gamma_a\bullet\gamma_b]=[\gamma_a \bullet m_2^{2k}\bullet \gamma_b]\rangle}.$$
In fact, the relation $[\gamma_a\bullet\gamma_b]=[\gamma_a \bullet m_2^{2k}\bullet \gamma_b]$ for all $[\gamma_a],[\gamma_b]\in \pi_1(\bR^3\setminus T_{2,3}\# T_{2,-3})$ is vacuous in $\Z_2$-coefficients. Indeed, we have
\begin{align*}
   [\gamma_a \bullet m^{2k}_2 \bullet \gamma_b]&=[\gamma_a\bullet m^{2k-1}_2\bullet \gamma_b] +[\gamma_a\bullet m^{2k-1}_2]\cdot [ \gamma_b]\\
   &=[\gamma_a\bullet m^{2k-2}_2\bullet \gamma_b] +[\gamma_a\bullet m^{2k-2}_2]\cdot [ \gamma_b]+[\gamma_a\bullet m^{2k-1}_2]\cdot [ \gamma_b]\\
   &=[\gamma_a\bullet m^{2k-2}_2\bullet \gamma_b]+(1+1)[\gamma_a]\cdot [ \gamma_b].
\end{align*}
For the last equality, we use (\ref{meridian-vanish}).
Hence, we obtain a well-defined surjective $\bZ_2$-algebra map
$$\Psi:\Cord(K)\rightarrow \Cord(T_{2, 3}\#T_{2, -3}).$$

Now, let us describe the action of $(F^{\mathrm{box}})_\ast$ on $\pi_{1}(\mathbb{R}^3\setminus K)$ using the amalgamated product (\ref{eqn_kampen_cable}).
First, on generators from $\pi_1(\mathbb{R}^3\setminus T_{2, 3}\#T_{2, -3}) \cong \pi_1(\R^3\setminus \mathrm{int} V_2)$, the action of $(F^{\mathrm{box}})_\ast$ was described in (\ref{eqn_autom_gener}).
Next, on generators from $\pi_1\big(V_2\setminus h(T_{2k+1, 1})\big)$, it is immediate that the action of $(F^{\mathrm{box}})_\ast$ is the identity map. 
Consequently, we obtain the following commutative diagram
\[ \xymatrix@C=40pt{
\Cord(K) \ar@{->>}[d]_-{\Psi} \ar[r]^-{(F^{\mathrm{box}})_\ast} & \Cord(K) \ar@{->>}[d]_-{\Psi} \\
\Cord(T_{2, 3}\#T_{2, -3}) \ar[r]^-{(F^{\mathrm{box}})_\ast} & \Cord(T_{2, 3}\#T_{2, -3}). 
}\]

Finally, recall that from Example \ref{exmpl_nontriv1} we know that the map $F^{\mathrm{box}}$ induces a non-trivial automorphism on $\Cord(T_{2, 3}\#T_{2, -3})$.
Since $\Psi$ is a surjection, $F^{\mathrm{box}}$ induces a non-trivial automorphism on $\Cord(K)$, too.
\end{example}

\begin{remark}\label{rem-potential}
Even though Theorem \ref{thm-1} is so far proved only in $\bZ_2$-coefficients, now we would like to briefly discuss potential advantage of the fully non-commutative cord algebra $\Cord^{\mathrm{nc}}(K)$  in $\bZ_2[\lambda^{\pm 1}, \mu^{\pm1}]$-coefficients on the level of automorphisms on it.
For the precise definition of $\Cord^{\mathrm{nc}}(K)$, one can refer to \cite[Definition 2.6]{CELN} or  Definition \ref{def-fully-nc} below.

For this, we consider the action of a Gramain's loop (Remark \ref{rem_embedding_spaces}) on a knot $K\subset \bR^3$.
Here, $K$ is a Seifert framed knot with a base-point $\ast$ on the framing $K'$. 
We realize a Gramain's loop as follows. Let $B[\ast]$ be a small box in $\R^3$ that contains a small part of $K$ with the base-point $\ast$ on $\partial B[*]$. Then we can take a convention that Gramain's loop twists $K$ around $B[\ast]$ once in the negative meridian direction.
Let $(F_t)_{t\in[0, 1]}$ be an ambient isotopy induced by the Gramain's loop and set $F^{\mathrm{Gram}}\coloneqq F_1$.
Then, we obtain
$$(F^{\mathrm{Gram}})_\ast[\gamma]=[m\bullet\gamma\bullet m^{-1}],\hspace{0.4cm}\hbox{ where }[\gamma]\in\pi_1(\bR^3\setminus K, \ast).$$
In addition, similarly to the blue-box loop, this action naturally translates to $\Cord(K)$ and $\Cord^{\mathrm{nc}}(K).$
It is immediate from (\ref{meridian-vanish}) that the action $(F^{\mathrm{Gram}})_\ast$ is trivial on $\Cord(K)$, however as we will see, this is not the case for $\Cord^{\mathrm{nc}}(K)$.

Let us put $K\coloneqq T_{2, 3}$. Then, by \cite[Example 2.24]{CELN}, $\Cord^{\mathrm{nc}}(K)$ is a non-commutative ring generated by $\lambda^{\pm}$, $\mu^{\pm}$ and one more generator $s$ modulo a two-sided ideal generated by the following four elements
$$\lambda\mu+\mu\lambda ,\  \lambda\mu^6 s+ s\lambda\mu^6,\  1+\mu+s+\lambda\mu^5 s\mu^{-3}s \mu^{-1},\  1+\mu+\lambda\mu^4 s\mu^{-2}+\lambda\mu^5 s\mu^{-2}s\mu^{-1}.$$
Moreover, as in \cite[Example 2.24]{CELN}, there is a well-defined ring homomorphism $\varphi$ from $\Cord^{\mathrm{nc}}(K)$ to a non-commutative $\Z_2$-algebra $\Z_2\la  \mu^{\pm},  a^{\pm}\ra / ( \mu a \mu + a \mu a ) \cong\bZ_2[\pi_1(\R^3\setminus T_{2,3})] $ given by
\[\begin{array}{ccc}
\mu\mapsto \mu, & \lambda\mapsto  a\mu a^{-1}\mu a\mu^{-3}, & s\mapsto (1+\mu)a\mu^{-1}a^{-1}.
\end{array}
\]
Then, we have
\begin{align*} 
\varphi(s)+\varphi\circ (F^{\mathrm{Gram}})_* (s) &= (1+\mu)a\mu^{-1}a^{-1} + \mu (1+\mu)a\mu^{-1}a^{-1} \mu^{-1} \\
&= (1+\mu)a\mu^{-1}a^{-1} + (1+\mu)\mu a (a^{-1}\mu^{-1}a^{-1}) \\
& = (1+\mu)(a+\mu)\mu^{-1}a^{-1}.
\end{align*}
Actually, $(1+\mu)(a+\mu)\neq 0$. To see this, consider a well-defined homomorphism to the ring of $2\times 2$ matrices with entries in $\Z_2$
\[ \Z_2\la \mu^{\pm}, a^{\pm}\ra / ( \mu a \mu + a \mu a ) \to M_{2\times 2}(\Z_2) \colon a\mapsto \begin{pmatrix}0 & 1 \\ 1 & 0 \end{pmatrix},\ \mu \mapsto \begin{pmatrix}1 & 1 \\ 0 & 1 \end{pmatrix}, \]
which maps $(1+\mu)(a+\mu)$ to $\begin{pmatrix}0 & 1 \\ 0 & 1 \end{pmatrix}$.
Therefore, the action of $(F^{\mathrm{Gram}})_\ast$ is non-trivial on $\Cord^{\mathrm{nc}}(K)$, though it is trivial on $\Cord(K)$.
\end{remark}

\appendix

\section{Proof of Theorem \ref{thm-str-cord}}\label{subsec-proof-isom}

\subsection{Review of the string homology in degree $0$ from \cite{CELN}}\label{subsec-full-string}

For any $q\in K$ and $v\in T_q \bR^3 = T_qK\oplus (T_qK)^{\perp} $, let $v^{\mathrm{normal}}$ denote the $(T_qK)^{\perp}$-component of $v$.
Consider the tubular neighborhood $N$ of $K$ given by (\ref{tub-nbd}).
Fix $x_0\in \partial N$ and a unit vector $v_0\in T_{x_0}N$.

\begin{definition}[Definition 2.1 of \cite{CELN}]\label{def-string-CELN}
Let $l\in \Z_{\geq 0}$. A \textit{broken string with $2l$ switches} on $K$ is a tuple 
\[s=(s_1,\dots ,s_{2l+1})\]
consisting of $C^1$ paths
\[ \begin{cases} 
s_{2i+1}  \colon [a_{2i},a_{2i+1}]\to N & \text{ for }i=0,\dots ,l , \\
s_{2i} \colon [a_{2i-1},a_{2i}]\to \bR^3 & \text{ for }i=1,\dots ,l,
\end{cases}\text{ where }0=a_0<a_1<\dots < a_{2l+1},
\] 
satisfying the following conditions:
\begin{itemize}
\item $s_0(0)=s_{2l+1}(a_{2l+1})=x_0$ and $\dot{s}_0 (0) = \dot{s}_{2l+1}(a_{2l+1})=v_0$.
\item $s_{j}(a_j)=s_{j+1}(a_j)\in K$ for every $j\in \{1,\dots ,2l\}$.
\item $\dot{s}_{2i}(a_{2i})^{\mathrm{normal}} = -\dot{s}_{2i+1}(a_{2i})^{\mathrm{normal}}$ and $\dot{s}_{2i-1}(a_{2i-1})^{\mathrm{normal}} = \dot{ s}_{2i}(a_{2i-1})^{\mathrm{normal}}$ for every $i\in \{1,\dots ,l\}$.
\end{itemize}
$s_{2i}$ and $s_{2i+1}$ are referred to as a \textit{$Q$-string} and an \textit{$N$-string} respectively.
We let $\tilde{\Sigma}^l_K$ denote the space of broken strings with $2l$ switches on $K$.
\end{definition}

\begin{definition}
For $l\in \Z_{\geq 0}$, we define $C_0(\tilde{\Sigma}^l_K)$ to be the $\bZ_2$-vector space freely generated by $\bold{s}=(s_1,\dots ,s_{2l+1})\in \tilde{\Sigma}^l_K$ such that:
\begin{itemize}
\item[(0a')] $(s_j)^{-1}(K) = \{a_{j-1},a_{j}\}$ for every $j\in\{1,\dots ,2l+1\}$. Here, $[a_{j-1},a_j]$ is the domain of $s_j$.
\item[(0b')] For every $j\in\{2,\dots ,2l\}$, $s_{j}$ is not tangent to $K$ at the endpoints $\{ a_{j-1}, a_{j}\}$.
\end{itemize}
\end{definition}

To define the string homology of $K$, we need to introduce:
\begin{itemize}
\item a $\Z_2$-vector space $C_1(\tilde{\Sigma}^l_K)$.
\item $\Z_2$-linear maps  $\partial \colon C_1(\tilde{\Sigma}^l_K) \to C_0(\tilde{\Sigma}^l_K)$ and $\delta_Q,\delta_N \colon C_1(\tilde{\Sigma}^l_K) \to C_0(\tilde{\Sigma}^{l+1}_K)$.
\end{itemize}
In order to write down the proof of Theorem \ref{thm-str-cord}, we do not need to refer to their precise definitions, so let us briefly recall them. (For the precise definitions, see \cite[Section 2.1]{CELN} and reduce the coefficient ring from $\Z$ to $\Z_2$.)
$C_1(\tilde{\Sigma}^l_K)$ is a counterpart of $C_1(\Sigma^l_K)$ in Definition \ref{def-gen-chain} and consists of generic $1$-chains of $\tilde{\Sigma}^l_K$ satisfying certain transversality conditions.
$\partial$ is the boundary operator for singular $1$-chains.
Finally, by splitting a $Q$-string (resp. $N$-string) at a point where it intersects $K$, and by inserting a short $N$-string (resp. $Q$-string) called a \textit{spike}, $\delta_Q$ (resp. $\delta_N$) is defined in a similar way as $\delta\colon C_1(\Sigma^l_K)\to \bar{C}_0(\Sigma^{l+1}_K)$.

Let us set $C_0(\tilde{\Sigma}_K)\coloneqq \bigoplus_{l=0}^{\infty} C_0(\tilde{\Sigma}^l_K)$ and $C_1(\tilde{\Sigma}_K)\coloneqq \bigoplus_{l=0}^{\infty} C_1(\tilde{\Sigma}^l_K)$.
Then, a $\Z_2$-linear map
\[ \partial +\delta_Q+\delta_N \colon C_1(\tilde{\Sigma}_K)\to C_0(\tilde{\Sigma}_K) \]
is defined.
The \textit{string homology} $H^{\str}_0(K)$ in degree $0$ is defined by
\[ H^{\str}_0(K) \coloneqq C_0(\tilde{\Sigma}_K)/ \Image(\partial +\delta_Q +\delta_N). \]
In addition, $C_0(\tilde{\Sigma}_K)$ has a product structure induced by the map
\[ \tilde{\Sigma}^l_K \times \tilde{\Sigma}^{l'}_K \to \tilde{\Sigma}^{l+l'}_K \colon \left( (s_1,\dots ,s_{2l+1}),(s'_1,\dots ,s'_{2l'+1})\right) \mapsto ( s_1,\dots ,s_{2l}, s_{2l+1}\bullet s'_{1}  , s'_2,\dots ,s'_{2l'+1} ), \]
where $s_{2l+1}\bullet s'_{1} $ is the concatenation of $s_{2l+1}$ and $s'_{1}$ at $x_0$.
This determines a product structure on $H_0^{\str}(K)$.

\subsection{Fully non-commutative cord algebra}
\label{sec:noncom cord}

We fix an orientation and a framing of $K$.
We also fix a base point $q_0\in K$, then a point $q'_0\in K'$ is determined by the shift.
Let $\mathcal{P}_{K,q_0}$ be the set of homotopy classes of paths $\gamma\colon[0,1] \to \bR^3 \setminus K$ such that $\gamma(0),\gamma(1)\in K'\setminus \{q'_0\}$.
\begin{definition}[Definition 2.6 of \cite{CELN}]\label{def-fully-nc}
Let $\mathcal{A}^{\nc}$ be the non-commutative $\bZ_2$-algebra generated by $\mathcal{P}\cup \{ \lambda , \lambda^{-1},\mu,\mu^{-1}\}$ modulo the relations
\begin{align}\label{rel-lambda-mu} 
\begin{array}{cc} \lambda\cdot \lambda^{-1} = \lambda^{-1}\cdot \lambda = \mu\cdot \mu^{-1}= \mu^{-1}\cdot \mu =1, & \lambda \cdot \mu = \mu\cdot \lambda. \end{array}
\end{align}
The \textit{fully non-commutative cord algebra} of $K$ is defined as
\[ \Cord^{\nc}(K) \coloneqq \mathcal{A}^{\nc}/\mathcal{I}^{\nc},\]
where $\mathcal{I}^{\nc}$ is a two-sided ideal of $\mathcal{A}^{\nc}$ generated by the four types of relations as in Figure \ref{fig-cord-relation}.
\begin{figure}
\centering
\begin{overpic}[height=5cm]{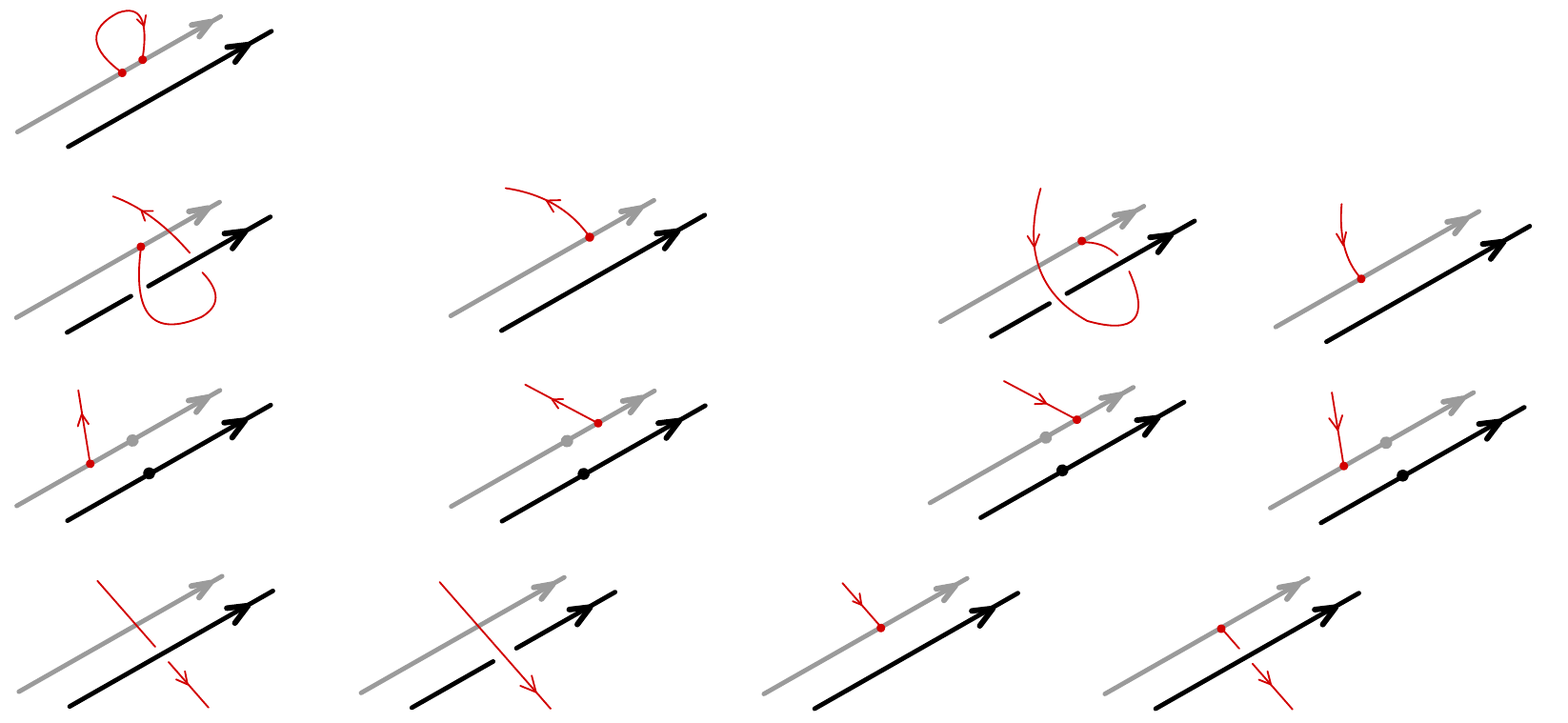}
\put(-5,40){(i)}
\put(-6,28){(ii)}
\put(-7,15){(iii)}
\put(-7,4){(iv)}
\put(19,40){$= 1-\mu$ }
\put(19,28){$=\mu\ \cdot$}
\put(49,28){and}
\put(77.5,28){$=$}
\put(100,28){$\cdot \ \mu$ }
\put(19,15){$=\lambda\ \cdot$}
\put(49,15){and}
\put(77.5,15){$=$}
\put(100,15){$\cdot \ \lambda$ }
\put(19,4){$-$}
\put(43,4){$=$}
\put(67,4){$\cdot$}
\end{overpic}
\caption{The black line is the oriented knot $K$ and the gray line is its shift $K'$ with respect to the framing. The bullets on them in (iii) are the base points.}\label{fig-cord-relation}
\end{figure}
\end{definition}

$\Cord^{\nc}(K)$ is a $\Z_2[\lambda^{\pm},\mu^{\pm}]$-NC-algebra in the sense of \cite[Definition 2.12]{CELN}: It is a ring equipped with a ring homomorphism $\Z_2[\lambda^{\pm},\mu^{\pm}]\to \Cord^{\nc}(K)$. 
It should be noted that $\lambda$ and $\mu$ correspond to the longitude and the meridian of $K$ respectively
via the isomorphism fixed by the orientation and the framing of $K$
\begin{align}\label{peripheral}
\{\lambda^a\mu^b\mid a,b\in \Z \} = \Z \cdot \lambda \oplus \Z \cdot \mu \cong \pi_1(\partial N,x_0).
\end{align}

Fix a point $q_1\in K\setminus \{q_0\}$ near $q_0$. 
Take paths $\gamma_Q$ and $\tilde{\gamma}_Q$ with endpoints in $\{q_1,q'_1\}$ as in Section \ref{subsubsec-cord-Z2}.
We assume that $K'$ lies in $\partial N$ and take $q'_1\in K'$ as the base point $x_0\in \partial N$ of Definition \ref{def-string-CELN}. 
By \cite[Proposition 2.9]{CELN}, there is an isomorphism of algebra
\begin{align}\label{isom-I-nc}  I_K^{\mathrm{nc}} \colon \Cord^{\nc}(K) \to H^{\str}_0(K) .
\end{align}
Let us describe how to construct this isomorphism.
Take $l \in \Z_{\geq 0}$, $x_1,\dots ,x_l\in\mathcal{P}_{K,q_0}$ and $\alpha_1,\dots ,\alpha_{l+1}\in \{\lambda^a\mu^b \mid a,b\in \bZ\}$.
Then, $I^{\nc}_K$ maps $\alpha_1 x_1\alpha_2 \cdots x_l\alpha_{l+1} \in \Cord(K)$
to an element of $H^{\str}_0(K)$ represented by
\begin{align}\label{alpha-x-alpha} (\bar{\alpha}_1,\bar{x}_1,\bar{\alpha}_2, \dots ,\bar{x}_l, \bar{\alpha}_{l+1}) \in C_0(\tilde{\Sigma}^l_K),
\end{align}
where $\bar{x}_1,\dots ,\bar{x}_l$ are $Q$-strings and $\bar{\alpha}_1,\dots ,\bar{\alpha}_{l+1}$ are $N$-strings such that:
\begin{itemize}
\item Up to parametrization, the path $\bar{x}_{i}$ agrees with $\hat{\gamma}_{x_i}$ in Proposition \ref{prop-isom-IK}.
\item  Via (\ref{peripheral}), $\alpha_i$ determines  a homotopy class of a loop in $\partial N$ based at $x_0$, and
$\bar{\alpha}_i$ is defined in a similar way as $\bar{x}_i$.
(Its precise definition will not be needed in the later discussion.
For more details, see the proof of \cite[Proposition 2.9]{CELN}.)
\end{itemize}

\subsection{Relation between $\Cord^{\nc}(K)$ and $\Cord (K)$}\label{subsubsec-cord-nc}

Let $A$ be a $\Z_2[\lambda^{\pm},\mu^{\pm}]$-NC-algebra.
First, we define $A^{\#}$ to be the ring $A$ modulo the two-sided ideal generated by
\[ k \cdot x - x\cdot k \]
for all $x\in A$ and $k \in \bZ_2[\lambda^{\pm},\mu^{\pm}]$.
Then, $\mathcal{A}^{\#}$ becomes a $\bZ_2[\lambda^{\pm},\mu^{\pm}]$-algebra.
Next, for the $\Z_2[\lambda^{\pm},\mu^{\pm}]$-algebra $A^{\#}$,
we use the ring homomorphism $\bZ_2[\lambda^{\pm},\mu^{\pm}] \to \bZ_2\colon \lambda\mapsto 1, \mu \mapsto 1$ to define a $\bZ_2$-algebra
\begin{align}\label{A-reduction}
\rest{A}{\lambda=1,\mu=1}^{\#} \coloneqq A^{\#}\otimes_{\Z_2[\lambda^{\pm},\mu^{\pm}]} \Z_2 
\end{align}
together with a $\Z_2$-algebra map
$j_A \colon A \to  \rest{A}{\lambda=1,\mu=1}^{\#} \colon x\mapsto x\otimes 1.$

In the case of $A=\Cord^{\nc}(K)$, 
we can see from the relations in Definition \ref{def-cord-Z/2} and Definition \ref{def-fully-nc} that there is a natural isomorphism of $\bZ_2$-algebra
$ \rest{\Cord^{\mathrm{nc}}(K)}{\lambda=1,\mu=1}^{\#} \to \Cord(K)\colon [\gamma]\otimes 1\mapsto [\gamma]$,
so we have a $\Z_2$-algebra map
\[ j_K\coloneqq j_{\Cord^{\nc}(K)} \colon \Cord^{\nc}(K) \to \Cord(K) \]
which maps $\lambda$ and $ \mu$ to $1$ and $[\gamma]\in \mathcal{P}_{K,q_0}$ to $[\gamma]\in\mathcal{P}_K$.

Let us observe the composition
\begin{align}\label{I-1-j-I}
\xymatrix{
H^{\str}_0(K) \ar[r]^-{(I^{\nc}_K)^{-1}} & \Cord^{\nc}(K) \ar[r]^-{j_K} & \Cord(K) \ar[r]^-{I_K} & \bar{H}^{\str}_0(K) .
} \end{align}
By the composite map, $\bar{\alpha}$ for any $\alpha \in \{\lambda^a\mu^b \mid a,b\in \Z\}$ is mapped to the unit of $\bar{H}_0^{\str}(K)$, and any element represented by (\ref{alpha-x-alpha}) for $l\geq 1$ is mapped to an element of $\bar{H}_0^{\str}(K)$ represented by $(\hat{\gamma}_{x_1},\dots , \hat{\gamma}_{x_l})\in \bar{C}_0(\Sigma^l_K)$.
Therefore, any element of $H^{\str}_0(K)$ represented by $(s_1,\dots ,s_{2l+1})\in \tilde{\Sigma}^l_K$ satisfying (0a') and (0b') is mapped to an element of $\bar{H}^{\str}_0(K)$  represented by
\[\begin{cases} 1 \in \bar{C}_0(\Sigma^0_K) =\Z_2 & \text{ if }l=0, \\
(s'_1,\dots ,s'_l)\in \bar{C}_0(\Sigma^l_K) & \text{ if }l\geq 1,
\end{cases}\]
such that $s'_i$ agrees with $s_{2i}$ up to parametrization for $i\in \{ 1,\dots ,l\}$,

\subsection{Proof of isomorphism}\label{app-isom}

The unit conormal bundle $\Lambda_K$ is identified with $\partial N$ by the map $\Lambda_K\to \partial N\colon (q,p)\mapsto q+ 2\epsilon \cdot p$. We choose a base point $x_0\in \partial N \cong \Lambda_K$.
In addition, fix a diffeomorphism $L_k \to N\setminus \partial N \colon (q,p) \mapsto (q, \frac{2\epsilon}{1+|p|^2}p)$.

For any two points $x,y\in \Lambda_K$, let $P_{x,y}$ denote the space of continuous paths $\gamma\colon [0,T]\to \Lambda_K$ such that $T>0$, $\gamma(0)=x$ and $\gamma(T)=y$. The inverse path of $\gamma\in P_{x,y}$ is denoted by $\gamma^{-1}\in P_{y,x}$.
The concatenation of $\gamma_1\in P_{x,y}$ and $\gamma_2\in P_{y,z}$ gives $\gamma_1\bullet \gamma_2 \in P_{x,z}$.
For each $a\in \mathcal{R}(\Lambda_K)$, fix paths $\gamma^0_a\in P_{x_0,a(0)}$ and $\gamma^1_a\in P_{x_0,a(T_a)}$.

First, referring to \cite[Section 6.2]{CELN}, let us introduce the fully non-commutative Legendrian contact homology of $\Lambda_K$.

For every $u\in \mathcal{M}_{\Lambda_K,J}(a;a_1,\dots ,a_m)$, let $\bar{u}\colon \partial D_{m+1}\to \Lambda_K$ denote the $\Lambda_K$-component of the map $\rest{u}{\partial D_{m+1}} \colon \partial D_{m+1}\to \R\times \Lambda_K$.
On the $k$-th boundary component, $\rest{\bar{u}}{\partial_k D_{m+1} }$ is extended to a path in $\Lambda_k$ on $\overline{\partial_k D_{m+1}}\cong [0,1]$.
Let us denote it by $\gamma^u_k$.
Then, we define a homology class $\partial_k \bar{u} \in H_0(P_{x_0,x_0};\Z_2) $ represented by loops as follows:
When $m=0$, we take $\gamma^1_a \bullet \gamma_k^u \bullet (\gamma^0_a)^{-1} \in P_{x_0,x_0}$.
When $m\geq 1$, we take
\[ P_{x_0,x_0} \ni \begin{cases}
\gamma^1_a \bullet \gamma_1^u \bullet (\gamma^1_{a_1})^{-1}  & \text{ if }k=1, \\
\gamma^0_{a_k} \bullet \gamma_k^u \bullet (\gamma^1_{a_{k+1}})^{-1}  & \text{ if } 2\leq k\leq m , \\
\gamma^0_{a_m}\bullet \gamma^u_{m+1} \bullet (\gamma^0_a)^{-1}  & \text{ if } k=m+1.
\end{cases}\]
Moreover, via the isomorphism
\[ H_0(P_{x_0,x_0};\Z_2) = \bZ_2[\pi_1(\Lambda_K,x_0)] \cong \Z_2[\pi_1(\partial N,x_0)] = \bZ_2[\lambda^{\pm},\mu^{\pm}]. \]
we regard $\partial_k \bar{u}$ as an element of $\Z_2[\lambda^{\pm},\mu^{\pm}]$.

Let $\mathcal{A}^{\nc}_*(\Lambda_K)$ be the unital $\Z_2$-graded algebra generated by $\mathcal{R}(\Lambda_K)\cup \{\lambda,\lambda^{-1},\mu,\mu^{-1}\}$ modulo the same relations as (\ref{rel-lambda-mu}).
Here, the degrees of $\lambda$ and $\mu$ are $0$.
Then, $\mathcal{A}^{\nc}_*(\Lambda_K)$ is a $\Z_2[\lambda^{\pm},\mu^{\pm}]$-NC-algebra.
We define a derivation $d^{\nc}_J\colon \mathcal{A}_*(\Lambda_K)\to \mathcal{A}_{*-1}(\Lambda_K)$ so that
$d_J(\lambda^{\pm})=d_J(\mu^{\pm})=0$ and
\[ d^{\nc}_J(a) = \sum_{m=0}^{\infty} \sum_{|a_1|+\dots +|a_m|=|a|-1} \left(  \sum_{u\in \bar{\calM}_{\Lambda_K,J}(a;a_1,\dots ,a_m)} (\partial_1 \bar{u}) a_1 (\partial_2 \bar{u}) \cdots a_m (\partial_{m+1} \bar{u}) \right) \]
for every $a\in \mathcal{R}(\Lambda_K)$,
and extend it by the Leibniz rule.
The pair $(\mathcal{A}^{\nc}_*(\Lambda_K),d^{\mathrm{nc}}_J)$ is isomorphic to a DGA in \cite[page 706]{CELN} (see also Remark \ref{rem-nc-LCH} below), and $d^{\mathrm{nc}}_J\circ d^{\mathrm{nc}}_J =0$ holds.
Let us denote the homology of $(\mathcal{A}^{\nc}_*(\Lambda_K),d^{\mathrm{nc}}_J)$ by $\LCH^{\nc}_*(\Lambda_K)$.
Note that in degree $0$,
\[ \LCH_0^{\nc}(\Lambda_K) = \mathcal{A}^{\nc}_0(\Lambda_K)/ d^{\nc}_J(\mathcal{A}^{\mathrm{nc}}_1(\Lambda_K)). \]

\begin{remark}\label{rem-nc-LCH}
A graded algebra $\mathscr{A}$ considered in \cite[page 706]{CELN} is generated by elements $\alpha \in H_0(P_{x_0,x_0})$ and words
$\alpha_1a_1\alpha_2 a_2 \cdots \alpha_m a_m \alpha_{m+1}$
for $m\in \bZ_{\geq 1}$, where $a_1,\dots ,a_m \in \mathcal{R}(\Lambda_K)$ and $\alpha_1 \in H_0(P_{x_0,a_1(T_{a_1})})$, $\alpha_k \in H_0(P_{ a_{k-1}(0),a_{k}(T_{a_k})} )$ for $2\leq k\leq m$ and $\alpha_{m+1} \in H_0(P_{a_m(0),x_0})$.
(Here, we reduce the coefficient ring from $\Z$ in \cite[page 706]{CELN} to $\Z_2$.)
By concatenation of paths, we have isomorphisms
\[ \bZ_2[\lambda^{\pm},\mu^{\pm}] \cong H_0(P_{x_0,x_0}) \to \begin{cases} H_0(P_{x_0,a_1(T_{a_1})}) \colon \gamma \mapsto \gamma\bullet \gamma^1_{a_1}, \\
H_0(P_{ a_{k-1}(0),a_{k}(T_{a_{k}})} ) \colon \gamma \mapsto (\gamma^0_{a_{k-1}})^{-1}\bullet \gamma\bullet \gamma^1_{a_{k}} , \\
H_0(P_{a_m(0),x_0}) \colon \gamma \mapsto (\gamma^0_{a_m})^{-1}\bullet \gamma .
\end{cases} \]
(The coefficient rings are $Z_2$.)
Via these isomorphisms, the DGA $(\mathscr{A}, \partial^{\mathrm{sy}})$ in \cite[page 706]{CELN} agrees with $(\mathcal{A}^{\nc}_*(\Lambda_K),d^{\mathrm{nc}}_J)$.
\end{remark}

\begin{remark}
In \cite[Section 6.2]{CELN}, a version of Chekanov-Eliashberg DGA is considered, which we denote here by
$(\widetilde{\mathscr{A}},\partial^{\mathrm{sing}}+\partial^{\mathrm{sy}})$.
It is described as follows (see also the construction of a DGA in \cite[Section 3]{EL} with coefficients in chains of a based loop space):
$\widetilde{\mathscr{A}}$ is generated by singular chains in the spaces
\[P_{x_0,a_1(T_{a_1})} \times P_{ a_1(0),a_{2}(T_{a_{2}})} \times \dots \times P_{ a_{m-1}(0),a_{m}(T_{a_{m}})} \times P_{a_m(0),x_0}\]
for all $a_1,\dots ,a_m\in \mathcal{R}(\Lambda_K)$. (When $m=0$, we consider $P_{x_0,x_0}$.)
The differential is given by $\partial^{\mathrm{sing}}+\partial^{\mathrm{sy}}$, where $\partial^{\mathrm{sing}}$ is the boundary operator for singular chains and $\partial^{\mathrm{sy}}$ is defined by using the moduli spaces $\bar{\calM}_{\Lambda_K,J}(a;a_1,\dots ,a_m)$.
The chain complex $(\widetilde{\mathscr{A}}, \partial^{\mathrm{sing}} + \partial^{\mathrm{sy}})$ is bigraded by the degree of singular chains and the degree of Reeb chords.
Viewing $\partial^{\mathrm{sing}}$ as the horizontal differential, we obtain a spectral sequence and its first page is concentrated in the $0$-th column, which agrees with
\[ (\mathscr{A}=H_0( \widetilde{\mathscr{A}} ,\partial^{\mathrm{sing}}) , \partial^{\mathrm{sy}}) \]
considered in Remark \ref{rem-nc-LCH}.
Here, we use the fact that $\Lambda_K \cong S^1 \times S^1 = K(\Z^2,1)$ and $H_*(P_{x,y}) = H_0(P_{x,y})$ for any $x,y\in \Lambda_K$.
Therefore, we have an isomorphism in degree $0$
\begin{align}\label{isom-loop-nc}
H_0 (\widetilde{\mathscr{A}},\partial^{\mathrm{sing}}+\partial^{\mathrm{sy}}) \cong  H_0 (\mathscr{A},\partial^{\mathrm{sy}}) \cong \LCH^{\nc}_0 (\Lambda_K) .
\end{align}
\end{remark}

\cite[Theorem 1.2]{CELN} asserts that there exists an isomorphism from $H_0 (\widetilde{\mathscr{A}},\partial^{\mathrm{sing}}+\partial^{\mathrm{sy}})$ to $H^{\str}_0(K)$.
It is induced by a chain map of \cite[Proposition 6.12]{CELN}.
Combining with (\ref{isom-loop-nc}), we obtain an isomorphism
\[ \tilde{\Psi}_K\colon \LCH^{\nc}_0(\Lambda_K) \to H^{\str}_0(K) , \]
and by composing it with the quotient map $\mathcal{A}^{\nc}_0(\Lambda_K)\to \LCH^{\nc}_0(\Lambda_K)$, we have
\[ \tilde{\Psi}'_K\colon \mathcal{A}^{\nc}_0(\Lambda_K) \to H^{\str}_0(K), \]
which preserves the product structures.
Let us describe the images under $\tilde{\Psi}'_K$ of the generators of $\mathcal{A}^{\nc}_0(\Lambda_K)$.
First, $\tilde{\Psi}'_K(\lambda)$ and $\tilde{\Psi}_K'(\mu)$ are given by loops in $\partial N$ based at $x_0$ which belong to $C_0(\tilde{\Sigma}^0_K)$ and represent the longitude and the meridian respectively. 
Next, for every $a\in \mathcal{R}(\Lambda_K)$ with $|a|=0$ and for $l\in \Z_{\geq 0}$,
$\tilde{\Psi}'_{K}(a)$ is represented by $(\tilde{\Psi}'_{K,l}(a))_{l=0,1,\dots}\in C_0(\tilde{\Sigma}_K)$ given by
\[ \tilde{\Psi}'_{K,l}(a) =  \sum_{u\in \calM_{L_K,l}(a)} \tilde{\bold{s}}^u \in C_0(\tilde{\Sigma}^l_K).\]
Here, $\tilde{\bold{s}}^u = (\tilde{s}^u_1,\tilde{s}^u_2,\dots ,\tilde{s}^u_{2l+1}) \in \tilde{\Sigma}^l_K$ for $u\in \calM_{L_K,l}(a)$ is determined as follows:
\begin{itemize}
\item For every $i\in \{1,\dots ,l\}$, $\rest{u}{\partial_{2i}D_{2l+1}}$ is extended to a curve on $\overline{\partial_{2i}D_{2l+1}}$ in $\bR^3$.
By pre-composing it with a parametrization of $\overline{\partial_{2i}D_{2l+1}}$, the path $\tilde{s}_{2i}$ in $\R^3$ is defined.
\item For every $i\in \{0,\dots ,l\}$, $\rest{u}{\partial_{2i+1}D_{2l+1}}\colon \partial_{2i+1}D_{2l+1} \to L_K \cong N\setminus \partial N$ is extended to a curve on $\overline{\partial_{2i}D_{2l+1}}$ in $N$.
When $i\neq 0,l$, by pre-composing it with a parametrization of $\overline{\partial_{2i+1}D_{2l+1}}$, the path  $\tilde{s}_{2i+1}$ in $N$ is defined.
When $i=0$ (resp. $i=l$), we in addition concatenate it with $\gamma^1_a$ (resp. $(\gamma^0_a)^{-1}$), then we obtain $\tilde{s}_{0}$ and $\tilde{s}_{2l+1}$.
\end{itemize}
For the parametrization of $\tilde{s}^u_1,\dots ,\tilde{s}^u_{2l+1}$, see the proof of \cite[Proposition 6.12]{CELN}.
In particular, up to parametrization, $\tilde{s}^u_{2i}$ agrees with $s^u_i$ defined by (\ref{path-si}).

Recall the map $\Psi'_K\colon \mathcal{A}_0(\Lambda_K)\to \bar{C}_0(\Sigma_K)$ of (\ref{Psi'K}).
By the same notation, let $\Psi'_K\colon \mathcal{A}_0(\Lambda_K)\to \bar{H}_0^{\str}(K)$ denote the composite map with the projection $\bar{C}_0(\Sigma_K) \to \bar{H}_0^{\str}(K)$.

\begin{lemma}
The following diagram commutes:
\begin{align}\label{diagram-A-H-C}
\begin{split} \xymatrix{
\mathcal{A}^{\mathrm{nc}}_0(\Lambda_K) \ar@{->>}[d]  \ar[r]^-{\tilde{\Psi}'_K} & H^{\str}_0(K)  \ar[r]^-{(I^{\nc}_K)^{-1}}_-{\cong} & \Cord^{\mathrm{nc}}(K) \ar[d]_-{j_K} \\
\mathcal{A}_0(\Lambda_K)  \ar[r]^-{\Psi'_K} & \bar{H}^{\str}_0(K) & \Cord (K) \ar[l]_-{I_K}^-{\cong} ,
}\end{split}
\end{align}
where the left vertical arrow is a $\Z_2$-algebra map which sends $\lambda$ and $\mu$ to $1$ and $a\in \mathcal{R}(\Lambda_K)$ to $a$.
\end{lemma}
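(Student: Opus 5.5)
All maps in the diagram (\ref{diagram-A-H-C}) preserve products and units, so I will check commutativity on the algebra generators of $\mathcal{A}^{\nc}_0(\Lambda_K)$. These are $\lambda^{\pm1},\mu^{\pm1}$ and the Reeb chords $a$ with $|a|=0$. The lower-left path sends $\lambda,\mu$ to $\Psi'_K(1)=1$ and sends $a$ to the class of $\Psi'_K(a)=\sum_l\sum_{u\in\calM_{L_K,l}(a)}\bold{s}^u$. For the upper-right path I will not compute $(I^{\nc}_K)^{-1}$ directly. Instead I will use the description of the composite (\ref{I-1-j-I}), $I_K\circ j_K\circ (I^{\nc}_K)^{-1}\colon H^{\str}_0(K)\to \bar{H}^{\str}_0(K)$, obtained in Section \ref{subsubsec-cord-nc}. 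That composite sends a class represented by a broken string $(s_1,\dots,s_{2l+1})$ satisfying (0a') and (0b') to $1$ if $l=0$, and to the class of $(s'_1,\dots,s'_l)$ if $l\ge1$, where $s'_i$ is $s_{2i}$ reparametrized. In other words, it forgets the $N$-strings and keeps only the $Q$-strings.

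The case $\lambda,\mu$ is then immediate. $\tilde{\Psi}'_K(\lambda)$ and $\tilde{\Psi}'_K(\mu)$ are represented by elements of $C_0(\tilde{\Sigma}^0_K)$, that is, by broken strings with no switches. Their images under the composite are therefore $1$. Equivalently, $I^{\nc}_K(\lambda)=\bar{\lambda}$ and $I^{\nc}_K(\mu)=\bar{\mu}$, so up to homology $(I^{\nc}_K)^{-1}$ sends these classes to $\lambda,\mu$, and $j_K$ sends those to $1$. This matches the lower path. The same reasoning handles $\lambda^{-1},\mu^{-1}$.

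For a chord $a$ with $|a|=0$, I will use the explicit formula $\tilde{\Psi}'_K(a)=\sum_l\sum_{u\in\calM_{L_K,l}(a)}\tilde{\bold{s}}^u$. Here each $\tilde{\bold{s}}^u$ is a broken string whose $Q$-strings $\tilde{s}^u_{2i}$ agree up to parametrization with the paths $s^u_i$ of (\ref{path-si}). First I must check that each $\tilde{\bold{s}}^u$ satisfies (0a') and (0b'). This follows from Lemma \ref{lem-jet-transv}: part (iii) with $\dim\calM_{L_K,l}(a)=0$ shows the boundary arcs avoid $K$ in their interiors, and part (i) (winding $\tfrac12$) shows they are non-tangent at the switches. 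The $N$-string endpoint conditions at $x_0$ hold by construction. Applying the composite described above to $\tilde{\bold{s}}^u$ then gives exactly $\bold{s}^u\in\bar{C}_0(\Sigma^l_K)$ for $l\ge1$, and gives $1$ for $l=0$. Summing over $u$, the terms with $l\ge1$ reproduce $\Psi'_{K,l}(a)$, and the terms with $l=0$ contribute $\#_{\Z_2}\calM_{L_K,0}(a)=\Psi'_{K,0}(a)$. Hence the two paths agree on $a$.

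The main obstacle is the step where I apply the composite: the description of (\ref{I-1-j-I}) must be valid on the specific chain-level representatives $\tilde{\bold{s}}^u$, not only on the standard representatives (\ref{alpha-x-alpha}). My plan is to use the fact that the composite is well defined on homology and is given on generators by a chain-level forgetful map. Concretely, I will check that the assignment $(s_1,\dots,s_{2l+1})\mapsto(s'_1,\dots,s'_l)$ sends $\Image(\partial+\delta_Q+\delta_N)$ into $\Image D_K$. Under forgetting, $\partial$ goes to $\partial$ and $\delta_Q$ goes to $\delta$. For $\delta_N$, the inserted $Q$-string is a short spike, so after forgetting the result lies in $\Sigma^{l+1}_{K,\epsilon}$ and vanishes in $\bar{C}_0$. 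Once this is established, the chain-level map induces a well-defined map on homology. Since it agrees with (\ref{I-1-j-I}) on the generators (\ref{alpha-x-alpha}), which span $H^{\str}_0(K)$ via $I^{\nc}_K$, the two maps coincide. This justifies applying the forgetful formula to the representatives $\tilde{\bold{s}}^u$.
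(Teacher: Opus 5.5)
Your proposal is correct and follows the paper's proof. Both arguments check commutativity on the generators $\lambda$, $\mu$ and the degree-$0$ chords. Both use the forgetful description of the composite (\ref{I-1-j-I}) from Section \ref{subsubsec-cord-nc}, together with the agreement of $\tilde{s}^u_{2i}$ with $s^u_i$ up to parametrization.

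The paper asserts without further argument that this forgetful description applies to arbitrary broken strings satisfying (0a') and (0b'). Your chain-level check fills in that step: forgetting $N$-strings sends $\partial$ and $\delta_Q$ to $\partial$ and $\delta$, and kills the $\delta_N$-terms, provided the inserted $Q$-spikes are chosen shorter than $\epsilon$.
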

\begin{proof}
From the previous observation on the map $ I_K \circ j_K \circ (I^{\nc}_K)^{-1}$ of (\ref{I-1-j-I}),
for every $a\in \mathcal{R}(\Lambda_K)$,
$ I_K \circ j_K \circ (I^{\nc}_K)^{-1} \circ \tilde{\Psi}'_K(a)$ is represented by $(\bold{x}_l)_{l=0,1,\dots}\in \bar{C}_0(\Sigma_K)$, where
\[\bold{x}_l= 
\sum_{u\in \calM_{L_K,l}(a)} (s^u_{1},s^u_2,\dots ,s^u_{l})  
\]
for $l\geq 1$ (here, we use the coincidence of $\tilde{s}^u_{2i}$ and $s^u_i$ up to parametrization),
and $\bold{x}_0 $ is equal to $ \#_{\bZ_2} \calM_{L_K,0}(a)  \in \bar{C}_0(\Sigma^0_K) = \bZ_2$ since
\[ (I_K\circ j_K\circ (I^{\nc}_K)^{-1})\left(\sum_{u\in \calM_{L_K,0}(a) } (\tilde{s}^u_1 ) \right) = \#_{\bZ_2} \calM_{L_K,0}(a). \]
In addition, $I_K \circ j_K \circ (I^{\nc}_K)^{-1} \circ \tilde{\Psi}'_K$ maps $\lambda$ and $\mu$ to $1$.
Comparing with the definition of $\Psi'_K$ in Section \ref{subsubsec-isom-LCH-str}, these computations show the commutativity of the diagram.
\end{proof}

\begin{proof}[Proof of Theorem \ref{thm-str-cord}]
The left vertical map of (\ref{diagram-A-H-C}) maps $ d^{\nc}_J(\mathcal{A}^{\nc}_1(\Lambda_K))$ onto $d_J(\mathcal{A}_1(\Lambda_K))$. Therefore, the commutativity of the diagram (\ref{diagram-A-H-C}) implies that $\Psi'_K(y) =0 $ for any $y\in d_J(\mathcal{A}_1(\Lambda_K))$.

Now, from (\ref{diagram-A-H-C}), we obtain a commutative diagram
\[\xymatrix{
\LCH^{\mathrm{nc}}_0(\Lambda_K) \ar[d]  \ar[rr]_-{\cong}^-{ (I^{\nc}_K)^{-1} \circ \tilde{\Psi}_K} & &\Cord^{\mathrm{nc}}(K) \ar[d]^-{j_K} \\
\LCH_0(\Lambda_K)  \ar[r]^-{\Psi_K} & \bar{H}^{\str}_0(K)  & \Cord (K) \ar[l]_-{I_K}^-{\cong} .
}\]
The left vertical map induces an isomorphism $\rest{\LCH^{\nc}_0(\Lambda_K)}{\lambda=0,\mu=0}^{\#} \to \LCH_0(\Lambda_K)$ of $\Z_2$-algebra.
(The notation $\rest{A}{\lambda=1,\mu=1}^{\#}$ comes from (\ref{A-reduction}).)
In addition, since
\[ (I^{\nc}_K)^{-1}\circ \tilde{\Psi}_K \colon \LCH^{\nc}_0(\Lambda_K) \to \Cord^{\nc}(K) \]
is an isomorphism as $\Z_2[\lambda^{\pm},\mu^{\pm}]$-NC-algebra, it induces an isomorphism of $\Z_2$-algebra 
\[\rest{\LCH^{\nc}_0(\Lambda_K)}{\lambda=1,\mu=1}^{\#}  \to \rest{\Cord^{\nc}(K)}{\lambda=1,\mu=1}^{\#} .\]
Let us denote this isomorphism by $\wh{\Psi}_K$.
Then, the following diagram commutes:
\[\xymatrix{
\rest{\LCH^{\mathrm{nc}}_0(\Lambda_K)}{\lambda=1,\mu=1}^{\#} \ar[d]^-{\cong}  \ar[rr]_-{\cong}^-{\wh{\Psi}_K} & & \rest{ \Cord^{\mathrm{nc}}(K)}{\lambda=1,\mu=1}^{\#} \ar[d]^-{\cong} \\
\LCH_0(\Lambda_K)  \ar[r]^-{\Psi_K} & \bar{H}^{\str}_0(K)  & \Cord (K) \ar[l]_-{I_K}^-{\cong} .
}\]
It follows that $\Psi_K \colon \LCH_0(\Lambda_K) \to \bar{H}^{\str}_0(K)$ is an isomorphism.
\end{proof}

\printbibliography

@article {Ng,
    AUTHOR = {Ng, Lenhard},
     TITLE = {Framed knot contact homology},
   JOURNAL = {Duke Math. J.},
  FJOURNAL = {Duke Mathematical Journal},
    VOLUME = {141},
      YEAR = {2008},
    NUMBER = {2},
     PAGES = {365--406},
      ISSN = {0012-7094},
   MRCLASS = {53D40 (53D12 53D35 57M27)},
  MRNUMBER = {2376818},
MRREVIEWER = {Tobias Ekholm},
       DOI = {10.1215/S0012-7094-08-14125-0},
       URL = {https://doi.org/10.1215/S0012-7094-08-14125-0},
}

@article {CELN,
    AUTHOR = {Cieliebak, Kai and Ekholm, Tobias and Latschev, Janko and Ng,
              Lenhard},
     TITLE = {Knot contact homology, string topology, and the cord algebra},
   JOURNAL = {J. \'{E}c. polytech. Math.},
  FJOURNAL = {Journal de l'\'{E}cole polytechnique. Math\'{e}matiques},
    VOLUME = {4},
      YEAR = {2017},
     PAGES = {661--780},
      ISSN = {2429-7100},
   MRCLASS = {57M27 (53D42 55P50 57R17)},
  MRNUMBER = {3665612},
MRREVIEWER = {Daniel V. Mathews},
       DOI = {10.5802/jep.55},
       URL = {https://doi-org.utokyo.idm.oclc.org/10.5802/jep.55},
}

@article {DR,
    AUTHOR = {Dimitroglou Rizell, Georgios},
     TITLE = {Legendrian ambient surgery and {L}egendrian contact homology},
   JOURNAL = {J. Symplectic Geom.},
  FJOURNAL = {The Journal of Symplectic Geometry},
    VOLUME = {14},
      YEAR = {2016},
    NUMBER = {3},
     PAGES = {811--901},
      ISSN = {1527-5256,1540-2347},
   MRCLASS = {53D42 (53D35 57R17)},
  MRNUMBER = {3548486},
MRREVIEWER = {Janko\ Latschev},
       DOI = {10.4310/JSG.2016.v14.n3.a6},
       URL = {https://doi.org/10.4310/JSG.2016.v14.n3.a6},
}

@article {DR-lift,
    AUTHOR = {Dimitroglou Rizell, Georgios},
     TITLE = {Lifting pseudo-holomorphic polygons to the symplectisation of
              {$P\times\Bbb{R}$} and applications},
   JOURNAL = {Quantum Topol.},
  FJOURNAL = {Quantum Topology},
    VOLUME = {7},
      YEAR = {2016},
    NUMBER = {1},
     PAGES = {29--105},
      ISSN = {1663-487X},
   MRCLASS = {53D42 (53D12 53D40)},
  MRNUMBER = {3459958},
MRREVIEWER = {Weiwei Wu},
       DOI = {10.4171/QT/73},
       URL = {https://doi.org/10.4171/QT/73},
}

@article {EES-R,
    AUTHOR = {Ekholm, Tobias and Etnyre, John and Sullivan, Michael},
     TITLE = {The contact homology of {L}egendrian submanifolds in {${\Bbb
              R}^{2n+1}$}},
   JOURNAL = {J. Differential Geom.},
  FJOURNAL = {Journal of Differential Geometry},
    VOLUME = {71},
      YEAR = {2005},
    NUMBER = {2},
     PAGES = {177--305},
      ISSN = {0022-040X},
   MRCLASS = {53D35 (57R17)},
  MRNUMBER = {2197142},
MRREVIEWER = {Joshua M. Sabloff},
       URL = {http://projecteuclid.org/euclid.jdg/1143651770},
}

@article {Cha,
    AUTHOR = {Chantraine, Baptiste},
     TITLE = {Lagrangian concordance of {L}egendrian knots},
   JOURNAL = {Algebr. Geom. Topol.},
  FJOURNAL = {Algebraic \& Geometric Topology},
    VOLUME = {10},
      YEAR = {2010},
    NUMBER = {1},
     PAGES = {63--85},
      ISSN = {1472-2747,1472-2739},
   MRCLASS = {57R17 (53D12 57M50)},
  MRNUMBER = {2580429},
MRREVIEWER = {Lenhard\ L.\ Ng},
       DOI = {10.2140/agt.2010.10.63},
       URL = {https://doi.org/10.2140/agt.2010.10.63},
}

@article {EES,
    AUTHOR = {Ekholm, Tobias and Etnyre, John and Sullivan, Michael},
     TITLE = {Legendrian contact homology in {$P\times\Bbb R$}},
   JOURNAL = {Trans. Amer. Math. Soc.},
  FJOURNAL = {Transactions of the American Mathematical Society},
    VOLUME = {359},
      YEAR = {2007},
    NUMBER = {7},
     PAGES = {3301--3335},
      ISSN = {0002-9947},
   MRCLASS = {53D35 (53D40)},
  MRNUMBER = {2299457},
MRREVIEWER = {Michael J. Usher},
       DOI = {10.1090/S0002-9947-07-04337-1},
       URL = {https://doi.org/10.1090/S0002-9947-07-04337-1},
}

@article {EHK,
    AUTHOR = {Ekholm, Tobias and Honda, Ko and K\'{a}lm\'{a}n, Tam\'{a}s},
     TITLE = {Legendrian knots and exact {L}agrangian cobordisms},
   JOURNAL = {J. Eur. Math. Soc. (JEMS)},
  FJOURNAL = {Journal of the European Mathematical Society (JEMS)},
    VOLUME = {18},
      YEAR = {2016},
    NUMBER = {11},
     PAGES = {2627--2689},
      ISSN = {1435-9855},
   MRCLASS = {53D42 (53D10 53D40 57M27 57R90)},
  MRNUMBER = {3562353},
MRREVIEWER = {Georgios Dimitroglou Rizell},
       DOI = {10.4171/JEMS/650},
       URL = {https://doi.org/10.4171/JEMS/650},
}

@article {CEL,
    AUTHOR = {Cieliebak, K. and Ekholm, T. and Latschev, J.},
     TITLE = {Compactness for holomorphic curves with switching {L}agrangian
              boundary conditions},
   JOURNAL = {J. Symplectic Geom.},
  FJOURNAL = {The Journal of Symplectic Geometry},
    VOLUME = {8},
      YEAR = {2010},
    NUMBER = {3},
     PAGES = {267--298},
      ISSN = {1527-5256,1540-2347},
   MRCLASS = {53D40 (53D12 53D42 53D45)},
  MRNUMBER = {2684508},
MRREVIEWER = {Lenhard\ L.\ Ng},
       URL = {http://projecteuclid.org/euclid.jsg/1283865584},
}

@article{Kalman,
  title={Contact homology and one parameter families of Legendrian knots},
  author={T{\'a}m{\'a}s Kalm{\'a}n},
  journal={Geometry \& Topology},
  year={2004},
  volume={9},
  pages={2013-2078},
  url={https://api.semanticscholar.org/CorpusID:8307055}
}

@article{Casals,
  title={Braid loops with infinite monodromy on the Legendrian contact DGA},
  author={Roger Casals and Lenhard L. Ng},
  journal={Journal of Topology},
  year={2021},
  volume={15},
  url={https://api.semanticscholar.org/CorpusID:230799429}
}

@article {Ng2003,
    AUTHOR = {Ng, Lenhard},
     TITLE = {Knot and braid invariants from contact homology. {II}},
      NOTE = {With an appendix by the author and Siddhartha Gadgil},
   JOURNAL = {Geom. Topol.},
  FJOURNAL = {Geometry and Topology},
    VOLUME = {9},
      YEAR = {2005},
     PAGES = {1603--1637},
      ISSN = {1465-3060,1364-0380},
   MRCLASS = {57M27 (20F36 53D35 57R17)},
  MRNUMBER = {2175153},
MRREVIEWER = {Joshua\ M.\ Sabloff},
       DOI = {10.2140/gt.2005.9.1603},
       URL = {https://doi.org/10.2140/gt.2005.9.1603},
}

@article {O24,
    AUTHOR = {Okamoto, Yukihiro},
     TITLE = {Legendrian non-isotopic unit conormal bundles in high
              dimensions},
   JOURNAL = {J. Topol.},
  FJOURNAL = {Journal of Topology},
    VOLUME = {18},
      YEAR = {2025},
    NUMBER = {3},
     PAGES = {Paper No. e70039},
      ISSN = {1753-8416,1753-8424},
   MRCLASS = {53D42 (53D12 55P50 57R17)},
  MRNUMBER = {4960403},
       DOI = {10.1112/topo.70039},
       URL = {https://doi.org/10.1112/topo.70039},
}

@article {O22,
    AUTHOR = {Okamoto, Yukihiro},
     TITLE = {Toward a topological description of {L}egendrian contact
              homology of unit conormal bundles},
   JOURNAL = {Algebr. Geom. Topol.},
  FJOURNAL = {Algebraic \& Geometric Topology},
    VOLUME = {25},
      YEAR = {2025},
    NUMBER = {2},
     PAGES = {951--1027},
      ISSN = {1472-2747,1472-2739},
   MRCLASS = {53D42 (55P50 57R17)},
  MRNUMBER = {4930557},
       DOI = {10.2140/agt.2025.25.951},
       URL = {https://doi.org/10.2140/agt.2025.25.951},
}

@article {EL,
    AUTHOR = {Ekholm, Tobias and Lekili, Yank\i},
     TITLE = {Duality between {L}agrangian and {L}egendrian invariants},
   JOURNAL = {Geom. Topol.},
  FJOURNAL = {Geometry \& Topology},
    VOLUME = {27},
      YEAR = {2023},
    NUMBER = {6},
     PAGES = {2049--2179},
      ISSN = {1465-3060,1364-0380},
   MRCLASS = {57R17},
  MRNUMBER = {4634745},
MRREVIEWER = {Alexander\ Fel\cprime shtyn},
       DOI = {10.2140/gt.2023.27.2049},
       URL = {https://doi.org/10.2140/gt.2023.27.2049},
}

@article{ES,
  title={Skeins on branes},
  author={Ekholm, Tobias and Shende, Vivek},
  journal={arXiv preprint arXiv:1901.08027},
  year={2019}
}

@article {APS,
    AUTHOR = {Abbondandolo, Alberto and Portaluri, Alessandro and Schwarz,
              Matthias},
     TITLE = {The homology of path spaces and {F}loer homology with conormal
              boundary conditions},
   JOURNAL = {J. Fixed Point Theory Appl.},
  FJOURNAL = {Journal of Fixed Point Theory and Applications},
    VOLUME = {4},
      YEAR = {2008},
    NUMBER = {2},
     PAGES = {263--293},
      ISSN = {1661-7738,1661-7746},
   MRCLASS = {53D40 (53D12)},
  MRNUMBER = {2465553},
MRREVIEWER = {Tobias\ Ekholm},
       DOI = {10.1007/s11784-008-0097-y},
       URL = {https://doi.org/10.1007/s11784-008-0097-y},
}

@article {ENS,
    AUTHOR = {Ekholm, Tobias and Ng, Lenhard and Shende, Vivek},
     TITLE = {A complete knot invariant from contact homology},
   JOURNAL = {Invent. Math.},
  FJOURNAL = {Inventiones Mathematicae},
    VOLUME = {211},
      YEAR = {2018},
    NUMBER = {3},
     PAGES = {1149--1200},
      ISSN = {0020-9910,1432-1297},
   MRCLASS = {57M27 (53D42)},
  MRNUMBER = {3763406},
MRREVIEWER = {Janko\ Latschev},
       DOI = {10.1007/s00222-017-0761-1},
       URL = {https://doi.org/10.1007/s00222-017-0761-1},
}

@article {Shende,
    AUTHOR = {Shende, Vivek},
     TITLE = {The conormal torus is a complete knot invariant},
   JOURNAL = {Forum Math. Pi},
  FJOURNAL = {Forum of Mathematics. Pi},
    VOLUME = {7},
      YEAR = {2019},
     PAGES = {e6, 16},
      ISSN = {2050-5086},
   MRCLASS = {57M25 (55N30 57M27)},
  MRNUMBER = {4010558},
MRREVIEWER = {Mohamed\ Elhamdadi},
       DOI = {10.1017/fmp.2019.1},
       URL = {https://doi.org/10.1017/fmp.2019.1},
}

@article {Asp,
    AUTHOR = {Asplund, Johan},
     TITLE = {Fiber {F}loer cohomology and conormal stops},
   JOURNAL = {J. Symplectic Geom.},
  FJOURNAL = {The Journal of Symplectic Geometry},
    VOLUME = {19},
      YEAR = {2021},
    NUMBER = {4},
     PAGES = {777--864},
      ISSN = {1527-5256,1540-2347},
   MRCLASS = {53D40},
  MRNUMBER = {4371551},
MRREVIEWER = {Jun\ Zhang},
       DOI = {10.4310/JSG.2021.v19.n4.a1},
       URL = {https://doi.org/10.4310/JSG.2021.v19.n4.a1},
}

@article {FMP20,
    AUTHOR = {Fern\'andez, Eduardo and Mart\'inez-Aguinaga, Javier and
              Presas, Francisco},
     TITLE = {Fundamental groups of formal {L}egendrian and horizontal
              embedding spaces},
   JOURNAL = {Algebr. Geom. Topol.},
  FJOURNAL = {Algebraic \& Geometric Topology},
    VOLUME = {20},
      YEAR = {2020},
    NUMBER = {7},
     PAGES = {3219--3312},
      ISSN = {1472-2747,1472-2739},
   MRCLASS = {57K33 (53D10 58A17)},
  MRNUMBER = {4194282},
MRREVIEWER = {Laura\ P.\ Starkston},
       DOI = {10.2140/agt.2020.20.3219},
       URL = {https://doi.org/10.2140/agt.2020.20.3219},
}

@article {SS,
    AUTHOR = {Sabloff, Joshua M. and Sullivan, Michael G.},
     TITLE = {Families of {L}egendrian submanifolds via generating families},
   JOURNAL = {Quantum Topol.},
  FJOURNAL = {Quantum Topology},
    VOLUME = {7},
      YEAR = {2016},
    NUMBER = {4},
     PAGES = {639--668},
      ISSN = {1663-487X,1664-073X},
   MRCLASS = {57R17 (57Q45)},
  MRNUMBER = {3593565},
MRREVIEWER = {Klaus\ Mohnke},
       DOI = {10.4171/QT/82},
       URL = {https://doi.org/10.4171/QT/82},
}

@article {BEHWZ,
    AUTHOR = {Bourgeois, F. and Eliashberg, Y. and Hofer, H. and Wysocki, K.
              and Zehnder, E.},
     TITLE = {Compactness results in symplectic field theory},
   JOURNAL = {Geom. Topol.},
  FJOURNAL = {Geometry and Topology},
    VOLUME = {7},
      YEAR = {2003},
     PAGES = {799--888},
      ISSN = {1465-3060},
   MRCLASS = {53D45 (53D35 53D40 57R17)},
  MRNUMBER = {2026549},
MRREVIEWER = {Kai Cieliebak},
       DOI = {10.2140/gt.2003.7.799},
       URL = {https://doi.org/10.2140/gt.2003.7.799},
}

@inproceedings{Hatcher2002TopologicalMS,
  title={Topological Moduli Spaces of Knots},
  author={Allen Hatcher},
  year={2002},
 url={https://api.semanticscholar.org/CorpusID:8472234},
 options={url=true}
}

@article{Budney2010,
author = {Budney, Ryan},
title = {Topology of knot spaces in dimension 3},
journal = {Proceedings of the London Mathematical Society},
volume = {101},
number = {2},
pages = {477-496},
year = {2010}
}

@incollection {Budney2006,
    AUTHOR = {Budney, Ryan},
     TITLE = {A family of embedding spaces},
 BOOKTITLE = {Groups, homotopy and configuration spaces},
    SERIES = {Geom. Topol. Monogr.},
    VOLUME = {13},
     PAGES = {41--83},
 PUBLISHER = {Geom. Topol. Publ., Coventry},
      YEAR = {2008},
   MRCLASS = {57R40 (55P48 55Q45 57R50)},
  MRNUMBER = {2508201},
MRREVIEWER = {Brian\ A.\ Munson},
       DOI = {10.2140/gtm.2008.13.41},
       URL = {https://doi.org/10.2140/gtm.2008.13.41},
}

@article{Budney2007,
title = {Little cubes and long knots},
journal = {Topology},
volume = {46},
number = {1},
pages = {1-27},
year = {2007},
issn = {0040-9383},
doi = {https://doi.org/10.1016/j.top.2006.09.001},
url = {https://www.sciencedirect.com/science/article/pii/S0040938306000541},
author = {Ryan Budney}
}

@article{Hatcher83,
 ISSN = {0003486X, 19398980},
 URL = {http://www.jstor.org/stable/2007035},
 author = {Allen E. Hatcher},
 journal = {Annals of Mathematics},
 number = {3},
 pages = {553--607},
 publisher = {[Annals of Mathematics, Trustees of Princeton University on Behalf of the Annals of Mathematics, Mathematics Department, Princeton University]},
 title = {A Proof of the Smale Conjecture,
          $\operatorname{Diff}(S^3) \simeq O(4)$},
 urldate = {2025-12-17},
 volume = {117},
 year = {1983}
}

@article{martinez2024legendrian,
  title={On the Legendrian realisation of parametric families of knots},
  author={Mart{\'\i}nez-Aguinaga, Javier},
  journal={arXiv preprint arXiv:2406.04293},
  year={2024}
}

@article {CG,
    AUTHOR = {Casals, Roger and Gao, Honghao},
     TITLE = {Infinitely many {L}agrangian fillings},
   JOURNAL = {Ann. of Math. (2)},
  FJOURNAL = {Annals of Mathematics. Second Series},
    VOLUME = {195},
      YEAR = {2022},
    NUMBER = {1},
     PAGES = {207--249},
      ISSN = {0003-486X,1939-8980},
   MRCLASS = {57K33 (53D10)},
  MRNUMBER = {4358415},
MRREVIEWER = {Youlin\ Li},
       DOI = {10.4007/annals.2022.195.1.3},
       URL = {https://doi.org/10.4007/annals.2022.195.1.3},
}

@article {Golovko,
    AUTHOR = {Golovko, Roman},
     TITLE = {A note on the infinite number of exact {L}agrangian fillings
              for spherical spuns},
   JOURNAL = {Pacific J. Math.},
  FJOURNAL = {Pacific Journal of Mathematics},
    VOLUME = {317},
      YEAR = {2022},
    NUMBER = {1},
     PAGES = {143--152},
      ISSN = {0030-8730,1945-5844},
   MRCLASS = {53D12 (53D42)},
  MRNUMBER = {4440942},
MRREVIEWER = {Toru\ Yoshiyasu},
       DOI = {10.2140/pjm.2022.317.143},
       URL = {https://doi.org/10.2140/pjm.2022.317.143},
}

@inproceedings {Ng-plane,
    AUTHOR = {Ng, Lenhard},
     TITLE = {Plane curves and contact geometry},
 BOOKTITLE = {Proceedings of {G}\"okova {G}eometry-{T}opology {C}onference
              2005},
     PAGES = {165--174},
 PUBLISHER = {G\"okova Geometry/Topology Conference (GGT), G\"okova},
      YEAR = {2006},
      ISBN = {1-57146-152-3},
   MRCLASS = {53D35 (57M27 57R17 57R42)},
  MRNUMBER = {2282015},
MRREVIEWER = {Joshua\ M.\ Sabloff},
}

\noindent Yukihiro Okamoto:
Department of Mathematical Sciences,
Tokyo Metropolitan University,
Minami-osawa, Hachioji, Tokyo, 192-0397, Japan

\noindent
\textit{E-mail address}: 
\texttt{yukihiro@tmu.ac.jp}

\medskip

\noindent Mari\'{a}n Poppr

\noindent \textit{E-mail address}: \texttt{marianpoppr@centrum.cz}

\end{document}